\newtheorem{theorem}{Theorem}[section]
\newtheorem{corollary}[theorem]{Corollary}
\newtheorem{conjecture}[theorem]{Conjecture}
\newtheorem{lemma}[theorem]{Lemma}
\newtheorem{problem}[theorem]{Problem}
\theoremstyle{remark}
\numberwithin{equation}{section}
\newcommand{\ta}{\theta}
\newcommand{\ba}{\beta}
\newcommand{\da}{\delta}
\newcommand{\rd}{\,\mathrm d}
\newcommand{\C}{\mathbb C}
\newcommand{\N}{\mathbb N}
\newcommand{\lcm}{\operatorname{lcm}}
\author{Victor J.\ W.\ Guo}
\address{School of Mathematics and Statistics, Huaiyin Normal University,
Huai'an 223300, Jiangsu, People's Republic of China}
\email{jwguo@hytc.edu.cn}
\thanks{The first author was partially supported by the National Natural
Science Foundation of China (grant 11771175).
The second author was partially supported by Austrian Science Fund grant P32305.}
\author{Michael J.\ Schlosser}
\address{Fakult\"at f\"ur Mathematik, Universit\"at Wien,
Oskar-Morgenstern-Platz~1, A-1090 Vienna, Austria}
\email{michael.schlosser@univie.ac.at}
\title[$q$-supercongruences from transformations]{Some
$q$-supercongruences from transformation formulas for basic hypergeometric
series}
\subjclass[2010]{Primary 33D15; Secondary 11A07, 11F33, 33D45}
\keywords{basic hypergeometric series, supercongruences, identities, linearization}
\begin{document}

\begin{abstract}
Several new $q$-supercongruences are obtained using transformation
formulas for basic hypergeometric series, together with various
techniques such as suitably combining terms, and creative
microscoping, a method recently developed by the first author in
collaboration with Zudilin. More concretely, the results in
this paper include $q$-analogues of supercongruences (referring to
$p$-adic identities remaining valid for some higher power of $p$)
established by Long, by Long and Ramakrishna, and several other
$q$-supercongruences. The six basic hypergeometric transformation
formulas which are made use of are Watson's transformation, a
quadratic transformation of Rahman, a cubic transformation of Gasper
and Rahman, a quartic transformation of Gasper and Rahman, a double
series transformation of Ismail, Rahman and Suslov, and a new
transformation formula for a nonterminating very-well-poised
${}_{12}\phi_{11}$ series. Also, the nonterminating $q$-Dixon
summation formula is used. A special case of the new
${}_{12}\phi_{11}$ transformation formula is further utilized to
obtain a generalization of Rogers' linearization formula for the
continuous $q$-ultraspherical polynomials.
\end{abstract}

\maketitle

\section{Introduction}\label{secintro}
Ramanujan, in his second letter to Hardy on February 27, 1913,
mentioned the following identity
\begin{equation}
\sum_{k=0}^{\infty} (-1)^{k}(4k+1)\frac{(\frac{1}{2})_k^5}{k!^5}=
\frac{2}{\Gamma(\frac{3}{4})^4},  \label{eq:ram5}
\end{equation}
where $\Gamma(x)$ is the Gamma function and where $(a)_k=a(a+1)\cdots(a+k-1)$
is the Pochhammer symbol. A $p$-adic analogue of \eqref{eq:ram5}
was conjectured by Van Hamme~\cite[Eq.~(A.2)]{Hamme} as follows:
\begin{equation}
\sum_{k=0}^{(p-1)/2} (-1)^{k}(4k+1)\frac{(\frac{1}{2})_k^5}{k!^5}
\equiv
\begin{cases} -\displaystyle\frac{p}{\Gamma_p(\frac{3}{4})^4} \pmod{p^3},
&\text{if $p\equiv 1\pmod 4$,}\\
 0\pmod{p^3}, &\text{if $p\equiv 3\pmod 4$.}
\end{cases} \label{eq:a2}
\end{equation}
Here and throughout the paper, $p$ always denotes an odd prime and
$\Gamma_p(x)$ is the $p$-adic Gamma function.
The congruence \eqref{eq:a2} was later proved by McCarthy and Osburn~\cite{MO}
through a combination of ordinary and Gaussian hypergeometric series.
Recently, the congruence \eqref{eq:a2} for $p\equiv 3\pmod{4}$ and $p>3$
was further generalized by Liu \cite{Liu} to the modulus $p^4$ case.

It is well known that some truncated hypergeometric series are closely
related to Calabi-Yau threefolds over finite fields and are further relevant
to the coefficients of modular forms.
For example, using the fact that the Calabi-Yau threefold in question is
modular, which was proved by Ahlgren and Ono~\cite{AO},
Kilbourn~\cite{Kilbourn}
succeeded in proving Van Hamme's (M.2) supercongruence:
\begin{equation}
\sum_{k=0}^{(p-1)/2} \frac{(\frac{1}{2})_k^4}{k!^4} \equiv  a_p \pmod{p^{3}},
\label{eq:kil}
\end{equation}
where $a_p$ is the $p$-th coefficient of a weight $4$ modular form
\begin{equation*}
\eta(2z)^4\eta(4z)^4:=q\prod_{n=1}^\infty (1-q^{2n})^4  (1-q^{4n})^4,
\quad q=e^{2\pi iz}. %\label{eq:modular}
\end{equation*}
Applying Whipple's $_7F_6$ transformation formula, Long~\cite{Long} proved that
\begin{equation}
\sum_{k=0}^{(p-1)/2} (4k+1) \frac{(\frac{1}{2})_k^6}{k!^6}
\equiv p \sum_{k=0}^{(p-1)/2} \frac{(\frac{1}{2})_k^4}{k!^4}\pmod{p^{4}}
\quad\text{for $p>3,$}  \label{eq:long-1}
\end{equation}
which in view of the supercongruence \eqref{eq:kil} can be written as
\begin{equation*}
\sum_{k=0}^{(p-1)/2} (4k+1) \frac{(\frac{1}{2})_k^6}{k!^6}
\equiv p\,a_p \pmod{p^{4}}\quad\text{for $p>3$.}
\end{equation*}

The main aim of this paper is to give $q$-analogues of some known
supercongruences, including a \textit{partial}\/ $q$-analogue of Long's
supercongruence \eqref{eq:long-1} (partial in the sense that the
modulo $p^4$ condition is replaced by the weaker condition modulo $p^3$).
We provide such a result in Theorem~\ref{thm:first} in the form of two
transformations of truncated basic hypergeometric series. In addition, several
other $q$-supercongruences are given. These results are proved by
special instances of transformation formulas for basic hypergeometric series.
(See Theorem~\ref{newtf} in the Appendix for a new basic
hypergeometric transformation formula which we make use of.)

Throughout we assume $q$ to be fixed with $0<|q|<1$. We refer to $q$ as
the ``base''. For $a,k\in\C$, the $q$-shifted factorial is defined by
\begin{equation}\label{qpoch}
(a;q)_k:={}\frac{(a;q)_\infty}{(aq^k;q)_\infty},\qquad
\text{where}\qquad (a;q)_\infty={}\prod_{j\geqslant 0}(1-aq^j).
\end{equation}
For brevity, we frequently use the shorthand notation
\begin{equation*}
(a_1,\dots,a_m;q)_k=(a_1;q)_k\dots(a_m;q)_k,\qquad\qquad
k\in\C\cup\infty.
\end{equation*}
Moreover, the {\it $q$-binomial coefficients}
$\begin{bmatrix}\begin{smallmatrix}x\\k\end{smallmatrix}\end{bmatrix}$
are defined by
\begin{equation*}
\begin{bmatrix}x\\k\end{bmatrix}
=\begin{bmatrix}x\\k\end{bmatrix}_q
=\begin{cases}\displaystyle\frac{(q^{x-k+1};q)_k}{(q;q)_k},
&\text{if $k\geqslant 0$,} \\[5pt]
0, &\text{otherwise.}
\end{cases}
\end{equation*}
It is easy to see that
\begin{equation}
(-1)^k q^{k^2}\begin{bmatrix}-\frac{1}{2}\\k\end{bmatrix}_{q^2}
=\frac{(q;q^2)_k}{(q^2;q^2)_k}
=\frac{1}{(-q;q)_k^2}\begin{bmatrix}2k\\k\end{bmatrix}. \label{eq:q-bino-1/2}
\end{equation}

Following Gasper and Rahman~\cite{GR}, basic hypergeometric
${}_{r}\phi_s$ series with $r$ upper parameters
$a_1,\dots,a_r$, $s$ lower parameters $b_1,\dots,b_s$, base $q$ and
argument $z$ are defined by
\begin{equation*}
{}_{r}\phi_s\!\left[\begin{matrix}
a_1,a_2,\dots,a_r\\b_1,\dots,b_s
\end{matrix};q,z\right]:=\sum_{k=0}^\infty
\frac{(a_1,a_2,\dots,a_r;q)_k}{(q,b_1,\dots,b_s;q)_k}
\left[(-1)^k q^{\binom k2}\right]^{1+s-r}z^k,
\end{equation*}
where $q\neq 0$ when $r>s+1$.
Such a series terminates if one of the upper parameters, say,
$a_r$, is of the form $q^{-n}$, where $n$ is a nonnegative integer.
If the series does not terminate, then it converges for $|z|<1$.

In many of our proofs we will make use of Watson's $_8\phi_7$
transformation formula \cite[Appendix~(III.17)]{GR}:
\begin{align}
& _{8}\phi_{7}\!\left[\begin{array}{cccccccc}
a,& qa^{\frac{1}{2}},& -qa^{\frac{1}{2}}, & b,    & c,    & d,    & e,    & f    \\
  & a^{\frac{1}{2}}, & -a^{\frac{1}{2}},  & aq/b, & aq/c, & aq/d, & aq/e, & aq/f
\end{array};q,\, \frac{a^2q^2}{bcdef}
\right] \notag\\
&\quad =\frac{(aq, aq/de, aq/df, aq/ef;q)_\infty}
{(aq/d, aq/e, aq/f, aq/def;q)_\infty}
\,{}_{4}\phi_{3}\!\left[\begin{array}{c}
aq/bc,\ d,\ e,\ f \\
aq/b,\, aq/c,\, def/a
\end{array};q,\, q
\right],  \label{eq:8phi7}
\end{align}
which is valid whenever the $_8\phi_7$ series converges and the $_4\phi_3$
series terminates. In particular, we will also make use of the limiting case
$f=q^{-n}\to\infty$,  which we state for convenience:
\begin{align}
&\sum_{k=0}^{\infty}\frac{(-1)^k(1-aq^{2k})(a,b,c,d,e;q)_k}
{(1-a)(q,aq/b,aq/c,aq/d,aq/e;q)_k}
\left(\frac{a^2q^2}{bcde}\right)^k q^{\binom k2} \notag\\[5pt]
&\quad=\frac{(aq, aq/de;q)_\infty}{(aq/d, aq/e;q)_\infty}
\,{}_{3}\phi_{2}\!\left[\begin{array}{c}
aq/bc,\ d,\ e\\
aq/b,\, aq/c
\end{array};q,\, \frac{aq}{de}
\right].  \label{eq:8phi7-2}
\end{align}
Other transformations we make use of are a quadratic transformation formula
of Rahman, stated in \eqref{eq:1-aq3k}, a cubic transformation formula of
Gasper and Rahman, stated in \eqref{eq:1-acq4k}, a quartic transformation
formula by Gasper and Rahman, stated in \eqref{eq:quartic}, a double series
transformation by Ismail, Rahman and Suslov, stated in \eqref{eq:irs},
and a new transformation formula for a
nonterminating ${}_{12}\phi_{11}$ series into two multiples of
nonterminating ${}_4\phi_3$ series, given as Theorem~\ref{newtf}
in the Appendix.
We also make use of the $q$-Dixon summation, stated in \eqref{eq:q-Dixon}.

For further material on basic hypergeometric series and more generally,
to special functions, we refer to the text books by Gasper and
Rahman~\cite{GR}, and by Andrews, Askey and Roy~\cite{AAR},
respectively. In particular, in our computations we implicitly make heavy
use of elementary manipulations of $q$-shifted factorials (see
\cite[Appendix~I]{GR}).

Recall that the {\it $q$-integer} is defined as $[n]=[n]_q=1+q+\cdots+q^{n-1}$.
Moreover, the $n$-th {\it cyclotomic polynomial} $\Phi_n(q)$ is given by
\begin{equation*}
\Phi_n(q):=\prod_{\substack{1\leqslant k\leqslant n\\ \gcd(n,k)=1}}(q-\zeta^k),
\end{equation*}
where $\zeta$ is an $n$-th primitive root of unity.
It is clear that $\Phi_n(q)$ is a polynomial in $q$ with integer coefficients.
Further,
\begin{equation*}
\prod_{d\mid n,\, d>1}\Phi_d(q)=[n],
\end{equation*}
in particular, $\Phi_p(q)=[p]$ for prime $p$.

We say that two rational functions  $A(q)$ and $B(q)$ in $q$ are congruent
modulo a polynomial $P(q)$, denoted by $A(q)\equiv B(q)\pmod{P(q)}$,
if the numerator of the reduced form of $A(q)-B(q)$ is divisible by $P(q)$
in the polynomial ring $\mathbb{Z}[q]$.
We refer the reader to \cite{Andrews99,CP,Guo2018,Guo-a2,Guo-jmaa,Guo-rama,Guo-mod4,Guo4,GS1,GS2,GS3,GZ15,LPZ,LP,NP,SP,Tauraso1,Tauraso2,WY,Zudilin2}
for some interesting $q$-congruences.

%%%%%%%%%%%%%%%%%%%%%%%%%%%%%%%%%%%%%%%%%%%%%%%%%%%%%%%%%%%%%%%%%%%%%%%%%%%%%%%%%%%%%%%%%%%%%%%%%%%%%%%%%%%%%%%%%%%%%%%%%%%%%%%%%%%%%%%%%%%%%%%%%%%%%%%%%%%%%%%%%%%%%%%%%%%%%%%%%%%%%%%%%%%%%%%%%%%%%%%%%%%%%%%%%%%%%%%%%%
\section{The main results}
The following is our $q$-analogue of \eqref{eq:long-1}, where the modulo $p^4$
condition is replaced by the weaker condition modulo $p^3$.

\begin{theorem}\label{thm:first}
Let $n$ be a positive odd integer. Then
\begin{subequations}
\begin{align}
\sum_{k=0}^{n-1}[4k+1]\frac{(q;q^2)_k^6}{(q^2;q^2)_k^6} q^k
&\equiv [n]q^{(1-n)/2} \sum_{k=0}^{(n-1)/2} \frac{(q;q^2)_k^4}{(q^2;q^2)_k^4} q^{2k}
\pmod{[n]\Phi_n(q)^2}, \label{eq:first-0}\\%[5pt]
\intertext{and}
\sum_{k=0}^{(n-1)/2}[4k+1]\frac{(q;q^2)_k^6}{(q^2;q^2)_k^6} q^k
&\equiv [n]q^{(1-n)/2} \sum_{k=0}^{(n-1)/2} \frac{(q;q^2)_k^4}{(q^2;q^2)_k^4} q^{2k}
\pmod{[n]\Phi_n(q)^2}.  \label{eq:first}
\end{align}
\end{subequations}
\end{theorem}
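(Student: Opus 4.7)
The plan is to apply the creative microscoping method pioneered by Guo and Zudilin, combined with a well-chosen specialization of Watson's $_8\phi_7$ transformation \eqref{eq:8phi7}. As a preliminary, I would reduce \eqref{eq:first-0} to \eqref{eq:first}: for $(n+1)/2\leqslant k\leqslant n-1$, since $n$ is odd, the factor $(q;q^2)_k^6$ carries $(1-q^n)^6$ while $(q^2;q^2)_k^6$ is coprime to every $\Phi_d(q)$ with $d\mid n$, $d>1$; hence each tail summand is divisible by $(1-q^n)^6$, in particular by $[n]\Phi_n(q)^2$, so the two sums are congruent modulo $[n]\Phi_n(q)^2$ and it suffices to prove \eqref{eq:first}.

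To prove \eqref{eq:first}, I would introduce a parameter $a$ by replacing two of the six factors $(q;q^2)_k/(q^2;q^2)_k$ on the left by $(aq;q^2)_k/(aq^2;q^2)_k$ and $(q/a;q^2)_k/(q^2/a;q^2)_k$, forming $L_n(a)$, and do likewise on the right to form $R_n(a)$. The aim is the parameterized congruence
\begin{equation*}
L_n(a)\equiv [n]q^{(1-n)/2}R_n(a)\pmod{\Phi_n(q)(1-aq^n)(a-q^n)}.
\end{equation*}
The three factors $\Phi_n(q)$, $1-aq^n$, and $a-q^n$ being pairwise coprime, the Chinese Remainder Theorem reduces this to three separate verifications. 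Setting $a=1$ then gives divisibility (in the numerator of the difference) by $\Phi_n(q)(1-q^n)^2=(1-q)^2\Phi_n(q)^3\prod_{d\mid n,\,1<d<n}\Phi_d(q)^2$, which majorizes the target modulus $[n]\Phi_n(q)^2=\Phi_n(q)^3\prod_{d\mid n,\,1<d<n}\Phi_d(q)$.

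For the congruences at $a=q^{-n}$ and $a=q^n$, the respective specialization makes $(aq;q^2)_k$ or $(q/a;q^2)_k$ vanish for $k\geqslant(n+1)/2$, so $L_n(a)$ becomes a terminating very-well-poised $_8\phi_7$. I would apply Watson's formula \eqref{eq:8phi7} with base $q^2$, Watson's parameter $a=q$, and the remaining parameters $b=q/a$, $c=q$, $d=aq$, $e=q$, $f=q$, thereby expressing $L_n(a)$ as an infinite-product ratio times a terminating $_4\phi_3$. The prefactor collapses by telescoping of $q$-factorials---specifically the critical factor $(1-q^n)$ emerges from the ratio $(q;q^2)_{(n-1)/2}(q^3;q^2)_{(n-1)/2}/(q^2;q^2)_{(n-1)/2}^2$---and together with $q$-power shifts from the argument produces exactly $[n]q^{(1-n)/2}$. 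One then identifies the resulting $_4\phi_3$ with $R_n(a)$ at the specialization, using a reversal identity such as $(a;q)_k=(-a)^kq^{\binom{k}{2}}(q^{1-k}/a;q)_k$ to relate $(q^{1+n};q^2)_k$ and $(q^{1-n};q^2)_k$. For the congruence modulo $\Phi_n(q)$, I would pair the summand of index $k$ with that of index $(n-1)/2-k$: modulo $\Phi_n(q)$, reversing the summation order rewrites each summand as a $q$-power multiple of the corresponding one on the other side, and term-by-term matching concludes the argument.

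The main obstacle will be the final identification of Watson's output $_4\phi_3$ with $R_n(a)$ at $a=q^{\pm n}$: Watson naturally produces $(aq;q^2)_k^2$-type pairings whereas $R_n(a)$ carries the symmetric pair $(aq;q^2)_k(q/a;q^2)_k$. Bridging this gap may require an auxiliary transformation---for instance a limiting case of \eqref{eq:8phi7-2} or one of the quadratic/cubic/quartic identities in the paper's toolkit---and the precise tracking of constants and $q$-powers across these manipulations, as well as the bookkeeping for the mod-$\Phi_n(q)$ pairing, is where the computation becomes most delicate.
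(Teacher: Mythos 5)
Your outline of the microscoping core matches the paper's route: a parametric deformation of the summand, Watson's transformation at $a=q^{\pm n}$ producing the prefactor $[n]q^{(1-n)/2}$ times a terminating ${}_4\phi_3$, and the pairing of the $k$-th with the $((n-1)/2-k)$-th term modulo $\Phi_n(q)$ (the paper's Lemma~\ref{lem:2.1}); your worry about identifying the output ${}_4\phi_3$ is unfounded, since at $a=q^{\pm n}$ the pair $(aq,q/a;q^2)_k$ is literally $(q^{1+n},q^{1-n};q^2)_k$, which is what Watson produces. However, there are two genuine gaps, both coming from conflating divisibility by powers of $\Phi_n(q)$ with divisibility by $[n]$ when $n$ is composite. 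First, your reduction of \eqref{eq:first-0} to \eqref{eq:first} fails: for $(n-1)/2<k\leqslant n-1$ the denominator $(q^2;q^2)_k^6$ is coprime to $\Phi_n(q)$ but \emph{not} to $\Phi_d(q)$ for proper divisors $d\mid n$ (it contains $1-q^{2d}$ as soon as $k\geqslant d$), so the tail summands are only guaranteed to be divisible by $\Phi_n(q)^3$, not by $[n]\Phi_n(q)^2$; for instance, for $n=9$ and $k=6$ the summand is not even divisible by $\Phi_3(q)$. The paper states exactly this when it remarks that \eqref{eq:first-0} and \eqref{eq:first} are genuinely different congruences.

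Second, your claim that setting $a=1$ in the congruence modulo $\Phi_n(q)(1-aq^n)(a-q^n)$ yields divisibility by $\Phi_n(q)(1-q^n)^2$ is unjustified for the same reason: passing to the limit $a\to1$ in a congruence of rational functions requires controlling the denominators, and at $a=1$ the denominators $(q^2;q^2)_k^6$ with $k\leqslant(n-1)/2$, while coprime to $\Phi_n(q)$, may contain $\Phi_d(q)$ for $d\mid n$, $1<d<n$, so part of $(1-q^n)^2$ can be cancelled. The limit therefore delivers only the congruence modulo $\Phi_n(q)^3$. To reach the stated modulus $[n]\Phi_n(q)^2$ one must prove separately that both truncations $\sum_{k=0}^{(n-1)/2}$ and $\sum_{k=0}^{n-1}$ of the left-hand side vanish modulo $[n]$; this is the part of the paper's proof your proposal omits. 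There, for each divisor $d>1$ of $n$ one evaluates the summand $c_q(k)$ at a primitive $d$-th root of unity $\zeta$, uses the already established congruence modulo $\Phi_d(q)^3$ with $n$ replaced by $d$ to get $\sum_{k=0}^{d-1}c_\zeta(k)=\sum_{k=0}^{(d-1)/2}c_\zeta(k)=0$, and then exploits $c_\zeta(\ell d+k)=c_\zeta(\ell d)\,c_\zeta(k)$ to conclude that both truncated sums vanish at $\zeta$, hence are divisible by $\Phi_d(q)$ for every $d\mid n$, $d>1$, i.e.\ by $[n]$. Without this ingredient your argument proves the theorem only modulo $\Phi_n(q)^3$, which coincides with the claimed modulus only when $n$ is prime.
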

Noticing that the terms corresponding to $k$ in the upper half range
$(n-1)/2<k\leqslant n-1$ are congruent to $0$ modulo $\Phi_n(q)^3$ but not
modulo $[n]\Phi_n(q)^2$ in general, we conclude that \eqref{eq:first-0}
and \eqref{eq:first} are in fact different congruences. Of course, when $n=p$
is an odd prime and $q=1$, they are both equivalent to \eqref{eq:long-1}
modulo $p^3$.
The proof of Theorem~\ref{thm:first} is deferred to Section~\ref{sec:proofs}.

Van Hamme~\cite[Eq.~(H.2)]{Hamme} proved the following supercongruence:
\begin{equation}
\sum_{k=0}^{(p-1)/2} \frac{(\frac{1}{2})_k^3}{k!^3}
\equiv
\begin{cases} -\displaystyle \Gamma_p\bigg(\frac{1}{4}\bigg)^4  \pmod{p^2},
&\text{if $p\equiv 1\pmod 4$,}\\
 0\pmod{p^2}, &\text{if $p\equiv 3\pmod 4$.}
\end{cases} \label{eq:h2}
\end{equation}
The first author and Zeng~\cite[Cor.~1.2]{GZ15} gave a $q$-analogue
of \eqref{eq:h2} as follows:
\begin{align*}
&\sum_{k=0}^{(p-1)/2}\frac{(q;q^2)_k^2 (q^2;q^4)_k}{(q^2;q^2)_k^2 (q^4;q^4)_k}
q^{2k}\\[5pt]
&\quad{}\equiv
\begin{cases} \displaystyle %{(p-1)/2\brack (p-1)/4}_{q^4}^2
\begin{bmatrix}(p-1)/2\\(p-1)/4\end{bmatrix}_{q^4}^2
\frac{q^{(p-1)/2}}{(-q^2;q^2)_{(p-1)/2}^2}  \pmod{[p]^2},
&\text{if $p\equiv 1\pmod 4$,}\\
 0\pmod{[p]^2}, &\text{if $p\equiv 3\pmod 4$.}
\end{cases}
\end{align*}
We do not know any $q$-analogue of \eqref{eq:a2}. However, we are able
to provide a $q$-analogue of a very closely related identity.
In particular, since $\Gamma_p(\frac{1}{4})^4 \Gamma_p(\frac{3}{4})^4=1$,
from \eqref{eq:a2} and \eqref{eq:h2} we deduce that
\begin{equation}
\sum_{k=0}^{(p-1)/2} (-1)^{k}(4k+1)\frac{(\frac{1}{2})_k^5}{k!^5}
\equiv p\sum_{k=0}^{(p-1)/2}  \frac{(\frac{1}{2})_k^3}{k!^3} \pmod{p^3},
\label{eq:a2-h2}
\end{equation}
which was already noticed by Mortenson~\cite{Mortenson}.
We are able to give the following complete $q$-analogue of
\eqref{eq:a2-h2}.
\begin{theorem}\label{thm:second}
Let $n$ be a positive odd integer. Then
\begin{subequations}
\begin{align}
\sum_{k=0}^{n-1}(-1)^k [4k+1]\frac{(q;q^2)_k^5}{(q^2;q^2)_k^5} q^{k^2+k}
&\equiv [n]q^{(1-n)/2}
\sum_{k=0}^{(n-1)/2} \frac{(q;q^2)_k^3}{(q^2;q^2)_k^3} q^{2k} \pmod{[n]\Phi_n(q)^2},
\label{eq:second-0} \\
\intertext{and}
\sum_{k=0}^{(n-1)/2}(-1)^k [4k+1]\frac{(q;q^2)_k^5}{(q^2;q^2)_k^5} q^{k^2+k}
&\equiv [n]q^{(1-n)/2}
\sum_{k=0}^{(n-1)/2} \frac{(q;q^2)_k^3}{(q^2;q^2)_k^3} q^{2k} \pmod{[n]\Phi_n(q)^2}.
\label{eq:second}
\end{align}
\end{subequations}
\end{theorem}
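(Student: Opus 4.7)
The plan is to imitate the ``creative microscoping'' strategy used for Theorem~\ref{thm:first}, based on Watson's $_8\phi_7$ transformation~\eqref{eq:8phi7}. First I would prove the truncated version~\eqref{eq:second} and then reduce~\eqref{eq:second-0} to it by bounding the tail. Introduce an auxiliary parameter $a$ and consider the parametric partial sum
\[
S_n(a):=\sum_{k=0}^{(n-1)/2}(-1)^k[4k+1]\,
\frac{(aq,q/a,q,q,q;q^2)_k}{(aq^2,q^2/a,q^2,q^2,q^2;q^2)_k}\,q^{k^2+k},
\]
which collapses to the left-hand side of~\eqref{eq:second} at $a=1$. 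The virtue of the extra factors $(aq,q/a;q^2)_k$ in the numerator is that the perturbed sum has visible simple zeros at $a=q^{\pm n}$ once one exhibits a closed form for it.

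Next I would recognise $S_n(a)$ as an instance of a terminating very-well-poised $_{10}\phi_9$ with base $q^2$ and apply Watson's transformation~\eqref{eq:8phi7} with parameter identifications $a\mapsto q$, $b\mapsto aq$, $c\mapsto q/a$, $d\mapsto q$, $e\mapsto q^{1+n}$, $f\mapsto q^{1-n}$, so that $f$ forces termination at $k=(n-1)/2$. Watson then rewrites $S_n(a)$ as an explicit prefactor times a terminating balanced $_4\phi_3$, and specialising $a=1$ on the right-hand side produces exactly $[n]q^{(1-n)/2}\sum_{k=0}^{(n-1)/2}(q;q^2)_k^3/(q^2;q^2)_k^3\,q^{2k}$, matching the right-hand side of~\eqref{eq:second}. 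Substituting $a=q^n$ and $a=q^{-n}$ into the resulting $a$-parametric identity shows that $S_n(a)$ minus the transformed side vanishes at these two distinct points, so the difference is divisible by $(a-q^n)(1-aq^n)$ as a rational function of $a$; evaluating at $a=1$ then yields the congruence modulo $(1-q^n)^2$, and in particular modulo $\Phi_n(q)^2$.

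To strengthen the modulus to $[n]\Phi_n(q)^2$ I would either run a parallel microscoping with $a$ replaced by a primitive $d$th root of unity for each divisor $d>1$ of $n$, or equivalently invoke the symmetry argument that pairs the $k$th term of the LHS with the $((n-1)/2-k)$th term modulo each $\Phi_d(q)$: this shows $[n]$ divides both sides of~\eqref{eq:second}, and since $\gcd([n]/\Phi_n(q),\Phi_n(q)^2)=\Phi_n(q)$, the two divisibilities combine to the full modulus $[n]\Phi_n(q)^2$. For~\eqref{eq:second-0}, a direct valuation count shows that every term with $(n+1)/2\leqslant k\leqslant n-1$ already carries a high power of $\Phi_n(q)$ inherited from the factor $(1-q^n)$ sitting inside $(q;q^2)_k^5$, as well as at least one factor of $\Phi_d(q)$ for each $d\mid n$ with $1<d<n$, so the entire upper half vanishes modulo $[n]\Phi_n(q)^2$ and~\eqref{eq:second-0} follows from~\eqref{eq:second}.

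The main obstacle is the middle step: choosing the parametric very-well-poised $_{10}\phi_9$ so that simultaneously (a) it degenerates to the correct LHS at $a=1$, (b) Watson's transformation yields a right-hand side whose $a\to 1$ limit is precisely $[n]q^{(1-n)/2}$ times the $_3\phi_2$-type sum, and (c) the roots $a=q^{\pm n}$ are manifest on both sides. Reconciling the alternating sign $(-1)^k$, the exponent $q^{k^2+k}$, and Watson's balance condition $a^2q^2/(bcdef)=1$ through a single set of substitutions is where the creative microscoping actually earns its name; once the parameter choices are fixed, the remaining divisibility and tail estimates are essentially routine.
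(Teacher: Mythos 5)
Your plan follows the paper's general strategy (creative microscoping plus Watson), but as written it has concrete gaps. First, the parameter identification cannot reproduce your $S_n(a)$: in \eqref{eq:8phi7} with all six parameters finite, the argument of the ${}_8\phi_7$ is $a^2q^2/(bcdef)$, a fixed power of $q$, so each term carries only a factor $q^{ck}$; the alternating factor $(-1)^kq^{k^2+k}$ in $S_n(a)$ can only come from a parameter sent to infinity, i.e.\ from the limiting case \eqref{eq:8phi7-2}. Moreover you list $aq$, $q/a$, $q^{1+n}$, $q^{1-n}$ simultaneously as Watson parameters, but in $S_n(a)$ the entries $q^{1\pm n}$ arise only after substituting $a=q^{\mp n}$ into $(aq,q/a;q^2)_k$, so they cannot be independent parameters at the same time. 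Even after repairing this by applying \eqref{eq:8phi7-2} (base $q^2$, parameters $q,q,q,q^{1-n},q^{1+n}$) to $S_n(q^{\pm n})$, the transformation yields a prefactor times a ${}_3\phi_2$ with argument $q$ (upper parameters $q,q^{1\pm n}$, lower $q^2,q^2$), not the argument-$q^2$ series $[n]q^{(1-n)/2}\sum_{k}(q;q^2)_k^3/(q^2;q^2)_k^3\,q^{2k}$ on the right of \eqref{eq:second}; your own requirement (b) therefore fails under your substitutions. This is exactly why the paper keeps a second parameter $b$: Theorem~\ref{thm:2.2} is proved with $b$ finite, Watson's \eqref{eq:8phi7} then produces a ${}_4\phi_3$ with argument $q^2$, and only afterwards are the limits $b\to\infty$, $a\to1$ taken, which simultaneously creates the factor $q^{k^2+k}$ on the left and the correct target series on the right. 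With $a$ alone you would still owe a nontrivial identity or congruence linking the two different ${}_3\phi_2$ evaluations.

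Second, the modulus bookkeeping falls short. Vanishing of the difference at $a=q^{\pm n}$ gives, after $a\to1$ (and only after checking that the denominators stay coprime to $\Phi_n(q)$), a congruence modulo $\Phi_n(q)^2$ — not modulo $(1-q^n)^2$, since the denominators at $a=1$ do contain factors $\Phi_d(q)$ for proper divisors $d$ of $n$. Combining this with divisibility of both sides by $[n]$ yields only $\operatorname{lcm}([n],\Phi_n(q)^2)=[n]\Phi_n(q)$; your $\gcd$ remark does not upgrade this to $[n]\Phi_n(q)^2$. What is missing is the extra factor $\Phi_n(q)$ in the \emph{parametric} congruence: the paper proves \eqref{eq:qab} modulo $\Phi_n(q)(1-aq^n)(a-q^n)$, the additional $\Phi_n(q)$ coming from pairing the $k$-th and $((n-1)/2-k)$-th terms of the $a$-dependent sum via Lemma~\ref{lem:2.1}, so that $a\to1$ already gives $\Phi_n(q)^3$, which intersected with $[n]$ gives $[n]\Phi_n(q)^2$. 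Finally, your reduction of \eqref{eq:second-0} to \eqref{eq:second} is invalid: the terms with $(n-1)/2<k\leqslant n-1$ are $\equiv0\pmod{\Phi_n(q)^3}$ (there the denominator is coprime to $\Phi_n(q)$), but they need not be divisible by $\Phi_d(q)$ for proper divisors $d\mid n$, because $(q^2;q^2)_k^5$ in the denominator also contains such factors; the paper explicitly notes that \eqref{eq:second-0} and \eqref{eq:second} are genuinely different congruences, and it proves \eqref{eq:second-0} by combining the tail estimate modulo $\Phi_n(q)^3$ with a root-of-unity argument applied directly to the full sum $\sum_{k=0}^{n-1}$ to obtain divisibility by $[n]$.
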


Note that, just like in Theorem~\ref{thm:first}, the two congruences
\eqref{eq:second-0} and \eqref{eq:second} are not equivalent.
The proof of Theorem~\ref{thm:second} is deferred to Section~\ref{sec:proofs}.

Long and Ramakrishna~\cite[Thm.~2]{LR} proved the following supercongruence:
\begin{equation}
\sum_{k=0}^{p-1} (6k+1) \frac{(\frac{1}{3})_k^6}{k!^6}
\equiv
\begin{cases} -p\displaystyle \Gamma_p\bigg(\frac{1}{3}\bigg)^9
\pmod{p^6}, &\text{if $p\equiv 1\pmod 6$,}\\[10pt]
 -\frac{p^4}{27}\displaystyle \Gamma_p\bigg(\frac{1}{3}\bigg)^9\pmod{p^6},
&\text{if $p\equiv 5\pmod 6$.}
\end{cases} \label{eq:d2}
\end{equation}
This result is stronger than Van Hamme's (D.2) supercongruence conjecture which
asserts a congruence modulo $p^4$ for $p\equiv 1\pmod 6$.
Long and Ramakrishna also pointed out that \eqref{eq:d2} does not hold
modulo $p^7$ in general.

We propose the following partial $q$-analogue of
Long and Rama\-krishna's supercongruence \eqref{eq:d2}.
\begin{theorem}\label{thm:third}
Let $n$ be a positive integer coprime with $3$. Then
\begin{equation}
\sum_{k=0}^{n-1} [6k+1]\frac{(q;q^3)_k^6}{(q^3;q^3)_k^6} q^{3k}
\equiv
\begin{cases} 0  \pmod{[n]}, &\text{if $n\equiv 1\pmod 3$,}\\[10pt]
 0 \pmod{[n]\Phi_n(q)}, &\text{if $n\equiv 2\pmod 3$.}
\end{cases}  \label{eq;3rd-noa}
\end{equation}
\end{theorem}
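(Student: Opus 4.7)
The plan is to adapt the creative microscoping method of the first author and Zudilin. Introduce a free parameter $a$ and consider the parametric sum
\[
S_n(a;q) := \sum_{k=0}^{n-1} [6k+1]\, \frac{(aq, q/a;q^3)_k\, (q;q^3)_k^4}{(aq^3, q^3/a;q^3)_k\, (q^3;q^3)_k^4}\, q^{3k},
\]
which at $a=1$ reduces to the left-hand side of \eqref{eq;3rd-noa}. Viewed as a rational function of $a$ with coefficients in $\mathbb{Z}[q,q^{-1}]$, my goal is to show that $S_n(a;q)$ vanishes both at $a=q^{-n}$ and at $a=q^n$; equivalently, that the reduced numerator of $S_n(a;q)$ is divisible by the product $(1-aq^n)(a-q^n)$.

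To establish the two evaluations $S_n(q^{-n};q)=S_n(q^n;q)=0$, I would invoke Watson's ${}_8\phi_7$ transformation \eqref{eq:8phi7} in base $q^3$ (or its limiting form \eqref{eq:8phi7-2}), combined when needed with the cubic transformation \eqref{eq:1-acq4k} or the $q$-Dixon summation \eqref{eq:q-Dixon}. Either substitution produces the shifted factorial $(q^{1-n};q^3)_k$ in the numerator of the summand, after which the transformation converts the sum into a closed form involving an infinite-product prefactor that manifestly vanishes. The case split $n\equiv 1$ versus $n\equiv 2\pmod 3$ enters naturally: only for $n\equiv 1\pmod 3$ does $(q^{1-n};q^3)_k$ have an honest interior zero, at $k=(n-1)/3$, which truncates the sum so that the residual ${}_4\phi_3$ can be summed by standard means; for $n\equiv 2\pmod 3$ no such automatic truncation occurs, and the vanishing must instead be read off from the prefactor produced by Watson's transformation.

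With divisibility of $S_n(a;q)$ by $(1-aq^n)(a-q^n)$ in hand, specialize $a=1$. Since $q^n\equiv 1\pmod{\Phi_n(q)}$, the factor $(1-aq^n)(a-q^n)\big|_{a=1}=(1-q^n)^2$ contributes $\Phi_n(q)^2$. One must then subtract any cancellation coming from zeros of the denominator $(aq^3, q^3/a;q^3)_k$ of the summand at $a=1$; a careful bookkeeping shows that this cancellation absorbs exactly one copy of $\Phi_n(q)$ when $n\equiv 1\pmod 3$ but no copies when $n\equiv 2\pmod 3$, yielding divisibility of the original sum by $\Phi_n(q)$ or $\Phi_n(q)^2$ respectively. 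To upgrade the modulus from $\Phi_n(q)$ to $[n]=\prod_{d\mid n,\,d>1}\Phi_d(q)$, I would rerun the argument with $n$ replaced by each divisor $d>1$ of $n$ (for which $\gcd(d,3)=1$ is automatic), observe that the tail terms $k\geqslant d$ are divisible by $\Phi_d(q)^6$ via the numerator $(q;q^3)_k^6$, and use pairwise coprimality of the $\Phi_d(q)$ to assemble the local divisibilities into the global one. In the case $n\equiv 2\pmod 3$, the extra factor $\Phi_n(q)$ then combines with $[n]$ to give $[n]\Phi_n(q)$.

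I expect the main obstacle to be the case $n\equiv 2\pmod 3$, where the lack of an automatic truncation of $S_n(q^{\pm n};q)$ forces a more delicate application of Watson's transformation — probably combined with a pairing $k\leftrightarrow n-1-k$ of summation indices — to expose the extra factor of $\Phi_n(q)$ promised in the advertised modulus.
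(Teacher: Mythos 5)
There is a genuine gap at the very center of your plan: the parametric sum $S_n(a;q)$ you introduce (which is exactly the sum in the paper's Theorem~\ref{thm:third-new} with $d=3$) does \emph{not} vanish at $a=q^{\pm n}$, so its numerator is not divisible by $(1-aq^n)(a-q^n)$. For example, for $n=4$ the specialization $a=q^{-4}$ truncates at $k=1$ and gives
\begin{equation*}
S_4(q^{-4};q)=1+\frac{q\,(1-q^5)}{(1-q)(1+q+q^2)^3},
\end{equation*}
which is divisible by $\Phi_4(q)$ but is certainly not the zero function (its value at $q=0$ is $1$). This is consistent with what Watson's transformation actually yields here: when $n\equiv 1\pmod 3$ the specialized sum is a terminating very-well-poised ${}_8\phi_7$ in base $q^3$, and \eqref{eq:8phi7} with $f=q^{-n}$ turns it into a \emph{finite}-product prefactor times a ${}_4\phi_3$; the prefactor is divisible by $\Phi_n(q)$ (once, or twice in the $n\equiv 2$ situation) but does not vanish identically. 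When $n\equiv 2\pmod 3$ there is no truncation at all and the nonterminating form of Watson's transformation is not applicable in the way you describe. The paper's proof therefore takes a different route: it keeps $a$ generic and uses the elementary congruences $(q^{1-rn},q^{1+rn};q^3)_k\equiv(q;q^3)_k^2$ and $(q^{3-rn},q^{3+rn};q^3)_k\equiv(q^3;q^3)_k^2\pmod{\Phi_n(q)^2}$, with $r=1$ if $n\equiv1\pmod 3$ and $r=2$ if $n\equiv2\pmod3$, to replace two of the remaining $(q;q^3)_k$ (resp.\ $(q^3;q^3)_k$) factors. The modified sum then terminates, Watson's terminating transformation evaluates it, and the prefactor supplies one copy of $\Phi_n(q)$ when $r=1$ and two copies when $r=2$; this, not any vanishing at $a=q^{\pm n}$, is the source of the case distinction. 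Finally one sets $a=1$.

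Two further steps of your outline would also not go through as stated. First, the ``cancellation bookkeeping'' at $a=1$ is unfounded: for $0\leqslant k\leqslant n-1$ and $\gcd(n,3)=1$, the denominators $(q^3;q^3)_k$ (and $(aq^3,q^3/a;q^3)_k$ at $a=1$) contain no factor $\Phi_n(q)$ at all, so there is nothing to ``absorb,'' and your mechanism would in fact predict $\Phi_n(q)^2$ in both residue classes, which is not how the theorem's dichotomy arises. Second, for the upgrade from $\Phi_n(q)$ to $[n]$ your claim that the terms with $k\geqslant d$ are divisible by $\Phi_d(q)^6$ via $(q;q^3)_k^6$ ignores that the denominators $(q^3;q^3)_k^6$ acquire $\Phi_d(q)$ factors at essentially the same rate once $k\geqslant d$, so individual terms are generally units modulo $\Phi_d(q)$. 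The correct mechanism (used in the proof of Theorem~\ref{thm:first} and invoked again here) is to evaluate the terms at a primitive $d$-th root of unity $\zeta$, use the multiplicativity $c_\zeta(\ell d+k)=c_\zeta(\ell d)c_\zeta(k)$ together with $\sum_{k=0}^{d-1}c_\zeta(k)=0$ (the already-established congruence for $n=d$), and then assemble the divisibilities by $\Phi_d(q)$ over all divisors $d>1$ of $n$.
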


We also partially confirm the $a=1$ case of the second congruence in \cite[Conj.~5.2]{GuoZu}.
\begin{theorem}\label{thm:fourth}
Let $d$ and $n$ be positive integers with $d>2$ and $n\equiv -1\pmod d$. Then
\begin{equation}
\sum_{k=0}^{n-1}[2dk+1]\frac{(q;q^d)_k^4}{(q^d;q^d)_k^4}q^{(d-2)k}
\equiv 0\pmod{\Phi_n(q)^2}.  \label{eq:2dk+1}
\end{equation}
\end{theorem}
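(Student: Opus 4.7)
The plan is to prove Theorem~\ref{thm:fourth} by the \emph{creative microscoping} method introduced by the first author in joint work with Wadim Zudilin.

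I would first introduce an auxiliary parameter $a$ and work with a parametric truncated $q$-series $S_n(a,q)$ that specialises at $a=1$ to the left-hand side of \eqref{eq:2dk+1}. A natural candidate is the truncated very-well-poised ${}_6\phi_5$ in base $q^d$,
\begin{equation*}
S_n(a,q) := \sum_{k=0}^{n-1}\frac{(1-aq^{1+2dk})(aq;q^d)_k^2(q;q^d)_k^2}{(1-aq)(aq^d;q^d)_k^2(q^d;q^d)_k^2}\,q^{(d-2)k},
\end{equation*}
possibly refined by a second microscoping parameter $b$ in order to control the tail at the relevant specialisations of $a$.

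The next step is to prove $S_n(a,q)=0$ at $a=q^{\pm n}$. For this I would apply the nonterminating $q$-Dixon summation \eqref{eq:q-Dixon} (equivalently, the nonterminating very-well-poised ${}_6\phi_5$ summation) to evaluate the full series $\sum_{k=0}^{\infty}$ as a closed-form product. Writing $n=dm-1$ (from the hypothesis $n\equiv -1\pmod d$), the numerator factor $(q^{d-n-1};q^d)_\infty$ contains the zero $(1-q^{(m-1)d+(d-n-1)})=(1-q^{0})$ at $j=m-1$. One must then show that the tail $\sum_{k=n}^{\infty}$ also vanishes at $a=q^{-n}$, which dictates the precise choice of parametrisation; this is the most delicate point of the whole argument. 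The $a=q^n$ case follows either by a symmetric computation or by invoking the $a\mapsto q^{-2}a^{-1}$ invariance of the VWP structure.

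Separately, I would prove $S_n(a,q)\equiv 0\pmod{\Phi_n(q)}$ as a Laurent polynomial in $a$. Modulo $\Phi_n(q)$ one has $q^n\equiv 1$, which forces the factor $(q;q^d)_k$ to acquire a zero starting from $k=(d-1)m$ (since $1+((d-1)m-1)d=(d-1)n\equiv 0$), so modulo $\Phi_n(q)$ the sum truncates at $k=(d-1)m-1$. I would then pair the $k$-th and the $((d-1)m-1-k)$-th summands and use elementary $q$-shifted-factorial manipulations together with $q^n\equiv 1$ to show that each pair cancels.

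Since the polynomials $1-aq^n$, $a-q^n$, and $\Phi_n(q)$ are pairwise coprime in $\mathbb{Z}[a,q]$, combining the previous two facts yields $(1-aq^n)(a-q^n)\Phi_n(q)\mid S_n(a,q)$. Specialising $a=1$ gives $(1-q^n)^2\Phi_n(q)\mid S_n(1,q)$, and since $(1-q^n)^2=(1-q)(1-q^n)[n]$, this implies $[n]\Phi_n(q)\mid S_n(1,q)$, proving the claim.

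The hardest step is expected to be the vanishing at $a=q^{\pm n}$: the bare parametrisation above does not automatically kill the tail of the nonterminating ${}_6\phi_5$, and a careful (possibly two-parameter) refinement is required so that the hypothesis $n\equiv -1\pmod d$ forces both the closed form and the tail to vanish simultaneously. The congruence modulo $\Phi_n(q)$ is conceptually easier but requires careful bookkeeping of the pairing.
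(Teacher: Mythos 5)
Your plan stalls exactly at the step you yourself flag as delicate, and that step cannot be repaired within the microscoping set-up you propose. Since $n\equiv-1\pmod d$ and $d>2$, we have $d\nmid n-1$, so at $a=q^{-n}$ the factor $(q^{1-n};q^d)_k$ never vanishes and the specialized series does not terminate at (or before) $k=n-1$. The nonterminating very-well-poised ${}_6\phi_5$ evaluation (your sum is the case $A=aq$, $B=aq$, $C=D=q$ in base $q^d$, argument $q^{d-2}$) does show that the \emph{complete} series vanishes at $a=q^{-n}$, because the product side $\frac{(aq^{1+d},q^{d-1},q^{d-1},aq^{d-1};q^d)_\infty}{(q^d,aq^d,aq^d,q^{d-2};q^d)_\infty}$ acquires the zero factor $(q^{d-1-n};q^d)_\infty$; but your $S_n(a,q)$ is the truncation at $k=n-1$, whose tail $\sum_{k\geqslant n}$ consists of nonvanishing terms and has no reason to vanish, so $S_n(q^{-n},q)=0$ is not available. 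Worse, at $a=q^{n}$ even the complete series does not vanish (no factor of the product side becomes zero there), and your placement of $a$ has no $a\mapsto1/a$ symmetry that would transfer the other case. A related defect: in the mod $\Phi_n(q)$ pairing, the reflection $k\mapsto K-k$ (with $K=((d-1)n-1)/d$) effectively replaces $a$ by $1/a$, so with the asymmetric factors $(aq;q^d)_k^2/(aq^d;q^d)_k^2$ the termwise cancellation inside $S_n(a,q)$ also breaks; Lemma~\ref{lem:2.1}-type pairings require the symmetric pattern $(aq,q/a;\cdot)_k/(aq^d,q^d/a;\cdot)_k$. This is precisely why the paper does \emph{not} microscope at $a=q^{\pm n}$ here: in Theorem~\ref{thm:third-new} the parameter $a$ stays generic, one uses $(q^{1-(d-1)n},q^{1+(d-1)n};q^d)_k\equiv(q;q^d)_k^2$ and $(q^{d-(d-1)n},q^{d+(d-1)n};q^d)_k\equiv(q^d;q^d)_k^2\pmod{\Phi_n(q)^2}$ (note $(d-1)n\equiv1\pmod d$, so after this replacement the series terminates at $k=((d-1)n-1)/d$), applies the terminating form of Watson's transformation \eqref{eq:8phi7}, and reads off the factor $\Phi_n(q)^2$ from the prefactor; letting $a\to0$ then yields exactly the sum in \eqref{eq:2dk+1} modulo $\Phi_n(q)^2$.

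Your concluding step is also invalid. Even granting that $(1-aq^n)(a-q^n)\Phi_n(q)$ divides the numerator of $S_n(a,q)$, setting $a=1$ only gives that the reduced numerator of $S_n(1,q)$ is divisible by $\Phi_n(q)^3$, not by $(1-q^n)^2\Phi_n(q)$, and certainly not by $[n]\Phi_n(q)$: the denominators $(q^d;q^d)_k^4$ with $k\leqslant n-1$ are coprime to $\Phi_n(q)$ (since $\gcd(n,d)=1$) but \emph{not} to $[n]$ — they contain $\Phi_{d'}(q)$ for proper divisors $d'>1$ of $n$ — so upon reduction all of $(1-q^n)^2$ except its $\Phi_n(q)^2$-part may be cancelled. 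The divisibility by $[n]$ must be proved separately; the paper does this by the root-of-unity block argument from the proof of Theorem~\ref{thm:first} (evaluating the summand at every primitive $d'$-th root of unity with $d'\mid n$, $d'>1$, and invoking the already established congruence for $n=d'$), after which $[n]\Phi_n(q)$ arises as the least common multiple of $[n]$ and $\Phi_n(q)^2$.
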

The proofs of Theorems~\ref{thm:third} and \ref{thm:fourth} are deferred to
Section~\ref{sec:proofs34}.

In Section~\ref{sec:proofs}, we shall prove Theorems~\ref{thm:first}
and \ref{thm:second} using the {\it creative microscoping} method
developed by the first author and Zudilin~\cite{GuoZu}. Roughly speaking,
to prove a $q$-supercongruence modulo $\Phi_n(q)^3$, we prove its
generalization with an extra parameter $a$
so that the corresponding congruence holds modulo $\Phi_n(q)(1-aq^n)(a-q^n)$.
Since the polynomials $\Phi_n(q)$, $1-aq^n$, and $a-q^n$ are relatively prime,
this generalized $q$-congruence can be established modulo these three
polynomials individually.
Finally, by taking the limit $a\to 1$, we obtain the original
$q$-supercongruence of interest.
We learned that this creative microscoping method has already caught
the interests of Guillera~\cite{Guillera3} and Straub~\cite{Straub}.

Further, we introduce a new idea for proving some congruences
modulo $\Phi_n(q)$. In many instances in this paper,
the congruences $\sum_{k=0}^{(n-1)/2}a_{n,k}\equiv 0\pmod{\Phi_n(q)}$
are proved by simply showing $a_k+a_{(n-1)/2-k}\equiv 0\pmod{\Phi_n(q)}$
(instead of, say, evaluating certain infinite series at roots of unity
which was illustrated in \cite{GuoZu}).

The proofs of Theorems~\ref{thm:third} and
\ref{thm:fourth} in Section~\ref{sec:proofs34} again are done by showing
a more general identity but otherwise are accomplished in a slightly
different way. All the proofs of Theorems~\ref{thm:first}--\ref{thm:fourth}
in Sections~\ref{sec:proofs} and \ref{sec:proofs34},
and of the further results from Section~\ref{sec:more},
are based on Watson's $_8\phi_7$ transformation formula. We also confirm
a three-parametric $q$-congruence conjecture in Section~\ref{sec:three-para}
based on a quadratic transformation formula of Rahman. Further,
in Section~\ref{sec:cubic} we deduce some $q$-congruences from a
cubic transformation formula of Gasper and Rahman. Similarly, in
Section~\ref{sec:quartic} we deduce some $q$-congruences from a
quartic transformation formula of Gasper and Rahman.
The $q$-supercongruences in Section~\ref{sec:qcong12phi11} are proved
similarly but are derived using a new ${}_{12}\phi_{11}$ transformation formula.
Since the latter formula is of independent interest, its derivation is
given in the Appendix. It is also shown there how a special case
of the ${}_{12}\phi_{11}$ transformation formula can be utilized to
obtain a generalization of Rogers' linearization
formula for the continuous $q$-ultraspherical polynomials.
In Section~\ref{sec:q-dixon} some $q$-supercongruences are deduced
from the $q$-Dixon summation. In Section~\ref{sec:irs} we deduce
$q$-super congruences --most of them only conjectural-- from a double series
transformation of Ismail, Rahman and Suslov.
Finally, in Section~\ref{sec:concl},
some concluding remarks are given and some related conjectures for
further study are proposed. For example, we conjecture that
the congruence \eqref{eq;3rd-noa} still holds modulo $[n]\Phi_n(q)^3$ for
$n\equiv 2\pmod{3}$.

%%%%%%%%%%%%%%%%%%%%%%%%%%%%%%%%%%%%%%%%%%%%%%%%%%%%%%%%%%%%%%%%%%%%%%%%%%%%%%%%%%%%%%%%%%%%%%%%%%%%%%%%%%%%%%%%%%%%%%%%%%%%%%%%%%%%%%%%%%%%%%%%%%%%%%%%%%%%%%%%%%%%%%%%%%%%%%%%%%%%%%%%%%%%%%%%%%%%%%%%%%%%%%%%%%%%%%%%%%
\section{Proofs of Theorems \ref{thm:first} and
\ref{thm:second} }\label{sec:proofs}
We first give the following lemma.
\begin{lemma}\label{lem:2.1}
Let $n$ be a positive odd integer.
Then, for $0\leqslant k\leqslant (n-1)/2$, we have
\begin{equation*}
\frac{(aq;q^2)_{(n-1)/2-k}}{(q^2/a;q^2)_{(n-1)/2-k}}
\equiv (-a)^{(n-1)/2-2k}\frac{(aq;q^2)_k}{(q^2/a;q^2)_k} q^{(n-1)^2/4+k}
\pmod{\Phi_n(q)}.
\end{equation*}
\end{lemma}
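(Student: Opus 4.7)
Let $M:=(n-1)/2$, so that $n=2M+1$ and the claim reads
\begin{equation*}
\frac{(aq;q^2)_{M-k}}{(q^2/a;q^2)_{M-k}}\equiv(-a)^{M-2k}\frac{(aq;q^2)_k}{(q^2/a;q^2)_k}\,q^{M^2+k}\pmod{\Phi_n(q)}.
\end{equation*}
The plan is to first split off the common factors $(aq;q^2)_k$ and $(q^2/a;q^2)_k$ via the identity $(aq;q^2)_{M-k}=(aq;q^2)_k\,(aq^{2k+1};q^2)_{M-2k}$ and its analogue for $(q^2/a;q^2)_{M-k}$, thereby reducing the congruence to
\begin{equation*}
\frac{(aq^{2k+1};q^2)_{M-2k}}{(q^{2k+2}/a;q^2)_{M-2k}}\equiv(-a)^{M-2k}\,q^{M^2+k}\pmod{\Phi_n(q)}.
\end{equation*}
This reduction is immediate when $2k\leqslant M$; in the opposite case $2k>M$, swapping the roles of numerator and denominator yields an analogous quotient with index $2k-M$, which is handled by the identical argument.

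For the reduced congruence, I would rewrite the $q$-shifted factorial in the denominator by pulling out a factor $-q^{2k+2+2j}/a$ from each of its $M-2k$ terms, obtaining
\begin{equation*}
(q^{2k+2}/a;q^2)_{M-2k}=\frac{(-1)^{M-2k}}{a^{M-2k}}\,q^{(M-2k)(M+1)}\prod_{j=0}^{M-2k-1}\bigl(1-aq^{-(2k+2+2j)}\bigr),
\end{equation*}
where the exponent $(M-2k)(M+1)$ arises from the sum $\sum_{j=0}^{M-2k-1}(2k+2+2j)$. Using $q^n\equiv 1\pmod{\Phi_n(q)}$, each factor $1-aq^{-(2k+2+2j)}$ may be replaced by $1-aq^{n-(2k+2+2j)}$, and as $j$ ranges from $0$ to $M-2k-1$ the resulting exponents run through precisely the odd integers $2k+1,2k+3,\ldots,n-2k-2$, so the reflected product coincides with $(aq^{2k+1};q^2)_{M-2k}$ and cancels the numerator of the ratio.

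This yields
\begin{equation*}
\frac{(aq^{2k+1};q^2)_{M-2k}}{(q^{2k+2}/a;q^2)_{M-2k}}\equiv(-a)^{M-2k}\,q^{-(M-2k)(M+1)}\pmod{\Phi_n(q)},
\end{equation*}
so it remains only to check $-(M-2k)(M+1)\equiv M^2+k\pmod n$; the difference $(M^2+k)-\bigl(-(M-2k)(M+1)\bigr)$ simplifies to $(M-k)(2M+1)=(M-k)n$, giving the congruence. The main obstacle is the bookkeeping of signs, powers of $a$, and exponents of $q$ through the reflection $q^{-j}\mapsto q^{n-j}$; once this is organized carefully, the identity drops out.
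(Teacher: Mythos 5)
Your argument is correct and rests on exactly the same mechanism as the paper's proof: use $q^n\equiv 1\pmod{\Phi_n(q)}$ to shift exponents and reflect each factor via $1-q^m/a=-(q^m/a)\,(1-aq^{-m})$, keeping careful track of the powers of $-a$ and $q$, with the final exponent discrepancy being a multiple of $n$. The only difference is bookkeeping: the paper first evaluates the full ratio with index $(n-1)/2$ and then corrects by the top $k$ factors, which avoids any case distinction, whereas your splitting off of the common prefix forces the extra case $2k>(n-1)/2$ --- which you dispatch correctly, since the identical reflection computation applies to the block of length $2k-M$ and the exponent check again comes out as a multiple of $n$.
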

\begin{proof}Since $q^n\equiv 1\pmod{\Phi_n(q)}$, we have
\begin{align}\label{aqcong}
\frac{(aq;q^2)_{(n-1)/2} }{(q^2/a;q^2)_{(n-1)/2}}
&=\frac{(1-aq)(1-aq^3)\cdots (1-aq^{n-2})}
{(1-q^2/a)(1-q^4/a)\cdots (1-q^{n-1}/a)} \notag\\[5pt]
&\equiv \frac{(1-aq)(1-aq^3)\cdots (1-aq^{n-2})}
{(1-q^{2-n}/a)(1-q^{4-n}/a)\cdots (1-q^{-1}/a)}\notag\\[5pt]
&=(-a)^{(n-1)/2}q^{(n-1)^2/4} \pmod{\Phi_n(q)}.
\end{align}
Further, modulo $\Phi_n(q)$, we have
\begin{align*}
\frac{(aq;q^2)_{(n-1)/2-k}}{(q^2/a;q^2)_{(n-1)/2-k}}
&=\frac{(aq;q^2)_{(n-1)/2} }{(q^2/a;q^2)_{(n-1)/2}}
\frac{(1-q^{n+1-2k}/a)(1-q^{n+3-2k}/a)\cdots
(1-q^{n-1}/a)}{(1-aq^{n-2k})(1-aq^{n+2-2k})\cdots (1-aq^{n-2})}
\\[5pt]
&\equiv \frac{(aq;q^2)_{(n-1)/2} }{(q^2/a;q^2)_{(n-1)/2}}
\frac{(1-q^{1-2k}/a)(1-q^{3-2k}/a)\cdots
(1-q^{-1}/a)}{(1-aq^{-2k})(1-aq^{2-2k})\cdots (1-aq^{-2})},
\end{align*}
which in combination with \eqref{aqcong} establishes the assertion.
\end{proof}

We now use the above lemma to prove the following result which was originally
conjectured by the first author and Zudilin~\cite[Conj.~5.6]{GuoZu}.
\begin{theorem}
Let $n\equiv3\pmod 4$ be a positive integer. Then
\begin{equation*}
\sum_{k=0}^{(n-1)/2}\frac{(aq, q/a;q^2)_k  (q^2;q^4)_k}
{(aq^2, q^2/a;q^2)_k (q^4;q^4)_k} q^{2k}\equiv 0\pmod{\Phi_n(q)}.
\end{equation*}
\end{theorem}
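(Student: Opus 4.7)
The plan is to use the pairing idea introduced in the paragraph immediately preceding the theorem: letting $a_k$ denote the $k$-th summand of the sum on the left-hand side, I would show that
\[
a_k + a_{(n-1)/2-k} \equiv 0 \pmod{\Phi_n(q)}
\]
for every $k$. Under the hypothesis $n\equiv 3\pmod 4$, the quantity $(n-1)/2$ is odd, so the involution $k\mapsto(n-1)/2-k$ on the index set $\{0,1,\dots,(n-1)/2\}$ is fixed-point free; pairwise cancellation then forces the full sum to vanish modulo $\Phi_n(q)$.

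To rewrite $a_{(n-1)/2-k}$, I would first apply Lemma~\ref{lem:2.1} twice: once in its stated form on the ratio $(aq;q^2)_{(n-1)/2-k}/(q^2/a;q^2)_{(n-1)/2-k}$, and once with $a$ replaced by $1/a$ on the ratio $(q/a;q^2)_{(n-1)/2-k}/(aq^2;q^2)_{(n-1)/2-k}$. The prefactors $(-a)^{(n-1)/2-2k}$ and $(-1/a)^{(n-1)/2-2k}$ multiply to $(-1)^{n-1-4k}=1$ since $n$ is odd, and the residual powers of $q$ combine cleanly, leaving
\[
\frac{(aq,q/a;q^2)_{(n-1)/2-k}}{(aq^2,q^2/a;q^2)_{(n-1)/2-k}}
\equiv \frac{(aq,q/a;q^2)_k}{(aq^2,q^2/a;q^2)_k}\,q^{(n-1)^2/2+2k} \pmod{\Phi_n(q)}.
\]

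The factor $R_k:=(q^2;q^4)_k/(q^4;q^4)_k$ is independent of $a$, so Lemma~\ref{lem:2.1} does not apply directly. I would instead write $(q^2;q^4)_{(n-1)/2-k}=(q^2;q^4)_{(n-1)/2}/(q^{2n-4k};q^4)_k$, and similarly for $(q^4;q^4)$, reduce $q^n\equiv 1\pmod{\Phi_n(q)}$, and then pull out the standard negative-index signs via $1-q^{-4l}=-q^{-4l}(1-q^{4l})$. This yields $R_{(n-1)/2-k}\equiv R_{(n-1)/2}\,R_k\,q^{2k}\pmod{\Phi_n(q)}$. Combining with $q^{2((n-1)/2-k)}\equiv q^{-1-2k}$ and the identity from the previous step gives
\[
a_{(n-1)/2-k}\equiv c\cdot a_k \pmod{\Phi_n(q)}, \qquad c:=R_{(n-1)/2}\,q^{(n^2-1)/2},
\]
with $c$ independent of both $k$ and $a$.

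The crux is then to compute $c$. Specializing the relation above at $k=(n-1)/2$ forces $R_{(n-1)/2}^2\equiv q^{1-n}\equiv q\pmod{\Phi_n(q)}$, so $R_{(n-1)/2}\equiv \pm q^{(n+1)/2}$. To pin down the sign I would use the factorization $(q^2;q^4)_{(n-1)/2}(q^4;q^4)_{(n-1)/2}=(q^2;q^2)_{n-1}$ together with the classical congruences $(q^2;q^2)_{n-1}\equiv n\pmod{\Phi_n(q)}$ (proved by the permutation $j\mapsto 2j$ mod $n$, followed by $\prod_{k=1}^{n-1}(1-\zeta^k)=n$), and the companion $(q^4;q^4)_{(n-1)/2}^2\equiv (-1)^{(n-1)/2}n\,q^{(n^2-1)/2}\pmod{\Phi_n(q)}$, obtained by the reflection $j\mapsto n-j$ and the geometric sum $\sum_{l=1}^{(n-1)/2}4l=(n^2-1)/2$. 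Together these give $R_{(n-1)/2}\equiv(-1)^{(n-1)/2}q^{-(n^2-1)/2}$ and hence $c\equiv (-1)^{(n-1)/2}\pmod{\Phi_n(q)}$, which equals $-1$ precisely when $n\equiv 3\pmod 4$. The main obstacle is exactly this sign determination: the $a\leftrightarrow 1/a$ symmetry disposes of everything involving $a$, and the hypothesis on $n\bmod 4$ enters the argument solely through the sign of the pure-$q$ constant $R_{(n-1)/2}$.
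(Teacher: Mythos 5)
Your proposal is correct, and its skeleton is exactly the paper's: pair the terms $k\leftrightarrow (n-1)/2-k$ (fixed-point free since $(n-1)/2$ is odd for $n\equiv 3\pmod 4$) and use Lemma~\ref{lem:2.1} with $a$ and with $a\mapsto 1/a$ to handle the $a$-dependent factors. Where you diverge is in the treatment of the $a$-free factor $R_k=(q^2;q^4)_k/(q^4;q^4)_k$. The paper simply applies Lemma~\ref{lem:2.1} once more with $q\mapsto q^2$ and $a=1$, obtaining $R_{(n-1)/2-k}\equiv(-1)^{(n-1)/2-2k}R_k\,q^{(n-1)^2/2+2k}\pmod{\Phi_n(q^2)}$, and then invokes $\Phi_n(q^2)=\Phi_n(q)\Phi_n(-q)$ for odd $n$; this delivers the crucial sign $(-1)^{(n-1)/2}=-1$ in one line, with no constant left to evaluate. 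You instead derive the reflection relation $R_{(n-1)/2-k}\equiv R_{(n-1)/2}R_kq^{2k}$ by hand and then pin down $R_{(n-1)/2}$ through the cyclotomic evaluations $(q^2;q^2)_{n-1}\equiv n$ and $(q^4;q^4)_{(n-1)/2}^2\equiv(-1)^{(n-1)/2}nq^{(n^2-1)/2}\pmod{\Phi_n(q)}$; these evaluations are correct (the sum $\sum_{l=1}^{(n-1)/2}4l=(n^2-1)/2$ is arithmetic, not geometric, by the way), and the division is legitimate because $(q^4;q^4)_{(n-1)/2}$ is coprime to $\Phi_n(q)$ — a point worth stating. What your route buys is independence from the factorization $\Phi_n(q^2)=\Phi_n(q)\Phi_n(-q)$, at the cost of the extra root-of-unity computation; what it loses is brevity. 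Also note that your intermediate step ``$R_{(n-1)/2}^2\equiv q$ hence $R_{(n-1)/2}\equiv\pm q^{(n+1)/2}$'' tacitly uses the irreducibility of $\Phi_n(q)$ over $\mathbb{Q}$ to get a single coherent sign; it is harmless but entirely superfluous once you compute $R_{(n-1)/2}$ directly.
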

\begin{proof}By Lemma~\ref{lem:2.1}, we have
\begin{align*}
\frac{(aq, q/a;q^2)_{(n-1)/2-k}  }{(aq^2, q^2/a;q^2)_{(n-1)/2-k}}
&\equiv  \frac{(aq, q/a;q^2)_{k}  }{(aq^2, q^2/a;q^2)_{k}} q^{(n-1)^2/2+2k}
\pmod{\Phi_n(q)}, \\
\intertext{and}
\frac{(q^2;q^4)_{(n-1)/2-k}  }{(q^4;q^4)_{(n-1)/2-k}}
&\equiv (-1)^{(n-1)/2-2k}\frac{(q^2;q^4)_{k}  }{(q^4;q^4)_{k}} q^{(n-1)^2/2+2k}
\pmod{\Phi_n(q^2)}.
\end{align*}
Noticing that $q^n\equiv 1\pmod{\Phi_n(q)}$ and, for odd $n$,
$\Phi_n(q^2)=\Phi_n(q)\Phi_n(-q)$, we get
\begin{equation*}
\frac{(aq, q/a;q^2)_{(n-1)/2-k}  (q^2;q^4)_{(n-1)/2-k} q^{n-1-2k}}
{(aq^2, q^2/a;q^2)_{(n-1)/2-k} (q^4;q^4)_{(n-1)/2-k}}
\equiv -\frac{(aq, q/a;q^2)_k  (q^2;q^4)_k q^{2k}}
{(aq^2, q^2/a;q^2)_k (q^4;q^4)_k}   \pmod{\Phi_n(q)}
\end{equation*}
for any positive integer $n$ with $n\equiv 3\pmod 4$ and
$0\leqslant k\leqslant (n-1)/2$. This completes the proof of the theorem.
\end{proof}

Similarly, we can prove that the third $q$-congruence in \cite[Conj. 5.2]{GuoZu}
is true modulo $\Phi_n(q)$ and is therefore further true modulo $[n]$
(again as in the proof of Theorem~\ref{thm:first}).

We shall establish the following two-parameter generalization
of Theorem~\ref{thm:first}.
\begin{theorem}\label{thm:2.2}
Let $n$ be a positive odd integer. Then, modulo $\Phi_n(q)(1-aq^n)(a-q^n)$,
\begin{align}
&\sum_{k=0}^{(n-1)/2} [4k+1]\frac{(aq, q/a, bq;q^2)_k (q;q^2)_k^3}
{(aq^2, q^2/a, q^2/b;q^2)_k (q^2;q^2)_k^3}\left(\frac{q}{b}\right)^k \notag\\[5pt]
&\quad\equiv [n]q^{(1-n)/2}
\sum_{k=0}^{(n-1)/2}\frac{(aq, q/a, q/b, q;q^2)_k}
{(q^2/b;q^2)_k (q^2;q^2)_k^3}q^{2k}. \label{eq:qab}
\end{align}
\end{theorem}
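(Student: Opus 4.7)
The plan is to apply the \emph{creative microscoping} strategy mentioned in the introduction. Since $\Phi_n(q)$, $1-aq^n$, and $a-q^n$ are pairwise coprime as polynomials in $\mathbb{Z}[a,q]$, by the Chinese remainder theorem it suffices to verify \eqref{eq:qab} modulo each factor separately. Moreover, both sides of \eqref{eq:qab} are invariant under the substitution $a\mapsto 1/a$ (which simply swaps the Pochhammer pairs $(aq;q^2)_k$/$(q/a;q^2)_k$ and $(aq^2;q^2)_k$/$(q^2/a;q^2)_k$), so the congruence modulo $a-q^n$ will follow from the one modulo $1-aq^n$, leaving only two cases to handle.

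First, modulo $\Phi_n(q)$: the right-hand side of~\eqref{eq:qab} vanishes, since $\Phi_n(q)\mid[n]$ for $n>1$. Denote the $k$-th summand of the left-hand side by $T_k$ and consider the involution $k\leftrightarrow(n-1)/2-k$. Six applications of Lemma~\ref{lem:2.1} (with its parameter $a$ specialized in turn to $a$, $1/a$, $b$, and three times to $1$), together with the elementary relation $[2n-4k-1]\equiv-q^{-4k-1}[4k+1]\pmod{\Phi_n(q)}$, yield after straightforward cancellations
\[
T_{(n-1)/2-k}/T_k\equiv-q^{(n-1)(3n-2)/2-1}\pmod{\Phi_n(q)}.
\]
Since $(n-1)(3n-2)/2\equiv1\pmod n$ for every odd $n$, this ratio is $\equiv-1$ and paired terms cancel. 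When $n\equiv1\pmod 4$ the involution has the fixed point $k=(n-1)/4$, but in that case $[4k+1]=[n]\equiv0\pmod{\Phi_n(q)}$, so the corresponding term vanishes on its own.

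Second, modulo $1-aq^n$: substitute $a=q^{-n}$. The factor $(aq;q^2)_k=(q^{1-n};q^2)_k$ truncates the left-hand sum at $k=(n-1)/2$, and the resulting sum is a terminating very-well-poised ${}_8\phi_7$ in base $q^2$, with ``base'' parameter $A=q$, upper parameters $\{q^{1-n},q^{n+1},bq,q,q\}$, and argument $q/b$. Applying Watson's transformation~\eqref{eq:8phi7} (with $q^{1-n}$ placed in one of the $(d,e,f)$-slots) converts the ${}_8\phi_7$ into a balanced terminating ${}_4\phi_3$; the accompanying prefactor of infinite products simplifies, by standard $q$-Pochhammer manipulations, to $[n]\,(q;q^2)_{(n-1)/2}^2/(q^2;q^2)_{(n-1)/2}^2$. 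A single application of Sears' transformation for balanced terminating ${}_4\phi_3$ series (with a suitable choice of parameter roles) identifies this ${}_4\phi_3$ with the ${}_4\phi_3$ on the right-hand side of \eqref{eq:qab} at $a=q^{-n}$, up to a factor of $(q^2;q^2)_{(n-1)/2}^2/(q;q^2)_{(n-1)/2}^2\cdot q^{(1-n)/2}$; this factor cancels the Pochhammer ratio from Watson, leaving exactly $[n]q^{(1-n)/2}$.

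The principal difficulty is computational: in the second step one must pick exactly the right assignment of parameters in both Watson's and Sears' transformations so that the Pochhammer prefactors collapse as claimed. Once the three ingredients---vanishing modulo $\Phi_n(q)$, the Watson--Sears identification modulo $1-aq^n$, and the $a\mapsto 1/a$ symmetry for $a-q^n$---are in hand, the Chinese remainder theorem assembles them into~\eqref{eq:qab}.
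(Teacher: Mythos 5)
Your proposal is correct and follows essentially the same route as the paper: reduce modulo the three pairwise coprime factors, handle $\Phi_n(q)$ by pairing the $k$-th and $((n-1)/2-k)$-th terms via Lemma~\ref{lem:2.1} (your explicit ratio $-q^{(n-1)(3n-2)/2-1}\equiv-1$ and the fixed-point case $n\equiv1\pmod4$ with $[4k+1]=[n]$ agree with the paper), and handle $a=q^{\mp n}$ by recognizing the left side as a terminating very-well-poised ${}_8\phi_7$ in base $q^2$ and applying Watson's transformation \eqref{eq:8phi7}. The only difference is in that last step: with the assignment you describe (which does lead to the prefactor $[n](q;q^2)_{(n-1)/2}^2/(q^2;q^2)_{(n-1)/2}^2$) one indeed needs a subsequent Sears transformation, and your claimed compensating factor checks out; the paper instead puts $bq$ and $q$ in the $b,c$-slots and $q^{1-n},q^{1+n},q$ in the $d,e,f$-slots, so that Watson alone produces exactly the ${}_4\phi_3$ on the right of \eqref{eq:qab}, with prefactor directly simplifying to $[n]q^{(1-n)/2}$ (after a $0/0$ limit), making the Sears step and its bookkeeping unnecessary.
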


\begin{proof}
For $a=q^{-n}$ or $a=q^n$ , the left-hand side of \eqref{eq:qab} is equal to
\begin{align}
&\sum_{k=0}^{(n-1)/2} [4k+1]\frac{(q^{1-n}, q^{1+n}, bq;q^2)_k (q;q^2)_k^3}
{(q^{2-n}, q^{2+n}, q^2/b;q^2)_k (q^2;q^2)_k^3}
\left(\frac{q}{b}\right)^k \notag\\[5pt]
&\quad={}_{8}\phi_{7}\!
\left[\begin{array}{cccccccc}
q,& q^{\frac{5}{2}}, & -q^{\frac{5}{2}}, & bq,  & q,  & q^{1-n}, & q^{1+n}, & q  \\
  & q^{\frac{1}{2}}, & -q^{\frac{1}{2}}, & q^2/b, & q^2, & q^{2+n}, & q^{2-n},  & q^2
\end{array};q^2,\, \frac{q}{b}
\right].  \label{eq:a-q-n}
\end{align}
By Watson's $_8\phi_7$ transformation formula \eqref{eq:8phi7},
we can rewrite the right-hand side of \eqref{eq:a-q-n} as
\begin{align}
&\lim_{z\to 1}\frac{(q^3, q, q^{1+n}, zq^{1-n};q^2)_\infty}
{(q^{2+n}, q^{2-n}, q^2, z;q^2)_\infty}
\,{}_{4}\phi_{3}\!\left[\begin{array}{c}
q/b,\,q^{1-n},\, q^{1+n},\, q \\
q^2,\ q^2/b,\ q^2
\end{array};q^2,\, q^2
\right] \notag \\[5pt]
&\quad =\frac{(q;q^2)_{(n+1)/2} (q^{1-n};q^2)_{(n-1)/2}}
{(q^{2-n};q^2)_{(n+1)/2} (q^2;q^2)_{(n-1)/2}}
\sum_{k=0}^{(n-1)/2}\frac{(q^{1-n}, q^{1+n}, q/b, q;q^2)_k}
{(q^2/b;q^2)_k (q^2;q^2)_k^3}q^{2k}.  \label{eq:fraction}
\end{align}
It is easy to see that the fraction before the sum on the right-hand
side of \eqref{eq:fraction} is equal to $[n]q^{(1-n)/2}$. This proves that
the congruence \eqref{eq:qab} holds modulo $1-aq^n$ or $a-q^n$.

Moreover, by Lemma~\ref{lem:2.1}, it is easy to see that, modulo $\Phi_n(q)$,
the $k$-th and $((n-1)/2-k)$-th terms on the left-hand side of \eqref{eq:qab}
cancel each other, i.e.,
\begin{align*}
&\frac{[2n-4k-1](aq, q/a, bq;q^2)_{(n-1)/2-k} (q;q^2)_{(n-1)/2-k}^3}
{(aq^2, q^2/a, q^2/b;q^2)_{(n-1)/2-k} (q^2;q^2)_{(n-1)/2-k}^3}
\left(\frac{q}{b}\right)^{(n-1)/2-k} \notag\\[5pt]
&\quad\equiv -[4k+1]\frac{(aq, q/a, bq;q^2)_k (q;q^2)_k^3}
{(aq^2, q^2/a, q^2/b;q^2)_k (q^2;q^2)_k^3}\left(\frac{q}{b}\right)^k
\pmod{\Phi_n(q)}.
\end{align*}
When the left-hand side of \eqref{eq:qab} has an odd number of factors,
the central term will remain. This happens when $n=4l+1$ for some positive
integer $l$, and in this case the central term has index $k=l$ and one
directly sees that $[4k+1]=[n]$ is a factor of the summand. In total, this
proves that the left-hand side of \eqref{eq:qab} is congruent to $0$ modulo
$\Phi_n(q)$, and therefore the congruence \eqref{eq:qab} also holds modulo
$\Phi_n(q)$. Since $\Phi_n(q)$, $1-aq^n$ and $a-q^n$ are relatively prime
polynomials, the proof of \eqref{eq:qab} is complete.
\end{proof}

\begin{proof}[Proof of Theorem~\ref{thm:first}]
The limits of the denominators on both sides of \eqref{eq:qab} as $a\to 1$
are relatively prime to $\Phi_n(q)$, since $0\leqslant k\leqslant (n-1)/2$.
On the other hand, the limit of $(1-aq^n)(a-q^n)$ as $a\to1$ has the factor
$\Phi_n(q)^2$. Thus, the limiting case $a,b\to1$ of \eqref{eq:qab} gives the
following congruence
\begin{equation}
\sum_{k=0}^{(n-1)/2}[4k+1]\frac{(q;q^2)_k^6}{(q^2;q^2)_k^6} q^k
\equiv [n]q^{(1-n)/2} \sum_{k=0}^{(n-1)/2} \frac{(q;q^2)_k^4}
{(q^2;q^2)_k^4} q^{2k} \pmod{\Phi_n(q)^3},  \label{6-1}
\end{equation}
which also implies that
\begin{equation}
\sum_{k=0}^{n-1}[4k+1]\frac{(q;q^2)_k^6}{(q^2;q^2)_k^6} q^k
\equiv [n]q^{(1-n)/2} \sum_{k=0}^{(n-1)/2}
\frac{(q;q^2)_k^4}{(q^2;q^2)_k^4} q^{2k} \pmod{\Phi_n(q)^3},  \label{6-2}
\end{equation}
since $(q;q^2)_k^6/(q^2;q^2)_k^6\equiv 0\pmod{\Phi_n(q)^3}$ for $k$
in the range $(n-1)/2<k\leqslant n-1$.
It remains to show that the above two congruences are still
true modulo $[n]$, or equivalently,
\begin{subequations}\label{eq:4k+1,both}
\begin{align}
\sum_{k=0}^{(n-1)/2}[4k+1]\frac{(q;q^2)_k^6}{(q^2;q^2)_k^6} q^k
&\equiv 0\pmod{[n]}, \label{eq:4k+1}\\%[5pt]
\intertext{and}
\sum_{k=0}^{n-1}[4k+1]\frac{(q;q^2)_k^6}{(q^2;q^2)_k^6} q^k
&\equiv 0\pmod{[n]}. \label{eq:4k+1,b}
\end{align}
\end{subequations}

For $n>1$, let $\zeta\ne1$ be an $n$-th root of unity, not
necessarily primitive. That is, $\zeta$ is a primitive root of unity
of odd degree $d\mid n$.  Let $c_q(k)$ denote the $k$-th term on the
left-hand side of the congruences in \eqref{eq:4k+1,both}, i.e.,
\begin{equation*}
c_q(k)=[4k+1]\frac{(q;q^2)_k^6}{(q^2;q^2)_k^6} q^k=
[4k+1]%{2k\brack k}^6
\begin{bmatrix}2k\\k\end{bmatrix}^6
\frac{q^{k}}{(-q;q)_k^{12}}.
\end{equation*}
The congruences \eqref{6-1} and \eqref{6-2} with $n=d$ imply that
\begin{equation*}
\sum_{k=0}^{(d-1)/2}c_\zeta(k)=\sum_{k=0}^{d-1}c_\zeta(k)=0.
\end{equation*}
Observe that
\begin{equation*}
\frac{c_\zeta(\ell d+k)}{c_\zeta(\ell d)}
=\lim_{q\to\zeta}\frac{c_q(\ell d+k)}{c_q(\ell d)}
=c_\zeta(k).
\end{equation*}
We have
\begin{equation*}
\sum_{k=0}^{n-1}c_\zeta(k)=\sum_{\ell=0}^{n/d-1}\sum_{k=0}^{d-1}c_\zeta(\ell d+k)
=\sum_{\ell=0}^{n/d-1}c_\zeta(\ell d) \sum_{k=0}^{d-1}c_\zeta(k)=0,
\end{equation*}
and
\begin{equation*}
\sum_{k=0}^{(n-1)/2}c_\zeta(k)
=\sum_{\ell=0}^{(n/d-3)/2} c_\zeta(\ell d)
\sum_{k=0}^{d-1}c_\zeta(k)+\sum_{k=0}^{(d-1)/2}c_\zeta((n-d)/2+k)=0,
\end{equation*}
which means that the sums $\sum_{k=0}^{n-1}c_q(k)$ and $\sum_{k=0}^{(n-1)/2}c_q(k)$
are both divisible by the cyclotomic polynomial $\Phi_d(q)$.
Since this is true for any divisor $d>1$ of $n$, we conclude that they
are divisible by
\begin{equation*}
\prod_{d\mid n,\, d>1}\Phi_d(q)=[n],
\end{equation*}
thus establishing \eqref{eq:4k+1,both}.
\end{proof}

\begin{proof}[Proof of Theorem~\ref{thm:second}]
Similarly as in the proof of Theorem~\ref{thm:first},
letting  $a\to1$ and $b\to \infty$ in \eqref{eq:qab}, we obtain
\begin{equation*}
\sum_{k=0}^{(n-1)/2}(-1)^k [4k+1]\frac{(q;q^2)_k^5}{(q^2;q^2)_k^5} q^{k^2+k}
\equiv [n]q^{(1-n)/2}
\sum_{k=0}^{(n-1)/2} \frac{(q;q^2)_k^3}{(q^2;q^2)_k^3} q^{2k} \pmod{\Phi_n(q)^3},
%\label{eq:second-2}
\end{equation*}
which also implies that
\begin{equation*}
\sum_{k=0}^{n-1}(-1)^k [4k+1]\frac{(q;q^2)_k^5}{(q^2;q^2)_k^5} q^{k^2+k}
\equiv [n]q^{(1-n)/2}
\sum_{k=0}^{(n-1)/2} \frac{(q;q^2)_k^3}{(q^2;q^2)_k^3} q^{2k} \pmod{\Phi_n(q)^3}.
\end{equation*}
Along the same lines as in the proof of Theorem~\ref{thm:first},
we can show that
\begin{equation*}
\sum_{k=0}^{n-1}(-1)^k [4k+1]\frac{(q;q^2)_k^5}{(q^2;q^2)_k^5} q^{k^2+k}
\equiv
\sum_{k=0}^{(n-1)/2}(-1)^k [4k+1]\frac{(q;q^2)_k^5}{(q^2;q^2)_k^5}
q^{k^2+k}\equiv 0\pmod{[n]}.
\end{equation*}
Combining the above congruences, we are led to \eqref{eq:second-0} and
\eqref{eq:second}.
\end{proof}

%%%%%%%%%%%%%%%%%%%%%%%%%%%%%%%%%%%%%%%%%%%%%%%%%%%%%%%%%%%%%%%%%%%%%%%%%%%%%%%%%%%%%%%%%%%%%%%%%%%%%%%%%%%%%%%%%%%%%%%%%%%%%%%%%%%%%%%%%%%%%%%%%%%%%%%%%%%%%%%%%%%%%%%%%%%%%%%%%%%%%%%%%%%%%%%%%%%%%%%%%%%%%%%%%%%%%%%
\section{Proofs of Theorems \ref{thm:third} and
\ref{thm:fourth}}\label{sec:proofs34}
We shall prove the following common generalization of Theorems~\ref{thm:third}
and \ref{thm:fourth}.
\begin{theorem}\label{thm:third-new}
Let $n$ and $d$ be positive integers with $d\geqslant 3$ and $\gcd(n,d)=1$. Then
\begin{equation}
\sum_{k=0}^{n-1} [2dk+1]\frac{(aq,q/a;q^d)_k (q;q^d)_k^4}
{(aq^d,q^d/a;q^d)_k (q^d;q^d)_k^4} q^{(2d-3)k}
\equiv
\begin{cases} 0  \pmod{\Phi_n(q)^2}, &\text{if $n\equiv -1\pmod d$,}\\[10pt]
 0 \pmod{\Phi_n(q)}, &\text{otherwise.}
\end{cases} \label{eq:qd2}
\end{equation}
\end{theorem}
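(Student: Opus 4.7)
The plan is to prove Theorem~\ref{thm:third-new} by applying Watson's $_8\phi_7$ transformation formula~\eqref{eq:8phi7} in base $q^d$, combined with the creative microscoping approach. The left-hand side of~\eqref{eq:qd2} is recognised as a truncated very-well-poised $_8\phi_7$ series with base $q^d$ and $a_W=q$, whose remaining five upper parameters are $(b,c,d',e,f)=(q,q,q,aq,q/a)$; one checks that the required balancing condition $a_W^2 q^{2d}/(bcd'ef)=q^{2d-3}$ is satisfied and matches the factor $q^{(2d-3)k}$.

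For the case $n\equiv 1\pmod d$ (where the conclusion is only divisibility by $\Phi_n(q)$), I would first derive a base-$q^d$ analogue of Lemma~\ref{lem:2.1} by the same manipulations, using $q^n\equiv 1\pmod{\Phi_n(q)}$: this yields reflection identities modulo $\Phi_n(q)$ for the quotients $(aq;q^d)_{M-k}/(aq^d;q^d)_{M-k}$, $(q/a;q^d)_{M-k}/(q^d/a;q^d)_{M-k}$, and $(q;q^d)_{M-k}/(q^d;q^d)_{M-k}$ in terms of their $k$-th counterparts. Taking the pairing index $M=(n-1)/d$ (the natural termination index of the $_8\phi_7$ when $a=q^{-n}$), a direct computation---including the reflection $[2d(M-k)+1]\equiv -q^{\cdots}[2dk+1]\pmod{\Phi_n(q)}$---shows that the $k$-th and $(M-k)$-th summands in~\eqref{eq:qd2} cancel modulo $\Phi_n(q)$, with the middle term (when $M$ is even) carrying the factor $[n]$ from $[2d(M/2)+1]=[dM+1]=[n]$. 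Terms with index $k>M$ are annihilated modulo $\Phi_n(q)^4$ because $(q;q^d)_k^4$ acquires four zeros at each primitive $n$-th root of unity as soon as $k\geqslant M+1$.

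For the case $n\equiv -1\pmod d$ with the stronger modulus $\Phi_n(q)^2$, the strategy is analogous in spirit but must be supplemented by a creative microscoping argument: one shows the sum vanishes modulo $1-aq^n$ (equivalently, at $a=q^{-n}$) in addition to the pairing modulo $\Phi_n(q)$, so that the combined divisibility modulo $\Phi_n(q)(1-aq^n)$ at $a=1$ yields $\Phi_n(q)^2$. The pairing uses the same base-$q^d$ reflection identities, now with the pairing index $M=n-m$ (where $n=md-1$), and terms with $k>M$ are again killed by the high-order zeros of $(q;q^d)_k^4$. The delicate point is that the specialisation $a=q^{-n}$ does \emph{not} terminate the $_8\phi_7$ when $n\equiv -1\pmod d$ (since $q^{1-n}$ is not of the form $q^{-Nd}$), so Watson's transformation cannot be invoked directly; the resolution is to introduce an auxiliary terminating parameter, apply Watson's formula, and then take the appropriate limit, tracking how the prefactor (which involves $(q^{d-1-n};q^d)_\infty/(q^{d-n};q^d)_\infty$) produces a factor of $1-q^n$. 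This non-terminating regime is the main technical obstacle, and it must be addressed carefully to confirm that the vanishing persists after truncation at $k=n-1$. Once Theorem~\ref{thm:third-new} is established, Theorems~\ref{thm:third} and~\ref{thm:fourth} follow by specialising $a=1$ and, in the case of Theorem~\ref{thm:third}, upgrading the $\Phi_n(q)$-divisibility to $[n]=\prod_{d\mid n,d>1}\Phi_d(q)$ via the divisor-sum argument already used in the proof of Theorem~\ref{thm:first}.
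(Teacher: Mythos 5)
Your treatment of the case $n\equiv 1\pmod d$ (pairing $k\leftrightarrow (n-1)/d-k$ via a base-$q^d$ analogue of Lemma~\ref{lem:2.1}, middle term carrying $[n]$, and the terms with $k>(n-1)/d$ killed by $(q;q^d)_k^4$) is workable and gives the asserted divisibility by $\Phi_n(q)$ for generic $a$, by a route different from the paper's. But the case $n\equiv-1\pmod d$, which is the substantive half of \eqref{eq:qd2}, has a genuine gap, and in fact its key claim is false: the sum truncated at $k=n-1$ does \emph{not} vanish at $a=q^{-n}$. Take $d=3$, $n=2$: at $a=q^{-2}$ the sum equals $1-[7]\,q^{2}(1-q)^{4}/\bigl((1-q^{5})(1-q^{3})^{3}\bigr)$, which is not identically zero (evaluate at $q=2$). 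So no auxiliary-parameter limit in Watson's formula can establish divisibility by $1-aq^n$; the ``main technical obstacle'' you flag (nontermination of the ${}_8\phi_7$ at $a=q^{\pm n}$ when $n\equiv-1\pmod d$) is not merely delicate, it signals that this specialisation is the wrong move. Moreover, even if such a vanishing held, your microscoping step would only prove the congruence modulo $\Phi_n(q)^2$ \emph{after} setting $a=1$, whereas the theorem asserts it with $a$ generic; this matters because Theorem~\ref{thm:fourth} is obtained from \eqref{eq:qd2} by letting $a\to 0$ (not $a=1$, as you state at the end), so the generic-$a$ form is genuinely needed.

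The paper's proof never specialises $a$ at all. Its key observation is the congruence $(q^{1-\alpha n},q^{1+\alpha n};q^d)_k\equiv(q;q^d)_k^2$ and $(q^{d-\alpha n},q^{d+\alpha n};q^d)_k\equiv(q^d;q^d)_k^2$ modulo $\Phi_n(q)^2$, valid for any integer $\alpha$. Choosing $\alpha=1$ when $n\equiv1\pmod d$ and $\alpha=d-1$ when $n\equiv-1\pmod d$ (so that $\alpha n\equiv1\pmod d$), one replaces two of the four $a$-free factors in numerator and denominator, which turns the truncated sum, modulo $\Phi_n(q)^2$, into a genuinely terminating very-well-poised ${}_8\phi_7$ in base $q^d$ with the parameters $aq,q/a$ untouched. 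The terminating case of Watson's transformation \eqref{eq:8phi7} then rewrites it as an explicit prefactor times a ${}_4\phi_3$, and the prefactor visibly contains one factor $1-q^n$ in the first case and the two factors $1-q^{(d-1)n}$ and $1-q^{(2-d)n}$ in the second, while its denominator is coprime to $\Phi_n(q)$; this yields $\Phi_n(q)$ and $\Phi_n(q)^2$ respectively, uniformly in $a$. This substitution idea (importing $q^{\pm(d-1)n}$ into the $a$-independent slots modulo $\Phi_n(q)^2$ to force termination) is the ingredient missing from your proposal, and without it, or some replacement for it, your argument does not reach the modulus $\Phi_n(q)^2$ in the case $n\equiv-1\pmod d$.
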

\begin{proof}
Let $\alpha$ and $j$ be integers. Since
\begin{equation*}
(1-q^{\alpha n-dj+d-1})(1-q^{\alpha  n+dj-d+1})+(1-q^{dj-d+1})^2
q^{\alpha  n-dj+d-1}=(1-q^{\alpha  n})^2
\end{equation*}
and $1-q^{\alpha n}\equiv 0\pmod{\Phi_n(q)}$, we obtain
\begin{equation*}
(1-q^{\alpha  n-dj+d-1})(1-q^{\alpha  n+dj-d+1})\equiv -(1-q^{dj-d+1})^2
q^{\alpha  n-dj+d-1}\pmod{\Phi_n(q)^2}.
\end{equation*}
It follows that
\begin{equation*}
(q^{1-\alpha  n},q^{1+\alpha  n};q^d)_k \equiv (q;q^d)_k^2 \pmod{\Phi_n(q)^2}.
\end{equation*}
Similarly, we have
\begin{equation*}
(q^{d-\alpha  n},q^{d+\alpha  n};q^d)_k \equiv (q^d;q^d)_k^2 \pmod{\Phi_n(q)^2}.
\end{equation*}

Since $\gcd(n,d)=1$, we know that there exists a positive integer $\alpha<d$ such that $\alpha n\equiv 1\pmod{d}$. Then by \cite[Appendix~(III.18)]{GR} (i.e.,
\eqref{eq:8phi7} with $f=q^{-\alpha n}$), modulo $\Phi_n(q)^2$, the left-hand side
of \eqref{eq:qd2} is congruent to
\begin{align*}
&\sum_{k=0}^{(\alpha n-1)/d}[2dk+1]\frac{(aq, q/a, q,q, q^{1+\alpha
n}, q^{1-\alpha n};q^d)_k }
{(aq^d, q^d/a, q^d, q^d, q^{d-\alpha n}, q^{d+\alpha n};q^d)_k} q^{(2d-3)k}  \\[5pt]
&\quad=\frac{(q^{d+1}, q^{d-\alpha n-1};q^d)_{(\alpha n-1)/d}}{(q^d, q^{d-\alpha n};q^d)_{(\alpha n-1)/d}}\,
 {}_{4}\phi_{3}\!\left[\begin{array}{c}
q^{d-1},\,q,\,q^{1+\alpha n},\, q^{1-\alpha n} \\
q^2,\, aq^d,\, q^d/a,
\end{array};q^d,\,q^d
\right].
\end{align*}
It is clear that $(q^{d+1};q^d)_{(\alpha n-1)/d}$ in the numerator has the
factor $1-q^{\alpha n}$ and is therefore divisible by $\Phi_n(q)$, while the
denominator is coprime with $\Phi_n(q)$. This proves \eqref{eq:qd2}
for the second case.

Furthermore, if $n\equiv -1\pmod{d}$, then, modulo $\Phi_n(q)^2$, the left-hand
side of \eqref{eq:qd2} is congruent to
\begin{align*}
&\sum_{k=0}^{((d-1)n-1)/d} [2dk+1]
\frac{(aq, q/a, q, q, q^{1+(d-1)n}, q^{1-(d-1)n};q^d)_k}
{(aq^d, q^d/a, q^d, q^d, q^{d-(d-1)n}, q^{d+(d-1)n};q^d)_k} q^{(2d-3)k} \\[5pt]
&\quad =\frac{(q^{d+1}, q^{d-(d-1)n-1};q^d)_{((d-1)n-1)/d}}
{(q^d, q^{d-(d-1)n};q^d)_{((d-1)n-1)/d}}\,
 {}_{4}\phi_{3}\!\left[\begin{array}{c}
q^{d-1},\,q,\,q^{1+(d-1)n},\, q^{1-(d-1)n} \\
q^2,\, aq^d,\, q^d/a,
\end{array};q^d,\,q^d
\right].
\end{align*}
It is easy to see that this time the numerator has the factor
$(1-q^{(d-1)n})(1-q^{(2-d)n})$ and is therefore divisible by $\Phi_n(q)^2$,
and again the denominator is coprime with $\Phi_n(q)$.
This proves \eqref{eq:qd2} for the first case.
\end{proof}

Letting $a=1$ and $d=3$ in \eqref{eq:qd2}, we get
\begin{equation*}
\sum_{k=0}^{n-1} [6k+1]\frac{(q;q^3)_k^6}{(q^3;q^3)_k^6} q^{3k}
\equiv
\begin{cases} 0 \pmod{\Phi_n(q)^2}, &\text{if $n\equiv 2\pmod 3$,} \\[10pt]
 0 \pmod{\Phi_n(q)}, &\text{if $n\equiv 1\pmod 3$.}
\end{cases}
\end{equation*}
Similarly as in the proof of Theorem~\ref{thm:first}, we can prove that
\begin{equation*}
\sum_{k=0}^{n-1} [6k+1]\frac{(q;q^3)_k^6}{(q^3;q^3)_k^6} q^{3k}
\equiv 0\pmod{[n]}.
\end{equation*}
This completes the proof of \eqref{eq;3rd-noa}.

Likewise, taking $a\to 0$ in \eqref{eq:qd2}, we obtain
\begin{equation*}
\sum_{k=0}^{n-1}[2dk+1]\frac{(q;q^d)_k^4}{(q^d;q^d)_k^4}q^{(d-2)k}
\equiv
\begin{cases} 0  \pmod{\Phi_n(q)^2}, &\text{if $n\equiv -1\pmod d$,}\\[10pt]
0 \pmod{\Phi_n(q)}, &\text{otherwise.}
\end{cases}
\end{equation*}
%and further (again as in the proof of Theorem~\ref{thm:first}) we can show that
%\begin{equation*}
%\sum_{k=0}^{n-1}[2dk+1]\frac{(q;q^d)_k^4}{(q^d;q^d)_k^4}q^{(d-2)k}
%\equiv 0\pmod{[n]}.
%\end{equation*}
This completes the proof of \eqref{eq:2dk+1}.

It appears that the following generalization with one more parameter $b$
is still true.
\begin{conjecture}\label{conj:2dk+1-new}
Let $n$ and $d$ be positive integers with $d\geqslant 3$ and $\gcd(n,d)=1$. Then
\begin{align*}
&\sum_{k=0}^{n-1} [2dk+1]\frac{(aq,q/a,bq, q/b;q^d)_k (q;q^d)_k^2}
{(aq^d, q^d/a, bq^d, q^d/b;q^d)_k  (q^d;q^d)_k^2} q^{(2d-3)k} \notag \\[5pt]
&\quad\equiv
\begin{cases} 0  \pmod{[n]\Phi_n(q)}, &\text{if $n\equiv -1\pmod d$,}\\[10pt]
 0 \pmod{[n]}, &\text{otherwise.}
\end{cases}
\end{align*}
\end{conjecture}

%%%%%%%%%%%%%%%%%%%%%%%%%%%%%%%%%%%%%%%%%%%%%%%%%%%%%%%%%%%%%%%%%%%%%%%%%%%%%%%%%%%%%%%%%%%%%%%%%%%%%%%%%%%%%%%%%%%%%%%%%%%%%%%%%%%%%%%%%%%%%%%%%%%%%%%%%%%%%%%%%%%%%%%%%%%%%%%%%%%%%%%%%%%%%%%%%%%%%%%%%%%%
\section{More $q$-congruences from Watson's transformation}
\label{sec:more}
Throughout this section, $m$ always stands for $n-1$ or $(n+1)/2$.
Note that the special case of \cite[Thm.~4.9]{GuoZu} with $r=-1$,
$d=2$ and $a=1$ gives
\begin{equation*}
\sum_{k=0}^{m}(-1)^k [4k-1]\frac{(q^{-1};q^2)_k^3}{(q^2;q^2)_k^3} q^{k^2+2k}
\equiv [n](-q)^{(n-3)(n+1)/4} \pmod{[n]\Phi_n(q)^2}\quad \text{for odd}\ n>1.
\end{equation*}
In this section, we shall give some similar congruences.
\begin{theorem}\label{thm:4k-1-5th}
Let $n>1$ be a positive odd integer. Then
\begin{align}
&\sum_{k=0}^{m}(-1)^k [4k-1]\frac{(q^{-1};q^2)_k^5}{(q^2;q^2)_k^5} q^{k^2+5k}
\notag \\[5pt]
&\quad\equiv [n](-q)^{(n+1)(n-3)/4}\sum_{k=0}^{(n+1)/2}
\frac{(q^{-1};q^2)_k^2 (q^3;q^2)_k}{(q^2;q^2)_k^3 } q^{3k}
\pmod{[n]\Phi_n(q)^2}.   \label{eq:4k-1-5th}
\end{align}
\end{theorem}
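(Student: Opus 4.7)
The plan is to apply the creative microscoping technique of Guo and Zudilin \cite{GuoZu}, as already done in the proofs of Theorems~\ref{thm:first}--\ref{thm:fourth}. I would introduce two auxiliary parameters $a$ and $b$ and consider the two-parameter generalization
\begin{equation*}
S_n(a,b):=\sum_{k=0}^{(n+1)/2}[4k-1]\frac{(aq^{-1},q^{-1}/a,bq^{-1};q^2)_k (q^{-1};q^2)_k^3}{(aq^2,q^2/a,q^3/b;q^2)_k(q^2;q^2)_k^3}\left(\frac{q^7}{b}\right)^k,
\end{equation*}
which is a truncated very-well-poised ${}_8\phi_7$-type sum in base $q^2$ with the Watson parameter ``$a$'' set to $q^{-1}$. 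The target is a parametric congruence of the shape
\begin{equation*}
S_n(a,b) \equiv [n](-q)^{(n+1)(n-3)/4}\,U_n(a,b) \pmod{\Phi_n(q)(1-aq^n)(a-q^n)},
\end{equation*}
where $U_n(a,b)$ is a suitable terminating ${}_4\phi_3$-type sum whose specialization at $a\to 1$, $b\to\infty$ coincides with the right-hand side of \eqref{eq:4k-1-5th}. Once this is established, the pairwise coprimality of $\Phi_n(q)$, $1-aq^n$ and $a-q^n$, together with the double limit $a\to 1$, $b\to\infty$ (the latter turns $(bq^{-1};q^2)_k(q^7/b)^k$ into $(-1)^kq^{k^2+5k}$, producing the alternating quadratic $q$-powers on the left-hand side of \eqref{eq:4k-1-5th}), yields the congruence modulo $\Phi_n(q)^3$.

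To verify the congruence modulo $(1-aq^n)(a-q^n)$, I would specialize $a=q^{\pm n}$, in which case $S_n(q^{\pm n},b)$ terminates naturally and becomes a genuine terminating very-well-poised ${}_8\phi_7$. Applying Watson's transformation \eqref{eq:8phi7} collapses it to a terminating ${}_4\phi_3$; a telescoping manipulation of the resulting prefactor (mirroring that of \eqref{eq:fraction} in the proof of Theorem~\ref{thm:2.2}) yields the factor $[n](-q)^{(n+1)(n-3)/4}$, and the remaining ${}_4\phi_3$ is identified with $U_n(q^{\pm n},b)$. For the congruence modulo $\Phi_n(q)$, I would use the pairing idea of Section~\ref{sec:proofs}: the analogues of Lemma~\ref{lem:2.1} applied to $(aq^{-1},q^{-1}/a;q^2)_{(n+1)/2-k}$ and $(q^{-1};q^2)_{(n+1)/2-k}$ versus their denominator counterparts, combined with the identity $[2n+1-4k]\equiv -q^{1-4k}[4k-1]\pmod{\Phi_n(q)}$, allow one to verify that the $k$-th and $((n+1)/2-k)$-th terms of $S_n(a,b)$ cancel modulo $\Phi_n(q)$. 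When $n\equiv 3\pmod 4$ a central term at $k=(n+1)/4$ remains, but $[4k-1]=[n]$ there, so that term is itself divisible by $\Phi_n(q)$.

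Finally, to strengthen the congruence from modulo $\Phi_n(q)^3$ to modulo $[n]\Phi_n(q)^2$, I would invoke the cyclotomic primitive-root-of-unity argument from the proof of Theorem~\ref{thm:first}: for every odd divisor $d>1$ of $n$, specializing $q\to\zeta$ at a primitive $d$-th root of unity reveals the multiplicative identity $c_\zeta(\ell d+k)=c_\zeta(\ell d)\,c_\zeta(k)$, where $c_q(k)$ denotes the $k$-th term of the left-hand side of \eqref{eq:4k-1-5th}; combined with the $n=d$ instance of \eqref{eq:4k-1-5th}---whose right-hand side visibly vanishes modulo $[d]$---this yields $\sum_{k=0}^{n-1}c_\zeta(k)=0$ and the analogous identity for the upper limit $(n+1)/2$, giving divisibility by each $\Phi_d(q)$ and hence by their product $[n]$. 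The equivalence of the two upper summation limits $m=n-1$ and $m=(n+1)/2$ modulo $[n]\Phi_n(q)^2$ is immediate, since for $(n+1)/2<k\leqslant n-1$ the numerator $(q^{-1};q^2)_k^5$ contains $\Phi_n(q)^5$ while the denominator $(q^2;q^2)_k^5$ is coprime to $\Phi_n(q)$.

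The main obstacle will be identifying the precise shape of $U_n(a,b)$ so that Watson's transformation of $S_n(q^{\pm n},b)$ literally produces $[n](-q)^{(n+1)(n-3)/4}U_n(a,b)$ at $a=q^{\pm n}$, and accurately tracking the quadratic power of $q$ that materializes as the prefactor $(-q)^{(n+1)(n-3)/4}$ through the limit $b\to\infty$ and through the ${}_8\phi_7\to{}_4\phi_3$ reduction.
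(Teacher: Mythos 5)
Your overall framework is the same as the paper's: a one-more-parameter congruence modulo $\Phi_n(q)(1-aq^n)(a-q^n)$, verified at $a=q^{\pm n}$ by Watson and modulo $\Phi_n(q)$ by pairing the $k$-th and $((n+1)/2-k)$-th terms via Lemma~\ref{lem:2.1} (with the central term at $k=(n+1)/4$ when $n\equiv3\pmod 4$ carrying the factor $[4k-1]=[n]$), then the root-of-unity argument for divisibility by $[n]$ for both truncations, and the observation that the terms with $(n+1)/2<k\leqslant n-1$ are divisible by high powers of $\Phi_n(q)$. All of that matches the paper and is sound.

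The gap is exactly at the step you flag as the ``main obstacle'', and it is not a routine bookkeeping matter. First, as written $S_n(a,b)$ is not very-well-poised: the lower parameter paired with $bq^{-1}$ must be $q^{-1}\cdot q^2/(bq^{-1})=q^2/b$, not $q^3/b$, so \eqref{eq:8phi7} does not apply to your sum at all. More seriously, even after that repair, applying \eqref{eq:8phi7} (base $q^2$, very-well-poised parameter $q^{-1}$, parameter multiset $\{bq^{-1},q^{-1},q^{-1},q^{-1\mp n},q^{-1\pm n}\}$ at $a=q^{\pm n}$) degenerates for every choice of slots: if $bq^{-1}$ is placed so that $b$ enters only the ${}_{4}\phi_{3}$ (the analogue of \eqref{eq:fraction}), the prefactor contains $(q^{3-n};q^2)_\infty=0$ while the ${}_{4}\phi_{3}$ acquires the lower parameter $def/a=q^{-2}$, whose terms are undefined from $k=2$ on, a $0\cdot\infty$ degeneration that the simple $z\to1$ limit of Theorem~\ref{thm:2.2} does not resolve; if instead $bq^{-1}$ is placed among $d,e,f$, then $b$ enters the prefactor through ratios such as $(q^{3\pm n}/b;q^2)_\infty/(q^2/b,q^4/b;q^2)_\infty$ that do not telescope to finite products. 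Either way the clean factor $[n](-q)^{(n+1)(n-3)/4}$ and a usable $U_n(a,b)$ do not emerge, so the parametric congruence you posit is not obtained by a direct Watson application. The paper sidesteps this by not introducing $b$: it works with the single-parameter sum whose summand already carries $(-1)^kq^{k^2+5k}$, i.e. $(aq^{-1},q^{-1}/a;q^2)_k(q^{-1};q^2)_k^3$ over $(aq^2,q^2/a;q^2)_k(q^2;q^2)_k^3$, and at $a=q^{\pm n}$ applies the limiting ($f\to\infty$) case \eqref{eq:8phi7-2} of Watson; the prefactor is then the finite ratio $-q^{-1}(q;q^2)_{(n+1)/2}/(q^{2-n};q^2)_{(n+1)/2}=[n](-q)^{(n+1)(n-3)/4}$ and the right-hand side is the terminating sum $\sum_{k}(aq^{-1},q^{-1}/a;q^2)_k(q^3;q^2)_k\,q^{3k}/(q^2;q^2)_k^3$, which at $a\to1$ gives \eqref{eq:4k-1-5th-a} and hence \eqref{eq:4k-1-5th}. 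If you want to keep a second parameter, the workable variant is to stay within \eqref{eq:8phi7-2} and replace one plain $q^{-1}$ by $bq^{-1}$ (with $q^2/b$ below and argument $b^{-k}q^{k^2+5k}$), letting $b\to1$, not $b\to\infty$, at the end.
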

\begin{proof} We first establish the following result:
\begin{align}
&\sum_{k=0}^{(n+1)/2}(-1)^k [4k-1]\frac{(aq^{-1},q^{-1}/a;q^2)_k (q^{-1};q^2)_k^3}
{(aq^2,q^2/a;q^2)_k (q^2;q^2)_k^3} q^{k^2+5k}
\equiv [n](-q)^{(n+1)(n-3)/4} \notag \\[5pt]
&\qquad\quad\times\sum_{k=0}^{(n+1)/2}
\frac{(aq^{-1},q^{-1}/a;q^2)_k (q^3;q^2)_k}{(q^2;q^2)_k^3 }
q^{3k}\pmod{\Phi_n(q)(1-aq^n)(a-q^n)}.
\label{eq:4k-1-5th-a}
\end{align}
For $a=q^{-n}$ or $a=q^n$, the left-hand side of \eqref{eq:4k-1-5th-a}
is equal to
\begin{align}
&\sum_{k=0}^{(n+1)/2}(-1)^k  [4k-1]\frac{(q^{-1-n},q^{-1+n};q^2)_k (q^{-1};q^2)_k^3}
{(q^{2-n}, q^{2+n};q^2)_k (q^2;q^2)_k^3}q^{k^2+5k} \notag\\[5pt]
&\quad=-q^{-1}\sum_{k=0}^{(n+1)/2}(-1)^k \frac{(1-q^{4k-1})
(q^{-1},q^{-1},q^{-1},q^{-1-n},q^{-1+n};q^2)_k }
{(1-q^{-1})(q^2,q^2,q^2, q^{2+n}, q^{2-n};q^2)_k } q^{k^2+5k} .
\label{eq:a-q-nm}
\end{align}
By the limiting case of Watson's transformation formula \eqref{eq:8phi7-2},
we can rewrite the right-hand side of \eqref{eq:a-q-nm} as
\begin{equation*}
\frac{-q^{-1}(q, q^3;q^2)_\infty}{(q^{2+n}, q^{2-n};q^2)_\infty}
\sum_{k=0}^{(n+1)/2}\frac{(q^{-1-n},q^{-1+n};q^2)_k (q^3;q^2)_k}
{(q^2;q^2)_k^3 } q^{3k}.
\end{equation*}
It is easy to see that
\begin{equation*}
\frac{-q^{-1}(q, q^3;q^2)_\infty}{(q^{2+n}, q^{2-n};q^2)_\infty}
=\frac{-q^{-1}(q;q^2)_{(n+1)/2}}{(q^{2-n};q^2)_{(n+1)/2}}
=[n](-q)^{(n+1)(n-3)/4}.
\end{equation*}
This proves that the congruence \eqref{eq:4k-1-5th-a}
holds modulo $(1-aq^n)(a-q^n)$.

On the other hand, by Lemma~\ref{lem:2.1}, for $0\leqslant k\leqslant (n+1)/2$,
we have
\begin{align}
\frac{(aq^{-1};q^2)_{(n+1)/2-k}}{(q^2/a;q^2)_{(n+1)/2-k}}
&=\frac{(1-aq^{-1})(aq;q^2)_{(n-1)/2-k}}
{(1-q^{n+1-2k}/a)(q^2/a;q^2)_{(n-1)/2-k}} \notag \\[5pt]
&\equiv (-a)^{(n-1)/2-2k}\frac{(1-aq^{-1})(aq;q^2)_k}
{(1-q^{1-2k}/a)(q^2/a;q^2)_k} q^{(n-1)^2/4+k} \notag\\[5pt]
&=(-a)^{(n+1)/2-2k}\frac{(aq^{-1};q^2)_k}{(q^2/a;q^2)_k} q^{(n-1)^2/4+3k-1}
\pmod{\Phi_n(q)}.  \label{eq:mod-phi}
\end{align}
It follows that the $k$-th and $((n+1)/2-k)$-th terms on the left-hand side of
\eqref{eq:4k-1-5th-a} cancel each other modulo $\Phi_n(q)$. When the respective
sum has an odd number of factors, the central term will remain. This happens
when $n=4l-1$ for some positive integer $l$, and in this case the central term
has index $k=l$ and one directly sees that $[4k-1]=[n]$ is a factor of the
summand. In total, this proves that
the congruence \eqref{eq:4k-1-5th-a} also holds modulo $\Phi_n(q)$.
Since the polynomials $\Phi_n(q)$, $1-aq^n$ and $a-q^n$ are coprime with one
another, the proof of \eqref{eq:4k-1-5th-a} is complete.

Letting $a\to 1$ in \eqref{eq:4k-1-5th-a}, one sees that the congruence
\eqref{eq:4k-1-5th} holds modulo $\Phi_n(q)^3$ by noticing that
$(q^{-1};q^2)_5^k/(q^2;q^2)_k^5\equiv 0\pmod{\Phi_n(q)^3}$ for
$(n+1)/2<k\leqslant n-1$. Along the same lines of the proof of
Theorem~\ref{thm:first}, we can prove that
\begin{equation*}
\sum_{k=0}^{m}(-1)^k [4k-1]\frac{(q^{-1};q^2)_k^5}{(q^2;q^2)_k^5}
q^{k^2+5k}\equiv 0\pmod{[n]},
\end{equation*}
i.e., that the congruence \eqref{eq:4k-1-5th} holds modulo $[n]$. Since
$\lcm(\Phi_n(q)^3,[n])=[n]\Phi_n(q)^2$, the proof of the theorem is complete.
\end{proof}

\begin{corollary}We have
\begin{equation*}
\sum_{k=0}^{(p+1)/2}(-1)^k (4k-1)\frac{(-\frac{1}{2})_k^5}{k!^5}
\equiv (-1)^{(p+1)/2} p \sum_{k=0}^{(p+1)/2}
\frac{(-\frac{1}{2})_k^2 (\frac{3}{2})_k}{k!^3} \pmod{p^3}.
\end{equation*}
\end{corollary}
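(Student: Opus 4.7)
The plan is to obtain the corollary as the classical specialization of Theorem~\ref{thm:4k-1-5th} at $n=p$ with $m=(p+1)/2$, letting $q\to 1$. The two sides of the corollary arise from the two sides of \eqref{eq:4k-1-5th} under the standard dictionary between $q$-shifted factorials and Pochhammer symbols, and the modulus $[n]\Phi_n(q)^2$ at $q=1$, $n=p$, evaluates to $p\cdot p^2=p^3$, matching the $p^3$ of the corollary.

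First I would translate the $q$-data. Since $0\leqslant k\leqslant (p+1)/2<p$, no factor of $(q^2;q^2)_k$ vanishes at $q=1$ to higher order than the corresponding numerator factor, so all ratios extend analytically to $q=1$. Using $\lim_{q\to 1}(1-q^a)/(1-q)=a$, one computes
\begin{equation*}
\lim_{q\to 1}\frac{(q^{-1};q^2)_k}{(q^2;q^2)_k}=\frac{(-\tfrac12)_k}{k!},\qquad
\lim_{q\to 1}\frac{(q^3;q^2)_k}{(q^2;q^2)_k}=\frac{(\tfrac32)_k}{k!},
\end{equation*}
while $[4k-1]\to 4k-1$ and the $q$-powers $q^{k^2+5k}$, $q^{3k}$ each tend to $1$. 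Thus the two $q$-summations of \eqref{eq:4k-1-5th} specialize exactly to the two sums in the corollary.

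Next I would pin down the scalar prefactor. Since $[n]\to p$ and $(-q)^{(n+1)(n-3)/4}\to(-1)^{(p+1)(p-3)/4}$, the remaining task is to show $(-1)^{(p+1)(p-3)/4}=(-1)^{(p+1)/2}$. Writing $(p+1)(p-3)/4=\frac{p+1}{2}\cdot\frac{p-3}{2}$ and splitting into cases modulo $4$, both factors are odd when $p\equiv1\pmod 4$ and both are even when $p\equiv3\pmod 4$; in either case the left side agrees with $(-1)^{(p+1)/2}$. Finally, since \eqref{eq:4k-1-5th} says $A(q)-B(q)=[n]\Phi_n(q)^2 C(q)$ for some $C(q)$ that is $p$-integral at $q=1$ (the common denominators $(q^2;q^2)_k$ are units there for $k<p$), setting $q=1$ yields $A(1)-B(1)\equiv0\pmod{p^3}$, which is precisely the corollary. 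The whole derivation is routine specialization; the only step that asks for a brief check is the parity identity for the exponent of $-1$, so no real obstacle is expected.
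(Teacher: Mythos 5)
Your proposal is correct and follows exactly the route the paper intends: the corollary is the specialization of Theorem~\ref{thm:4k-1-5th} with $n=p$, $m=(p+1)/2$ and $q\to 1$, using $\lim_{q\to1}(q^{-1};q^2)_k/(q^2;q^2)_k=(-\tfrac12)_k/k!$, $\lim_{q\to1}(q^3;q^2)_k/(q^2;q^2)_k=(\tfrac32)_k/k!$, the parity identity $(-1)^{(p+1)(p-3)/4}=(-1)^{(p+1)/2}$, and $[p]\Phi_p(q)^2\to p^3$. The only slight imprecision is the remark that the denominators $(q^2;q^2)_k$ are ``units'' at $q=1$ (they vanish there); the correct point is that for $k<p$ they are coprime to $\Phi_p(q)$, so the reduced denominator of the difference has value prime to $p$ at $q=1$, which is what makes the specialization legitimate.
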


\begin{theorem}\label{thm:4k-1-4th}
Let $n>1$ be a positive odd integer. Then
\begin{equation}
\sum_{k=0}^{m}[4k-1]\frac{(q^{-1};q^2)_k^4}{(q^2;q^2)_k^4} q^{4k}
\equiv 0 \pmod{[n]\Phi_n(q)^2}. \label{eq:th4}
\end{equation}
\end{theorem}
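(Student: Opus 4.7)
The plan is to mimic the approach for Theorem~\ref{thm:4k-1-5th}: introduce an extra parameter $a$, establish a parameterized $q$-congruence modulo $\Phi_n(q)(1-aq^n)(a-q^n)$, let $a\to 1$ to obtain divisibility by $\Phi_n(q)^3$, and combine with divisibility by $[n]$. Specifically, I would consider
\begin{equation*}
S_n(a):=\sum_{k=0}^{(n+1)/2}[4k-1]\frac{(aq^{-1},q^{-1}/a;q^2)_k\,(q^{-1};q^2)_k^2}{(aq^2,q^2/a;q^2)_k\,(q^2;q^2)_k^2}\,q^{4k},
\end{equation*}
and aim to show $S_n(a)\equiv 0\pmod{\Phi_n(q)(1-aq^n)(a-q^n)}$. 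The three factors are pairwise coprime in $\mathbb{Z}[a,q]$, so I verify the congruence modulo each individually.

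For the factor $\Phi_n(q)$ I use the pairing $k\leftrightarrow (n+1)/2-k$. Applying \eqref{eq:mod-phi} with parameter $a$, with $1/a$, and twice with $a=1$ (once for each factor of $(q^{-1};q^2)_k^2/(q^2;q^2)_k^2$), the full Pochhammer ratio at index $(n+1)/2-k$ is congruent modulo $\Phi_n(q)$ to the one at index $k$ multiplied by $q^{(n-1)^2+12k-4}$. Simultaneously $[4((n+1)/2-k)-1]\equiv -q^{1-4k}[4k-1]$ and $q^{4((n+1)/2-k)}\equiv q^{2-4k}\pmod{\Phi_n(q)}$. Since $q^{(n-1)^2-1}=q^{n(n-2)}\equiv 1\pmod{\Phi_n(q)}$, the $k$-th and $((n+1)/2-k)$-th summands cancel modulo $\Phi_n(q)$. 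When $n\equiv 3\pmod 4$ the central index $k_0=(n+1)/4$ gives $[4k_0-1]=[n]$, so the central summand is itself divisible by $\Phi_n(q)$, yielding $S_n(a)\equiv 0\pmod{\Phi_n(q)}$.

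For the factor $(1-aq^n)(a-q^n)$ I would show $S_n(q^{\mp n})=0$; the two cases are symmetric, so I treat $a=q^{-n}$. Inserting the auxiliary Watson parameter $f=q^2$ alongside $a_W=q^{-1}$, $\{b,c\}=\{q^{-n-1},q^{n-1}\}$ and $d=e=q^{-1}$ identifies the sum with $-q^{-1}$ times a terminating very-well-poised $_8\phi_7$ in base $q^2$ with argument $q^4$ (the contribution $(q^2;q^2)_k/(q^{-1};q^2)_k$ of $f=q^2$ cancels against the other Pochhammer factors). Applying Watson's transformation \eqref{eq:8phi7} with the terminating role assigned to $q^{-n-1}$ (permissible by symmetry of the five free parameters), the right-hand side factors as
\begin{equation*}
\frac{(q,q^3;q^2)_{(n+1)/2}}{(q^2;q^2)_{(n+1)/2}^2}\cdot{}_4\phi_3\!\left[\begin{matrix}q^{-(n+1)},\,q^{-1},\,q^{-1},\,q^{-n}\\ q^{2-n},\,q^{-1},\,q^{-n-2}\end{matrix};q^2,q^2\right].
\end{equation*}
The $q^{-1}$ appearing in both an upper and a lower slot cancels, leaving a balanced $_3\phi_2$. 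By the $q$-Pfaff--Saalsch\"utz summation (with $c=q^{2-n}$) this $_3\phi_2$ equals $(q^{3-n},q^2;q^2)_{(n+1)/2}/(q^{2-n},q^3;q^2)_{(n+1)/2}$, whose numerator $(q^{3-n};q^2)_{(n+1)/2}$ contains the factor $1-q^0$ (at index $j=(n-3)/2$) for every odd $n\ge 3$. Hence $S_n(q^{-n})=0$, and $S_n(q^n)=0$ follows by swapping the roles of $b$ and $c$.

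Combining the three congruences and setting $a=1$, the factor $(1-aq^n)(a-q^n)$ specializes to $(1-q^n)^2$, divisible by $\Phi_n(q)^2$, so $S_n(1)\equiv 0\pmod{\Phi_n(q)^3}$. For the extended range $m=n-1$, each summand with $(n+1)/2<k\le n-1$ satisfies $(q^{-1};q^2)_k^4/(q^2;q^2)_k^4\equiv 0\pmod{\Phi_n(q)^4}$, since $(q^{-1};q^2)_k$ picks up exactly one factor $1-q^n$ while $(q^2;q^2)_k$ picks up none (as $n$ is odd and $k<n$); hence both versions of the sum are $\equiv 0\pmod{\Phi_n(q)^3}$. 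Divisibility by $[n]$ of both sums follows by the root-of-unity argument from the proof of Theorem~\ref{thm:first}: for each primitive $d$-th root of unity $\zeta\ne 1$ with $d\mid n$, the already-established $\Phi_d(q)^3$ congruence forces the telescoped sum to vanish at $q=\zeta$. Since $\gcd(\Phi_n(q)^3,[n])=\Phi_n(q)$, combining yields the required modulus $[n]\Phi_n(q)^2$. The chief technical delicacy is the labeling in the Watson step: one must pick $d=e=q^{-1}$ rather than a pair producing $aq^2/de=1$, so that the collapsing $_3\phi_2$ is genuinely balanced and summable by $q$-Pfaff--Saalsch\"utz, avoiding any $0\cdot\infty$ indeterminacy in the intermediate formula.
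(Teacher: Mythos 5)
Your proposal is correct, and its overall architecture coincides with the paper's: your $S_n(a)$ is exactly the parameterized sum \eqref{eq:4th-a}, the modulo-$\Phi_n(q)$ step is the same pairing $k\leftrightarrow (n+1)/2-k$ via \eqref{eq:mod-phi} (your exponent bookkeeping $q^{(n-1)^2-1}=q^{n(n-2)}\equiv 1$ and the central term $[4k-1]=[n]$ for $n\equiv 3\pmod 4$ are right), and the $a\to 1$ limit, the root-of-unity argument for divisibility by $[n]$, and the final lcm step $\gcd(\Phi_n(q)^3,[n])=\Phi_n(q)$ are as in the paper. The one genuinely different ingredient is the evaluation at $a=q^{\pm n}$. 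The paper uses the limiting case \eqref{eq:8phi7-2} of Watson's transformation (base $q^2$, $c\to 0$, $a=b=q^{-1}$, $d=q^{-1-n}$, $e=q^{-1+n}$) and then kills the resulting sum by the $q$-Chu--Vandermonde evaluation \eqref{eq:q-Chu} after $q\mapsto q^{-1}$. You instead keep the full transformation \eqref{eq:8phi7}, insert the auxiliary parameter $f=q^2$ whose Pochhammer contribution cancels, put the terminating parameter $q^{-1-n}$ into the $\{d,e,f\}$ group (your prefactor $(q,q^3;q^2)_{(n+1)/2}/(q^2;q^2)_{(n+1)/2}^2$ and the displayed ${}_4\phi_3$ check out), cancel the common parameter $q^{-1}$, and evaluate the balanced terminating ${}_3\phi_2$ by $q$-Pfaff--Saalsch\"utz, getting $(q^2,q^{3-n};q^2)_{(n+1)/2}/(q^{2-n},q^3;q^2)_{(n+1)/2}=0$ from the factor $1-q^0$ (using $n\geqslant 3$). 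Amusingly this is the same vanishing factor $(q^{3-n};q^2)_{(n+1)/2}$ that underlies the paper's Chu--Vandermonde step, so the two routes are close cousins: the paper's is slightly shorter and reuses \eqref{eq:q-Chu}, while yours avoids the $q\mapsto q^{-1}$ trick at the price of the delicate parameter split in Watson, which you correctly identify (a split placing $q^2$ and $q^{-1-n}$ together among $d,e,f$ with $b,c=q^{n-1},q^{-1}$ would produce the vanishing lower parameter $q^{1-n}$ and a genuine indeterminacy). Your treatment of the extended range $m=n-1$ (each extra term divisible by $\Phi_n(q)^4$) is also fine and matches what the paper leaves implicit.
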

\begin{proof}
We first establish the following congruence:
\begin{equation}
\sum_{k=0}^{m}[4k-1]\frac{(aq^{-1}, q^{-1}/a;q^2)_k(q^{-1};q^2)_k^2}
{(aq^2, q^2/a;q^2)_k(q^2;q^2)_k^2} q^{4k} \equiv 0
\pmod{\Phi_n(q)(1-aq^n)(a-q^n)}.  \label{eq:4th-a}
\end{equation}
Letting $q\mapsto q^2$ and $c\to 0$ followed by $a=b=q^{-1}$, $d=q^{-1-n}$ and
$e=q^{-1+n}$ in the limiting case of Watson's transformation formula
\eqref{eq:8phi7-2}, we obtain
\begin{align}
&\sum_{k=0}^{m}[4k-1]\frac{(q^{-1-n}, q^{-1+n};q^2)_k (q^{-1};q^2)_k^2}
{(q^{2-n}, q^{2+n};q^2)_k(q^2;q^2)_k^2} q^{4k}  \notag\\[5pt]
&\quad=\frac{-q^{-1}(q, q^3;q^2)_\infty}{(q^{2+n}, q^{2-n};q^2)_\infty}
\sum_{k=0}^{(n+1)/2}\frac{(q^{-1-n},q^{-1+n};q^2)_k }{(q^2;q^2)_k^2} q^{4k}.
\label{eq:4k-1-4th}
\end{align}
By the $q$-Chu-Vandermonde summation formula \cite[Appendix~(II.6)]{GR},
for odd $n>1$, we have
\begin{equation}
\sum_{k=0}^{m}\frac{(q^{-1-n},q^{-1+n};q^2)_k }{(q^2;q^2)_k^2} q^{2k}=
\frac{(q^{3-n};q^2)_{(n+1)/2}}{(q^2;q^2)_{(n+1)/2}}q^{(n^2-1)/2}=0.
\label{eq:q-Chu}
\end{equation}
Letting $q\mapsto q^{-1}$ in the above equality, we see that the summation on
the right-hand side of \eqref{eq:4k-1-4th} is equal to $0$.
This proves that the congruence \eqref{eq:4th-a} holds modulo $(1-aq^n)(a-q^n)$.

On the other hand, similarly as before, by \eqref{eq:mod-phi} one sees that
the sum of the $k$-th and $((n+1)/2-k)$-th terms on the left-hand side of
\eqref{eq:4th-a} are congruent to $0$ modulo $\Phi_n(q)$
(and also, when the respective sum has an odd number of factors, i.e.,
when $n=4l-1$ for some positive integer $l$,
then the remaining central term has index $k=l$ and one directly sees
that $[4k-1]=[n]$ is a factor of the summand).
This thus proves that
the congruence \eqref{eq:4th-a} is also true modulo $\Phi_n(q)$.
This completes the proof of \eqref{eq:4th-a}.

Let $c_q(k)$ denote the $k$-th term on the left-hand side of \eqref{eq:4th-a}.
In the same vein as in the proof of Theorem~\ref{thm:first}, we can further
prove that
\begin{equation}
\sum_{k=0}^{n-1}c_q(k)=\sum_{k=0}^{(n+1)/2}c_q(k)\equiv 0\pmod{[n]}. \label{eq:yzyz}
\end{equation}
Thus, we have proved that
\begin{equation}
\sum_{k=0}^{m}[4k-1]\frac{(aq^{-1}, q^{-1}/a;q^2)_k(q^{-1};q^2)_k^2}
{(aq^2, q^2/a;q^2)_k(q^2;q^2)_k^2} q^{4k} \equiv 0 \pmod{[n](1-aq^n)(a-q^n)}.
\label{eq:xyxy}
\end{equation}
The parts of the denominators in \eqref{eq:xyxy} which contain the parameter
$a$ are the factors of $(aq^2, q^2/a;q^2)_{(n+1)/2}$ or
$(aq^2, q^2/a;q^2)_{n-1}$.
Their limits as $a\to1$ are relatively prime to $\Phi_n(q)$.
On the other hand, the limit of $(1-aq^n)(a-q^n)$ as $a\to1$ has the factor
$\Phi_n(q)^2$.
Therefore, the limiting case $a\to 1$ of the congruence \eqref{eq:xyxy}
reduces to \eqref{eq:th4} modulo $\Phi_n(q)^3$. But the congruences
\eqref{eq:yzyz} are still true when $a=1$ which implies that
the congruence \eqref{eq:th4} holds modulo $[n]$.
This completes the proof of the theorem.
\end{proof}

It appears that the congruence conditions stated in Theorem~\ref{thm:4k-1-4th}
and its extension in \eqref{eq:4th-a} can be strengthened:
\begin{conjecture}\label{conj:4k-1}
Let $n>1$ be a positive odd integer. Then
\begin{align*}
\sum_{k=0}^{m}[4k-1]\frac{(aq^{-1};q^2)_k(q^{-1}/a;q^2)_k(q^{-1};q^2)_k^2}
{(aq^2;q^2)_k(q^2/a;q^2)_k(q^2;q^2)_k^2} q^{4k}
&\equiv 0 \pmod{[n]^2(1-aq^n)(a-q^n)},  \\%[5pt]
\intertext{and}
\sum_{k=0}^{m}[4k-1]\frac{(q^{-1};q^2)_k^4}{(q^2;q^2)_k^4} q^{4k}
&\equiv 0 \pmod{[n]^4}.
\end{align*}
\end{conjecture}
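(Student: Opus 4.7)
The plan is to enlarge the summation by adjoining a second deformation parameter $b$ that is structurally symmetric to $a$, and to study the two-parameter family
\begin{equation*}
T_n(a,b):=\sum_{k=0}^{m}[4k-1]\frac{(aq^{-1},q^{-1}/a,bq^{-1},q^{-1}/b;q^2)_k}{(aq^2,q^2/a,bq^2,q^2/b;q^2)_k}\,q^{4k},
\end{equation*}
which collapses to the left-hand side of the first conjectured congruence at $b=1$ (the pair $(bq^{-1},q^{-1}/b;q^2)_k/(bq^2,q^2/b;q^2)_k$ becoming $(q^{-1};q^2)_k^2/(q^2;q^2)_k^2$) and to that of the second at $a=b=1$. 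The target is the enhanced two-parameter congruence
\begin{equation*}
T_n(a,b)\equiv 0\pmod{[n]\Phi_n(q)\,(1-aq^n)(a-q^n)(1-bq^n)(b-q^n)}.
\end{equation*}
Specialising $b\to 1$ turns $(1-bq^n)(b-q^n)$ into $(1-q^n)^2=[n]^2(1-q)^2$, so the resulting divisor is a polynomial multiple of $[n]^2(1-aq^n)(a-q^n)$, yielding the first conjecture; a further specialisation $a\to 1$ contributes another factor $(1-q^n)^2$, giving a polynomial multiple of $[n]^4$ and thereby the second conjecture.

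The proof of the two-parameter congruence would follow the three-stage pattern used for Theorem~\ref{thm:4k-1-4th}. In Stage~A (divisibility modulo $(1-aq^n)(a-q^n)(1-bq^n)(b-q^n)$), by the manifest $a\leftrightarrow b$ symmetry it suffices to treat $a=q^{\pm n}$; the pair $(aq^{-1},q^{-1}/a;q^2)_k$ specialises to $(q^{\mp n-1},q^{\pm n-1};q^2)_k$, terminating the series. Applying a suitable very-well-poised $q$-series transformation (such as Watson's formula \eqref{eq:8phi7-2} or its higher-order $_{10}\phi_9$ analogue) and evaluating the resulting terminating sum via $q$-Chu--Vandermonde or $q$-Pfaff--Saalsch\"utz should produce a closed form containing the vanishing factor $(q^{3-n};q^2)_{(n+1)/2}$, exactly as in the proof of Theorem~\ref{thm:4k-1-4th}. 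In Stage~B (modulo $\Phi_n(q)$), Lemma~\ref{lem:2.1} applied jointly to both parameter pairs shows that the $k$-th and $((n+1)/2-k)$-th terms cancel modulo $\Phi_n(q)$; when $n=4l-1$ the surviving central term at $k=l$ still carries the factor $[4k-1]=[n]$. In Stage~C (modulo $[n]$), the periodicity argument from the proof of Theorem~\ref{thm:first}, which exploits the multiplicativity $c_\zeta(\ell d+k)=c_\zeta(\ell d)c_\zeta(k)$ at any primitive $d$-th root of unity $\zeta$ with $d\mid n$, reduces divisibility by $\Phi_d(q)$ to the vanishing of $T_d(a,b)|_{q=\zeta}$, which is furnished by Stages~A and B applied at the smaller level.

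The central obstacle is Stage~A. Since the second pair $(bq^{-1},q^{-1}/b;q^2)_k/(bq^2,q^2/b;q^2)_k$ persists in the specialised sum at $a=q^{\pm n}$, the terminating very-well-poised series is no longer an $_8\phi_7$---directly amenable to Watson's transformation---but rather a $_{10}\phi_9$. One natural route is to invoke Bailey's $_{10}\phi_9\to{}_{10}\phi_9$ transformation to split it as a linear combination of two shorter series, and then apply Watson's formula to each resulting half, hoping that both closed-form evaluations exhibit factors of the same shape $(q^{3-n};q^2)_{(n+1)/2}$. A complementary difficulty is that Stage~C, as stated, only produces a single factor $[n]$ from the periodicity; for composite $n$ the full $[n]^2$-strength of the claimed first conjecture would require a second-order periodicity argument, most likely achieved by differentiating the $b$-parametric congruence in $b$ at suitable roots of unity or by introducing yet another auxiliary parameter within the creative microscoping framework of the paper.
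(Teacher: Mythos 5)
First, note that the statement you are proving is left as an open conjecture in the paper: the authors' own methods (Watson's transformation \eqref{eq:8phi7-2}, Lemma~\ref{lem:2.1}, and the root-of-unity periodicity argument) only reach the modulus $[n]\Phi_n(q)^2$ of Theorem~\ref{thm:4k-1-4th} and its parametric form \eqref{eq:4th-a}, so there is no proof in the paper to compare against, and your argument must stand on its own. It does not: the keystone of your plan, the two-parameter congruence $T_n(a,b)\equiv 0\pmod{[n]\Phi_n(q)(1-aq^n)(a-q^n)(1-bq^n)(b-q^n)}$, is false. Divisibility of the numerator by $a-q^n$ forces $T_n(q^n,b)=0$ identically in $b$ and $q$, and this already fails for $n=3$: taking $q=2$, $a=q^3=8$, $b=3$ one finds $T_3(8,3)=-\tfrac12-\tfrac{1575}{682}-\tfrac{12858750}{26460577}\approx -3.295\neq 0$, whereas the same sum with $b=1$ does vanish, in agreement with \eqref{eq:4th-a}. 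So the vanishing that Stage~A ``hopes'' to extract from a ${}_{10}\phi_9$ transformation plus a $q$-Chu--Vandermonde-type evaluation cannot exist: the identity that kills the sum at $a=q^{\pm n}$ in the paper's proof is special to the fixed pair $(q^{-1},q^{-1})$ and is destroyed by the symmetric deformation $(bq^{-1},q^{-1}/b)$. This is consistent with the paper's own caveats: the authors note that \eqref{eq:qab} does not hold modulo $[n](1-aq^n)(a-q^n)$ in general, and their multi-parameter Conjecture~\ref{conj:2dk+1-new} is only claimed modulo $[n]$ or $[n]\Phi_n(q)$, never modulo the extra factors $(1-aq^n)(a-q^n)(1-bq^n)(b-q^n)$. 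A further warning sign you could have caught a priori: if your enhanced congruence were true, letting $b\to1$ would give divisibility by $[n]^3\Phi_n(q)(1-q)^2(1-aq^n)(a-q^n)$, far stronger than the conjectured $[n]^2(1-aq^n)(a-q^n)$, and letting $a\to1$ as well would overshoot $[n]^4$ even more drastically.

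Even setting aside the false Stage~A claim, the remaining stages do not deliver the stated moduli. Stage~B (pairing $k$ with $(n+1)/2-k$ via Lemma~\ref{lem:2.1}) and Stage~C (the periodicity argument from the proof of Theorem~\ref{thm:first}) together yield at most one factor $\Phi_d(q)$ per divisor $d>1$ of $n$, i.e.\ a single factor $[n]$; you acknowledge this yourself, but the ``second-order periodicity'' or differentiation-in-$b$ device you invoke to upgrade to $[n]^2$ (let alone $[n]^4$ in the $a=b=1$ case) is not formulated, and nothing in the paper's machinery supplies it. In short, the proposal reduces the conjecture to an auxiliary statement that is strictly stronger and demonstrably wrong, and the parts that are correct (Stages~B and~C) reprove only what Theorem~\ref{thm:4k-1-4th} already gives. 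A workable attack would have to find a different source for the extra powers of $[n]$ -- for instance a closed-form or $q$-WZ-type evaluation of the truncated sum itself, in the spirit of \eqref{eq:4k+1-8th}, rather than a creative-microscoping deformation in a second symmetric parameter.
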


\begin{theorem}\label{thm:4k+1-6th}
Let $n>3$ be a positive odd integer. Then
\begin{equation}
\sum_{k=0}^{(n+1)/2}[4k+1]\frac{(q^{-1};q^2)_k^2 (q;q^2)_k^2}
{(q^4;q^2)_k^2 (q^2;q^2)_k^2} q^{4k}
\equiv 0 \pmod{[n]\Phi_n(q)^2}. \label{eq:4k+1-6th}
\end{equation}
\end{theorem}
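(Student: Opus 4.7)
The plan is to prove Theorem~\ref{thm:4k+1-6th} by creative microscoping, following the template of Theorems~\ref{thm:first} and~\ref{thm:second}. I would introduce the one-parameter family
\begin{equation*}
S(a):=\sum_{k=0}^{(n+1)/2}[4k+1]\frac{(aq^{-1},\,q^{-1}/a;q^2)_k\,(q;q^2)_k^2}{(aq^4,\,q^4/a;q^2)_k\,(q^2;q^2)_k^2}\,q^{4k},
\end{equation*}
and aim to establish
\begin{equation*}
S(a)\equiv 0\pmod{\Phi_n(q)(1-aq^n)(a-q^n)}.
\end{equation*}
A direct matching shows that the summand of $S(a)$ coincides with that of the terminating very-well-poised ${}_{8}\phi_7$ series with base $q^2$, parameters $(A;b,c,d,e,f)=(q;\,q,\,q^{-1}/a,\,q^{n+4},\,aq^{-1},\,q^{-(n+1)})$, and argument $q^4$: the extra factors $(q^{n+4};q^2)_k$ and $(q^{-(n+1)};q^2)_k$ in the summand cancel between numerator and denominator.

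For the part modulo $(1-aq^n)(a-q^n)$, observe that the summand is symmetric under $a\mapsto 1/a$, so $S(a)=S(1/a)$ and it suffices to show $S(q^{-n})=0$. Applying Watson's terminating transformation \cite[Appendix~(III.18)]{GR} gives
\begin{equation*}
S(a)=\frac{(q^3,\,q^{-n}/a;q^2)_{(n+1)/2}}{(q^{-n-1},\,q^4/a;q^2)_{(n+1)/2}}\,{}_{4}\phi_{3}\!\left[\begin{array}{c}aq^3,\,q^{n+4},\,aq^{-1},\,q^{-(n+1)}\\ q^2,\,aq^4,\,aq\end{array};\,q^2,\,q^2\right].
\end{equation*}
At $a=q^{-n}$ the factor $q^{-n}/a$ becomes $1$, so $(1;q^2)_{(n+1)/2}=0$ forces $S(q^{-n})=0$. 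For the part modulo $\Phi_n(q)$, the boundary term $k=(n+1)/2$ is already $\equiv 0\pmod{\Phi_n(q)^2}$ because $(q;q^2)_{(n+1)/2}^2$ contains $(1-q^n)^2$, while (crucially for $n>3$) the denominator factor $(q^4;q^2)_{(n+1)/2}^2$ has no $\Phi_n(q)$ divisor, since the exponents $4,6,\ldots,n+3$ of its factors include no multiple of the odd integer $n$ when $n>3$. This explains the hypothesis $n>3$ in the theorem. For the remaining sum $\sum_{k=0}^{(n-1)/2}$, I would adapt Lemma~\ref{lem:2.1} to the Pochhammer ratios $(aq^{-1};q^2)_{(n-1)/2-k}/(aq^4;q^2)_{(n-1)/2-k}$ (and its $a\mapsto a^{-1}$ analogue) and pair the $k$-th and $((n-1)/2-k)$-th terms modulo $\Phi_n(q)$, with any unpaired central term annihilated by $[4k+1]=[n]$.

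Letting $a\to 1$ then yields $S(1)\equiv 0\pmod{\Phi_n(q)^3}$, since $(1-aq^n)(a-q^n)$ contributes $\Phi_n(q)^2$ in the limit. To extract the factor $[n]$, I would apply the cyclotomic descent argument from the end of the proof of Theorem~\ref{thm:first}: for each divisor $d>1$ of $n$ and each primitive $d$-th root of unity $\zeta$, the congruence just established (with $d$ in place of $n$) gives $\sum_{k=0}^{(d+1)/2}c_\zeta(k)=0$, and the multiplicative periodicity $c_\zeta(\ell d+k)=c_\zeta(\ell d)c_\zeta(k)$ propagates this to $\sum_{k=0}^{(n+1)/2}c_\zeta(k)=0$. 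Divisibility by $\prod_{d\mid n,\,d>1}\Phi_d(q)=[n]$ follows, and combining with the $\Phi_n(q)^3$ divisibility gives the stated modulus $[n]\Phi_n(q)^2$.

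The main obstacle I expect is the pairing argument modulo $\Phi_n(q)$: unlike the symmetric pairing $(aq;q^2)_k$ versus $(q^2/a;q^2)_k$ in Lemma~\ref{lem:2.1}, the Pochhammer symbols $(aq^{-1};q^2)_k$ and $(aq^4;q^2)_k$ have shifts differing by $5$, so after the reduction $q^n\equiv 1$ the factors do not align symmetrically; tracking the power-of-$q$ contribution from $q^{4k}$ together with the congruence $[2n-1-4k]\equiv-q^{-1-4k}[4k+1]\pmod{\Phi_n(q)}$ will be required to confirm the sign flip.
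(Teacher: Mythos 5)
Your proposal follows essentially the paper's own route: the paper proves exactly your congruence $S(a)\equiv 0\pmod{\Phi_n(q)(1-aq^n)(a-q^n)}$ (its \eqref{eq:6th-a}), then lets $a\to 1$ (using precisely the coprimality of $(q^4;q^2)_{(n+1)/2}$ with $\Phi_n(q)$ for $n>3$ that you identified), pairs the $k$-th and $((n-1)/2-k)$-th terms via \eqref{eq:mod-phi} with the boundary term $k=(n+1)/2$ killed by $(q;q^2)_{(n+1)/2}$, and finishes with the root-of-unity descent. The only step where you deviate is the evaluation at $a=q^{\pm n}$: the paper first specializes $a=q^{\pm n}$ and applies the $c\to 0$ limiting Watson transformation \eqref{eq:8phi7-2}, reducing to a sum that vanishes by the $q\mapsto q^{-1}$ case of the $q$-Chu--Vandermonde evaluation \eqref{eq:q-Chu}, whereas you apply the terminating Watson transformation \cite[Appendix~(III.18)]{GR} to $S(a)$ with generic $a$ and read off the vanishing from the prefactor $(q^{-n}/a;q^2)_{(n+1)/2}$. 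Your identification of the $_8\phi_7$ parameters and the resulting prefactor and $_4\phi_3$ are correct; just note that at $a=q^{-n}$ the top term of your $_4\phi_3$ is of the form $0/0$ (the factors $1-aq^n$ in $(aq^3;q^2)_{(n+1)/2}$ and $(aq;q^2)_{(n+1)/2}$ cancel), so one should remark that the $_4\phi_3$ stays regular there before concluding $S(q^{-n})=0$.

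One concrete gap remains in your final step. For the descent you invoke ``the congruence just established (with $d$ in place of $n$)'' to get $\sum_{k=0}^{(d+1)/2}c_\zeta(k)=0$ at $a=1$ for every divisor $d>1$ of $n$. But your own argument only yields the specialized ($a=1$) congruence modulo $\Phi_d(q)$ for $d>3$, precisely because for $d=3$ the denominator $(q^4;q^2)_{(d+1)/2}^2$ contains $(1-q^6)^2$ and is not coprime to $\Phi_3(q)$, so the $a\to1$ passage fails there; moreover the term $c_\zeta(2)$ at a primitive cube root $\zeta$ is itself a $0/0$ expression. Since the theorem allows $3\mid n$ (e.g.\ $n=9$), the divisor $d=3$ must be handled separately; the paper does this by checking directly that \eqref{eq:4k+1-6th} holds modulo $\Phi_3(q)$ when $n=3$, and you would need the same (easy, finite) verification to close the argument. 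The remaining items you flag --- adapting Lemma~\ref{lem:2.1} to the shifted ratios for the pairing, with the sign flip coming from $[2n-1-4k]\equiv-q^{-1-4k}[4k+1]\pmod{\Phi_n(q)}$ --- are exactly what the paper's citation of \eqref{eq:mod-phi} supplies, so that part of your plan matches the paper's proof.
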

\begin{proof}The proof is similar to that of Theorem \ref{thm:4k-1-4th}.
We first establish
\begin{equation}
\sum_{k=0}^{(n+1)/2}[4k+1]\frac{(aq^{-1}, q^{-1}/a;q^2)_k(q;q^2)_k^2}
{(aq^4, q^4/a;q^2)_k(q^2;q^2)_k^2} q^{4k} \equiv 0
\pmod{\Phi_n(q)(1-aq^n)(a-q^n)}  \label{eq:6th-a}
\end{equation}
for odd $n>1$.
Letting $q\mapsto q^2$ and $c\to 0$ followed by $a=b=q$, $d=q^{-1-n}$ and
$e=q^{-1+n}$ in \eqref{eq:8phi7-2}, we obtain
\begin{align}
&\sum_{k=0}^{(n+1)/2}[4k+1]\frac{(q^{-1-n}, q^{-1+n};q^2)_k (q;q^2)_k^2}
{(q^{4-n}, q^{4+n};q^2)_k(q^2;q^2)_k^2} q^{4k}  \notag\\[5pt]
&\quad=\frac{(q^3, q^5;q^2)_\infty}{(q^{4+n}, q^{4-n};q^2)_\infty}
\sum_{k=0}^{(n+1)/2}\frac{(q^{-1-n},q^{-1+n};q^2)_k }{(q^2;q^2)_k^2} q^{4k}.
\label{eq:4k-1-6th}
\end{align}
As we have already mentioned in the proof of Theorem \ref{thm:4k-1-4th},
the summation on
the right-hand side of \eqref{eq:4k-1-6th} is equal to $0$ by the
$q\mapsto q^{-1}$ case of \eqref{eq:q-Chu}.
Thus, we have proved that the congruence \eqref{eq:6th-a} holds modulo
$(1-aq^n)(a-q^n)$.

On the other hand, similarly as before, by \eqref{eq:mod-phi} one sees that
the sum of the $k$-th and $((n-1)/2-k)$-th terms on the left-hand side of
\eqref{eq:6th-a} are congruent to $0$ modulo $\Phi_n(q)$
for $0\leqslant k\leqslant (n-1)/2$. Moreover, the summand for $k=(n+1)/2$ on
the right-hand side of \eqref{eq:4k-1-6th} is clearly congruent
to $0$ modulo $\Phi_n(q)$
because of the factor $(q;q^2)_{(n+1)/2}$ in the numerator.
This proves that the congruence \eqref{eq:6th-a} is also true
modulo $\Phi_n(q)$.
The proof of \eqref{eq:6th-a} is completed.

For $n>3$, we have $(n+3)/2<n$ and so the denominator of the left-hand side
of \eqref{eq:6th-a} is
relatively prime to $\Phi_n(q)$ when taking the limit as $a\to 1$.
Therefore, the congruence \eqref{eq:4k+1-6th}
holds modulo $\Phi_n(q)^3$ for $n>3$ by taking $a\to 1$ in \eqref{eq:6th-a}.
On the other hand, it is also easy to see that
the congruence \eqref{eq:4k+1-6th} holds modulo $\Phi_3(q)$ for $n=3$.
Let $c_q(k)$ denote the $k$-th term on the left-hand side of \eqref{eq:6th-a}.
Similarly to the proof of Theorem~\ref{thm:first}, we can further prove that
\begin{equation*}
\sum_{k=0}^{n-2}c_q(k)=\sum_{k=0}^{(n+1)/2}c_q(k)\equiv 0\pmod{[n]}.
\end{equation*}
This proves \eqref{eq:4k+1-6th}.
\end{proof}

We conjecture that the following generalization of \eqref{eq:6th-a}
and Theorem \ref{thm:4k+1-6th} is still true.
\begin{conjecture}Let $n>3$ be a positive odd integer. Then
\begin{align*}
\sum_{k=0}^{(n+1)/2}[4k+1]\frac{(aq^{-1};q^2)_k(q^{-1}/a;q^2)_k(q;q^2)_k^2}
{(aq^4;q^2)_k(q^4/a;q^2)_k(q^2;q^2)_k^2} q^{4k}
&\equiv 0 \pmod{[n]\Phi_n(q)(1-aq^n)(a-q^n)};  \\%[5pt]
\intertext{in particular,}
\sum_{k=0}^{(n+1)/2}[4k+1]\frac{(q^{-1};q^2)_k^2 (q;q^2)_k^2}
{(q^4;q^2)_k^2 (q^2;q^2)_k^2} q^{4k}
&\equiv 0 \pmod{[n]\Phi_n(q)^3}.
\end{align*}
\end{conjecture}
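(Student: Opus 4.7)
The plan is to prove the more general $a$-version of the conjecture: establish
\begin{equation*}
S_{q,a} := \sum_{k=0}^{(n+1)/2}[4k+1]\frac{(aq^{-1},q^{-1}/a;q^2)_k(q;q^2)_k^2}{(aq^4,q^4/a;q^2)_k(q^2;q^2)_k^2}\,q^{4k}
\equiv 0 \pmod{[n]\Phi_n(q)(1-aq^n)(a-q^n)}.
\end{equation*}
Specializing $a\to 1$ will then yield the stated $[n]\Phi_n(q)^3$ congruence, since $(1-aq^n)(a-q^n)|_{a=1}=(1-q^n)^2$ contributes two extra factors of $\Phi_n(q)$, while for $n>3$ the denominator $(aq^4,q^4/a;q^2)_k$ remains coprime to $\Phi_n(q)$ in the relevant range of $k$ (as already noted in the proof of Theorem~\ref{thm:4k+1-6th}).

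The congruence \eqref{eq:6th-a} already supplies $S_{q,a}\equiv 0$ modulo $\Phi_n(q)(1-aq^n)(a-q^n)$, so the remaining task is to show that $[n]/\Phi_n(q)=\prod_{d\mid n,\,1<d<n}\Phi_d(q)$ divides $S_{q,a}$. Fix such a divisor $d$ and let $\zeta$ be a primitive $d$-th root of unity; write $n=dm$ (necessarily $m\geqslant 3$ odd) and denote the $k$-th summand by $c_q(k)$. Because $\zeta^{2\ell d}=1$, one has the (formal) multiplicativity
\begin{equation*}
c_\zeta(\ell d+k) = c_\zeta(\ell d)\,c_\zeta(k),
\end{equation*}
and a bookkeeping of zero orders at $q=\zeta$ reveals that $c_\zeta(k)=0$ for $(d+1)/2\leqslant k\leqslant d-1$ (the factor $(q;q^2)_k^2$ vanishes while $(q^2;q^2)_k$ does not), whereas $c_\zeta(\ell d)$ is well-defined and generically nonzero for each $\ell\geqslant 0$. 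Invoking \eqref{eq:6th-a} with $n$ replaced by $d$ and evaluating at $q=\zeta$ gives $\sum_{k=0}^{(d+1)/2}c_\zeta(k)=0$ as an identity in $a$, and combining with the vanishing just noted produces $\sum_{k=0}^{d-1}c_\zeta(k)=0$. Setting $L=d(m-1)/2$ and splitting the full sum into $(m-1)/2$ consecutive full blocks of length $d$ followed by the final partial block $\{L,\ldots,L+(d+1)/2\}$, multiplicativity gives
\begin{equation*}
S_{\zeta,a} = \Bigl(\sum_{\ell=0}^{(m-3)/2}c_\zeta(\ell d)\Bigr)\Bigl(\sum_{j=0}^{d-1}c_\zeta(j)\Bigr) + c_\zeta(L)\sum_{j=0}^{(d+1)/2}c_\zeta(j) = 0.
\end{equation*}
Since $\Phi_n(q)$, the $\Phi_d(q)$ with $1<d<n$, $d\mid n$, together with $1-aq^n$ and $a-q^n$ are pairwise coprime, combining the individual divisibilities produces the sought $a$-extension.

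The principal obstacle is the rigorous justification of the multiplicativity step: $c_\zeta(\ell d+k) = c_\zeta(\ell d)c_\zeta(k)$ is only a formal identity between rational functions of $a$ and $q$, so a careful tracking of orders of vanishing at $q=\zeta$ in each of the four classes of $q$-shifted factorials $(aq^{-1},q^{-1}/a;q^2)_k$, $(aq^4,q^4/a;q^2)_k$, $(q;q^2)_k^2$ and $(q^2;q^2)_k^2$ appearing in $c_q(k)$ is needed to ensure the manipulation is legitimate across the whole index range $0\leqslant k\leqslant(n+1)/2$. The delicate cases are the indices $k=\ell d$ (where both numerator and denominator $q$-shifted factorials have matching zero orders, forcing a genuine $0/0$ limit analysis), particularly at $k=L$, the boundary between the full and partial blocks.
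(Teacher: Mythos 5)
There is a genuine gap, and it lies in the modulus bookkeeping at the very heart of your plan. You claim that, having \eqref{eq:6th-a} (which gives the sum modulo $\Phi_n(q)(1-aq^n)(a-q^n)$), ``the remaining task'' is to show divisibility by $[n]/\Phi_n(q)=\prod_{d\mid n,\,1<d<n}\Phi_d(q)$. But combining those two coprime pieces only yields
\begin{equation*}
S_{q,a}\equiv 0 \pmod{[n](1-aq^n)(a-q^n)},
\end{equation*}
whereas the first congruence of the conjecture is modulo $[n]\Phi_n(q)(1-aq^n)(a-q^n)$, i.e.\ it contains $\Phi_n(q)^2$ already before specializing $a$. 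Your root-of-unity block argument can never supply that second power of $\Phi_n(q)$: evaluation at a primitive $n$-th (or $d$-th) root of unity detects at most a single factor of each cyclotomic polynomial. Consequently, letting $a\to1$ in what your argument actually delivers gives $[n]\Phi_n(q)^2$, which is exactly the already established Theorem~\ref{thm:4k+1-6th}, not the conjectured $[n]\Phi_n(q)^3$. The entire content of the conjecture beyond the paper's results is this extra power of $\Phi_n(q)$ (equivalently, a congruence like $\Phi_n(q)^2(1-aq^n)(a-q^n)$ for generic $a$), and nothing in your proposal addresses it; the creative-microscoping pairing of the $k$-th and $((n-1)/2-k)$-th terms used to prove \eqref{eq:6th-a} produces only one power of $\Phi_n(q)$ for generic $a$, which is precisely why the statement is left open in the paper.

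A secondary, repairable but nontrivial point: even the weaker $[n](1-aq^n)(a-q^n)$ statement via your block decomposition needs care, as you note yourself. The terms $c_\zeta(\ell d)$ are $0/0$ limits (the numerator $(q;q^2)_{\ell d}^2$ and denominator $(q^2;q^2)_{\ell d}^2$ vanish to the same order at $q=\zeta$), so the multiplicativity $c_\zeta(\ell d+k)=c_\zeta(\ell d)c_\zeta(k)$ and the split into full blocks plus the final partial block $\{L,\dots,L+(d+1)/2\}$ must be justified by a limit argument in $q$ for fixed generic $a$, in the style of the proof of \eqref{eq:4k+1,both}; this is routine but must be written out. Fixing this, however, still leaves you with a proof of (an $a$-parametric form of) Theorem~\ref{thm:4k+1-6th} rather than of the conjecture.
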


Analogously, letting $q\mapsto q^2$ and $c\to 0$ followed by
$a=q$, $b=q^{-1}$, $d=q^{-1-n}$ and
$e=q^{-1+n}$ in \eqref{eq:8phi7-2}, we can prove the following result:
\begin{equation}
\sum_{k=0}^{(n+1)/2}[4k+1]\frac{(aq^{-1};q^2)_k (q^{-1}/a;q^2)_k(q^{-1},q;q^2)_k}
{(aq^4;q^2)_k(q^4/a;q^2)_k(q^4,q^2;q^2)_k} q^{6k}
\equiv 0 \pmod{\Phi_n(q)(1-aq^n)(a-q^n)}.  \label{eq:7th-a}
\end{equation}
We label the limiting case $a\to 1$ as the following theorem.
\begin{theorem}Let $n>3$ be a positive odd integer. Then
\begin{equation}
\sum_{k=0}^{(n+1)/2}[4k+1]\frac{(q^{-1};q^2)_k^3 (q;q^2)_k}
{(q^4;q^2)_k^3 (q^2;q^2)_k} q^{6k}
\equiv 0 \pmod{\Phi_n(q)^3}.  \label{eq:8th}
\end{equation}
Moreover, if $\gcd(n,3)=1$, then the above congruence holds modulo
$[n]\Phi_n(q)^2$.
\end{theorem}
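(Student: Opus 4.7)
The plan is to deduce the theorem from the parametric congruence \eqref{eq:7th-a} by sending $a\to 1$, and then to strengthen the resulting $\Phi_n(q)^3$-divisibility to $[n]\Phi_n(q)^2$ (under $\gcd(n,3)=1$) by a roots-of-unity argument following the template of the proof of Theorem~\ref{thm:first}. Letting $a\to 1$ in \eqref{eq:7th-a} sends $(1-aq^n)(a-q^n)$ to $(1-q^n)^2$, which is divisible by $\Phi_n(q)^2$; combined with the $\Phi_n(q)$ already in the modulus, this gives $\Phi_n(q)^3$. What must be verified is that the $a$-containing denominator $(aq^4,q^4/a;q^2)_{(n+1)/2}$ remains coprime to $\Phi_n(q)$ in the limit, i.e.\ that none of the factors $1-q^{2j+2}$ for $1\le j\le (n+1)/2$ is divisible by $\Phi_n(q)$. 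Since $n$ is odd while $2j+2$ is even, $n\mid 2j+2$ would force $2j+2\ge 2n$; but $2j+2\le n+3<2n$ for $n>3$. This is precisely where the hypothesis $n>3$ enters: at $n=3$ the factor $1-q^6$ would cause trouble. Hence \eqref{eq:8th} holds modulo $\Phi_n(q)^3$.

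For the strengthening to $[n]\Phi_n(q)^2$ under $\gcd(n,3)=1$, denote by $c_q(k)$ the $k$-th summand on the left of \eqref{eq:8th}. Let $d\mid n$ with $d>1$ and let $\zeta$ be a primitive $d$-th root of unity. Since $n$ is odd and $\gcd(n,3)=1$, $d$ is odd and $d\neq 3$, hence $d\ge 5>3$, so \eqref{eq:8th} applies with $n$ replaced by $d$ and gives $\sum_{k=0}^{(d+1)/2}c_\zeta(k)=0$; the same parity argument as above shows the denominators of the relevant $c_\zeta(k)$ remain nonzero. For $k$ in the range $(d+1)/2<k\le d-1$ the factor $(q;q^2)_k$ contributes $1-q^d$, which vanishes at $\zeta$, so each such term vanishes individually; combined with the above, $\sum_{k=0}^{d-1}c_\zeta(k)=0$. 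The multiplicative relation
\begin{equation*}
c_\zeta(\ell d+k)=c_\zeta(\ell d)\,c_\zeta(k),
\end{equation*}
which follows from $\zeta^{2\ell d}=1$ and the shape of the summand, then permits the splitting of $\sum_{k=0}^{(n+1)/2}c_\zeta(k)$ into complete blocks of length $d$ together with a terminal partial block, exactly as in the proof of Theorem~\ref{thm:first}. Each complete block sums to zero, and the terminal piece matches, up to an index shift, one of the already-vanishing half-range sums. Therefore $\Phi_d(q)$ divides the left-hand side of \eqref{eq:8th} for every $d\mid n$ with $d>1$, whence so does $[n]=\prod_{d\mid n,\,d>1}\Phi_d(q)$. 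Combining with the $\Phi_n(q)^3$ divisibility and noting that $\Phi_n(q)$ occurs in $[n]$ with multiplicity one yields divisibility by $\mathrm{lcm}([n],\Phi_n(q)^3)=[n]\Phi_n(q)^2$.

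The principal subtlety is the necessity of $\gcd(n,3)=1$ for the strengthening: were $3$ to divide $n$, the divisor $d=3$ would need separate treatment, and the roots-of-unity argument fails immediately because $(q^4;q^2)_2$ vanishes at a primitive cube root $\zeta$ of unity (the factor $1-\zeta^6$ equals zero), so $c_\zeta(k)$ is not even defined for $k\ge 2$. A secondary technicality, handled verbatim as in the proof of Theorem~\ref{thm:first}, is to align the terminal partial block arising in the block decomposition of $\sum_{k=0}^{(n+1)/2}c_\zeta(k)$ with the vanishing half-range sum $\sum_{k=0}^{(d+1)/2}c_\zeta(\cdot)$, keeping track of the appropriate shift.
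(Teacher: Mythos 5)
Your proposal is correct and follows essentially the route the paper intends: the paper simply declares \eqref{eq:8th} to be the limiting case $a\to1$ of \eqref{eq:7th-a} (your parity check on $(aq^4,q^4/a;q^2)_{(n+1)/2}$ is exactly what the hypothesis $n>3$ is for), and the strengthening to the modulus $[n]\Phi_n(q)^2$ is obtained by the same root-of-unity block argument as in the proof of Theorem~\ref{thm:first}, just as you outline, with $\gcd(n,3)=1$ guaranteeing every divisor $d>1$ of $n$ satisfies $d\geqslant 5$ so the case $n=d$ of \eqref{eq:8th} is available. The one point you gloss over is the offset $k\equiv d-1\pmod d$, where the denominator factor $(q^4;q^2)_k$ also vanishes at $\zeta$, so citing the numerator factor $(q;q^2)_k$ alone does not yet give the termwise vanishing; a short order count (the numerator $(q^{-1};q^2)_k^3(q;q^2)_k$ vanishes to order $4$ at $\zeta$ against order $3$ in $(q^4;q^2)_k^3(q^2;q^2)_k$) shows the term still vanishes and the multiplicative relation holds in the limiting sense, so your conclusion stands.
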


It seems that the following generalization of
\eqref{eq:7th-a} and \eqref{eq:8th} still holds.
\begin{conjecture}Let $n>3$ be a positive odd integer. Then
\begin{align*}
\sum_{k=0}^{(n+1)/2}[4k+1]\frac{(aq^{-1};q^2)_k (q^{-1}/a;q^2)_k(q^{-1},q;q^2)_k}
{(aq^4;q^2)_k(q^4/a;q^2)_k(q^4,q^2;q^2)_k} q^{6k}
&\equiv 0 \pmod{\Phi_n^2 (q)(1-aq^n)(a-q^n)};  \\%[5pt]
\intertext{in particular,}
\sum_{k=0}^{(n+1)/2}[4k+1]\frac{(q^{-1};q^2)_k^3 (q;q^2)_k}
{(q^4;q^2)_k^3 (q^2;q^2)_k} q^{6k}
&\equiv 0 \pmod{\Phi_n(q)^4}.
\end{align*}
\end{conjecture}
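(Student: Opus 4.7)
The plan is to deduce the second (unparametrized) congruence from the first by letting $a\to 1$, and to establish the first via the creative microscoping framework used throughout the paper. Since $\Phi_n(q)^2$, $1-aq^n$ and $a-q^n$ are pairwise coprime in $\mathbb{Z}[a,q]$, it suffices to verify the $a$-parametrized congruence modulo each of these three polynomials separately and combine via the Chinese remainder theorem. As $a\to 1$ the factor $(1-aq^n)(a-q^n)$ specializes to $(1-q^n)^2$, contributing $\Phi_n(q)^2$ and delivering the announced $\Phi_n(q)^4$ divisibility, provided the denominators $(aq^4,q^4/a;q^2)_k$ remain coprime to $\Phi_n(q)$ at $a=1$ for $0\leqslant k\leqslant (n+1)/2$. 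This coprimality holds for $n>3$ because the only candidate factor $1-q^n$ never appears in that range of $k$.

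The divisibility modulo $(1-aq^n)(a-q^n)$ is already contained in \eqref{eq:7th-a}, so no new work is needed there: at $a=q^{\mp n}$ the limiting Watson transformation \eqref{eq:8phi7-2} (with $q\mapsto q^2$, $c\to 0$, $a=q$, $b=q^{-1}$, $d=q^{-1-n}$, $e=q^{-1+n}$) converts the left-hand side into a multiple of a terminating ${}_3\phi_2$ series which vanishes by the $q$-Chu--Vandermonde evaluation \eqref{eq:q-Chu}.

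The nontrivial part is promoting the existing pairing argument for divisibility by $\Phi_n(q)$ to one for $\Phi_n(q)^2$. I would first apply Lemma~\ref{lem:2.1} to pair the $k$-th and $((n-1)/2-k)$-th summands, recovering the known mod $\Phi_n(q)$ vanishing, and then expand to the next order, writing $q^n = 1 + (q^n-1)$ and retaining the $(q^n-1)$-remainder of the identity in Lemma~\ref{lem:2.1}. This reduces the task to showing that a companion sum of remainder terms is itself divisible by $\Phi_n(q)$. I plan to attack this companion sum either by (a) finding a sharper Watson-type specialization in which the transformed side exhibits a visible factor $(1-q^n)^2$ rather than just $(1-q^n)$, e.g.\ by applying a parametric derivative before specializing $a$, or (b) invoking Rahman's quadratic transformation \eqref{eq:1-aq3k} to the same series to generate a complementary cancellation. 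Equivalently, one may phrase this step as proving that both the function and its $q$-derivative vanish at each primitive $n$-th root of unity.

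The main obstacle is producing this second-order $\Phi_n$-cancellation in a clean closed form; the first-order cancellation was already enough for \eqref{eq:8th}, but the conjecture claims a strictly stronger divisibility, which demands new input beyond the pairing of Lemma~\ref{lem:2.1}. Once this step is carried out, the Chinese remainder theorem assembles the three pieces into the first congruence of the conjecture, and passage to $a\to 1$ delivers the second congruence as outlined above.
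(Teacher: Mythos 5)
This statement is left as an open conjecture in the paper (it is stated immediately after \eqref{eq:7th-a} and \eqref{eq:8th} precisely because the authors could not prove it), and your proposal does not close it either: the essential step is missing. What you actually establish is only what the paper already has. The congruence modulo $(1-aq^n)(a-q^n)$ is indeed contained in \eqref{eq:7th-a}, and the pairing of the $k$-th and $((n+1)/2-k)$-th terms via Lemma~\ref{lem:2.1} (more precisely via \eqref{eq:mod-phi}) gives vanishing modulo the \emph{first} power of $\Phi_n(q)$ only. The conjecture's new content is divisibility of the $a$-parametric sum by $\Phi_n(q)^2$, i.e.\ second-order vanishing at every primitive $n$-th root of unity, and at that point your argument becomes a plan rather than a proof: "expand to the next order and show the companion sum of remainder terms is divisible by $\Phi_n(q)$", to be attacked "either by (a) a sharper Watson-type specialization \dots or (b) Rahman's quadratic transformation". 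Neither option is carried out, and there is no evidence either would succeed — the Watson specialization at $a=q^{\pm n}$ only ever produces the factors $1-aq^n$ and $a-q^n$, not an extra power of $\Phi_n(q)$, and a $q$-derivative of the specialized identity does not obviously yield a closed evaluation. This is exactly the obstruction the authors point to in Section~\ref{sec:concl}, where they remark that $q$-congruences modulo $[n]\Phi_n(q)^3$ or $\Phi_n(q)^4$ are very difficult and that essentially only one such result is known.

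The peripheral parts of your outline are fine: the three moduli $\Phi_n(q)^2$, $1-aq^n$, $a-q^n$ are pairwise coprime, so it would suffice to treat them separately, and the passage $a\to1$ from the parametric congruence to the $\Phi_n(q)^4$ statement is legitimate for $n>3$ since the denominators $(aq^4,q^4/a;q^2)_k$ with $0\leqslant k\leqslant(n+1)/2$ remain coprime to $\Phi_n(q)$ in the limit ($n$ odd, all exponents even and smaller than $2n$). But these reductions were never the difficulty; the heart of the conjecture — the extra factor $\Phi_n(q)$ beyond \eqref{eq:7th-a} — remains unproved in your proposal, so it cannot be accepted as a proof.
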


We also have the following similar result.
\begin{theorem}\label{thm:4k+1-7th}
Let $n>1$ be a positive odd integer. Then
\begin{equation}
\sum_{k=0}^{(n-1)/2}[4k+1]\frac{(q^{-1};q^2)_k (q;q^2)_k^3}
{(q^4;q^2)_k (q^2;q^2)_k^3} q^{2k}
\equiv 0 \pmod{[n]^3}. \label{eq:4k+1-7th}
\end{equation}
\end{theorem}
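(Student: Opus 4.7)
The plan is to apply Watson's terminating $_8\phi_7$ transformation (\cite[Appendix~(III.18)]{GR}, with $q\mapsto q^2$) with parameters $a=q$, $b=q^{-1}$, $c=q$, $d=q$, $e=q^{n+2}$, and terminating parameter $q^{-2N}=q^{1-n}$, where $N=(n-1)/2$. With these choices $a^{2}q^{2N+4}/(bcde)=q^{2}$, matching the $q^{2k}$ in the theorem. The very-well-poised block collapses to $[4k+1](q;q^{2})_k/(q^{2};q^{2})_k$; the upper parameter $e=q^{n+2}$ is matched by the lower parameter $aq^{2N+2}=q^{n+2}$, and the upper terminating parameter $q^{1-n}$ is matched by the lower parameter $aq^{2}/e=q^{1-n}$, so both pairs of $q$-Pochhammers cancel. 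The remaining factors $(q^{-1},q,q;q^{2})_k/(q^{4},q^{2},q^{2};q^{2})_k$ then make the summand of the $_8\phi_7$ agree exactly with the left-hand side of~\eqref{eq:4k+1-7th}.

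Watson's identity equates this $_8\phi_7$ to the prefactor $(q^{3},q^{-n};q^{2})_N/(q^{2},q^{1-n};q^{2})_N$ times a $_4\phi_3\bigl[q^{3},q,q^{n+2},q^{1-n};q^{4},q^{2},q^{3};q^{2},q^{2}\bigr]$. A $(q^{3};q^{2})_k$ cancels between upper and lower parameters, reducing the $_4\phi_3$ to $_3\phi_2\bigl[q,q^{n+2},q^{1-n};q^{4},q^{2};q^{2},q^{2}\bigr]$. This series is Saalschütz-balanced since $q^{4}\cdot q^{2}=q^{6}=q\cdot q^{n+2}\cdot q^{1-n}\cdot q^{2}$, so $q$-Saalschütz's summation~\cite[Appendix~(II.12)]{GR} evaluates it as $(q^{3},q^{2-n};q^{2})_N/(q^{4},q^{1-n};q^{2})_N$.

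Multiplying the two evaluated factors and applying the reflection identities
\begin{align*}
(q^{-n};q^{2})_N &= (-1)^{N}q^{-N^{2}-2N}(q^{3};q^{2})_N,\\
(q^{2-n};q^{2})_N &= (-1)^{N}q^{-N^{2}}(q;q^{2})_N,\\
(q^{1-n};q^{2})_N &= (-1)^{N}q^{-N^{2}-N}(q^{2};q^{2})_N,
\end{align*}
which follow from $1-q^{\alpha+2j}=-q^{\alpha+2j}(1-q^{-\alpha-2j})$, all powers of $q$ and all signs cancel, yielding the clean closed form
\begin{equation*}
\text{LHS of~\eqref{eq:4k+1-7th}}
\;=\;\frac{(q^{3};q^{2})_N^{3}\,(q;q^{2})_N}{(q^{2};q^{2})_N^{3}\,(q^{4};q^{2})_N}.
\end{equation*}

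Finally I would verify the $[n]^{3}$-divisibility cyclotomic-by-cyclotomic. For each divisor $d>1$ of $n$ (necessarily odd), set $t=n/d$ (also odd). A direct enumeration of the odd or even multiples of $d$ in the relevant ranges shows that $\Phi_d(q)$ appears $(t-1)/2$ times in $(q;q^{2})_N$, $(t+1)/2$ times in $(q^{3};q^{2})_N$, $(t-1)/2$ times in $(q^{2};q^{2})_N$, and $(t-1)/2$ times in $(q^{4};q^{2})_N$, the extra copy in $(q^{3};q^{2})_N$ coming from the factor $1-q^{n}$. The net $\Phi_d$-multiplicity of the closed form is therefore
$3\cdot\frac{t+1}{2}+\frac{t-1}{2}-3\cdot\frac{t-1}{2}-\frac{t-1}{2}=3$,
independent of $d$; since the $\Phi_d$ for $d\mid n$, $d>1$ are pairwise coprime and $[n]^{3}=\prod_{d\mid n,\,d>1}\Phi_d(q)^{3}$, this yields $[n]^{3}\mid\,$(numerator of the reduced form), as required. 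The main obstacle is isolating the precise Watson specialization that produces the summand with only a single factor $(q^{-1};q^{2})_k$ while still terminating at $k=(n-1)/2$ and reducing to a Saalschützian $_3\phi_2$; once this is arranged, the $q$-Saalschütz step and the cyclotomic count proceed mechanically.
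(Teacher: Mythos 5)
Your proof is correct, and it lands on exactly the same closed form as the paper: your expression $(q^{3};q^{2})_N^{3}(q;q^{2})_N/\bigl((q^{2};q^{2})_N^{3}(q^{4};q^{2})_N\bigr)$ with $N=(n-1)/2$ equals, via $(q^3;q^2)_N=[n]\,(q;q^2)_N$, the paper's $[n]^{3}(q;q^{2})_N^{4}/\bigl((q^{4};q^{2})_N(q^{2};q^{2})_N^{3}\bigr)$. The difference is in how the two steps are executed. The paper gets the evaluation by a one-line induction on the upper limit $N$ (the partial sums telescope to $(q;q^2)_N(q;q^2)_{N+1}^3/\bigl((1-q)^3(q^4;q^2)_N(q^2;q^2)_N^3\bigr)$), which is lighter than your Watson-plus-$q$-Saalsch\"utz route and yields the identity for every partial sum, not only at $N=(n-1)/2$; on the other hand, your specialization of Watson's terminating transformation is set up correctly --- the $e$/$f$ parameter pairs cancel for $k\leqslant N$ while the terminating factor still cuts the sum at $k=N$, the argument does come out to $q^{2}$, and the residual $_3\phi_2$ is balanced so $q$-Saalsch\"utz applies --- and it has the merit of explaining hypergeometrically where the closed form ``comes from''. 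For the divisibility, the paper rewrites the closed form as $[n]^{3}(1+q)\begin{bmatrix}n-1\\(n-1)/2\end{bmatrix}^{4}\big/\bigl([n+1](-q;q)_{N}^{8}\bigr)$ and quotes $\gcd([n],[n+1])=1$ and $\gcd([n],(-q;q)_{N})=1$, whereas you count $\Phi_d$-multiplicities divisor by divisor; your enumeration of the odd and even multiples of $d$ in the four $q$-shifted factorials is right (each $1-q^{m}$ carries $\Phi_d$ exactly once when $d\mid m$), giving net multiplicity $3$ for every $d\mid n$ with $d>1$, which indeed forces $[n]^{3}$ to divide the numerator of the reduced form. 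Both routes are valid; yours trades the paper's slick induction and gcd shortcut for a heavier derivation of the identity but a more self-contained, purely cyclotomic divisibility check.
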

\begin{proof}It is easy to see by induction on $N$ that
\begin{equation*}
\sum_{k=0}^{N}[4k+1]\frac{(q^{-1};q^2)_k (q;q^2)_k^3}
{(q^4;q^2)_k (q^2;q^2)_k^3} q^{2k}
=\frac{(q;q^2)_{N} (q;q^2)_{N+1}^3}{(1-q)^3(q^4;q^2)_{N} (q^2;q^2)_N^3}.
\end{equation*}
Putting $N=(n-1)/2$ in the above identity and using \eqref{eq:q-bino-1/2},
we get
\begin{align}
\sum_{k=0}^{(n-1)/2}[4k+1]\frac{(q^{-1};q^2)_k (q;q^2)_k^3}
{(q^4;q^2)_k (q^2;q^2)_k^3} q^{2k}
&=\frac{[n]^3  (q;q^2)_{(n-1)/2}^4}
{(q^4;q^2)_{(n-1)/2} (q^2;q^2)_{(n-1)/2}^3} \notag \\[5pt]
&=\frac{[n]^3 (1+q)}{[n+1](-q;q)_{(n-1)/2}^8}
\begin{bmatrix}n-1\\(n-1)/2\end{bmatrix}^4. \label{eq:4k+1-8th}
\end{align}
The proof then follows from the fact that $\gcd([n],[n+1])=1$
and $\gcd([n],(-q;q)_N)=1$ (for odd $n$).
\end{proof}
Let $n=p$ and $q=1$ in \eqref{eq:4k+1-8th}. Using Fermat's little theorem,
we immediately obtain the following conclusion.
\begin{corollary}We have
\begin{equation*}
\sum_{k=0}^{(p-1)/2}(4k+1)\frac{(-\frac{1}{2})_k (\frac{1}{2})_k^3 }{(k+1)!k!^3}
\equiv 2p^3 \pmod{p^4}.
\end{equation*}
\end{corollary}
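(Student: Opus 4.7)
The plan is to read off the corollary as the $q=1$, $n=p$ specialization of the closed form \eqref{eq:4k+1-8th} established in the proof of Theorem~\ref{thm:4k+1-7th}, and then to massage the resulting elementary expression modulo $p^4$.

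First I would verify that the left-hand side of \eqref{eq:4k+1-8th} tends to the sum appearing in the corollary as $q\to 1$. This is routine: using the standard limits
\begin{equation*}
\lim_{q\to 1}\frac{(q^{a};q^{2})_{k}}{(1-q^{2})^{k}}=\left(\tfrac{a}{2}\right)_{k},
\end{equation*}
the four factors $(q^{-1};q^{2})_{k}$, $(q;q^{2})_{k}^{3}$, $(q^{4};q^{2})_{k}$, $(q^{2};q^{2})_{k}^{3}$ on the left reduce, after canceling the common $(1-q^{2})$-powers, to $(-\tfrac12)_{k}$, $(\tfrac12)_{k}^{3}$, $(2)_{k}=(k+1)!$ and $(1)_{k}^{3}=k!^{3}$ respectively, while $[4k+1]_{q}\to 4k+1$ and $q^{2k}\to 1$.

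Next I would take the $q\to 1$, $n\to p$ limit of the right-hand side of \eqref{eq:4k+1-8th}. Since $[p]_{q}\to p$, $[p+1]_{q}\to p+1$, $1+q\to 2$, and $(-q;q)_{(p-1)/2}\to 2^{(p-1)/2}$, the identity collapses to
\begin{equation*}
\sum_{k=0}^{(p-1)/2}(4k+1)\frac{(-\tfrac12)_{k}(\tfrac12)_{k}^{3}}{(k+1)!\,k!^{3}}
=\frac{2p^{3}}{(p+1)\,2^{4(p-1)}}\binom{p-1}{(p-1)/2}^{4}.
\end{equation*}

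Finally, to deduce the congruence modulo $p^{4}$ I only need to show that the unit factor
\begin{equation*}
\frac{\binom{p-1}{(p-1)/2}^{4}}{(p+1)\,2^{4(p-1)}}
\end{equation*}
is congruent to $1$ modulo $p$, since multiplying this by $p^{3}$ gives an equality mod $p^{4}$ with $2p^{3}$. Here Fermat's little theorem handles $2^{4(p-1)}\equiv 1\pmod p$ and $p+1\equiv 1\pmod p$, while the classical congruence $\binom{p-1}{k}\equiv(-1)^{k}\pmod p$ (applied with $k=(p-1)/2$) gives $\binom{p-1}{(p-1)/2}^{4}\equiv 1\pmod p$. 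No step here is an obstacle in the usual sense; the only thing to be careful about is tracking the $(1-q^{2})$ factors cleanly when passing to the classical limit, so that the identification of the left-hand side as the claimed classical sum is unambiguous.
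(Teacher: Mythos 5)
Your proposal is correct and is exactly the paper's argument: the paper also obtains the corollary by setting $n=p$, $q=1$ in the closed-form identity \eqref{eq:4k+1-8th} and then invoking Fermat's little theorem (together with $\binom{p-1}{(p-1)/2}\equiv(-1)^{(p-1)/2}\pmod p$ and $p+1\equiv1\pmod p$) to see that the unit multiplying $2p^3$ is $\equiv1\pmod p$. You have merely written out the routine $q\to1$ limits that the paper leaves implicit.
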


We end this section with the following conjecture, which is similar
to Conjecture \ref{conj:2dk+1-new}. Similarly as in the proof of
Theorem \ref{thm:third-new}, we can confirm it for $b=1$.
\begin{conjecture}
Let $n$ and $d$ be positive integers with $d\geqslant 3$ and $\gcd(n,d)=1$. Then
\begin{align*}
&\sum_{k=0}^{n-1} [2dk-1]\frac{(aq^{-1},q^{-1}/a,bq^{-1},
q^{-1}/b;q^d)_k (q^{-1};q^d)_k^2}
{(aq^d, q^d/a, bq^d, q^d/b;q^d)_k  (q^d;q^d)_k^2} q^{(2d+3)k} \notag \\[5pt]
&\quad\equiv
\begin{cases} 0  \pmod{[n]\Phi_n(q)}, &\text{if $n\equiv 1\pmod d$,}\\[10pt]
 0 \pmod{[n]}, &\text{otherwise.}
\end{cases}
\end{align*}
\end{conjecture}

%%%%%%%%%%%%%%%%%%%%%%%%%%%%%%%%%%%%%%%%%%%%%%%%%%%%%%%%%%%%%%%%%%%%%%%%%%%%%%%%%%%%%%%%%%%%%%%%%%%%%%%%%%%%%%%%%%%%%%%%%%%%%%%%%%%%%%%%%%%%%%%%%%%%%%%%%%%%%%%%%%%%%%%%%%%%%%%%%%%%%%%%%%%%%%%%
\section{Proof of a three-parametric $q$-congruence from
a quadratic transformation of Rahman}
\label{sec:three-para}
In this section, we confirm a three-parametric $q$-congruence conjecture
of the first author and Zudilin \cite[Conj.~4.6]{GuoZu}.
\begin{theorem}Let $n$ be a positive odd integer. Then,
modulo $[n](1-aq^n)(a-q^n)$,
\begin{equation}
\sum_{k=0}^{M}[3k+1]\frac{(aq,q/a,q;q^2)_k (q/b,q/c,bc;q)_k}
{(aq,q/a,q;q)_k (bq^2,cq^2,q^3/bc;q^2)_k}  q^k
\equiv\frac{(bcq,q^2/b,q^2/c;q^2)_{(n-1)/2}}
{(q^3/bc,bq^2,cq^2;q^2)_{(n-1)/2}}[n],  \label{eq:3-par}
\end{equation}
where $M=n-1$ or $(n-1)/2$.
\end{theorem}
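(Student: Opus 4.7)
The approach follows the creative microscoping framework of Guo--Zudilin: I verify the target congruence modulo each of the three pairwise coprime polynomial factors $[n]$, $1-aq^n$, and $a-q^n$ separately, and then combine them by the Chinese remainder theorem in $\mathbb{Z}[a,b,c,q]$.

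For the factors $1-aq^n$ and $a-q^n$, I specialize $a=q^{-n}$ (respectively $a=q^n$). In either case the factor $(aq;q^2)_k$ (resp.\ $(q/a;q^2)_k$) becomes $(q^{1-n};q^2)_k$, which has a vanishing factor at $k=(n+1)/2$, so the summation effectively terminates at $(n-1)/2$. The resulting well-poised quadratic $q$-hypergeometric series is precisely of the shape handled by Rahman's quadratic transformation \eqref{eq:1-aq3k}; applying that transformation in its terminating form should yield a closed-form product evaluation. A routine prefactor simplification, parallel to the manipulation $(q;q^2)_{(n+1)/2}/(q^{2-n};q^2)_{(n+1)/2}=[n]\cdot q^{(1-n)/2}$ used in \eqref{eq:fraction}, then extracts the factor $[n]$ together with the explicit ratio $(bcq,q^2/b,q^2/c;q^2)_{(n-1)/2}/(q^3/bc,bq^2,cq^2;q^2)_{(n-1)/2}$ asserted on the right-hand side.

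For the modulus $[n]=\prod_{d\mid n,\,d>1}\Phi_d(q)$, note that the right-hand side vanishes at every root of unity whose order divides $n$ and exceeds $1$, so it suffices to prove the same for the left-hand side. I would first establish the case modulo $\Phi_n(q)$ by the pairing argument used throughout Sections~\ref{sec:proofs}--\ref{sec:more}: Lemma~\ref{lem:2.1} controls the $(aq,q/a;q^2)_k$ block, and the same technique (using $q^n\equiv 1\pmod{\Phi_n(q)}$ to reverse Pochhammer products) handles the $(q/b,q/c,bc;q)_k$ and $(bq^2,cq^2,q^3/bc;q^2)_k$ factors; together with a direct check that $[3((n-1)/2-k)+1]q^{3k-(n-1)/2}\equiv -[3k+1]\pmod{\Phi_n(q)}$, this causes the $k$-th and $((n-1)/2-k)$-th summands to cancel modulo $\Phi_n(q)$. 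When $(n-1)/2$ is even, a central term remains and must be shown to vanish separately, which I expect to happen because either $[3k+1]$ or an accompanying numerator factor $(q;q^2)_k$ or $(bc;q)_k$ carries $\Phi_n(q)$ at the middle index $k=(n-1)/4$. Once the $\Phi_n(q)$-divisibility is secured, the root-of-unity and periodicity argument from the proof of Theorem~\ref{thm:first} lifts it to divisibility by $\Phi_d(q)$ for every $d\mid n$ with $d>1$, hence by $[n]$, and simultaneously covers both summation ranges $M=n-1$ and $M=(n-1)/2$.

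The chief obstacle is the correct parameter identification in Rahman's quadratic transformation: one must match the mixed $q$- and $q^2$-Pochhammer blocks in our summand to Rahman's template and verify that the evaluation of the prefactor, after telescoping and reindexing, produces exactly $[n]$ times the stated product in $b$ and $c$; this is bookkeeping-heavy and easy to get wrong. The pairing step modulo $\Phi_n(q)$, together with the central-term analysis when $n\equiv 1\pmod{4}$, is the secondary delicate point.
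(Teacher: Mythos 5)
Your overall framing (verify the congruence modulo $[n]$, $1-aq^n$, $a-q^n$ separately and combine) matches the paper, but the argument you propose for the modulus $[n]$ does not work. First, the claimed termwise relation $[3((n-1)/2-k)+1]\,q^{3k-(n-1)/2}\equiv -[3k+1]\pmod{\Phi_n(q)}$ is false: since $q^{(3n-1)/2-3k}\equiv q^{(n-1)/2-3k}$, the left-hand side equals $-(1-q^{3k-(n-1)/2})/(1-q)$, and equality with $-[3k+1]$ would force $q^{(n+1)/2}\equiv 1\pmod{\Phi_n(q)}$, which is impossible. More decisively, the whole pairing scheme cannot succeed for this quadratic series: take $n=5$ and generic $b,c$; the central term $k=1$ is $[4]\,q\,(1-q/b)(1-q/c)(1-bc)/\bigl((1-bq^2)(1-cq^2)(1-q^3/bc)\bigr)$, which does not vanish at a primitive fifth root of unity, and none of $[3k+1]$, $(q;q^2)_k$, $(bc;q)_k$ carries $\Phi_n(q)$ at $k=(n-1)/4$; and since the full sum really is $\equiv 0\pmod{\Phi_n(q)}$, the pair $k=0,2$ cannot cancel either. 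The mixed base-$q$/base-$q^2$ summand with free $b,c$ simply lacks the reflection symmetry that Lemma~\ref{lem:2.1} exploits for the very-well-poised sums of Sections~\ref{sec:proofs}--\ref{sec:more} (your $n=3$ check is not evidence, as there the single ``pair'' is the whole sum). The paper proves \eqref{eq:3-par-2} by a different mechanism: in Rahman's transformation \eqref{eq:1-aq3k} it sets Rahman's $a=q^{1-n}$, $d=aq$, and replaces $b,c$ by $q/b,q/c$ (keeping the congruence parameter $a$ free); the right-hand side then vanishes identically because of the factor $(q^{3-n};q^2)_\infty$, producing an exact identity equal to $0$ whose summand is congruent, term by term via $q^n\equiv 1\pmod{\Phi_n(q)}$, to the general summand of \eqref{eq:3-par}; only after that does the root-of-unity argument from the proof of Theorem~\ref{thm:first} lift the result from $\Phi_n(q)$ to $[n]$ for both ranges $M=n-1$ and $M=(n-1)/2$.

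Concerning the moduli $1-aq^n$ and $a-q^n$: the paper does not reprove this part at all, but cites \cite[Thm.~4.7]{GuoZu} (and \cite[Thm.~4.8]{GuoZu} for $c=1$). Your plan to rederive it is plausible in spirit but underdeveloped: with the congruence parameter specialized to $q^{\pm n}$ one must take Rahman's $a=q$ and $\{d,aq/d\}=\{q^{1-n},q^{1+n}\}$, and the resulting ${}_3\phi_2$ on the right of \eqref{eq:1-aq3k} has lower parameters $q^{2\pm n}$ and generic upper parameters $q/b,q/c,bc$, hence is nonterminating (balanced); turning it into the stated product $[n]\,(bcq,q^2/b,q^2/c;q^2)_{(n-1)/2}/(q^3/bc,bq^2,cq^2;q^2)_{(n-1)/2}$ requires an additional nonterminating evaluation, not merely ``routine prefactor simplification.'' That half of your proposal is incomplete but repairable (e.g.\ by citation); the $[n]$ half needs a genuinely different argument along the lines just described.
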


The congruence \eqref{eq:3-par} modulo $(1-aq^n)(a-q^n)$ has already
been proved by the first author and Zudilin \cite[Thm.~4.7]{GuoZu}.
Moreover,  the congruence \eqref{eq:3-par} with $c=1$  was established
in \cite[Thm.~4.8]{GuoZu}. Therefore, it remains to be proven
that \eqref{eq:3-par} holds modulo $[n]$, i.e.,
\begin{equation}
\sum_{k=0}^{M}[3k+1]\frac{(aq,q/a,q;q^2)_k (q/b,q/c,bc;q)_k}
{(aq,q/a,q;q)_k (bq^2,cq^2,q^3/bc;q^2)_k}  q^k
\equiv 0 \pmod{[n]}.  \label{eq:3-par-2}
\end{equation}
\begin{proof}
We need to use a quadratic transformation formula of Rahman \cite{Rahman}
(see also \cite[Eq.~(3.8.13)]{GR}):
\begin{align}
&\sum_{k=0}^\infty \frac{(1-aq^{3k})(a,d,aq/d;q^2)_k (b,c, aq/bc;q)_k}
{(1-a)(aq/d,d,q;q)_k (aq^2/b,aq^2/c,bcq;q^2)_k}q^k   \notag\\[5pt]
&\quad=\frac{(aq^2,bq,cq,aq^2/bc;q^2)_\infty}{(q,aq^2/b,aq^2/c,bcq;q^2)_\infty}
\,{}_{3}\phi_{2}\!\left[\begin{array}{c}
b,\,c,\,aq/bc\\
dq,\,aq^2/d
\end{array};q^2,\, q^2
\right], \label{eq:1-aq3k}
\end{align}
provided $d$ or $aq/d$ is not of the form $q^{-2n}$, $n$ a non-negative
integer. It is clear that \eqref{eq:3-par-2} is true for $n=1$. We now
suppose that $n>1$.
Let $a=q^{1-n}$ in \eqref{eq:1-aq3k} and then we further set $d=aq$ and
replace $b$ and $c$ with $q/b$ and $q/c$, respectively.
Then the left-hand side of \eqref{eq:1-aq3k}
terminates at $k=(n-1)/2$, and the right-side of \eqref{eq:1-aq3k}
vanishes because the numerator contains the factor
$(q^{3-n};q^2)_\infty$. Namely, we have
\begin{equation*}
\sum_{k=0}^{(n-1)/2}\frac{1-q^{3k+1-n}}{1-q^{1-n}}
\frac{(aq,q^{1-n}/a,q^{1-n};q^2)_k (q/b,q/c,bcq^{-n};q)_k}
{(aq,q^{1-n}/a,q;q)_k (bq^{2-n},cq^{2-n},q^3/bc;q^2)_k}  q^k
=0.
\end{equation*}
Since $q^{n}\equiv 1\pmod{\Phi_n(q)}$, we immediately get
\begin{equation*}
\sum_{k=0}^{(n-1)/2}[3k+1]\frac{(aq,q/a,q;q^2)_k (q/b,q/c,bc;q)_k}
{(aq,q/a,q;q)_k (bq^2,cq^2,q^3/bc;q^2)_k}  q^k
\equiv 0 \pmod{\Phi_n(q)}.
\end{equation*}
Finally, the proof of \eqref{eq:3-par-2} is completely analogous to that
of Theorem~\ref{thm:first} (more precisely, to the proofs of \eqref{eq:4k+1}
and \eqref{eq:4k+1,b}).
\end{proof}

Letting $a,b\to 1$ and $c\to 0$ in \eqref{eq:3-par}, we obtain
\begin{equation}
\sum_{k=0}^{M}[3k+1]\frac{(q;q^2)_k^3 q^{-{\binom{k+1}2} } }
{(q;q)_k^2 (q^2;q^2)_k}
\equiv q^{(1-n)/2}[n] \pmod{[n]\Phi_n(q)^2}   \label{eq:q-div-WZ-1}
\end{equation}
(see also \cite{Guo4}), while letting $a,b\to 1$ and $c\to -1$ in
\eqref{eq:3-par}, we get
\begin{equation}
\sum_{k=0}^{M}[3k+1]\frac{(q;q^2)_k^3 (-1;q)_k q^k}{(q;q)_k^3 (-q^2,-q^3;q^2)_k }
\equiv \frac{1+q}{1+q^n}[n] \pmod{[n]\Phi_n(q)^2}.   \label{eq:q-div-WZ-2}
\end{equation}
It is easy to see that both \eqref{eq:q-div-WZ-1} and \eqref{eq:q-div-WZ-2}
are $q$-analogues of the following supercongruence:
\begin{equation}
\sum_{k=0}^{M} (3k+1)\frac{(\frac{1}{2})_k^3}{k!^3}2^{2k}
\equiv p\pmod{p^3}, \label{eq:div-1}
\end{equation}
where $M=p-1$ or $M=(p-1)/2$.
The congruence \eqref{eq:div-1} with $M=(p-1)/2$ was first proved
by Guillera and Zudilin \cite{GuZu} using the WZ (Wilf--Zeilberger) method.
For $M=p-1$, Hu \cite{Hu} proved an even stronger congruence, namely
\begin{equation}
\sum_{k=0}^{p-1} (3k+1)\frac{(\frac{1}{2})_k^3}{k!^3}2^{2k}
\equiv p\pmod{p^4}.
\end{equation}
Also the $M=(p-1)/2$ case of \eqref{eq:div-1} can be extended
to a congruence modulo $p^4$, namely
\begin{equation}
\label{eq:E1}
\sum_{k=0}^{(p-1)/2} (3k+1)\frac{(\frac{1}{2})_k^3}{k!^3}2^{2k}
\equiv p+(-1)^{(p-1)/2}2p^3E_{p-3}\pmod{p^4},
\end{equation}
where $E_{p-3}$ is the $(p-3)$-th Euler number, which was conjectured
by Sun~\cite[Conj.~5.1(ii)]{Sun4} and recently proved by Mao and
Zhang~\cite{MZ}.

Here we would like to propose a supercongruence similar to \eqref{eq:div-1}.
\begin{conjecture}\label{conj:3k-1}
We have
\begin{equation}
\sum_{k=0}^{(p+1)/2} (3k-1)\frac{(-\frac{1}{2})_k^2
(\frac{1}{2})_k}{k!^3}2^{2k}
\equiv p\pmod{p^3}. \label{eq:3k-1}
\end{equation}
\end{conjecture}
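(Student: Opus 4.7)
The plan is to apply the creative microscoping method of Sections~\ref{sec:proofs} and \ref{sec:three-para} to a $q$-analogue of \eqref{eq:3k-1}, using Rahman's quadratic transformation~\eqref{eq:1-aq3k}. The passage from \eqref{eq:div-1} to \eqref{eq:3k-1} --- whereby $(\frac12)_k^3$ becomes $(-\frac12)_k^2(\frac12)_k$, the weight $3k+1$ becomes $3k-1$, and the summation range extends to $(p+1)/2$ --- is the $p$-adic shadow of specializing the Rahman variable $a\mapsto q^{-1}$ in \eqref{eq:1-aq3k}: this substitution turns $(1-aq^{3k})/(1-a)$ into $-q\,[3k-1]$ and the Pochhammer $(a;q^2)_k$ into $(q^{-1};q^2)_k$, producing the correct $q$-skeleton.

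Concretely, I would formulate a parametric $q$-supercongruence of the shape
\[
\sum_{k=0}^{M}[3k-1]\,\frac{(\alpha q^{-1},q^{-1}/\alpha,q^{-1};q^2)_k\,(q;q^2)_k}{(\alpha q,q/\alpha,q;q)_k\,(q;q)_k\,(q^2;q^2)_k}\,q^{g(k)}
\equiv R(n,\alpha,q)\pmod{\Phi_n(q)(1-\alpha q^n)(\alpha-q^n)},
\]
valid for odd $n$ and $M\in\{n-1,(n+1)/2\}$, with the $q$-shift $g(k)$ and the explicit closed form $R(n,\alpha,q)$ chosen so that specializing $\alpha\to 1$, $q\to 1$, $n=p$ reproduces \eqref{eq:3k-1}. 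Divisibility by $(1-\alpha q^n)(\alpha-q^n)$ would be checked by setting $\alpha=q^{\pm n}$: the sum then becomes a terminating very-well-poised ${}_8\phi_7$ whose Rahman transform \eqref{eq:1-aq3k} collapses, for a suitable choice of the remaining parameters, to the explicit product $R(n,\alpha,q)$.

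Divisibility by $\Phi_n(q)$ would be obtained by the pairing trick used in the proofs of Theorems~\ref{thm:first} and \ref{thm:4k-1-5th}: the $q^{-1}$-shifted form of Lemma~\ref{lem:2.1} encoded in \eqref{eq:mod-phi} forces the $k$-th and $((n+1)/2-k)$-th summands to cancel modulo $\Phi_n(q)$, with any surviving central term either carrying the factor $[3k-1]=[n]$ (when $3\mid n+1$ and the central index is $k=(n+1)/3$) or being annihilated by a numerator factor such as $(q;q^2)_{(n+1)/2}$. Strengthening the modulus from $\Phi_n(q)$ to $[n]$ is then a routine cyclotomic averaging as in the final steps of the proof of Theorem~\ref{thm:first}. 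The limit $\alpha\to 1$ yields the target $q$-supercongruence modulo $[n]\Phi_n(q)^2$, and specializing $q\to 1$, $n=p$ produces \eqref{eq:3k-1}.

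\textbf{The main obstacle} is the precise identification of the $q$-shift $g(k)$ and of the closed form $R(n,\alpha,q)$. Because \eqref{eq:3k-1} is not a direct specialization of \eqref{eq:3-par} --- rescaling Rahman's $a$ to move $[3k+1]$ into $[3k-1]$ destroys the clean factorization of the right-hand side --- the parameter dictionary in Rahman's transformation must be redesigned from scratch, and one should expect $R(n,1,q)$ to be a rational prefactor more intricate than the $q^{(1-n)/2}[n]$ appearing in \eqref{eq:q-div-WZ-1}. Verifying a candidate parametric identity numerically at several small odd $n$ would be a prudent first step before embarking on the general proof.
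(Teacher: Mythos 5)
You should first be aware of where this statement stands in the paper: \eqref{eq:3k-1} is not a theorem there but an open problem (Conjecture~\ref{conj:3k-1}), and the authors explicitly record that they were unable to find \emph{any} $q$-analogue of it, even modulo $p$. So there is no proof to compare against, and your proposal does not supply one. What you call ``the main obstacle'' is in fact the entire content of the problem: the exponent $g(k)$, the closed form $R(n,\alpha,q)$, and indeed the existence of a parametric $q$-congruence of the shape you write down are all left undetermined. The mechanism that makes \eqref{eq:3-par} provable is very specific: setting $a=q^{1-n}$ in Rahman's transformation \eqref{eq:1-aq3k} simultaneously truncates the left-hand side at $k=(n-1)/2$ and kills the right-hand side through the factor $(q^{3-n};q^2)_\infty$, giving an exact vanishing identity from which the $\Phi_n(q)$ divisibility follows. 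After the reparametrization you suggest (shifting parameters so that $[3k+1]$ becomes $[3k-1]$ and $(\tfrac12)_k^3$ becomes $(-\tfrac12)_k^2(\tfrac12)_k$), one does not obviously retain both the termination at the range $(p+1)/2$ and a vanishing or cleanly summable right-hand side; this is presumably exactly the wall the authors hit, and your sketch gives no indication of how to get past it.

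Two concrete steps in your outline are also incorrect as stated. First, in a pairing $k\leftrightarrow (n+1)/2-k$ over $0\leqslant k\leqslant (n+1)/2$, the surviving central index is $k=(n+1)/4$ (when this is an integer), not $k=(n+1)/3$; and $[3k-1]=[n]$ would require $k=(n+1)/3$, so the central term does not carry the factor $[n]$ the way it does for the weights $4k\pm1$ in Theorems~\ref{thm:first} and \ref{thm:4k-1-5th}, where the coincidence $4\cdot\frac{n\mp1}{4}\pm1=n$ is what saves the argument. Second, the cancellation \eqref{eq:mod-phi} coming from Lemma~\ref{lem:2.1} is tailored to summands built entirely from $q$-shifted factorials in base $q^2$; a Rahman-type summand such as that of \eqref{eq:3-par} mixes bases $q$ and $q^2$, and for those sums the paper never uses the pairing trick at all --- it obtains $\Phi_n(q)$ divisibility from the exact vanishing identity mentioned above, and only then upgrades to modulus $[n]$ by the root-of-unity averaging. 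Without an identity playing that role, neither your modulo-$\Phi_n(q)$ step nor the averaging step gets started. As it stands the proposal proves nothing beyond restating the hoped-for strategy, and the congruence \eqref{eq:3k-1} remains open.
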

Unfortunately, we were not able to find any $q$-analogue of \eqref{eq:3k-1},
even for the
simple case modulo $p$.

Moreover, letting $a\to 1$, $b\to -1$ and $c\to 0$ in \eqref{eq:3-par}, we get
\begin{equation}
\sum_{k=0}^{M}(-1)^k [3k+1]\frac{(q;q^2)_k^3 (-q;q)_k q^{-{\binom{k+1}2}}}
{(q;q)_k^3 (-q^2;q^2)_k}
\equiv [n](-q)^{(1-n)/2}\pmod{[n]\Phi_n(q)^2}, \label{eq:q-Zudilin-44}
\end{equation}
while letting $a\to 1$ and $b,c\to 0$ in \eqref{eq:3-par}, we arrive at
\begin{equation}
\sum_{k=0}^{M}(-1)^k [3k+1]\frac{(q;q^2)_k^3}{(q;q)_k^3}
\equiv [n](-q)^{(n-1)^2/4} \pmod{[n]\Phi_n(q)^2}. \label{eq:q-Zudilin-33}
\end{equation}
It is worth mentioning that both \eqref{eq:q-Zudilin-44} and
\eqref{eq:q-Zudilin-33} are $q$-analogues of the following supercongruence
due to Guillera and Zudilin \cite{GuZu}:
\begin{equation}
\sum_{k=0}^{(p-1)/2} (-1)^k(3k+1)\frac{(\frac{1}{2})_k^3}{k!^3} 2^{3k}
\equiv p(-1)^{(p-1)/2}\pmod{p^3}.  \label{eq:q-Zudilin-55}
\end{equation}
The congruence \eqref{eq:q-Zudilin-33} with $M=n-1$ was first established
by the first author \cite{Guo4} using the $q$-WZ method. The congruence
\eqref{eq:q-Zudilin-44} is new.

Sun~\cite[Conj.~5.1(ii)]{Sun4} conjectured that
\begin{equation}
\label{eq:E2}
\sum_{k=0}^{p-1}(3k+1)\frac{(\frac{1}{2})_k^3}{k!^3} 2^{3k}
\equiv p(-1)^{(p-1)/2}+p^3E_{p-3}\pmod{p^4}
\end{equation}
(where $E_{p-3}$ is again the $(p-3)$-th Euler number), for any prime $p>3$,
which was confirmed by Chen, Xie and He~\cite{CXH}.

Motivated by Conjecture \ref{conj:3k-1}, \eqref{eq:E1} and \eqref{eq:E2},
we would like to raise the following problems.
\begin{problem}Is there a ``$3k-1$ version'' of the supercongruence
\eqref{eq:q-Zudilin-55}?
\end{problem}
\begin{problem}
Are there any $q$-analogues of hypergeometric supercongruences involving
Euler numbers as in \eqref{eq:E1} or \eqref{eq:E2}?
\end{problem}
%%%%%%%%%%%%%%%%%%%%%%%%%%%%%%%%%%%%%%%%%%%%%%%%%%%%%%%%%%%%%%%%%%%%%%%%%%%%%%%%%%%%%%%%%%%%%%%%%%%%%%%%%%%%%%%%%%%%%%%%%%%%%%%%%%%%%%%%%%%%%%%%%%%%%%%%%%%%%%%%%%%%%%%%%%%%%%%%%%%%%%%%%%%%%%%%%%%%%%%%%%%%%%%%%%%
\section{Some $q$-congruences from a cubic transformation
of Gasper and Rahman}\label{sec:cubic}
Gasper and Rahman \cite{GR0} (see also \cite[Eq.\ (3.8.18)]{GR}) obtained
the following cubic transformation:
\begin{align}
&\sum_{k=0}^\infty \frac{1-acq^{4k}}{1-ac}
\frac{(a,q/a;q)_k (ac;q)_{2k} (d,acq/d;q^3)_k }
{(cq^3,a^2cq^2;q^3)_k (q;q)_{2k} (acq/d,d;q)_k}q^k \notag\\[5pt]
&\quad=\frac{(acq^2,acq^3,d/ac,dq/ac,adq,aq,q^2/a,dq^2/a;q^3)_\infty}
{(q,q^2,dq,dq^2,a^2cq^2,cq^3,dq/a^2c,d/c;q^3)_\infty}
\notag\allowdisplaybreaks[0]\\[5pt]
&\quad\quad+\frac{d(a,q/a,acq;q)_\infty (q^3,d,acq/d,d^2q^2/ac;q^3)_\infty}
{ac (q,d,acq/d;q)_\infty (cq^3,a^2cq^2,d/c,dq/a^2c;q^3)_\infty}
\,{}_{2}\phi_{1}\!\left[\begin{array}{c}
\!\! d/c,\,dq/a^2c\\
d^2q^2/ac
\end{array};q^3,\, q^3
\right]. \label{eq:1-acq4k}
\end{align}
This transformation for $d=0$ becomes a summation formula and has been used
by the first author and Zudilin \cite{GuoZu} to prove the following
$q$-congruence: modulo $[n](1-aq^n)(a-q^n)$,
\begin{equation}
\sum_{k=0}^{M}[8k+1] \frac{(aq, q/a;q^2)_k (q;q^2)_{2k}}
{(q^2;q^2)_{2k}(aq^6, q^6/a;q^6)_k } q^{2k^2}
\equiv q^{-(n-1)/2}[n]\biggl(\frac{-3}{n}\biggr),
\end{equation}
where $M=n-1$ or $(n-1)/2$, $\big(\frac{\cdot}{\cdot}\big)$
is the Jacobi--Kronecker symbol, and $\gcd(n,6)=1$.

In this section, we shall deduce some $q$-congruences from
\eqref{eq:1-acq4k} with $d\neq 0$.
\begin{theorem}\label{thm:8k+1-1}
Let $n>1$ be a positive integer coprime with $6$. Then
\begin{align}
&\sum_{k=0}^{(n-1)/2}[8k+1] \frac{(aq, q/a;q^2)_k (q;q^2)_{2k} (q,q^2;q^6)_k}
{(q^2;q^2)_{2k}(aq^6, q^6/a;q^6)_k (q;q)_{2k} } q^{2k} \notag\\[10pt]
&\quad\equiv
\begin{cases}  0 \pmod{\Phi_n(q)}, &\text{if $n\equiv 1\pmod 6,$}\\[10pt]
0  \pmod{\Phi_n(q)(1-aq^n)(a-q^n)}, &\text{if $n\equiv 5\pmod 6$.}
\end{cases} \label{8k+1-1}
\end{align}
\end{theorem}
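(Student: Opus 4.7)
The plan follows the creative microscoping method: regard the left-hand side of \eqref{8k+1-1} as a polynomial $F(a,q)\in\mathbb{Z}[a,q]$ with $k$-th summand $f(k)$, and prove separate divisibility by the pairwise coprime factors $\Phi_n(q)$ and (in the case $n\equiv 5\pmod 6$) by $1-aq^n$ and $a-q^n$. The starting observation is that $f(k)$ matches the summand of Gasper and Rahman's cubic transformation \eqref{eq:1-acq4k} under $q\mapsto q^2$, $a\mapsto aq$, $c\mapsto 1/a$, $d\mapsto q$: these substitutions turn $(1-ACq^{8k})/(1-AC)$ into $[8k+1]$, identify every Pochhammer factor of \eqref{8k+1-1}, and convert the remaining $(ACq^2/D,D;q^2)_k=(q^2,q;q^2)_k$ into $(q;q)_{2k}$.

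Under these substitutions $D/AC=1$, so the first product-only summand on the right of \eqref{eq:1-acq4k} carries the vanishing factor $(1;q^6)_\infty$ and drops out; call the surviving ${}_2\phi_1$-term $\mathrm{RHS}_2$. Specializing $a=q^{-n}$ forces the transformed series to truncate at $k=(n-1)/2$, matching our finite left-hand side exactly. The prefactor of $\mathrm{RHS}_2$ contains $(aq;q^2)_\infty$, which at $a=q^{-n}$ acquires a factor $1-q^0=0$. Among the denominator infinite products of $\mathrm{RHS}_2$, the only one that can simultaneously vanish is $(D/C;q^6)_\infty=(aq;q^6)_\infty=(q^{1-n};q^6)_\infty$, and this happens precisely when $n\equiv 1\pmod 6$. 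Consequently, for $n\equiv 5\pmod 6$ we obtain $F(q^{-n},q)=\mathrm{RHS}_2|_{a=q^{-n}}=0$, and by the manifest $a\leftrightarrow 1/a$ symmetry of the summand also $F(q^n,q)=0$; this yields divisibility by $(1-aq^n)(a-q^n)$ in that case.

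For the $\Phi_n(q)$ divisibility (required in both residue classes), I would use the fact that the factor $(q;q^2)_{2k}$ in the numerator of $f(k)$ contains $1-q^n$ for every $k\geqslant\lceil(n+1)/4\rceil$, so all those summands vanish modulo $\Phi_n(q)$, and $F(a,q)\equiv\sum_{k\leqslant\lfloor(n-1)/4\rfloor}f(k)\pmod{\Phi_n(q)}$. The short remaining sum is then shown to cancel modulo $\Phi_n(q)$ by combining, at a primitive $n$-th root of unity $\zeta$, the identity
\begin{equation*}
\frac{(1-a\zeta^j)(1-\zeta^j/a)}{(1-a\zeta^{-j})(1-\zeta^{-j}/a)}=\zeta^{2j}
\end{equation*}
(which collapses the $a$-dependence of each surviving summand, using $\gcd(6,n)=1$ to permute $q^{6j}$ among $q^{\pm r}$ modulo $\Phi_n(q)$) with the cyclotomic identity $1+\zeta+\cdots+\zeta^{n-1}=0$. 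The minimal cases $n=5$ and $n=7$ reduce to the direct two-term identity $f(0)+f(1)\equiv 0\pmod{\Phi_n(q)}$.

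\textbf{Main obstacle.} The main obstacle is completing the $\Phi_n(q)$ step for larger $n$: the surviving sum has three terms for $n=11$, four for $n=13$, and so on, and the cancellation is a genuine multi-term relation rather than a simple pairing $k\leftrightarrow(n-1)/2-k$ (which fails here because the paired index often already lies in the range where $f$ vanishes individually, so pairing cannot force the non-vanishing terms to cancel). Organizing the collapse of the $a$-dependent parts into scalars and verifying that those scalars sum to zero via the cyclotomic identity requires careful bookkeeping of how the exponents $6j\pmod n$ are permuted relative to $2j\pmod n$ when $\gcd(6,n)=1$.
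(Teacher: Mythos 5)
Your identification of the summand with Gasper and Rahman's cubic transformation \eqref{eq:1-acq4k} (via $q\mapsto q^2$, $a\mapsto aq$, $c\mapsto 1/a$, $d\mapsto q$) and your treatment of the specialization $a=q^{\pm n}$ for $n\equiv 5\pmod 6$ are correct and essentially coincide with the paper's handling of that case: at $a=q^{-n}$ the series terminates at $k=(n-1)/2$, the product-only term on the right vanishes, and the prefactor of the surviving ${}_2\phi_1$ term vanishes because of $(aq;q^2)_\infty=(q^{1-n};q^2)_\infty$, the only possibly vanishing denominator factor $(aq;q^6)_\infty=(q^{1-n};q^6)_\infty$ being harmless exactly when $n\equiv 5\pmod 6$; with the $a\leftrightarrow 1/a$ symmetry this gives divisibility by $(1-aq^n)(a-q^n)$.

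However, the divisibility by $\Phi_n(q)$ --- which is the entire content of the theorem when $n\equiv 1\pmod 6$ and is also required when $n\equiv 5\pmod 6$ --- is not established in your proposal. Your plan for it (discarding the terms with $k\geqslant (n+1)/4$ and then cancelling the short remaining sum at a primitive $n$-th root of unity via $\frac{(1-a\zeta^j)(1-\zeta^j/a)}{(1-a\zeta^{-j})(1-\zeta^{-j}/a)}=\zeta^{2j}$ and $1+\zeta+\cdots+\zeta^{n-1}=0$) is only a sketch, and you yourself flag it as the main obstacle: the required cancellation is a genuine multi-term relation, no pairing $k\leftrightarrow (n-1)/2-k$ is available, and it is not clear that the $a$-dependence of the surviving terms collapses to scalars at all. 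The missing idea is that the same cubic transformation also does this part, with the $q^{-n}$ inserted into the $c$-parameter rather than into $a$: apply \eqref{eq:1-acq4k} with $q\mapsto q^2$, $d=q$, $a\mapsto aq$ and $c=q^{-n}/a$ (so $ac=q^{1-n}$), keeping $a$ free. Then the left-hand side terminates (because of $(q^{1-n};q^2)_{2k}$), while the right-hand side vanishes identically: the first term's numerator contains $(q^{5-n};q^6)_\infty(q^{7-n};q^6)_\infty$, one factor of which is zero since $n\equiv\pm1\pmod 6$ and $n>1$, and the second term's numerator contains $(acq^2;q^2)_\infty=(q^{3-n};q^2)_\infty=0$ for odd $n\geqslant 3$, the corresponding denominators being nonzero for generic $a$. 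This yields an exact identity $\sum_{k=0}^{(n-1)/2}(\cdots)=0$, each of whose summands is congruent modulo $\Phi_n(q)$ (using $q^{n}\equiv 1$ and the coprimality of the denominators to $\Phi_n(q)$ for $k\leqslant(n-1)/2$) to the corresponding summand of \eqref{8k+1-1}; hence the congruence modulo $\Phi_n(q)$ holds for all $a$ in both residue classes. Without this step, or a completed version of your root-of-unity argument, your proposal proves only the $(1-aq^n)(a-q^n)$ part of the $n\equiv 5$ case and nothing in the $n\equiv 1$ case.
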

\begin{proof}We specialize \eqref{eq:1-acq4k} by taking
$q\mapsto q^2$, then $c=q^{1-n}/a$, $d=q$, and replace $a$ by $aq$.
The right-hand side of the resulting identity vanishes,
because the numerator of the first fraction contains  the factors
$(q^{5-n};q^6)_\infty$ and $(q^{7-n};q^6)_\infty$,
and the numerator of the second fraction has the factor $(q^{3-n};q^2)_\infty$.
At the same time, the left-hand side terminates at $k=(n-1)/2$
(in fact, much earlier, at $k=\lfloor n/4\rfloor$), for the summand
involves the term $(q^{1-n};q^2)_{2k}$. This proves that
\begin{equation*}
\sum_{k=0}^{(n-1)/2}\frac{1-q^{1-n+8k}}{1-q^{1-n}}
\frac{(aq, q/a;q^2)_k (q^{1-n};q^2)_{2k} (q,q^{2-n};q^6)_k}
{(q^2;q^2)_{2k}(aq^{6-n}, q^{6-n}/a;q^6)_k (q^{2-n};q^2)_k (q;q^2)_k } q^{2k}
=0.
\end{equation*}
Since $q^n\equiv  1\pmod{\Phi_n(q)}$, we deduce that the congruence
\eqref{8k+1-1} holds modulo $\Phi_n(q)$.

We now assume that $n\equiv 5\pmod{6}$. It remains to show that
\begin{equation*}
\sum_{k=0}^{(n-1)/2}[8k+1] \frac{(aq, q/a;q^2)_k (q;q^2)_{2k} (q,q^2;q^6)_k}
{(q^2;q^2)_{2k}(aq^6, q^6/a;q^6)_k (q;q)_{2k} } q^{2k}
\equiv 0\pmod{(1-aq^n) (a-q^n)},
\end{equation*}
or equivalently,
\begin{equation}
\sum_{k=0}^{(n-1)/2}[8k+1] \frac{(q^{1-n}, q^{1+n};q^2)_k (q;q^2)_{2k}
(q,q^2;q^6)_k}{(q^2;q^2)_{2k}(q^{6-n}, q^{6+n};q^6)_k (q;q)_{2k} } q^{2k}
=0. \label{eq:special}
\end{equation}
This identity again follows straightforwardly from \eqref{eq:1-acq4k}
by setting $q\mapsto q^2$, $a=q^{1-n}$, $c=q^n$, and $d=q$. In fact,
the left-hand side of the resulting identity terminates at $k=(n-1)/2$
and is therefore just the left-hand side of \eqref{eq:special}.
On the other hand, the right-hand side of the resulting identity
vanishes because the first fraction is equal to
\begin{equation*}
\frac{(q^5,q^7,1,q^2,q^{4-n},q^{3-n},q^{3+n},q^{4+n};q^6)_\infty}
{(q^2,q^4,q^3,q^5,q^{6-n},q^{6+n},q^{1+n},q^{1-n};q^6)_\infty}
=0,
\end{equation*}
and the second fraction is equal to
\begin{equation*}
\frac{(q^{1-n},q^{1+n},q^3;q^2)_\infty (q^6,q,q^2,q^5;q^6)_\infty}
{(q^2,q,q^2;q^2)_\infty (q^{6+n},q^{6-n},q^{1-n},q^{1+n};q^6)_\infty}
=0.
\end{equation*}
This completes the proof of the theorem.
\end{proof}

Similarly as in the proof of Theorem \ref{thm:8k+1-1}, we can prove
the following result.
\begin{theorem}\label{thm:8k+1-2}
Let $n>1$ be a positive integer coprime with $6$. Then
\begin{align}
&\sum_{k=0}^{(n-1)/2}[8k+1] \frac{(aq, q/a;q^2)_k (q;q^2)_{2k} (q^{-1},q^4;q^6)_k}
{(q^2;q^2)_{2k}(aq^6, q^6/a;q^6)_k (q^{-1},q^4;q^2)_{k} } q^{2k} \notag\\[10pt]
&\quad\equiv
\begin{cases}  0 \pmod{\Phi_n(q)(1-aq^n)(a-q^n)},
&\text{if $n\equiv 1\pmod 6,$}\\[10pt]
0  \pmod{\Phi_n(q)}, &\text{if $n\equiv 5\pmod 6$.}
\end{cases}
\end{align}
\end{theorem}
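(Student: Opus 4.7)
The plan is to mimic the proof of Theorem~\ref{thm:8k+1-1} almost verbatim, using the cubic transformation \eqref{eq:1-acq4k} with $d=q^{-1}$ in place of $d=q$. This change is exactly what converts the factor $(q,q^2;q^6)_k/(q;q)_{2k}$ appearing in Theorem~\ref{thm:8k+1-1} into the factor $(q^{-1},q^4;q^6)_k/(q^{-1},q^4;q^2)_k$ here, and its critical effect is to interchange the roles of $n\equiv 1\pmod 6$ and $n\equiv 5\pmod 6$ in the vanishing analysis on the right-hand side.

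For the $\Phi_n(q)$ part (valid for both residue classes), I would specialize~\eqref{eq:1-acq4k} by taking $q\mapsto q^2$, then $c=q^{1-n}/a$, $d=q^{-1}$, and finally replacing $a$ by $aq$. Using $q^n\equiv 1\pmod{\Phi_n(q)}$, the resulting left-hand side reduces to the sum in Theorem~\ref{thm:8k+1-2}. On the right-hand side, the first fraction's numerator contains the factors $(q^{5-n};q^6)_\infty$ and $(q^{7-n};q^6)_\infty$ (coming from $acq^4$ and $acq^6$ after the substitutions); the former vanishes for $n\equiv 5\pmod 6$ and the latter for $n\equiv 1\pmod 6$ (with $n>1$), so the first fraction is zero in either case. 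The second fraction's numerator contains $(q^{3-n};q^2)_\infty$ (from the term $acq^2$ after the shift $a\mapsto aq$), which has a zero factor $1-q^0$ at $j=(n-3)/2$ for every odd $n\geqslant 3$. Hence the entire right-hand side vanishes.

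To upgrade to the modulus $\Phi_n(q)(1-aq^n)(a-q^n)$ in the case $n\equiv 1\pmod 6$, I would apply~\eqref{eq:1-acq4k} once more with the alternative specialization $q\mapsto q^2$, $a=q^{1-n}$, $c=q^n$, $d=q^{-1}$. The left-hand side collapses to the $a=q^{-n}$ specialization of the sum under study. The right-hand side's first fraction vanishes because $dq^2/ac=q^{-1}\cdot q^2/q=1$ produces the factor $(1;q^6)_\infty=0$; the second fraction vanishes because its numerator contains $(a;q^2)_\infty=(q^{1-n};q^2)_\infty$, which is zero for every odd $n>1$. The companion case $a=q^n$ is handled analogously (or by the $a\leftrightarrow 1/a$ symmetry of the underlying identity). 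Combining the three congruences via the pairwise coprimality of $\Phi_n(q)$, $1-aq^n$, and $a-q^n$ delivers the claim.

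The one step that truly requires care, and the reason for the residue class reversal relative to Theorem~\ref{thm:8k+1-1}, is verifying that the denominator factors of the second right-hand side term in the $(1-aq^n)(a-q^n)$ argument remain nonzero, so that one obtains $0/\text{nonzero}$ rather than an indeterminate $0/0$. The potentially dangerous factor is $(d/c;q^6)_\infty=(q^{-1-n};q^6)_\infty$: the exponent $-1-n+6j$ hits zero only when $(1+n)/6$ is a nonnegative integer, i.e., exactly when $n\equiv 5\pmod 6$, in which case the denominator would pick up a spurious zero and the argument breaks down. For $n\equiv 1\pmod 6$ no such $j$ exists, the denominator is clean, and the argument goes through. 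This is precisely the mirror image of the situation with $d=q$ in Theorem~\ref{thm:8k+1-1}, and accounts for the swap of residue classes in the stated modulus.
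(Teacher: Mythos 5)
Your proposal is correct and is essentially the paper's own argument: the paper disposes of Theorem~\ref{thm:8k+1-2} by saying it follows ``similarly as in the proof of Theorem~\ref{thm:8k+1-1}'', i.e.\ by the same two specializations of the cubic transformation \eqref{eq:1-acq4k} with $d=q^{-1}$ in place of $d=q$, which is exactly what you carry out. Your verification of the vanishing numerator factors, and in particular of the denominator factor $(d/c;q^6)_\infty=(q^{-1-n};q^6)_\infty$ that forces the interchange of the residue classes $n\equiv 1$ and $n\equiv 5\pmod 6$, fills in precisely the details the paper leaves implicit.
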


Letting $a\to 1$ in Theorems \ref{thm:8k+1-1} and \ref{thm:8k+1-2}, we obtain
\begin{corollary}Let $n>1$ be a positive integer coprime with $6$. Then
\begin{equation*}
\sum_{k=0}^{(n-1)/2}[8k+1] \frac{(q;q^2)_k^2 (q;q^2)_{2k} (q,q^2;q^6)_k}
{(q^2;q^2)_{2k}(q^6;q^6)_k^2 (q;q)_{2k} } q^{2k}
\equiv
\begin{cases}  0 \pmod{\Phi_n(q)}, &\text{if $n\equiv 1\pmod 6,$}\\[10pt]
0  \pmod{\Phi_n(q)^3 }, &\text{if $n\equiv 5\pmod 6$,}
\end{cases}
\end{equation*}
and
\begin{equation*}
\sum_{k=0}^{(n-1)/2}\![8k+1] \frac{(q;q^2)_k^2 (q;q^2)_{2k} (q^{-1},q^4;q^6)_k}
{(q^2;q^2)_{2k}(q^6;q^6)_k^2 (q^{-1},q^4;q^2)_{k} } q^{2k}
\equiv
\begin{cases}  0 \!\pmod{\Phi_n(q)^3},
&\!\text{if $n\equiv 1\!\!\pmod 6,$}\\[10pt]
0  \!\pmod{\Phi_n(q) }, &\!\text{if $n\equiv 5\!\!\pmod 6$.}
\end{cases}
\end{equation*}
\end{corollary}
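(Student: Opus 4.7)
The plan is to deduce both congruences of the corollary as the specialization $a\to 1$ of Theorems~\ref{thm:8k+1-1} and \ref{thm:8k+1-2}, which are already established. Since the corollary is stated directly after those theorems and the author has flagged that it follows by letting $a\to 1$, the task is essentially to verify that this limit is legitimate at the level of the stated moduli, and to track what happens to the factor $(1-aq^n)(a-q^n)$ upon setting $a=1$.

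First I would examine the $a$-dependent part of each summand. On the left-hand sides of Theorems~\ref{thm:8k+1-1} and \ref{thm:8k+1-2}, the parameter $a$ enters only through the numerator factor $(aq,q/a;q^2)_k$ and the denominator factor $(aq^6,q^6/a;q^6)_k$. In the limit $a\to 1$, the denominator factor becomes $(q^6;q^6)_k^2$, and I would check that this is coprime to $\Phi_n(q)$ for each $k$ in the summation range $0\leqslant k\leqslant (n-1)/2$. Since $\gcd(n,6)=1$, the cyclotomic polynomial $\Phi_n(q)$ divides $1-q^{6j}$ only when $n\mid j$, which is impossible for $1\leqslant j\leqslant (n-1)/2<n$. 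This guarantees that the specialization $a\to 1$ does not introduce any spurious cancellation with $\Phi_n(q)$.

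Next I would invoke the elementary observation
\begin{equation*}
(1-aq^n)(a-q^n)\big|_{a=1}=(1-q^n)^2,
\end{equation*}
and recall that $\Phi_n(q)^2$ divides $(1-q^n)^2$. Therefore the modulus $\Phi_n(q)(1-aq^n)(a-q^n)$ appearing in the stronger half of each theorem upgrades, after setting $a=1$, to a polynomial divisible by $\Phi_n(q)^3$. Applying this to Theorem~\ref{thm:8k+1-1} yields the $n\equiv 1\pmod 6$ case (modulus $\Phi_n(q)$, inherited directly) and the $n\equiv 5\pmod 6$ case (modulus $\Phi_n(q)^3$) of the first congruence. The same argument applied to Theorem~\ref{thm:8k+1-2} produces the second congruence, with the two residue classes modulo $6$ swapped because of the opposite case distribution in that theorem.

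No genuine obstacle remains, as both input theorems are already proved; the only non-trivial point to verify is the coprimality of the limiting denominator with $\Phi_n(q)$, which is the routine cyclotomic check outlined above. The arithmetic of the modulus, $\Phi_n(q)\cdot (1-q^n)^2\equiv 0\pmod{\Phi_n(q)^3}$, then turns one factor of $\Phi_n(q)$ in the parameter identity into three in the corollary, which is precisely the strengthening claimed.
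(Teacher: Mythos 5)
Your proposal is correct and is exactly the paper's route: the corollary is obtained there simply by letting $a\to 1$ in Theorems~\ref{thm:8k+1-1} and \ref{thm:8k+1-2}, and your verification that the limiting denominators $(q^6;q^6)_k^2$ (with $\gcd(n,6)=1$ and $k\leqslant (n-1)/2<n$) stay coprime to $\Phi_n(q)$, together with $(1-aq^n)(a-q^n)\big|_{a=1}=(1-q^n)^2$ contributing $\Phi_n(q)^2$, is precisely the standard limiting argument the paper uses (as in the proof of Theorem~\ref{thm:first}) and leaves implicit here.
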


We shall also prove the following results.
\begin{theorem}
Let $n$ be a positive integer coprime with $6$. Then
\begin{equation}
\sum_{k=0}^{M}[8k+1] \frac{(q;q^2)_k^2 (q;q^2)_{2k} (q^{-3};q^6)_k}
{(q^2;q^2)_{2k}(q^6;q^6)_k (q^{-3},q^6;q^2)_{k} } q^{2k}
\equiv 0 \pmod{[n]^2}, \label{eq:8k+1:conj}
\end{equation}
where $M=n-1$ or $(n-1)/2$.
\end{theorem}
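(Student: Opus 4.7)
The plan is to follow the strategy of Theorems \ref{thm:8k+1-1} and \ref{thm:8k+1-2}, based on the cubic transformation \eqref{eq:1-acq4k} of Gasper and Rahman. First I would introduce a parameter $a$ and consider the two-parameter generalization
\begin{equation*}
S(a):=\sum_{k=0}^{M}[8k+1]\frac{(aq,q/a;q^2)_k (q;q^2)_{2k} (q^{-3};q^6)_k}
{(q^2;q^2)_{2k}(aq^6,q^6/a;q^6)_k (q^{-3},q^6;q^2)_k}q^{2k},
\end{equation*}
which reduces to the target sum when $a=1$.

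I would then specialize \eqref{eq:1-acq4k} under the substitutions $q\mapsto q^2$, $a\mapsto aq$, $c=q^{1-n}/a$, and $d=q^{-3}$. The choice $c=q^{1-n}/a$ makes the factor $(ac;q^2)_{2k}$ in the transformation become $(q^{2-n};q^2)_{2k}$, which vanishes for sufficiently large $k$ and thereby truncates the series to a finite sum matching $S(a)$. The choice $d=q^{-3}$ is what produces the $(q^{-3};q^6)_k$ in the numerator and the $(q^{-3},q^6;q^2)_k$ in the denominator of the target. A careful examination of the right-hand side of \eqref{eq:1-acq4k} under these substitutions should show that both the explicit product term and the term involving $_2\phi_1$ vanish identically, thanks to zero factors of the form $(q^{j-n};q^6)_\infty$ or $(q^{j-n};q^2)_\infty$, in direct analogy with the vanishing exploited in the proof of Theorem \ref{thm:8k+1-1}. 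This yields $S(a)\equiv 0\pmod{(1-aq^n)(a-q^n)}$.

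Next, for divisibility by $\Phi_n(q)$, I would apply Lemma \ref{lem:2.1} to pair the $k$-th and $(M-k)$-th terms of $S(a)$ and show that their sum is congruent to zero modulo $\Phi_n(q)$ (paying special attention to any unpaired middle term, where $[8k+1]=[n]$ will provide the needed factor). Combined with the previous step and the fact that $\Phi_n(q)$, $1-aq^n$, $a-q^n$ are pairwise coprime polynomials, this yields
\begin{equation*}
S(a)\equiv 0\pmod{\Phi_n(q)(1-aq^n)(a-q^n)}.
\end{equation*}
Letting $a\to 1$ then gives $S(1)\equiv 0\pmod{\Phi_n(q)^3}$, in particular $\Phi_n(q)^2\mid S(1)$.

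It remains to upgrade this to the full $[n]^2$ modulus, i.e., to show $\Phi_d(q)^2\mid S(1)$ for every divisor $d>1$ of $n$. My plan here is to induct on $n$: assuming the theorem for every proper divisor $d>1$ of $n$ (which is also coprime with $6$), one obtains $\Phi_d(q)^2\mid \sum_{k=0}^{M_d}c_q(k)$, where $c_q(k)$ denotes the $k$-th summand. Then the periodicity argument from the proof of Theorem \ref{thm:first}, which at a primitive $d$-th root of unity $\zeta$ gives $c_\zeta(\ell d+k)=c_\zeta(\ell d)c_\zeta(k)$, allows the vanishing-to-order-two at $\zeta$ to propagate from the short sum $\sum_{k=0}^{d-1}$ to the full sum $\sum_{k=0}^{n-1}$. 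The main obstacle I expect is twofold: first, verifying that \emph{both} terms on the right-hand side of \eqref{eq:1-acq4k} actually vanish under the specialization, as the $_2\phi_1$ prefactor in the second term requires delicate bookkeeping of infinite products involving negative powers of $q$ arising from $d=q^{-3}$; and second, lifting the periodicity conclusion from first-order to second-order vanishing at each $\zeta$, which may in fact force the proof to go through a strengthened $a$-parametric version with modulus $[n](1-aq^n)(a-q^n)$ (so that taking $a\to 1$ gives $[n]^3$ divisibility in one stroke) rather than the weaker $\Phi_n(q)(1-aq^n)(a-q^n)$ version outlined above.
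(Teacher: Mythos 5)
Your plan breaks down at its central step, the claim that the specialized right-hand side of \eqref{eq:1-acq4k} ``vanishes identically'' so that $S(a)\equiv 0\pmod{(1-aq^n)(a-q^n)}$. Carry out the substitution you propose (base $q\mapsto q^2$, $d=q^{-3}$, the remaining parameters chosen so that the left-hand side matches the sum at $a=q^{\pm n}$, i.e.\ with $ac=q$ there): the second term on the right of \eqref{eq:1-acq4k} does vanish, because its numerator acquires the factor $(q^{1\mp n};q^2)_\infty=0$, but the first term does not. Its numerator becomes, up to reordering, $(q^5,q^7,q^{-4},q^{-2},q^{\mp n},q^{3\mp n},q^{3\pm n},q^{\pm n};q^6)_\infty$, and for $\gcd(n,6)=1$ none of these exponents is a nonpositive multiple of $6$, so no factor is zero. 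Hence the parametric sum at $a=q^{\pm n}$ equals a \emph{nonzero} infinite product, not zero, and no congruence modulo $(1-aq^n)(a-q^n)$ can be extracted this way; this is consistent with the fact that the sum \eqref{eq:8k+1:conj} itself is not zero but a nonzero product divisible by $[n]^2$. There are further problems even in the weaker steps: with the truncating choice $ac=q^{1-n}$ (the analogue of Theorem~\ref{thm:8k+1-1}) the left-hand side of \eqref{eq:1-acq4k} is not congruent to your $S(a)$ but to a family with an extra $(q^6;q^6)_k$ in the numerator (coming from $(acq^2/d;q^6)_k$), and in any case that route yields only a single power of $\Phi_n(q)$, exactly as in Theorems~\ref{thm:8k+1-1} and \ref{thm:8k+1-2}; the Lemma~\ref{lem:2.1} pairing $k\mapsto (n-1)/2-k$ does not apply to a summand containing $(q;q^2)_{2k}$, $(q^{-3};q^6)_k$, $(q^{-3};q^2)_k$, $(q^6;q^2)_k$; and the final upgrade from $\Phi_n(q)^2$-divisibility to $[n]^2$ is not achieved by the root-of-unity periodicity argument, which controls values (first-order vanishing) but not derivatives, as you yourself concede.

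The actual proof is far more elementary and of a completely different nature: the partial sums of \eqref{eq:8k+1:conj} admit a closed-form evaluation, proved by induction on $N$, namely \eqref{eq:8k+1-qbino}, which expresses $\sum_{k=0}^{N}$ of the summand as an explicit product involving $[4N+1]$, $[4N+3]$, $[2N+1]$ and the $q$-binomial coefficients $\begin{bmatrix}2N\\N\end{bmatrix}$, $\begin{bmatrix}4N\\2N\end{bmatrix}$, $\begin{bmatrix}2N\\N\end{bmatrix}_{q^3}$. The divisibility by $[n]^2$ then follows from $q$-Catalan-type facts: $[N+1]$ divides $\begin{bmatrix}2N\\N\end{bmatrix}$, divides $\begin{bmatrix}2N\\N\end{bmatrix}_{q^3}$ when $\gcd(N+1,3)=1$, and divides $\begin{bmatrix}4N\\2N\end{bmatrix}$ when $\gcd(N+1,6)=1$; taking $N=n-1$ gives two factors of $[n]$ directly, while for $N=(n-1)/2$ one factor of $[n]$ comes from $[2N+1]$ and the other from $\begin{bmatrix}4N\\2N\end{bmatrix}$. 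In particular both truncations $M=n-1$ and $M=(n-1)/2$ are handled uniformly, which your transformation-based truncation (which naturally stops near $n/4$) would not do. If you want to salvage a transformation-style argument, you would first need to recognize that the sum telescopes; the creative-microscoping machinery is the wrong tool here.
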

\begin{proof}
It is easy to see by induction on $N$ that
\begin{align}
&\sum_{k=0}^{N} \frac{[8k+1] (q;q^2)_k^2 (q;q^2)_{2k} (q^{-3};q^6)_k}
{(q^2;q^2)_{2k}(q^6;q^6)_k (q^{-3},q^6;q^2)_{k} } q^{2k} \notag \\[5pt]
&\quad=\frac{[4N+1][4N+3](q;q^2)_{N+1} (q;q^2)_{2N} (q^3;q^6)_N }
{(1-q^3)(q^6;q^6)_N (q^2;q^2)_{2N} (q^6;q^2)_N} \notag\\[5pt]
&\quad=\frac{[4N+1][4N+3][2N+1][2][4]}
{[3][2N+2][2N+4](-q;q)_N^2 (-q;q)_{2N}^2 (-q^3;q^3)_N^2}
\begin{bmatrix}2N\\N\end{bmatrix}\begin{bmatrix}4N\\2N\end{bmatrix}
\begin{bmatrix}2N\\N\end{bmatrix}_{q^3}.  \label{eq:8k+1-qbino}
\end{align}
Note that $\frac{1}{[N+1]}
\begin{bmatrix}\begin{smallmatrix}2N\\N\end{smallmatrix}\end{bmatrix}$
is the well-known $q$-Catalan number (see \cite{FH}), a polynomial in $q$.
Hence, the $q$-binomial coefficient
$\begin{bmatrix}\begin{smallmatrix}2N\\N\end{smallmatrix}\end{bmatrix}$
is divisible by $[N+1]$, so is
$\begin{bmatrix}\begin{smallmatrix}2N\\N\end{smallmatrix}\end{bmatrix}_{q^3}$
if $N+1$ is coprime with $3$.
It is also not difficult to prove that
$\begin{bmatrix}\begin{smallmatrix}4N\\2N\end{smallmatrix}\end{bmatrix}$
is divisible by $[N+1]$ whenever $N+1$ is coprime with $6$.
Therefore, putting $N=n-1$ in \eqref{eq:8k+1-qbino}, we can prove that
the right-hand side is congruent to $0$ modulo $[n]^2$.
Similarly, taking $N=(n-1)/2$ in \eqref{eq:8k+1-qbino}, we arrive at
the same conclusion. This time one $[n]$ comes from $[2N+1]$ and another $[n]$
comes from
$\begin{bmatrix}\begin{smallmatrix}4N\\2N\end{smallmatrix}\end{bmatrix}$.
\end{proof}

%%%%%%%%%%%%%%%%%%%%%%%%%%%%%%%%%%%%%%%%%%%%%%%%%%%%%%%%%%%%%%%%%%%%%%%%%%%%%%%%%%%%%%%%%%%%%%%%%%%%%%%%%%%%%%%%%%%%%%%%%%%%%%%%%%%%%%%%%%%%%%%%%%%%%%%%%%%%%%%%%%%%%%%%%%%%%%%%%%%%%%%%%%%%%%%%%%%%%%%%%%%%%%%%%%%
\section{Some $q$-congruences from a quartic transformation
of Gasper and Rahman}\label{sec:quartic}

Gasper and Rahman \cite{GR0} (see also \cite[Ex.\ 3.33]{GR}) also obtained
the following quartic transformation:
\begin{align}
&\sum_{k=0}^\infty \frac{1-a^2b^2 q^{5k-2}}{1-a^2b^2/q^2}
\frac{(a,b;q)_k (ab/q,ab,abq;q^3)_k (a^2b^2/q^2;q^4)_k}
{(ab^2q^2,a^2bq^2;q^4)_k (abq,ab,ab/q;q^2)_k (q;q)_k} q^k  \notag\\[5pt]
&\quad=\frac{(aq,b;q)_\infty (-abq;q^2)_\infty}
{(q;q)_\infty  (b,ab^2q^2,a^2bq^2;q^4)_\infty}
\,{}_{1}\phi_{1}\!\left[\begin{array}{c}
\!\! a\\
aq^4
\end{array};q^4,\, bq^4
\right]. \label{eq:quartic}
\end{align}
In this section, we shall deduce two congruences from the quartic
transformation \eqref{eq:quartic}.

\begin{theorem}Let $n$ be a positive integer with $n\equiv 5,7\pmod{8}$. Then
\begin{equation}
\sum_{k=0}^{(n-1)/2}[10k+2]
\frac{(q;q)_{2k}(q;q^2)_{3k}(q^2;q^8)_k}
{(q^9, q^8;q^8)_k(q;q^2)_{2k}(q^5;q^4)_k(q^2;q^2)_k}q^{2k}
\equiv 0\pmod{\Phi_n(q)}.  \label{cong:quartic-1}
\end{equation}
\end{theorem}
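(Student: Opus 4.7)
The plan is to apply the Gasper--Rahman quartic transformation \eqref{eq:quartic} with the base $q$ replaced by $q^2$, and then to specialize $a=q^{1-n}$, $b=q^2$. With these choices the factor $(a;q^2)_k=(q^{1-n};q^2)_k$ truncates the left-hand side at $k=(n-1)/2$, which is precisely the upper limit in \eqref{cong:quartic-1}, so the identity becomes a relation between the finite sum in \eqref{cong:quartic-1} (up to a modular correction) and the right-hand side of \eqref{eq:quartic} evaluated at the specialization.

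For the right-hand side of the specialized \eqref{eq:quartic}, observe that the prefactor contains $(aq^2;q^2)_\infty=(q^{3-n};q^2)_\infty$, whose factor indexed by $j=(n-3)/2$ equals $1-q^0=0$. Provided $n\equiv 5,7\pmod 8$, none of the infinite products in the accompanying denominator $(q^2;q^2)_\infty(q^2,q^{9-n},q^{8-2n};q^8)_\infty$ vanishes (these would require $n\equiv 1\pmod 8$ or $n\equiv 0\pmod 4$, respectively), and the auxiliary ${}_1\phi_1$ series converges for $|q|<1$. Hence the right-hand side of \eqref{eq:quartic} is identically zero at the specialization, so the finite sum on the left-hand side equals zero as a rational function of $q$.

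It remains to match this finite sum with the sum in \eqref{cong:quartic-1} modulo $\Phi_n(q)$. Reducing exponents via $q^n\equiv 1\pmod{\Phi_n(q)}$ yields $(q^{1-n},q^2;q^2)_k\equiv(q;q)_{2k}$, $(q^{1-n},q^{3-n},q^{5-n};q^6)_k\equiv(q;q^2)_{3k}$, $(q^{2-2n};q^8)_k\equiv(q^2;q^8)_k$, $(q^{9-n},q^{8-2n};q^8)_k\equiv(q^9,q^8;q^8)_k$, and $(q^{5-n},q^{3-n},q^{1-n};q^4)_k\equiv(q^5;q^4)_k(q;q^2)_{2k}$, all modulo $\Phi_n(q)$, while the prefactor reduces via $(1-q^{10k+2-2n})/(1-q^{2-2n})\equiv(1-q^{10k+2})/(1-q^2)=[10k+2]/(1+q)$. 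Hence the $k$-th summand on the left of the specialized \eqref{eq:quartic} is congruent to $[10k+2]T_k/(1+q)$ modulo $\Phi_n(q)$, where $T_k$ denotes the $k$-th summand of \eqref{cong:quartic-1}. Summing, using that the total vanishes, and multiplying through by $1+q=\Phi_2(q)$ (which is coprime to $\Phi_n(q)$ since $n\geq 5$) gives $\sum_{k=0}^{(n-1)/2}[10k+2]T_k\equiv 0\pmod{\Phi_n(q)}$, as required. The main subtlety is to ensure that the term-by-term congruences take place in the localization of $\mathbb{Z}[q]$ at $\Phi_n(q)$, i.e., that no denominator factor of the specialized summand becomes divisible by $\Phi_n(q)$ for $0\leq k\leq(n-1)/2$; this amounts to routine cyclotomic checks (e.g.\ $n\mid 9+8j$ with $0\leq j<(n-1)/2$ forces $n\equiv 1\pmod 8$, contrary to hypothesis) that use the congruence class $n\equiv 5,7\pmod 8$ in an essential way.
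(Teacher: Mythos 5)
Your overall strategy is the same as the paper's (specialize the Gasper--Rahman quartic transformation \eqref{eq:quartic} with $q\mapsto q^2$ so that the right-hand side vanishes and the left-hand side truncates at $k=(n-1)/2$, then reduce the resulting identity modulo $\Phi_n(q)$), but your particular specialization $a=q^{1-n}$, $b=q^2$ is not legitimate, and this is a genuine gap rather than a routine check. After $q\mapsto q^2$ the denominator of the $k$-th summand contains $(abq^2,ab,ab/q^2;q^4)_k=(q^{5-n},q^{3-n},q^{1-n};q^4)_k$, and for every $n$ in your range one of these $q^4$-factorials acquires the factor $1-q^0=0$ well inside the summation range: for $n\equiv 5\pmod 8$ the factor $(q^{5-n};q^4)_k$ vanishes identically once $k\geqslant (n-1)/4$ (already for $n=5$ one has $abq^2=1$, so $(1;q^4)_k=0$ for all $k\geqslant 1$), and for $n\equiv 7\pmod 8$ the factor $(q^{3-n};q^4)_k$ vanishes identically once $k\geqslant (n+1)/4$. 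The numerator factors $(ab/q^2,ab,abq^2;q^6)_k=(q^{1-n};q^2)_{3k}$ also vanish for such $k$, so these terms are indeterminate (indeed, depending on the parity of $(n-1)/2$ the factor $1-aq^{n-1}$ can occur with higher multiplicity in the denominator than in the numerator). Consequently the identity \eqref{eq:quartic} cannot simply be evaluated at $b=q^2$, your claim that "the finite sum on the left-hand side equals zero as a rational function of $q$" is unjustified, and your term-by-term congruence with $[10k+2]T_k/(1+q)$ is meaningless for the problematic $k$. Your "routine cyclotomic checks" only address factors such as $(q^{9-n};q^8)_k$ and miss exactly this obstruction; rescuing the argument by letting $a\to q^{1-n}$ as a limit would produce finite \emph{nonzero} limiting terms, so the resulting identity is not the one you reduce.

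The paper avoids this by shifting \emph{both} parameters: it takes $a=q^{1-n}$ and $b=q^{2-n}$ (after $q\mapsto q^2$). Modulo $\Phi_n(q)$ this reduces to exactly the same summand as yours, but now the left-hand denominators are $(q^{9-3n},q^{8-3n};q^8)_k\,(q^{5-2n},q^{3-2n},q^{1-2n};q^4)_k\,(q^2;q^2)_k$, none of whose exponents can vanish when $n\equiv 5,7\pmod 8$, while the truncation still comes from $(q^{1-n};q^2)_k$ and the right-hand side still vanishes because of $(aq^2;q^2)_\infty=(q^{3-n};q^2)_\infty$. The hypothesis $n\equiv 5,7\pmod 8$ is needed there to ensure the right-hand side is well defined (for $n\equiv 3\pmod 8$ the product $(ab^2q^4;q^8)_\infty=(q^{9-3n};q^8)_\infty$ in the denominator vanishes, and for $n\equiv 1\pmod 8$ the lower parameter $aq^8=q^{9-n}$ of the ${}_1\phi_1$ degenerates), which is a different role than the one your checks assign to it. With $b=q^{2-n}$ your concluding reduction (including multiplying by $1+q$, which is a unit modulo $\Phi_n(q)$) goes through essentially as you wrote it.
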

\begin{proof}Replacing $q$ by $q^2$, $a$ by $q^{1-n}$ and
$b$ by $q^{2-n}$ in \eqref{eq:quartic}, we see that the left-hand
side terminates at $k=(n-1)/2$, while the right-hand side vanishes.
(Note that we cannot make such a replacement
if $n\equiv 1,3\pmod{8}$.) Namely, we have
\begin{equation*}
\sum_{k=0}^{(n-1)/2}\frac{1-q^{10k+2-4n}}{1-q^{2-4n}}
\frac{(q^{1-n};q)_{2k}(q^{1-2n};q^2)_{3k}(q^{2-4n};q^8)_k}
{(q^{9-3n}, q^{8-3n};q^8)_k(q^{1-2n};q^2)_{2k}(q^{5-2n};q^4)_k(q^2;q^2)_k}q^{2k}
=0.
\end{equation*}
Since $q^n\equiv 1\pmod{\Phi_n(q)}$, we immediately obtain
\eqref{cong:quartic-1} from the above identity.
\end{proof}

It is not difficult to see that the congruence \eqref{cong:quartic-1}
can also be derived from the following quartic summation formula of
Gasper \cite{Gasper} (see also \cite[Ex. 3.30]{GR}):
\begin{align*}
&\sum_{k=0}^\infty \frac{1-aq^{5k}}{1-a}
\frac{(a,b;q)_k (q/b,q^2/b,q^3/b;q^3)_k (a^2b^2/q^2;q^4)_k}
{(q^4,aq^4/b;q^4)_k (abq,ab,ab/q;q^2)_k (q^3/ab^2;q)_k} q^k  \notag\\[5pt]
&\quad+\frac{ab^3(aq,bq,1/b;q)_\infty (a^2b^2q^2;q^4)_\infty}
{q^2(ab,q^3/ab^2;q)_\infty (ab/q;q^2)_\infty (q^4,ab^3/q^2,aq^4/b;q^4)_\infty}
\,{}_{1}\phi_{1}\!\left[\begin{array}{c}
\!\! a^2b^2/q^2\\
a^2b^2q^2
\end{array};q^4,\, ab^3q^2
\right]\\[5pt]
&\quad\quad=\frac{(aq,ab^2/q^2;q)_\infty }
{(ab;q)_\infty (ab/q;q^2)_\infty (aq^4/b,ab^3/q^2;q^4)_\infty}.
\end{align*}

\begin{theorem}Let $n$ be a positive integer with $n\equiv 5,7\pmod{8}$. Then
\begin{equation}
\sum_{k=0}^{(n-1)/2}[10k+4]
\frac{(q,q^3;q^2)_{k}(q^2;q^2)_{3k}(q^4;q^8)_k}
{(q^{11}, q^9;q^8)_k(q^2;q^2)_{2k}(q^6;q^4)_k(q^2;q^2)_k}q^{2k}
\equiv 0\pmod{\Phi_n(q)}.  \label{cong:quartic-2}
\end{equation}
\end{theorem}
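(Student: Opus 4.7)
The plan is to mimic the proof of the preceding theorem, but with a different specialization in Gasper and Rahman's quartic transformation \eqref{eq:quartic}. Concretely, I would substitute $q\mapsto q^2$, $a=q^{1-n}$, and $b=q^{3-n}$, and then reduce modulo $\Phi_n(q)$ using $q^n\equiv 1$. With this choice one has $ab=q^{4-2n}$ and $a^2b^2=q^{8-4n}$, so each $q$-shifted factorial in the summand on the specialised left-hand side matches, modulo $\Phi_n(q)$, a factor in \eqref{cong:quartic-2}: for instance, $(a,b;q^2)_k\equiv(q,q^3;q^2)_k$, $(ab/q^2,ab,abq^2;q^6)_k\equiv(q^2,q^4,q^6;q^6)_k=(q^2;q^2)_{3k}$, $(a^2b^2/q^4;q^8)_k\equiv(q^4;q^8)_k$, $(ab^2q^4,a^2bq^4;q^8)_k\equiv(q^{11},q^9;q^8)_k$, and $(abq^2,ab,ab/q^2;q^4)_k\equiv(q^2,q^4,q^6;q^4)_k=(q^2;q^2)_{2k}(q^6;q^4)_k$. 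The prefactor $\frac{1-a^2b^2q^{10k-4}}{1-a^2b^2/q^4}$ collapses to $[10k+4]/[4]$, and since $[4]$ is coprime to $\Phi_n(q)$ for odd $n>1$, the specialised left-hand side agrees, modulo $\Phi_n(q)$, with $\frac{1}{[4]}$ times the sum in \eqref{cong:quartic-2}. Note that the summation terminates automatically at $k=(n-1)/2$ because $(q^{1-n};q^2)_k$ vanishes for $k\geqslant(n+1)/2$.

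It then remains to show that the right-hand side of \eqref{eq:quartic}, evaluated at the chosen substitution, is zero. The numerator of the rational prefactor contains $(aq^2,b;q^2)_\infty=(q^{3-n};q^2)_\infty^2$, which carries a (double) zero for every odd $n\geqslant 3$. The denominator is $(q^2;q^2)_\infty(q^{3-n},q^{11-3n},q^{9-3n};q^8)_\infty$, and a residue analysis modulo~$8$ shows that these three $q^8$-factorials vanish precisely when $n\equiv 3$, $n\equiv 1$, and $n\equiv 3\pmod 8$, respectively. Hence, restricted to odd~$n$, the denominator is nonzero exactly for $n\equiv 5,7\pmod 8$, which is the hypothesis of the theorem. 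Under this hypothesis the numerator vanishes while the denominator is finite and nonzero, so the entire right-hand side equals zero, which combined with the left-hand reduction yields \eqref{cong:quartic-2}.

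The principal obstacle is the careful bookkeeping between the transformed summand and the target expression in \eqref{cong:quartic-2}, together with the verification that the hypothesis $n\equiv 5,7\pmod 8$ is precisely what precludes a $0/0$ indeterminacy on the right-hand side (for $n\equiv 1$ or $3\pmod 8$ one of the denominator factors $(q^{11-3n};q^8)_\infty$ or $(q^{3-n},q^{9-3n};q^8)_\infty$ would also pick up a matching zero, requiring a separate limiting argument). Once these modular computations are carried out, the proof runs in complete parallel to that of the preceding theorem.
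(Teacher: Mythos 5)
Your overall strategy is the same as the paper's: specialize Gasper and Rahman's quartic transformation \eqref{eq:quartic} with $q\mapsto q^2$ and $a=q^{1-n}$ so that the left-hand side terminates and the right-hand side vanishes, then reduce modulo $\Phi_n(q)$ using $q^n\equiv 1$. The paper, however, takes $b=q^3$, whereas you take $b=q^{3-n}$, and this choice creates a genuine problem at the top of the summation range. With your choice $ab=q^{4-2n}$, the denominator factor $(abq^2;q^4)_k=(q^{6-2n};q^4)_k$ acquires the factor $1-q^{6-2n+4j}$ at $j=(n-3)/2$, which is exactly $1-q^0=0$ once $k\geqslant (n-1)/2$. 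So the series in \eqref{eq:quartic} is not even defined at your specialization: the term $k=(n-1)/2$ is a $0/0$ expression (its numerator factor $(q^{3-n};q^2)_{(n-1)/2}$ vanishes as well), not a term that ``terminates'' thanks to $(q^{1-n};q^2)_k$ — that factor only kills $k\geqslant (n+1)/2$. Consequently your key step, that each specialized term is congruent modulo $\Phi_n(q)$ to $1/[4]$ times the corresponding term of \eqref{cong:quartic-2}, fails at $k=(n-1)/2$: such a matching requires the denominator to be invertible modulo $\Phi_n(q)$, and here it is literally zero. (Even on the target side this last term needs separate care, since its denominator factor $(q^6;q^4)_{(n-1)/2}$ contains $1-q^{2n}$, which is divisible by $\Phi_n(q)$.)

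The gap is repairable, but only with an extra argument you do not supply: keep $b$ generic, note that the offending factor $1-bq^{n-3}$ occurs identically in the numerator (from $(b;q^2)_k$) and in the denominator (from $(abq^2;q^4)_k$) of the $k=(n-1)/2$ term, cancel it and then let $b\to q^{3-n}$; one then checks that this term in fact vanishes (one factor of $(ab/q^2,ab,abq^2;q^6)_k$ still vanishes at the specialization), so the usable identity is $\sum_{k=0}^{(n-3)/2}(\cdots)=0$, after which the term $k=(n-1)/2$ of \eqref{cong:quartic-2} must be shown separately to be $\equiv 0\pmod{\Phi_n(q)}$ (its numerator contributes $1-q^{n}$ from $(q^3;q^2)_{(n-1)/2}$ and $1-q^{2n}$ from $(q^2;q^2)_{3k}$, against only $1-q^{2n}$ in the denominator). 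The paper's choice $b=q^3$ avoids all of this: for $n\equiv 5,7\pmod 8$ none of the specialized denominator factors $(q^{11-n},q^{9-2n};q^8)_k$, $(q^{2-n};q^2)_{2k}$, $(q^{6-n};q^4)_k$ is zero, the sum genuinely terminates at $k=(n-1)/2$ through $(q^{1-n};q^2)_k$ alone, and the reduction modulo $\Phi_n(q)$ goes through term by term. Your analysis of the right-hand side — that $n\equiv 5,7\pmod 8$ is precisely what keeps $(q^{3-n},q^{11-3n},q^{9-3n};q^8)_\infty$ away from zero — is correct; it is the left-hand side where your specialization breaks down.
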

\begin{proof}Replacing $q$ by $q^2$, $a$ by $q^{1-n}$ and
$b$ by $q^3$ in \eqref{eq:quartic}, we see that the left-hand
side again terminates at $k=(n-1)/2$, while the right-hand side vanishes.
That is,
\begin{equation*}
\sum_{k=0}^{(n-1)/2}\frac{1-q^{10k+4-2n}}{1-q^{4-2n}}
\frac{(q^{1-n},q^3;q^2)_{k}(q^{2-n};q^2)_{3k}(q^{4-2n};q^8)_k}
{(q^{11-n}, q^{9-2n};q^8)_k(q^{2-n};q^2)_{2k}(q^{6-n};q^4)_k(q^2;q^2)_k}q^{2k}
=0.
\end{equation*}
The proof of \eqref{cong:quartic-2} then follows from the above identity
and the fact $q^n\equiv 1\pmod{\Phi_n(q)}$.
\end{proof}

We have the following two related conjectures.

\begin{conjecture}The congruence \eqref{cong:quartic-1} is still true
modulo $\Phi_n(q)^2$ for $n\equiv 5\pmod{8}$.
In particular,
if $p\equiv 5\pmod{8}$, then
\begin{equation*}
\sum_{k=0}^{(p-1)/2}\frac{(5k+1) (1)_{2k}(\frac{1}{2})_{3k}
(\frac{1}{4})_{k} }{32^k (\frac{9}{8})_k (1)_k^2
(\frac{1}{2})_{2k} (\frac{5}{4})_k }
\equiv 0\pmod{p^2}.
\end{equation*}
\end{conjecture}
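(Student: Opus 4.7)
The plan is to apply creative microscoping, in the same spirit as the proofs of Theorems~\ref{thm:first}--\ref{thm:fourth}. The idea is to introduce an auxiliary parameter $a$ into the sum in such a way that (i) the original summand is recovered when $a=1$, and (ii) the generalized sum can be shown to vanish when $a=q^n$ and when $a=q^{-n}$, so that $(1-aq^n)(a-q^n)$ divides the sum viewed as a polynomial in $a$. Specializing $a\to 1$, the factor $(1-aq^n)(a-q^n)$ becomes $(1-q^n)^2$, which is divisible by $\Phi_n(q)^2$, delivering the desired congruence.

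Concretely, I would replace $(q;q)_{2k}=(q,q^2;q^2)_k$ in the numerator by the balanced pair $(aq,q^2/a;q^2)_k$ (and possibly insert a matching balanced pair in the denominator to preserve the very-well-poised structure; the correct insertion will be dictated by the requirement of step~(ii) below). Denote the resulting $a$-deformation by $S_n(q,a)$; clearly $S_n(q,1)$ equals the left-hand side of \eqref{cong:quartic-1}. The key step is to verify that at $a=q^{\pm n}$ the summand fits the left-hand side of the Gasper--Rahman quartic transformation \eqref{eq:quartic} under a specialization $q\mapsto q^2$, with the roles of $a$ and $b$ in \eqref{eq:quartic} taken by appropriate powers of $q$ together with the new parameter. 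If the right-hand side of \eqref{eq:quartic} then vanishes for \emph{both} $a=q^n$ and $a=q^{-n}$ (through the $(aq,b;q)_\infty$ or $(b;q^4)_\infty$ factor in the prefactor, exactly as in the original proof of \eqref{cong:quartic-1}), we will have shown $S_n(q,q^n)=S_n(q,q^{-n})=0$. Since the denominator of $S_n(q,a)$ is coprime to $\Phi_n(q)$ at $a=1$ for $0\leqslant k\leqslant (n-1)/2$, this implies the original sum is divisible by $(1-q^n)^2$, hence by $\Phi_n(q)^2$.

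For the $p$-adic consequence, set $n=p$ (prime, $p\equiv 5\!\pmod 8$) and take $q\to 1$: each $q$-Pochhammer symbol $(q^\alpha;q^\beta)_k$ divided by $(1-q)^k$ (an appropriate power thereof) reduces to a ratio of ordinary Pochhammer symbols, $[10k+2]\to 10k+2$, and $\Phi_p(q)\to p$, yielding the stated mod $p^2$ supercongruence. (A small amount of bookkeeping identifies $(q;q)_{2k}(q;q^2)_{3k}(q^2;q^8)_k/\left((q^9,q^8;q^8)_k(q;q^2)_{2k}(q^5;q^4)_k(q^2;q^2)_k\right)$ with the classical ratio on the right of $32^k$.)

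The main obstacle is pinpointing the right $a$-deformation. The summand mixes bases $q^2$, $q^4$, and $q^8$, so there are several candidate factors into which to insert $a$; only the correct choice will preserve the balance needed for \eqref{eq:quartic} to apply at $a=q^{\pm n}$ with a vanishing right-hand side. The restriction $n\equiv 5\!\pmod 8$ (rather than the milder $n\equiv 5,7\!\pmod 8$ of Theorem for \eqref{cong:quartic-1}) strongly suggests that only for $n\equiv 5\!\pmod 8$ do both specializations $a=q^n$ and $a=q^{-n}$ simultaneously trigger the vanishing prefactor $(q^{3-n\pm?};q^2)_\infty$ or $(q^{?};q^8)_\infty$ in \eqref{eq:quartic}, while for $n\equiv 7\!\pmod 8$ only one of them does, accounting for the extra factor of $\Phi_n(q)$ picked up in the conjectured case. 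Working out precisely which insertion yields the double vanishing, and verifying that no denominator blow-up spoils the $a\to 1$ limit, is the heart of the proof.
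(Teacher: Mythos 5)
This statement is left as an open conjecture in the paper (no proof is given there), so the only question is whether your outline closes it --- and it does not. What you describe is the standard creative microscoping template, but the decisive step, namely exhibiting an $x$-deformation $S_n(q,x)$ of the truncated sum in \eqref{cong:quartic-1} that at \emph{both} $x=q^{-n}$ and $x=q^{n}$ becomes a terminating instance of the quartic transformation \eqref{eq:quartic} with vanishing right-hand side, is exactly the part you defer, and the natural candidates fail. The transformation \eqref{eq:quartic} has only two free parameters, and the proof of \eqref{cong:quartic-1} already uses both: after $q\mapsto q^2$ it takes $(a,b)=(q^{1-n},q^{2-n})$, the pair $(q^{1-n},q^{2-n};q^2)_k=(q^{1-n};q)_{2k}$ providing the termination. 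Any microscoping deformation staying inside \eqref{eq:quartic} must be of the form $a=a'(x)$, $b=b'(x)$ with $a'(1)=q$, $b'(1)=q^2$ in base $q^2$. Your suggested insertion $(q;q)_{2k}=(q,q^2;q^2)_k\mapsto(xq,q^2/x;q^2)_k$ corresponds to $a'=xq$, $b'=q^2/x$: at $x=q^{-n}$ the pair becomes $(q^{1-n},q^{2+n};q^2)_k$, the series terminates at $k=(n-1)/2$, and the right-hand side of \eqref{eq:quartic} indeed vanishes; but at $x=q^{n}$ the pair is $(q^{1+n},q^{2-n};q^2)_k$, which never vanishes (the exponents $1+n$ and $2-n$ are respectively positive and odd), so the series does not terminate and no conclusion $S_n(q,q^{n})=0$ can be drawn from \eqref{eq:quartic}. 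The other obvious choice $a'=xq$, $b'=xq^2$ reproduces the paper's specialization at $x=q^{-n}$ but again gives nothing at $x=q^{n}$.

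The obstruction is structural rather than a matter of bookkeeping: in the Watson-based proofs (e.g.\ of \eqref{eq:qab}) the deformed pair $(aq,q/a;q^2)_k$ is symmetric under $a\mapsto 1/a$ because the undeformed pair $(q,q;q^2)_k$ has equal members, so both evaluations $a=q^{\pm n}$ give the same terminating series; here the undeformed pair $(q,q^2;q^2)_k$ has members of different parity, so no single-parameter insertion compatible with \eqref{eq:quartic} makes both evaluations terminate. Consequently your plan yields divisibility by only one of the factors $1-xq^n$, $x-q^n$, i.e.\ only the already proved congruence modulo $\Phi_n(q)$, not modulo $\Phi_n(q)^2$. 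Your heuristic about the residue classes is also off: in the paper's specialization the vanishing of the right-hand side of \eqref{eq:quartic} occurs for both $n\equiv 5$ and $n\equiv 7\pmod 8$, the classes $n\equiv 1,3\pmod 8$ being excluded because of zero factors in denominators, not because one of two evaluations fails. The final $q\to 1$ reduction to the mod $p^2$ statement is routine, but it rests on the unproved $q$-congruence, so the conjecture remains open.
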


\begin{conjecture}The congruence \eqref{cong:quartic-2} is still true
modulo $\Phi_n(q)^3$. In particular,
if $p\equiv 5,7\pmod{8}$, then
\begin{equation*}
\sum_{k=0}^{(p-1)/2}\frac{(5k+2) (\frac{1}{2})_k^2 (1)_{3k} }
{8^k (\frac{11}{8})_k (\frac{9}{8})_k (1)_{2k} (1)_k }
\equiv 0\pmod{p^3}.
\end{equation*}

\end{conjecture}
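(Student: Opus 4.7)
The plan is to extend the creative microscoping strategy of Sections~\ref{sec:proofs}--\ref{sec:cubic} to the quartic setting. I would exhibit a one-parameter deformation
$$T(a)=\sum_{k=0}^{(n-1)/2}[10k+4]\frac{(q/a,aq^3;q^2)_{k}(q^2;q^2)_{3k}(q^4;q^8)_k}
{(aq^{11}, q^9/a;q^8)_k(q^2;q^2)_{2k}(q^6;q^4)_k(q^2;q^2)_k}q^{2k}$$
which reduces at $a=1$ to the left-hand side of \eqref{cong:quartic-2}, and establish
$$T(a)\equiv 0\pmod{\Phi_n(q)(1-aq^n)(a-q^n)}.$$
The limit $a\to 1$ then yields the conjecture, because $(1-aq^n)(a-q^n)$ collapses to $(1-q^n)^2$, which already carries a factor of $\Phi_n(q)^2$.

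A direct match shows that $T(a)$ is $[4]$ times the left-hand side of the quartic transformation \eqref{eq:quartic} under the substitutions $q\mapsto q^2$, $a_Q=q/a$, $b_Q=aq^3$; the relation $a_Qb_Q=q^4$ turns the prefactor into $[10k+4]/[4]$ and reproduces every Pochhammer. Specializing $a=q^n$ puts the factor $(q^3/a;q^2)_\infty=(q^{3-n};q^2)_\infty$ into the numerator of the right-hand side of \eqref{eq:quartic}, and this vanishes since at the index $j=(n-3)/2$ one has $1-q^0=0$. Specializing $a=q^{-n}$ produces the same vanishing factor via $(aq^3;q^2)_\infty$. Under the hypothesis $n\equiv 5,7\pmod 8$, none of the denominator Pochhammers on the right-hand side of \eqref{eq:quartic} acquires a compensating zero, and the residual ${}_1\phi_1$ series is finite. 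Hence $T(q^{\pm n})=0$, which is exactly $T(a)\equiv 0\pmod{(1-aq^n)(a-q^n)}$. The remaining factor $\Phi_n(q)$ would be obtained by pairing the $k$-th and $((n-1)/2-k)$-th summands of $T(a)$: Lemma~\ref{lem:2.1} combined with $q^n\equiv 1\pmod{\Phi_n(q)}$ rewrites each Pochhammer at the upper index as a monomial in $a$ and $q$ times the corresponding Pochhammer at the lower index, and one checks that the contributions from numerator, denominator, and the $q$-integer prefactor combine with opposite signs.

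The principal obstacle is the pairing step. The numerator $(q/a,aq^3;q^2)_k$ lives at base $q^2$, while the denominator $(aq^{11},q^9/a;q^8)_k$ lives at base $q^8$, so no single invocation of Lemma~\ref{lem:2.1} handles both. Moreover $[10((n-1)/2-k)+4]=[5n-10k-1]\equiv q^{-10k-1}[10k+1]\pmod{\Phi_n(q)}$, so the ratio $[10k+4]/[10k+1]$ must emerge cleanly from the Pochhammer reductions, which is not automatic. A natural remedy is to introduce a second microscoping parameter $b$ that makes the summand genuinely $a\leftrightarrow 1/a$ and $b\leftrightarrow 1/b$ symmetric---mirroring the two-parameter setup of Theorem~\ref{thm:2.2}---prove the congruence in that enriched form, and specialize $b\to 1$ only at the very end. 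A secondary subtlety arises for exceptional $n$ where $\Phi_n(q)$ already divides one of the denominator factors $1-q^{11+8j}$ or $1-q^{9+8j}$ with $j<(n-1)/2$ (for instance $n=11$): in those cases the placement of the deformation parameter in the denominator needs to be readjusted so that no power of $\Phi_n(q)$ is lost when $a\to 1$.
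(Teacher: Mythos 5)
First, note that what you are trying to prove is stated in the paper only as a conjecture: the paper proves \eqref{cong:quartic-2} modulo $\Phi_n(q)$ (via \eqref{eq:quartic} with $q\mapsto q^2$, $a\mapsto q^{1-n}$, $b\mapsto q^3$) and offers no proof of the modulus $\Phi_n(q)^3$ claim, so your attempt has to stand entirely on its own. The sound part of your proposal is the deformation $T(a)$: your parameter matching is correct ($a_Q=q/a$, $b_Q=aq^3$ gives $a_Qb_Q=q^4$, the prefactor $[10k+4]/[4]$, and all Pochhammer factors of \eqref{cong:quartic-2}), at $a=q^{\pm n}$ the series genuinely terminates by $k=(n-1)/2$, the right-hand side of \eqref{eq:quartic} vanishes through $(q^{3-n};q^2)_\infty$, and for $n\equiv 5,7\pmod 8$ no denominator factor $(q^{9\mp n};q^8)_\infty$, $(q^{3-n};q^8)_\infty$, $(q^{11\mp n};q^8)_\infty$ produces a compensating zero. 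So $T(a)\equiv 0\pmod{(1-aq^n)(a-q^n)}$, and with some care about the $a$-free denominators (the factor $1-q^{2n}$ in $(q^6;q^4)_{(n-1)/2}$ cancels against $(q^2;q^2)_{3k}$ in the top summand), letting $a\to 1$ would give \eqref{cong:quartic-2} modulo $\Phi_n(q)^2$ — already beyond what the paper proves, but not what the statement asserts.

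The genuine gap is the third factor $\Phi_n(q)$, i.e.\ the claim $T(a)\equiv 0\pmod{\Phi_n(q)}$, which is exactly what would upgrade $\Phi_n(q)^2$ to $\Phi_n(q)^3$, and you do not prove it. As you yourself observe, the pairing $k\leftrightarrow (n-1)/2-k$ that works in Theorem~\ref{thm:2.2} fails structurally here: Lemma~\ref{lem:2.1} reduces quotients in a single base, whereas your summand mixes bases $q^2$, $q^4$ and $q^8$, and the prefactor transforms as $[5n-10k-1]\equiv -q^{-10k-1}[10k+1]\pmod{\Phi_n(q)}$, so $[10k+4]$ does not reappear and no antisymmetry can be expected termwise. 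Your proposed remedies remain wishes rather than arguments: the second microscoping parameter $b$ is never exhibited (and it is not clear that any placement of $b$ restores an $a\leftrightarrow 1/a$, $b\leftrightarrow 1/b$ symmetry compatible with the quartic structure), while the "exceptional $n$" worry (e.g.\ $n=11$) is vacuous, since $n\equiv 3\pmod 8$ is excluded by the hypothesis and one checks that $1-q^{9+8j}$, $1-q^{11+8j}$ with $j\leqslant (n-3)/2$ are never divisible by $\Phi_n(q)$ when $n\equiv 5,7\pmod 8$. In short, your scheme plausibly yields the congruence modulo $\Phi_n(q)^2$, but the step to $\Phi_n(q)^3$ — the actual content of the conjecture — is missing, and this is very likely the same obstruction that led the authors to leave the statement open.
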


%%%%%%%%%%%%%%%%%%%%%%%%%%%%%%%%%%%%%%%%%%%%%%%%%%%%%%%%%%%%%%%%%%%%%%%%%%%%%%%%%%%%%%%%%%%%%%%%%%%%%%%%%%%%%%%%%%%%%%%%%%%%%%%%%%%%%%%%%%%%%%%%%%%%%%%%%%%%%%%%%%%%%%%%%%%%%%%%%%%%%%%%%%%%%%%%%%%%%%%%%%%%%%%%%%%
\section{Some $q$-congruences from a new
${}_{12}\phi_{11}$ transformation}\label{sec:qcong12phi11}
In this section, we shall deduce some $q$-congruences from Theorem~\ref{newtf},
a new $_{12}\phi_{11}$ transformation formula, whose proof we give in the
appendix. Although all of the $q$-congruences are modulo $\Phi_n(q)$, the
$q=1$ cases sometimes can be generalized to supercongruences modulo higher
powers (see Conjectures~\ref{conj:7.5} and \ref{conj:7.6} in the next section).
\begin{theorem}Let $n\equiv 1\pmod{3}$ be a positive integer and $n>1$. Then
\begin{subequations}
\begin{align}
\sum_{k=0}^{(n-1)/3} [6k+1]\frac{(q;q^3)_k^6
(q^3;q^3)_{2k}}{(q^3;q^3)_k^6 (q^2;q^3)_{2k}} q^{2k}
&\equiv 0 \pmod{\Phi_n(q)},  \label{eq:6k+1} \\[5pt]
\sum_{k=0}^{(n-1)/3} [6k+1]\frac{(q;q^3)_k^4
(q^3;q^3)_{2k}}{(q^3;q^3)_k^4 (q^2;q^3)_{2k}}
&\equiv 0 \pmod{\Phi_n(q)},  \label{eq:6k+1-2} \\%[5pt]
\intertext{and}
\sum_{k=0}^{(n-1)/3} [6k+1]\frac{(q;q^3)_k^2
(q^3;q^3)_{2k}}{(q^3;q^3)_k^2 (q^2;q^3)_{2k}} q^{-2k}
&\equiv 0 \pmod{\Phi_n(q)}.  \label{eq:6k+1-3}
\end{align}
\end{subequations}
\end{theorem}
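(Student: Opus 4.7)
The plan is to derive all three congruences by specializing the new $_{12}\phi_{11}$ transformation of Theorem~\ref{newtf} in the Appendix, following the template already used throughout the paper for Watson's transformation, Rahman's quadratic transformation, and the Gasper--Rahman cubic and quartic transformations. In each of those earlier proofs the strategy is: substitute $q\mapsto q^d$ for the appropriate $d$, choose the well-poised parameter so that the summand acquires the overall factor $[dk+1]$, then set one of the free upper parameters to a power of $q$ of the form $q^{1-n}$ so that the left-hand side terminates at $k=(n-1)/d$, and finally observe that the transformed right-hand side carries a vanishing factor modulo $\Phi_n(q)$ because $q^n\equiv 1\pmod{\Phi_n(q)}$.

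Applied to the present theorem with $d=3$, I would first substitute $q\mapsto q^3$ in the $_{12}\phi_{11}$ transformation and take the very-well-poised parameter $a=q$; this produces the overall factor $(1-q^{6k+1})/(1-q)=[6k+1]$ together with the common block $(q^3;q^3)_{2k}/((q^3;q^3)_k(q^2;q^3)_{2k})$ via the usual cancellation in a very-well-poised series. Next I would specialize one of the free upper parameters to $q^{1-n}$; since $n\equiv 1\pmod 3$, the factor $(q^{1-n};q^3)_k$ is then nonzero exactly for $0\leqslant k\leqslant (n-1)/3$, giving the correct truncation. The remaining free parameters of the $_{12}\phi_{11}$ are then specialized in three different ways to reproduce the summands in \eqref{eq:6k+1}, \eqref{eq:6k+1-2}, and \eqref{eq:6k+1-3}: these sums differ only by an extra factor of $(q;q^3)_k^2/(q^3;q^3)_k^2$ together with a $q$-power ($q^{2k}$, $1$, or $q^{-2k}$), which is exactly the flexibility afforded by the remaining parameters.

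For each specialization I would then examine the transformed side, which under Theorem~\ref{newtf} is a sum of two nonterminating $_4\phi_3$'s multiplied by infinite products. The key point is that in each case at least one prefactor contains a Pochhammer block of the form $(q^{an};q^3)_\infty$, or equivalently a finite numerator factor of the form $1-q^{bn}$ with $b\ne 0$; since $q^n\equiv 1\pmod{\Phi_n(q)}$, this forces the right-hand side to vanish modulo $\Phi_n(q)$. Pulling the congruence back through the transformation then yields \eqref{eq:6k+1}--\eqref{eq:6k+1-3}.

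The main obstacle is bookkeeping: identifying, for each of the three target sums, the precise specialization of the remaining free parameters in the $_{12}\phi_{11}$ that reproduces exactly the desired summand up to the correct power of $q$, and then checking that with those same choices the transformed side indeed has a vanishing infinite-product factor rather than degenerating into an indeterminate $0/0$ form. Where such indeterminacies arise (as they do in the analogous proofs earlier in the paper, e.g.\ in the derivation from \eqref{eq:1-acq4k}), I would resolve them by taking a parameter limit before specializing at $q^{1-n}$. No further transformation beyond Theorem~\ref{newtf} and the standard elementary $q$-factorial manipulations of \cite[Appendix~I]{GR} should be required.
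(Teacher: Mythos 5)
Your overall template is the right one (specialize the new ${}_{12}\phi_{11}$ transformation \eqref{12phi11} with $q\mapsto q^3$, truncate via a factor $(q^{1-n};q^3)_k$, and use $q^n\equiv1\pmod{\Phi_n(q)}$), but the specific parameter placement you commit to fails at the decisive step. In the paper's proof the $n$-dependence is loaded into the \emph{very-well-poised} parameter: one takes $a=q^{1-n}$ and $b=c=d=q$ (then $c\to0$, resp.\ $c,d\to0$, for \eqref{eq:6k+1-2} and \eqref{eq:6k+1-3}, consistent with the limits you anticipate). With that choice \emph{both} terms on the right-hand side of \eqref{12phi11} vanish identically: the first because its numerator contains $(ab/c;q^3)_\infty=(q^{1-n};q^3)_\infty=0$, the second because its numerator contains $(aq^3;q^3)_\infty=(q^{4-n};q^3)_\infty=0$ (this is where $n>1$ enters), while no denominator factor vanishes. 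Hence the truncated left-hand side equals $0$ exactly, and \eqref{eq:6k+1} follows by reducing $(1-q^{6k+1-n})/(1-q^{1-n})\equiv[6k+1]$, $(q^{1-n};q^3)_k\equiv(q;q^3)_k$, $(q^{3-n};q^3)_{2k}\equiv(q^3;q^3)_{2k}$, etc., modulo $\Phi_n(q)$. Note that $[6k+1]$ is only recovered \emph{after} this reduction; it is not obtained by setting $a=q$ at the outset.

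Your plan instead fixes $a=q$ (to get $[6k+1]$ on the nose) and places $q^{1-n}$ in one of the free upper parameters. Check what this does to the right-hand side of \eqref{12phi11}: with $a=b=d=q$ and $c=q^{1-n}$ (the only placement that reproduces the summand of \eqref{eq:6k+1} modulo $\Phi_n(q)$; taking $b=q^{1-n}$ changes the argument and the denominator blocks), only the \emph{second} term vanishes, through $(c;q^3)_\infty$. The first term survives: its prefactor becomes $(q^4,q^{1+n},q,q^{2+n};q^3)_\infty/(q^2,q^{3+n},q^3,q^{n};q^3)_\infty$, none of whose numerator factors vanish, multiplied by a terminating ${}_4\phi_3$. (Arranging instead $ab/c=q^{1-n}$ merely swaps the roles: then the first term dies and the second survives.) So the transformed side is not zero, and your fallback claim that it ``vanishes modulo $\Phi_n(q)$'' cannot be rescued: the paper's notion of congruence applies to rational functions, and these surviving infinite products are not rational, so there is no congruence to pull back. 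The repair is precisely the paper's placement: put $q^{1-n}$ into $a$ itself so that vanishing factors occur in the numerators of \emph{both} terms on the right, and only afterwards reduce modulo $\Phi_n(q)$ to produce $[6k+1]$ and the Pochhammer blocks appearing in \eqref{eq:6k+1}--\eqref{eq:6k+1-3}.
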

\begin{proof} Replacing $q\mapsto q^3$ and then letting
$a=q^{1-n}$, $b=c=d=q$ in \eqref{12phi11}, we obtain
\begin{equation}
\sum_{k=0}^{(n-1)/3}\frac{1-q^{6k+1-n}}{1-q^{1-n}}\frac{(q^{1-n};q^3)_k^3(q;q^3)_k^3
(q^{3-n};q^3)_{2k}}{(q^{3-n};q^3)_k^3 (q^3;q^3)_k^3 (q^{2-n};q^3)_{2k}} q^{2k}
=0, \label{eq:6k+1-n}
\end{equation}
because the right-hand side of \eqref{12phi11} contains the factor
$(q^{1-n};q^3)_\infty$, which vanishes for $n\equiv 1\pmod{3}$. Since
$q^{n}\equiv 1\pmod{\Phi_n(q)}$, we immediately deduce \eqref{eq:6k+1}
from \eqref{eq:6k+1-n} .

Similarly, if we change $c=q$ to $c\to 0$ in the above procedure, then we
can prove \eqref{eq:6k+1-2}, while if we change $c=d=q$ to $c,d\to 0$ then we
are led to \eqref{eq:6k+1-3}.
\end{proof}

\begin{theorem}Let $n\equiv 2\pmod{3}$ be a positive integer and $n>2$. Then
\begin{subequations}
\begin{align}
\sum_{k=0}^{(n+1)/3} [6k-1]\frac{(q^{-1};q^3)_k^6
(q^3;q^3)_{2k}}{(q^3;q^3)_k^6 (q^{-2};q^3)_{2k}} q^{4k}
\equiv 0 \pmod{\Phi_n(q)},  \label{eq:6k-1} \\[5pt]
\sum_{k=0}^{(n+1)/3} [6k-1]\frac{(q^{-1};q^3)_k^4
(q^3;q^3)_{2k}}{(q^3;q^3)_k^4 (q^{-2};q^3)_{2k}}
\equiv 0 \pmod{\Phi_n(q)},  \label{eq:6k-1-2} \\%[5pt]
\intertext{and}
\sum_{k=0}^{(n+1)/3} [6k-1]\frac{(q^{-1};q^3)_k^2
(q^3;q^3)_{2k}}{(q^3;q^3)_k^2 (q^{-2};q^3)_{2k}} q^{-4k}
\equiv 0 \pmod{\Phi_n(q)}.  \label{eq:6k-1-3}
\end{align}
\end{subequations}
\end{theorem}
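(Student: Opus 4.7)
The plan is to mirror the proof of the preceding theorem (for $n\equiv 1\pmod 3$), adjusting the specialization of parameters in the new $_{12}\phi_{11}$ transformation (Theorem~\ref{newtf}) so that the summation terminates at $k=(n+1)/3$ and, simultaneously, the right-hand side of \eqref{12phi11} vanishes identically.

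Concretely, to prove \eqref{eq:6k-1} I would first replace $q\mapsto q^3$ in \eqref{12phi11} and then specialize the parameters to $a=q^{-1-n}$, $b=c=d=q^{-1}$. With these choices, the factor $(a;q^3)_k=(q^{-1-n};q^3)_k$ in the numerator of the left-hand side first vanishes at $k=(n+4)/3$, so the sum terminates precisely at $k=(n+1)/3$. Moreover, because $n\equiv 2\pmod 3$, the exponent $-1-n$ is divisible by $3$, so one of the infinite products in the explicit right-hand side of \eqref{12phi11} carries the factor $(q^{-1-n};q^3)_\infty$ (or an analogous product whose vanishing is driven by $a$), which is identically zero. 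Hence the terminating left-hand side equals zero as a rational function in $q$. Reducing this identity modulo $\Phi_n(q)$ and using $q^n\equiv 1\pmod{\Phi_n(q)}$ turns $q^{-1-n}$ into $q^{-1}$ and $q^{-n}$-powers in the lower factors into the appropriate $q^3$-powers, which yields precisely \eqref{eq:6k-1}.

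For \eqref{eq:6k-1-2} and \eqref{eq:6k-1-3} I would repeat this specialization but additionally take the limits $c\to 0$, respectively $c,d\to 0$. This removes the corresponding $(q^{-1};q^3)_k$ factors from the numerator, and the standard basic-hypergeometric book-keeping (the $1+s-r$ balancing exponent in the $_r\phi_s$ definition) converts the argument $q^{4k}$ into $q^{0\cdot k}$ and then into $q^{-4k}$, matching the three stated forms. Crucially, the vanishing factor on the right-hand side of \eqref{12phi11} depends on $a$ and not on $c$ or $d$, so it persists unchanged under these limits; termination of the left-hand side at $k=(n+1)/3$ is also preserved, since it too is controlled by $a$.

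The main obstacle is step (ii): one must identify, in the explicit form of \eqref{12phi11} given in the Appendix, precisely which of the infinite products on the right-hand side produces the desired zero for $a=q^{-1-n}$ when $n\equiv 2\pmod 3$, and verify that this zero is not cancelled by a pole from any other factor (and that the argument of the residual ${}_4\phi_3$ series on the right-hand side remains finite). Once this vanishing is established, the rest of the argument is a routine parameter substitution together with the congruence $q^n\equiv 1\pmod{\Phi_n(q)}$, exactly as in the derivation of \eqref{eq:6k+1}--\eqref{eq:6k+1-3}.
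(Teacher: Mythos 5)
Your overall strategy is exactly the paper's: replace $q\mapsto q^3$ in \eqref{12phi11}, set $a=q^{-1-n}$, $b=c=d=q^{-1}$ (and take $c\to0$, resp.\ $c,d\to0$, for \eqref{eq:6k-1-2} and \eqref{eq:6k-1-3}), observe that the left-hand side terminates at $k=(n+1)/3$ while the right-hand side vanishes, and then reduce modulo $\Phi_n(q)$ using $q^n\equiv1$. However, the step you yourself flag as the main obstacle is precisely the point where your tentative identification goes wrong, and it is not a harmless detail. The right-hand side of \eqref{12phi11} consists of \emph{two} terms, each a product of an infinite-product prefactor and a nonterminating ${}_4\phi_3$; to conclude that the whole right-hand side is zero you need a vanishing factor in \emph{each} of the two prefactors (or in one prefactor if the other term vanishes for a separate reason). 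The factor you name, $(q^{-1-n};q^3)_\infty$, arises from $(ab/c;q)_\infty=(ab/d;q)_\infty$ and occurs only in the \emph{first} prefactor; it says nothing about the second term, and moreover it does not survive the limits $c\to0$ or $d\to0$ that you need for \eqref{eq:6k-1-2} and \eqref{eq:6k-1-3} (since $ab/c$ then blows up), contradicting your claim that the vanishing factor "depends on $a$ and not on $c$ or $d$".

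The factor that actually does the work — and the one the paper uses — is $(aq;q)_\infty$, which under the specialization becomes $(q^{2-n};q^3)_\infty$ and vanishes because $2-n\equiv0\pmod 3$ and $2-n<0$ for $n>2$. This factor appears in the prefactor of \emph{both} terms on the right-hand side of \eqref{12phi11}, it genuinely is independent of $c$ and $d$ (so it persists under the limits), and one checks that the corresponding denominators $(ab;q^3)_\infty=(q^{-2-n};q^3)_\infty$, $(aq^3/c,aq^3/d;q^3)_\infty=(q^{3-n};q^3)_\infty$, $(cq^3/b,dq^3/b;q^3)_\infty=(q^{3};q^3)_\infty$ and $(cd/ab;q^3)_\infty=(q^{n};q^3)_\infty$ do not vanish, so no pole cancels the zero. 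With this factor in hand your argument goes through verbatim; you should also add the (routine) observation, needed for the final reduction and for the hypothesis $n>2$, that the denominators of the resulting terminating identity, namely $(q^{3-n};q^3)_k^3(q^3;q^3)_k^3(q^{-2-n};q^3)_{2k}$ for $0\leqslant k\leqslant(n+1)/3$, are coprime to $\Phi_n(q)$.
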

\begin{proof} Replacing $q\mapsto q^3$ and then letting  $a=q^{-1-n}$,
$b=c=d=q^{-1}$ in \eqref{12phi11}, we obtain
\begin{equation}
\sum_{k=0}^{(n+1)/3}\frac{1-q^{6k-1-n}}{1-q^{-1-n}}\frac{(q^{-1-n};q^3)_k^3(q;q^3)_k^3
(q^{3-n};q^3)_{2k}}{(q^{3-n};q^3)_k^3 (q^{3};q^3)_k^3 (q^{-2-n};q^3)_{2k}} q^{4k}
=0,\label{eq:6k-1-n}
\end{equation}
because the right-hand side of \eqref{12phi11} contains the factor
$(q^{2-n};q^3)_\infty$, which vanishes for $n\equiv 2\pmod{3}$.
It is easy to see that the denominator of \eqref{eq:6k-1-n} is relatively
prime to $\Phi_n(q)$ for $n>2$.
Therefore, applying $q^{n}\equiv 1\pmod{\Phi_n(q)}$, we obtain the desired
congruence in \eqref{eq:6k-1}.
Similarly (see the proof of \eqref{eq:6k+1-2} and \eqref{eq:6k+1-3}),
we can prove \eqref{eq:6k-1-2} and \eqref{eq:6k-1-3}.
\end{proof}

%%%%%%%%%%%%%%%%%%%%%%%%%%%%%%%%%%%%%%%%%%%%%%%%%%%%%%%%%%%%%%%%%%%%%%%%%%%%%%%%%%%%%%%%%%%%%%%%%%%%%%%%%%%%%%%%%%%%%%%%%%%%%%%%%%%%%%%%%%%%%%%%%%%%%%%%%%%%%%%%%%%%%%%%%%%%%%%%%%%%%%%%%%%%%%%%%%%%%%%%%%%%%%%%%%%%%%%%%%%%%%%%%%%%%%
\section{Some other $q$-congruences from the
$q$-Dixon sum}\label{sec:q-dixon}
By using the $q$-Dixon sum \cite[Eq.~(II.13)]{GR}:
\begin{equation}
{}_{4}\phi_{3}\!\left[\begin{array}{c}
 a,\, -q\sqrt{a},\,b,\,c\\
\!\!-\sqrt{a},\,aq/b,\,aq/c
\end{array};q,\, \frac{q\sqrt{a}}{bc}
\right]
=\frac{(aq,q\sqrt{a}/b,q\sqrt{a}/c,aq/bc;q)_\infty}
{(aq/b,aq/c,q\sqrt{a},q\sqrt{a}/bc;q)_\infty}, \label{eq:q-Dixon}
\end{equation}
 the first author and Zudilin \cite[Thm.~4.12]{GuoZu} proved the
following result.
\begin{equation}
\sum_{k=0}^{(n-1)/2} \frac{(1+aq^{4k+1})(a^2q^2, bq^2, cq^2;q^4)_k }
{(1+aq)(a^2q^4/b, a^2q^4/c, q^4;q^4)_k } \biggl(\frac{aq}{bc}\biggr)^k
\equiv 0\pmod{(1-a^2q^{2n})} \quad\text{for}\ n\equiv 3\pmod{4};
\label{eq:qDixon-1}
\end{equation}
in particular,
\begin{equation}
\sum_{k=0}^{(n-1)/2} \frac{(1+q^{4k+1})\, (q^2;q^4)_k^3}{(1+q)\,(q^4;q^4)_k^3} q^k
\equiv 0\pmod{\Phi_n(q)\Phi_n(-q)}\quad\text{for}\ n\equiv 3\pmod{4}.
\label{eq:qDixon-2}
\end{equation}
They \cite[Conj.~4.13]{GuoZu} also conjectured that the congruence
\eqref{eq:qDixon-2} still holds modulo $\Phi_n(q)^2 \Phi_n(-q)$.

In this section, we shall give further similar congruences from the
$q$-Dixon sum.
\begin{theorem}\label{thm:q-Dixon}
Let $n\equiv 1\pmod{4}$ be a positive integer. Then
\begin{equation}
\sum_{k=0}^{(n+1)/2} \frac{(1+aq^{4k-1}) (a^2/q^2,  b/q^2,  c/q^2;q^4)_k }
{(1+aq^{-1}) (a^2q^4/b, a^2q^4/c, q^4;q^4)_k } \biggl(\frac{aq^7}{bc}\biggr)^k
\equiv 0\pmod{(1-a^2q^{2n})}; \label{eq:qDixon-3}
\end{equation}
in particular,
\begin{equation}
\sum_{k=0}^{(n+1)/2} \frac{(1+q^{4k-1}) (q^{-2};q^4)_k^3}{(1+q)(q^4;q^4)_k^3} q^{7k}
\equiv 0\pmod{\Phi_n(q)\Phi_n(-q)}\quad\text{for odd}\ n>1.
\label{eq:qDixon-4}
\end{equation}
\end{theorem}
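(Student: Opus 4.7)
The plan is to follow the template used for the proof of \eqref{eq:qDixon-1} in \cite[Thm.~4.12]{GuoZu}: apply the $q$-Dixon summation \eqref{eq:q-Dixon} after a parameter substitution chosen so that the resulting ${}_4\phi_3$ matches the series on the left-hand side of \eqref{eq:qDixon-3}, while the Dixon product evaluation on the right displays a factor containing the required divisor $1-a^2q^{2n}$.

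Concretely, in \eqref{eq:q-Dixon} I replace $q\mapsto q^4$ and specialize $\sqrt{a}\mapsto aq^{-1}$ (so the Dixon parameter $a$ becomes $a^2/q^2$), $b\mapsto b/q^2$, $c\mapsto c/q^2$. A direct rewriting shows that the summand turns into
\[
\frac{(1+aq^{4k-1})(a^2/q^2,\,b/q^2,\,c/q^2;q^4)_k}{(1+aq^{-1})(a^2q^4/b,\,a^2q^4/c,\,q^4;q^4)_k}\left(\frac{aq^7}{bc}\right)^k,
\]
while the product evaluation on the right becomes
\[
\frac{(a^2q^2,\,aq^5/b,\,aq^5/c,\,a^2q^6/bc;q^4)_\infty}{(a^2q^4/b,\,a^2q^4/c,\,aq^3,\,aq^7/bc;q^4)_\infty}.
\]
Two observations then close the argument. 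First, at $a^2q^{2n}=1$ the factor $(a^2/q^2;q^4)_k=(q^{-2-2n};q^4)_k$ picks up the zero $1-q^{2n}$ at the index $j=(n+1)/2$, which is a nonnegative integer since $n\equiv 1\pmod 4$; hence every term of the Dixon series with $k\geqslant(n+3)/2$ vanishes, so modulo $1-a^2q^{2n}$ the infinite series collapses to the finite sum $\sum_{k=0}^{(n+1)/2}$ appearing in \eqref{eq:qDixon-3}. Second, the factor $(a^2q^2;q^4)_\infty$ on the product side contains $1-a^2q^{4j+2}=1-a^2q^{2n}$ at $j=(n-1)/2$, while all remaining factors in numerator and denominator are coprime to $1-a^2q^{2n}$ for generic $b$ and $c$. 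Combined, the finite sum is divisible by $1-a^2q^{2n}=(1-aq^n)(1+aq^n)$, which is \eqref{eq:qDixon-3}.

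For \eqref{eq:qDixon-4} I specialize $a=b=c=1$ in \eqref{eq:qDixon-3}; the modulus $1-a^2q^{2n}$ becomes $1-q^{2n}$, and multiplying through by $q$ converts the prefactor $1/(1+q^{-1})$ into $1/(1+q)$, matching the form stated. For odd $n>1$ the cyclotomic polynomials $\Phi_n(q)$ and $\Phi_{2n}(q)=\Phi_n(-q)$ are coprime and each divides $1-q^{2n}$, so their product divides $1-q^{2n}$, yielding the modulus $\Phi_n(q)\Phi_n(-q)$. The main obstacle is purely bookkeeping: transforming all four numerator and four denominator $q^4$-Pochhammer symbols on the product side correctly under the substitution, and using the parity of $n$ to locate the zeros of $(a^2/q^2;q^4)_k$ and $(a^2q^2;q^4)_\infty$ at precisely the indices $j=(n+1)/2$ and $j=(n-1)/2$, respectively, so that the truncation of the Dixon series matches exactly the upper limit $(n+1)/2$ in \eqref{eq:qDixon-3}.
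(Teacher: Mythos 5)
Your proposal is correct and matches the paper's proof essentially verbatim: the same substitution $q\mapsto q^4$, $a\mapsto a^2/q^2$, $b\mapsto b/q^2$, $c\mapsto c/q^2$ in the $q$-Dixon sum, the same truncation of the series at $k=(n+1)/2$ and vanishing of the product side when $a^2q^{2n}=1$ (with $n\equiv 1\pmod 4$ ensuring the denominator factor $(aq^3;q^4)_\infty$ stays nonzero), and the same specialization $a,b,c\to 1$ to get \eqref{eq:qDixon-4}. Only a trivial slip: since $1/(1+q^{-1})=q/(1+q)$, one divides (not multiplies) by the unit $q$ to reach the stated form, which of course does not affect the congruence.
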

\begin{proof}
Letting $q\mapsto q^4$, $a\mapsto a^2/q^2$, $b\mapsto b/q^2$ and
$c\mapsto c/q^2$ in \eqref{eq:q-Dixon}, we get
\begin{equation}
\sum_{k=0}^\infty \frac{(1+aq^{4k-1}) (a^2/q^2,  b/q^2, c/q^2;q^4)_k}
{(1+aq^{-1}) (a^2q^4/b, a^2q^4/c, q^4;q^4)_k} \biggl(\frac{aq^7}{bc}\biggr)^k
=\frac{(a^2q^2, aq^5/b, aq^5/c, a^2q^6/bc;q^4)_\infty}
{(a^2q^4/b, a^2q^4/c, aq^3, aq^7/bc;q^4)_\infty}.
\label{eq:qDixon-33}
\end{equation}
Since $n\equiv 1\pmod 4$, putting $a=\pm q^{-n}$ in \eqref{eq:qDixon-33}
we see that the left-hand side terminates at $k=(n+1)/2$,
while the right-hand side vanishes. This proves \eqref{eq:qDixon-3}.
For $n>1$, taking the limit as $a,b,c\to 1$ in \eqref{eq:qDixon-3}
we are led to \eqref{eq:qDixon-4}.
\end{proof}

We conjecture that the following stronger version of \eqref{eq:qDixon-4}
is also true.
\begin{conjecture} \label{conj:GuoZu}
Let $n\equiv 1\pmod{4}$ be an integer and $n>1$. Then
\begin{equation}
\sum_{k=0}^{(n+1)/2} \frac{(1+q^{4k-1}) (q^{-2};q^4)_k^3}{(1+q)(q^4;q^4)_k^3} q^{7k}
\equiv 0\pmod{\Phi_n(q)^2\Phi_n(-q)}.
\end{equation}
\end{conjecture}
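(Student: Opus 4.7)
I propose to strengthen Theorem~\ref{thm:q-Dixon} itself, keeping all three parameters $a,b,c$ alive, by showing
\begin{equation*}
\sum_{k=0}^{(n+1)/2}\frac{(1+aq^{4k-1})\,(a^2/q^2,b/q^2,c/q^2;q^4)_k}{(1+aq^{-1})\,(a^2q^4/b,a^2q^4/c,q^4;q^4)_k}\biggl(\frac{aq^7}{bc}\biggr)^{\!k}\equiv 0\pmod{\Phi_n(q)(1-a^2q^{2n})}
\end{equation*}
whenever $n\equiv 1\pmod4$ and $n>1$. Letting $a,b,c\to 1$ then recovers the conjecture: since $1-q^{2n}=(1-q^n)(1+q^n)$ is divisible by $\Phi_n(q)$ (via the first factor) and by $\Phi_n(-q)=\Phi_{2n}(q)$ (via the second factor, because $n$ is odd and $>1$), the product $\Phi_n(q)(1-q^{2n})$ contains $\Phi_n(q)^2\Phi_n(-q)$; moreover the denominators $(a^2q^4/b,a^2q^4/c,q^4;q^4)_k$ remain coprime to $\Phi_n(q)$ throughout $0\leqslant k\leqslant (n+1)/2$, so specialisation $a,b,c\to 1$ is harmless.

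The factor $1-a^2q^{2n}$ is already guaranteed by Theorem~\ref{thm:q-Dixon}, so the only new content is divisibility by $\Phi_n(q)$. For this I would adopt the pairing strategy introduced in the proof of Theorem~\ref{thm:2.2} and used throughout the paper (compare the discussion preceding~\eqref{eq:4k+1,both}). Write $T_k$ for the $k$-th summand and set $K=(n+1)/2$. For $n\equiv 1\pmod 4$, the index range $\{0,1,\dots,K\}$ contains an even number $K+1$ of elements and the involution $k\mapsto K-k$ is fixed-point-free, so it suffices to prove
\begin{equation*}
T_k+T_{K-k}\equiv 0\pmod{\Phi_n(q)},\qquad 0\leqslant k\leqslant (K-1)/2.
\end{equation*}
The central tool is the reversal identity
\begin{equation*}
(x;q^4)_{K-k}\equiv\frac{(-1)^k q^{2k^2}(x;q^4)_K}{x^k\,(q^2/x;q^4)_k}\pmod{\Phi_n(q)},
\end{equation*}
obtained from $(x;q^4)_{K-k}=(x;q^4)_K/(xq^{2n+2-4k};q^4)_k$, the congruence $q^{2n}\equiv 1\pmod{\Phi_n(q)}$ which replaces $q^{2n+2-4k}$ by $q^{2-4k}$, and the elementary manipulation $(xq^{2-4k};q^4)_k=(-1)^k x^k q^{-2k^2}(q^2/x;q^4)_k$. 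Applying this reversal to each of the six Pochhammer symbols in $T_{K-k}$ and using $1+aq^{2n+1-4k}\equiv 1+aq^{1-4k}\pmod{\Phi_n(q)}$ for the prefactor, the ratio $T_{K-k}/T_k$ should collapse to $-1$ after cancellation of the resulting $q$-powers and power factors in $a,b,c$.

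Because $\Phi_n(q)$ and $1-a^2q^{2n}$ are coprime (substituting any primitive $n$-th root of unity $\zeta$ for $q$ yields $1-a^2\zeta^{2n}=1-a^2$, nonzero in $\mathbb Q[a]$), the two divisibilities combine to finish the argument. The principal obstacle is the explicit pairing in the second step: unlike Lemma~\ref{lem:2.1}, where $(aq;q^2)_k$ and $(q^2/a;q^2)_k$ are manifestly dual under $a\leftrightarrow 1/a$, the six Pochhammer symbols of the $q$-Dixon sum are paired under a less transparent involution (the natural duality being $x\leftrightarrow q^2/x$, so $a^2/q^2\leftrightarrow q^4/a^2$, $b/q^2\leftrightarrow q^4/b$, and $a^2q^4/b\leftrightarrow b/(a^2q^2)$), and the bookkeeping of signs, $q$-powers, and power factors of $a,b,c$ requires care. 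Should the full three-parameter pairing prove too tedious, a fallback is to specialise $b,c\to 1$ before pairing, which still yields the desired limit $a\to 1$ and hence the conjecture.
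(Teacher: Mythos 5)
First, be aware that the paper does not prove this statement at all: it is posed as an open conjecture, the authors only establishing the weaker congruence \eqref{eq:qDixon-4} modulo $\Phi_n(q)\Phi_n(-q)$ via Theorem~\ref{thm:q-Dixon}. So your argument has to stand on its own, and its central step fails. The proposed three-parameter congruence modulo $\Phi_n(q)(1-a^2q^{2n})$ is false, because the modulo-$\Phi_n(q)$ part does not hold for generic $a$. Concretely: (i) for the pairing $T_k+T_{K-k}\equiv 0\pmod{\Phi_n(q)}$, $K=(n+1)/2$, your reversal identity turns $T_{K-k}$ into an expression involving $(b/(a^2q^2),c/(a^2q^2),q^{-2};q^4)_k$ over $(q^4/a^2,q^4/b,q^4/c;q^4)_k$, whereas $T_k$ involves $(a^2/q^2,b/q^2,c/q^2;q^4)_k$ over $(a^2q^4/b,a^2q^4/c,q^4;q^4)_k$; the map $x\mapsto q^2/x$ interchanges these parameter multisets only under constraints such as $a^2=1$, so $T_{K-k}/T_k$ is not $-1$ (indeed not even independent of $k$) for generic $a$, unlike the situation of Lemma~\ref{lem:2.1}. (ii) More decisively, set $q=\zeta$ a primitive $n$-th root of unity and let $a\to\infty$: the $k$-th summand has $a$-degree $(3k+1)-(4k+1)=-k$ (numerator degree $1+2k+k$ from $1+aq^{4k-1}$, $(a^2/q^2;q^4)_k$ and $(aq^7/bc)^k$; denominator degree $1+2k+2k$), so every term with $k\geqslant 1$ vanishes in the limit and the sum tends to the $k=0$ term, which is $1$. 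Hence the sum is not identically zero at $q=\zeta$ and cannot be $\equiv 0\pmod{\Phi_n(q)}$ as a rational function of $a,b,c$; the same count applies to your fallback $b=c=1$ with $a$ generic.

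The pairing does succeed once you set $a=1$ (with $b,c$ generic one finds, using $(x;q^4)_K\equiv(-1)^K x^K q^{-2K^2}(q^2/x;q^4)_K$ and $K+1-6K^2\equiv 0\pmod n$, that $T_{K-k}\equiv -T_k$), but that only yields $\Phi_n(q)\mid S(1,b,c)$, which is weaker than what the $a,b,c\to 1$ limit of \eqref{eq:qDixon-3} already gives. And it cannot be combined with the $(1-a^2q^{2n})$-divisibility: that factor is available for generic $a$, while your $\Phi_n(q)$-divisibility lives only at $a=1$, where $1-q^{2n}$ is itself divisible by $\Phi_n(q)$. Writing $S(a)=(1-a^2q^{2n})R(a)$, knowing $\Phi_n(q)\mid S(1)$ tells you nothing about $R(1)$ modulo $\Phi_n(q)$, and it is precisely $\Phi_n(q)\mid R(1)$ that the conjectured extra factor requires. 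So the creative-microscoping combination collapses, and the limit $a,b,c\to 1$ of what you can actually prove recovers only the known modulus $\Phi_n(q)\Phi_n(-q)$, not $\Phi_n(q)^2\Phi_n(-q)$. To rescue the scheme you would need a genuinely $a$-generic congruence modulo $\Phi_n(q)$ (or a different parametric handle, e.g.\ a specialization in $b$ or $c$ that forces the $q$-Dixon right-hand side to vanish to higher order), and the computation above shows the summand as it stands does not admit one — which is presumably why the statement is left as a conjecture in the paper.
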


Similarly to the proof of Theorem \ref{thm:q-Dixon}, taking $q\mapsto q^4$,
$a\mapsto a^2q^2$ and $b=c= q^{-2}$ in \eqref{eq:q-Dixon},
we can prove the following result.
\begin{theorem}
Let $n>1$ be a positive odd integer. Then
\begin{equation*}
\sum_{k=0}^{(n-1)/2} \frac{(1+aq^{4k+1}) (a^2q^2,  q^{-2},  q^{-2};q^4)_k }
{(1+aq) (a^2q^8, a^2q^8, q^4;q^4)_k } a^k q^{9k}
\equiv 0\pmod{(1-a^2q^{2n})};
\end{equation*}
in particular,
\begin{equation}
\sum_{k=0}^{(n-1)/2} \frac{(1+q^{4k+1}) (q^2;q^4)_k (q^{-2};q^4)_k^2}
{(1+q)(q^8;q^4)_k^2 (q^4;q^4)_k} q^{9k}
\equiv 0\pmod{\Phi_n(q)\Phi_n(-q)}. \label{eq:eq:qDxion-5}
\end{equation}
\end{theorem}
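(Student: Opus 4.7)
The plan is to mirror the proof of Theorem~\ref{thm:q-Dixon}. Starting from the $q$-Dixon summation \eqref{eq:q-Dixon}, I would make the substitutions $q\mapsto q^4$, $a\mapsto a^2q^2$, $b=c=q^{-2}$, and simplify $(-aq^{5};q^4)_k/(-aq;q^4)_k=(1+aq^{4k+1})/(1+aq)$ to obtain the closed-form evaluation
\begin{equation*}
\sum_{k=0}^\infty \frac{(1+aq^{4k+1})(a^2q^2, q^{-2}, q^{-2}; q^4)_k}{(1+aq)(q^4, a^2q^8, a^2q^8; q^4)_k}\, a^kq^{9k}
=\frac{(a^2q^6, aq^7, aq^7, a^2q^{10}; q^4)_\infty}{(a^2q^8, a^2q^8, aq^5, aq^9; q^4)_\infty}.
\end{equation*}

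The next step is to specialize $a=\pm q^{-n}$ for odd $n>1$. On the left, the factor $(a^2q^2;q^4)_k=(q^{2-2n};q^4)_k$ has a zero at index $j=(n-1)/2$ (an integer because $n$ is odd), so the series terminates at $k=(n-1)/2$. On the right, since $a^2=q^{-2n}$ for both sign choices, the factor $(a^2q^6;q^4)_\infty=(q^{6-2n};q^4)_\infty$ contains the zero $1-q^{6-2n+4j}=1-q^0$ at $j=(n-3)/2\ge 0$, and the right-hand side vanishes. Hence the finite sum is zero at $a=\pm q^{-n}$, and after verifying that the denominator of the summand does not acquire the factor $1-a^2q^{2n}$ for $k\le(n-1)/2$ (the relevant exponents appearing in $(a^2q^8;q^4)_k$ are strictly less than $2n$ and of the wrong parity), we conclude that $(1-aq^n)(1+aq^n)=1-a^2q^{2n}$ divides the reduced numerator. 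This establishes the main congruence.

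For the particular case $a=1$, I would use $1-q^{2n}=\prod_{d\mid 2n}\Phi_d(q)$ together with the identity $\Phi_{2n}(q)=\Phi_n(-q)$ (valid for odd $n>1$), so that $\Phi_n(q)\Phi_n(-q)$ divides $1-q^{2n}$. A parity-and-range check rules out any common zeros of $\Phi_n(q)\Phi_n(-q)$ with the denominator $(1+q)(q^8;q^4)_k^2(q^4;q^4)_k$ for $0\le k\le(n-1)/2$, since the exponents $4j+4$ and $4j+8$ arising there are even and at most $2n-2$. The slightly delicate step is identifying which $q$-Pochhammer on the right of the Dixon evaluation produces the zero after the substitution $a=\pm q^{-n}$; once this is pinpointed as $(a^2q^6;q^4)_\infty$, the rest is a routine verification.
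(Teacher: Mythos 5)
Your reduction of the $q$-Dixon sum is set up correctly: the substitutions $q\mapsto q^4$, $a\mapsto a^2q^2$, $b=c=q^{-2}$ do give exactly the series in the theorem with argument $aq^9$, the termination at $k=(n-1)/2$ at $a=\pm q^{-n}$ is right, and your identification of the vanishing numerator factor $(a^2q^6;q^4)_\infty$ is correct; this is also precisely the route the paper takes (it only states the specialization). The case $a=-q^{-n}$ is indeed unproblematic. But the step you dismiss as ``routine'' is where the argument breaks: you never check the \emph{denominator} of the product side. At $a=q^{-n}$ with $n\equiv 1\pmod 4$ the factor $(aq^5;q^4)_\infty=(q^{5-n};q^4)_\infty$ contains $1-q^{5-n+4j}$ with $j=(n-5)/4$, i.e.\ it vanishes, and for $n\geqslant 9$ so does $(aq^9;q^4)_\infty$. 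Thus the right-hand side is of the form $0/0$ and one cannot conclude that the terminating sum is zero. For $n\geqslant 9$, $n\equiv1\pmod4$, numerator and denominator each vanish to order two (the numerator zeros come from $(a^2q^6;q^4)_\infty$ and $(a^2q^{10};q^4)_\infty$), and the conclusion actually fails: for example at $n=9$, $q=\tfrac12$, $a=q^{-9}=512$ (so $1-a^2q^{2n}=0$ and no summand denominator vanishes) the five-term sum evaluates to about $1.92\neq 0$, and correspondingly the specialized congruence \eqref{eq:eq:qDxion-5} fails at a primitive ninth root of unity, where the sum is about $-2.27$. So the key claim ``the right-hand side vanishes'' is unjustified for all $n\equiv1\pmod 4$ (for $n=5$ it happens to be harmless because the numerator zero has higher order, and the sum does vanish there), and for $n\equiv1\pmod4$, $n\geqslant9$ no repair within this framework is possible because the statement itself is then false.

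To be fair, this defect is inherited from the paper, whose own ``proof'' is the same one-line specialization; the argument (and the statement) is sound for $n\equiv3\pmod4$, where neither $(aq^5;q^4)_\infty$ nor $(aq^9;q^4)_\infty$ vanishes at $a=\pm q^{-n}$, so the product cleanly vanishes while the left side terminates. (Even there, for $n\geqslant 11$ one should add a word about why the nonterminating identity may be specialized at $a=\pm q^{-n}$, since $|aq^9|=|q|^{9-n}\geqslant1$ lies outside the region of convergence; this can be handled by a standard termination/limiting argument or by the term-pairing technique used elsewhere in the paper.) A correct write-up should therefore restrict the modulus statement, or at least the range of $n$, rather than assert it for all odd $n>1$.
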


Note that, for $n\equiv 3\pmod{4}$, we can prove the following
three-parametric congruence:
\begin{equation}
\sum_{k=0}^{(n-1)/2} \frac{(1+aq^{4k+1}) (a^2q^2,  b/q^{2},  c/q^{2};q^4)_k }
{(1+aq) (a^2q^8/b, a^2q^8/c, q^4;q^4)_k } \biggl(\frac{aq^9}{bc}\biggr)^k
\equiv 0\pmod{(1-a^2q^{2n})};
\end{equation}
Besides, for the $q=-1$ case of \eqref{eq:eq:qDxion-5},
it seems that the corresponding congruence can be strengthened as follows.
\begin{conjecture}\label{conj:jk-1}
Let $p\equiv 3\pmod{4}$. Then
\begin{equation*}
\sum_{k=0}^{(p-1)/2}(-1)^k (4k+1) \frac{(\frac{1}{2})_k
(-\frac{1}{2})_k^2}{(k+1)!^2 k!}
\equiv 0\pmod{p^2}.
\end{equation*}
\end{conjecture}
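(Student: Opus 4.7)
The conjecture is the $q\to -1$ specialization of the $q$-supercongruence \eqref{eq:eq:qDxion-5} for $n\equiv 3\pmod 4$, strengthened from modulus $\Phi_n(q)\Phi_n(-q)$ to modulus $\Phi_n(q)\Phi_n(-q)^2$. Indeed, for a prime $p\equiv 3\pmod 4$ one has $\Phi_p(-1)=1$ while $\Phi_p(-(-1))=\Phi_p(1)=p$, so $\Phi_p(q)\Phi_p(-q)^2|_{q=-1}=p^2$; and the limit $(1+q^{4k+1})/(1+q)|_{q=-1}=4k+1$ converts \eqref{eq:eq:qDxion-5} into the alternating sum of Pochhammer symbols appearing in the conjecture.

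To obtain such a strengthening I would follow the creative microscoping paradigm used throughout the paper. The three-parametric $q$-congruence stated immediately before the conjecture, specialized to $b=c=q^{-2}$, already supplies the modulus $(1-a^2q^{2n})$; at $a\to 1$ this factorization $1-q^{2n}=(1-q^n)(1+q^n)$ contributes exactly one copy each of $\Phi_n(q)$ and $\Phi_n(-q)$. The missing second factor of $\Phi_n(-q)$ should come from a complementary pairing congruence modulo $\Phi_n(-q)$. Using $q^n\equiv -1\pmod{\Phi_n(-q)}$, I would establish a variant of Lemma~\ref{lem:2.1} yielding identities of the form
\begin{equation*}
\frac{(q^{-2};q^4)_{(n-1)/2-k}}{(q^8;q^4)_{(n-1)/2-k}}\equiv\varepsilon\,q^{\alpha(n,k)}\,\frac{(q^{-2};q^4)_k}{(q^8;q^4)_k}\pmod{\Phi_n(-q)},
\end{equation*}
and analogous identities for the ratios $(q^2;q^4)_k/(q^4;q^4)_k$ and $(1+q^{4k+1})/(1+q)$. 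Together these should show that the $k$-th and $((n-1)/2-k)$-th summands of \eqref{eq:eq:qDxion-5} cancel modulo $\Phi_n(-q)$. Because $(n-1)/2$ is odd when $n\equiv 3\pmod 4$, the pairing has no fixed point and no central term requires separate treatment. Combining the $q$-Dixon congruence with this mod $\Phi_n(-q)$ cancellation and then passing to $a\to 1$ would yield the target modulus $\Phi_n(q)\Phi_n(-q)^2$.

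The principal obstacle is the sign and $q$-power bookkeeping in the pairing step. Because the summand contains $(q^{-2};q^4)_k^2$ (squared) against $(q^2;q^4)_k$ (unsquared), the accumulated exponents of $q$ and the signs produced by each reversed $q$-Pochhammer modulo $\Phi_n(-q)$ must combine with $q^{9k}$ and the front factor into an overall $-1$ in front of the $k$-th summand, and this must happen precisely in the residue class $n\equiv 3\pmod 4$ (since for $n\equiv 1\pmod 4$ the $p^2$ improvement is not expected). A cleaner alternative, which I would explore in parallel, is to find a one-parameter refinement of the summand whose modulus already contains a factor like $(1+aq^n)(a+q^n)$, so that both copies of $\Phi_n(-q)$ come out of a single $q$-Dixon application at $a\to 1$; the difficulty there is to locate a placement of $a$ that preserves the applicability of the $q$-Dixon summation \eqref{eq:q-Dixon}.
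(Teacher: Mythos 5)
You are right that the statement is the $q\to-1$ shadow of \eqref{eq:eq:qDxion-5} and that, since $\Phi_p(-1)=1$ and $\Phi_p(1)=p$ for $p\equiv 3\pmod 4$, a strengthening of \eqref{eq:eq:qDxion-5} to the modulus $\Phi_n(q)\Phi_n(-q)^2$ would imply the claim. But note that the paper itself leaves this statement as an open conjecture (it is only motivated there by the $q=-1$ case of \eqref{eq:eq:qDxion-5}, which gives merely $p$), so your text has to stand on its own — and it is a plan rather than a proof, with the decisive step unverified and, in the form needed for your argument, actually false. For creative microscoping to deliver $\Phi_n(q)\Phi_n(-q)^2$ at $a\to 1$, the extra congruence modulo $\Phi_n(-q)$ must hold for the \emph{same parametric sum} that carries the modulus $1-a^2q^{2n}$ (this is the parametric congruence stated in the same theorem as \eqref{eq:eq:qDxion-5}; incidentally it corresponds to $b=c=1$, not $b=c=q^{-2}$, in the three-parametric congruence — your substitution would make $(q^{-4};q^4)_k$ vanish for $k\geqslant 2$). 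A pairing of the specialized $a=1$ summands of \eqref{eq:eq:qDxion-5} modulo $\Phi_n(-q)$, even if it held, would only reprove the factor $\Phi_n(-q)$ already present in \eqref{eq:eq:qDxion-5}; it cannot be ``combined and then passed to $a\to1$'' with a congruence that concerns a different function of $a$.

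The parametric congruence modulo $\Phi_n(-q)$ is false, so no pairing (nor any other) argument can establish it. Take $n=3$: modulo $\Phi_3(-q)$ one has $q^2=q-1$, $q^3=-1$, hence $q^4=q^{-2}$, $q^8=q^2$, $q^9=-1$, and the two-term parametric sum reduces to
\begin{equation*}
1-a\,\frac{(1+aq^{-1})(1-q^{-2})}{(1+aq)(1-a^2q^2)}
=\frac{(1-a)^2(1+a)}{(1+aq)^2(1-aq)}\not\equiv 0\pmod{\Phi_3(-q)};
\end{equation*}
for instance at $a=2$, $q=e^{i\pi/3}$ this equals $3/(12-\sqrt{3}\,i)\neq 0$. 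Since for $n=3$ the pairing $k\leftrightarrow (n-1)/2-k$ couples the only two summands, their sum being nonzero modulo $\Phi_3(-q)$ directly refutes the claimed cancellation for the parametric summands, and your main route collapses. The factor $(1-a)^2$ that does appear is in fact evidence for your ``cleaner alternative'': modulo $\Phi_n(-q)$ one has $(1+aq^n)(a+q^n)\equiv-(1-a)^2$, so a parametrization with modulus containing $(1+aq^n)(a+q^n)$ is plausibly what is needed — but locating such a parametrization compatible with the $q$-Dixon summation \eqref{eq:q-Dixon} is exactly the step you (like the paper) leave open, so the conjecture remains unproved.
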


Likewise, performing another set of parameter replacements $q\mapsto q^4$,
$a\mapsto a^2q^2$, $b\mapsto bq^2$, and $c\mapsto c/q^2$ in \eqref{eq:q-Dixon},
we can deduce the following result.
\begin{theorem}
Let $n\equiv 3\pmod{4}$ be a positive odd integer. Then
\begin{equation*}
\sum_{k=0}^{(n-1)/2} \frac{(1+aq^{4k+1}) (a^2q^2,  bq^{2},  c/q^{2};q^4)_k }
{(1+aq) (a^2q^4/b, a^2q^8/c, q^4;q^4)_k } \biggl(\frac{aq^5}{bc}\biggr)^k
\equiv 0\pmod{(1-a^2q^{2n})};
\end{equation*}
in particular,
\begin{equation}
\sum_{k=0}^{(n-1)/2} \frac{(1+q^{4k+1}) (q^2;q^4)_k^2 (q^{-2};q^4)_k}
{(1+q)(q^8;q^4)_k (q^4;q^4)_k^2} q^{5k}
\equiv 0\pmod{\Phi_n(q)\Phi_n(-q)}. \label{eq:eq:qDxion-6}
\end{equation}
\end{theorem}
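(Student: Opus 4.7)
The plan is to derive the stated congruence in complete analogy with the proofs of the three preceding theorems of this section, namely as a specialization of the $q$-Dixon summation \eqref{eq:q-Dixon}. The first step is to perform in \eqref{eq:q-Dixon} the four simultaneous substitutions $q\mapsto q^4$, $a\mapsto a^2q^2$, $b\mapsto bq^2$, $c\mapsto c/q^2$. Routine bookkeeping of each $q$-shifted factorial then yields the ``parent identity''
\begin{equation*}
\sum_{k=0}^{\infty}\frac{(1+aq^{4k+1})\,(a^2q^2,bq^2,c/q^2;q^4)_k}{(1+aq)\,(a^2q^4/b,a^2q^8/c,q^4;q^4)_k}\left(\frac{aq^5}{bc}\right)^{\!k}
=\frac{(a^2q^6,aq^3/b,aq^7/c,a^2q^6/bc;q^4)_{\infty}}{(a^2q^4/b,a^2q^8/c,aq^5,aq^5/bc;q^4)_{\infty}},
\end{equation*}
which is precisely the identity tailored to the summand appearing in the theorem.

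To establish the three-parameter congruence modulo $(1-a^2q^{2n})=(1-aq^n)(1+aq^n)$, I would successively specialize $a$ to the two roots $a=\pm q^{-n}$ of this polynomial. Writing $n=4m+3$ with $m\geqslant 0$, one has $a^2q^2=q^{-8m-4}$ in either case, so the factor $(a^2q^2;q^4)_k$ in the summand vanishes for $k\geqslant(n+1)/2$; the series therefore truncates at $k=(n-1)/2$ and coincides with the finite sum in the theorem. Simultaneously, the right-hand side of the parent identity carries in its numerator the factor $(a^2q^6;q^4)_{\infty}=(q^{-8m};q^4)_{\infty}$, whose term indexed by $j=2m$ is $1-q^{0}=0$; thus the product vanishes. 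Since the finite sum, viewed as a rational function of $a$, is zero at both roots of $1-a^2q^{2n}$, the three-parameter congruence follows.

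For the particular case \eqref{eq:eq:qDxion-6}, I would set $a=b=c=1$. Then $(1-a^2q^{2n})$ becomes $1-q^{2n}=(1-q^n)(1+q^n)$, which for odd $n$ is divisible by $\Phi_n(q)\Phi_n(-q)$. It remains only to verify that the specialized denominator $(q^4,q^8,q^4;q^4)_k$ is coprime with $\Phi_n(q)\Phi_n(-q)$ for $0\leqslant k\leqslant(n-1)/2$. Since $n$ is odd, a factor $1-q^{4\ell}$ is divisible by $\Phi_n(\pm q)$ only when $n\mid\ell$, and the largest index $\ell$ occurring in $(q^4;q^4)_k$ or $(q^8;q^4)_k$ for $k\leqslant(n-1)/2$ is $\ell=(n+1)/2<n$, so no cancellation with $\Phi_n(\pm q)$ arises. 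The main obstacle, as far as I can see, is purely bookkeeping: tracking the numerous factors under the four simultaneous substitutions in \eqref{eq:q-Dixon} and verifying that the numerator on the right really contains a vanishing factor for $n\equiv 3\pmod 4$. No deeper difficulty is anticipated, as the template has already been successfully executed in the proofs of \eqref{eq:qDixon-3}--\eqref{eq:eq:qDxion-5}.
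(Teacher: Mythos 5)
Your route is exactly the paper's: substitute $q\mapsto q^4$, $a\mapsto a^2q^2$, $b\mapsto bq^2$, $c\mapsto c/q^2$ in the $q$-Dixon sum \eqref{eq:q-Dixon}, specialize $a=\pm q^{-n}$ so that the series terminates at $k=(n-1)/2$ while the closed-form right-hand side vanishes, and then let $a,b,c\to1$; your parent identity is the correct specialization, the truncation argument is right, and the coprimality check of the denominators with $\Phi_n(q)\Phi_n(-q)$ in the final step is also fine.

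There is, however, one genuine gap, and it sits precisely at the step ``whose term indexed by $j=2m$ is $1-q^0=0$; thus the product vanishes.'' A vanishing factor in the numerator of
$\frac{(a^2q^6,\,aq^3/b,\,aq^7/c,\,a^2q^6/bc;q^4)_\infty}{(a^2q^4/b,\,a^2q^8/c,\,aq^5,\,aq^5/bc;q^4)_\infty}$
forces the whole expression to vanish only if no factor in the denominator vanishes as well; otherwise the right-hand side is an indeterminate $0/0$ and must be treated as a limit, which is generically nonzero. Checking this is exactly where the hypothesis $n\equiv3\pmod4$ is used: at $a=q^{-n}$ the denominator contains $(aq^5;q^4)_\infty=(q^{5-n};q^4)_\infty$, whose exponents $5-n+4j$ avoid $0$ precisely because $5-n\equiv2\pmod4$, whereas for $n\equiv1\pmod4$ (and $n\geqslant5$) this factor does vanish and the argument collapses (indeed, note that your truncation at $k=(n-1)/2$ and the vanishing of $(a^2q^6;q^4)_\infty=(q^{6-2n};q^4)_\infty$ hold for \emph{every} odd $n$, so as written your proof never actually invokes $n\equiv3\pmod4$ in an essential way and would ``prove'' the congruence also for $n\equiv1\pmod4$, where it fails). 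Adding the one-line verification that none of $(a^2q^4/b,\,a^2q^8/c,\,aq^5,\,aq^5/bc;q^4)_\infty$ vanishes at $a=\pm q^{-n}$ for generic $b,c$ when $n\equiv3\pmod4$ closes the gap and makes your proof complete and identical in substance to the paper's.
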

We have the following conjectures.
\begin{conjecture}For any integer $n\equiv 3\pmod{4}$ and $n>3$, the
congruence \eqref{eq:eq:qDxion-6} still holds modulo $\Phi_n(q)^2 \Phi_n(-q)$.
\end{conjecture}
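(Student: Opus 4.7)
The plan is to apply the creative microscoping strategy used throughout the paper by lifting the three-parametric congruence established just before the conjecture to the stronger form
\begin{equation*}
\sum_{k=0}^{(n-1)/2} \frac{(1+aq^{4k+1}) (a^2q^2,  bq^{2},  c/q^{2};q^4)_k }
{(1+aq) (a^2q^4/b, a^2q^8/c, q^4;q^4)_k } \biggl(\frac{aq^5}{bc}\biggr)^k
\equiv 0\pmod{\Phi_n(q)(1-a^2q^{2n})}.
\end{equation*}
Since $\Phi_n(q)$ and $1-a^2q^{2n}$ are relatively prime as polynomials in $\mathbb{Z}[q,a,b,c]$, it is enough to verify the two congruences separately; the one modulo $(1-a^2q^{2n})$ is already known. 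Granting the additional congruence modulo $\Phi_n(q)$, taking $a,b,c\to 1$ produces a congruence modulo $\Phi_n(q)(1-q^{2n})$, which for odd $n$ is divisible by $\Phi_n(q)^2\Phi_n(-q)$ (as $\Phi_n(q)\mid 1-q^n$ and $\Phi_n(-q)\mid 1+q^n$), yielding the conjecture.

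The heart of the plan is thus to prove that the three-parametric sum is divisible by $\Phi_n(q)$. The approach I would take is the pairing technique introduced in Section~\ref{sec:proofs}. Let $c_k=c_k(a,b,c)$ denote the $k$-th summand. Because $n\equiv 3\pmod 4$, the index range $0\le k\le (n-1)/2$ has an even number of terms and admits a natural involution $k\mapsto (n-1)/2-k$. Using $q^n\equiv 1\pmod{\Phi_n(q)}$, the reflected factor $1+aq^{4((n-1)/2-k)+1}$ reduces to $1+aq^{-1-4k}$ modulo $\Phi_n(q)$. The Pochhammer ratios, such as $(a^2q^2;q^4)_{(n-1)/2-k}/(a^2q^4/b;q^4)_{(n-1)/2-k}$ and the analogous ones for the $b$- and $c$-pairs, can be converted into their $k$-indexed counterparts via an analogue of Lemma~\ref{lem:2.1} adapted to base $q^4$. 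The goal is that, after combining these reflection identities, one obtains $c_{(n-1)/2-k}\equiv -c_k\pmod{\Phi_n(q)}$, so that the sum collapses.

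The main obstacle is that the three upper parameters $a^2q^2$, $bq^2$, $c/q^2$ are not paired symmetrically in the manner of $(aq,q/a)$ exploited in Theorem~\ref{thm:2.2}: the parameter $c/q^2$ is offset by $q^{-2}$ rather than $q^2$, and its matched denominator is $a^2q^8/c$ rather than a simple reciprocal. Consequently, the reflection identities will not line up as cleanly, and the accumulated powers of $q$, $a$, $b$, $c$ coming from all six Pochhammers together with the explicit factor $(aq^5/(bc))^k$ must conspire precisely to yield $-1$ modulo $\Phi_n(q)$. If the direct pairing produces a residual discrepancy, one remedy is to first apply Watson's transformation~\eqref{eq:8phi7} or the $q$-Dixon summation~\eqref{eq:q-Dixon} to recast the sum into a more symmetric shape before invoking the involution. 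An alternative route, closer in spirit to Theorems~\ref{thm:4k-1-4th} and \ref{thm:4k+1-6th}, is to detect the extra vanishing modulo $\Phi_n(q)$ directly in the closed-form right-hand side provided by the $q$-Dixon identity, rather than at the level of the individual summands.
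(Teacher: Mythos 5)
First, a point of context: the statement you are proving is left as an \emph{open conjecture} in the paper --- the authors only establish \eqref{eq:eq:qDxion-6} modulo $\Phi_n(q)\Phi_n(-q)$ (via the three-parametric congruence modulo $1-a^2q^{2n}$ obtained from the $q$-Dixon sum \eqref{eq:q-Dixon}), so there is no proof to match your plan against, and a complete argument would be new. Your bookkeeping at the end is fine: if the parametric sum were divisible by $\Phi_n(q)(1-a^2q^{2n})$ with denominators coprime to $\Phi_n(q)\Phi_n(-q)$, then letting $a,b,c\to1$ would indeed yield divisibility by $\Phi_n(q)\cdot(1-q^{2n})$, hence by $\Phi_n(q)^2\Phi_n(-q)$. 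The genuine gap is that the intermediate statement your whole plan rests on --- that the three-parametric sum is $\equiv 0\pmod{\Phi_n(q)}$ for \emph{generic} $a,b,c$ --- is never established, and there is strong evidence it is false, so the strategy cannot be repaired by tightening the pairing computation. Under the reflection $k\mapsto(n-1)/2-k$ the very-well-poised factor $1+aq^{4k+1}$ becomes, modulo $\Phi_n(q)$, $q^{-4k-1}\bigl(a+q^{4k+1}\bigr)$, i.e.\ the factor with $a$ replaced by $1/a$ (up to a unit), while all remaining $a$-dependence enters through $a^2$ placed asymmetrically (upper $a^2q^2$ against lower $q^4$, upper $bq^2$ against lower $a^2q^4/b$, etc.); no analogue of Lemma~\ref{lem:2.1} can convert $(a+q^{4k+1})$ back into $(1+aq^{4k+1})$, so termwise antisymmetry fails for generic $a$, in contrast to Theorem~\ref{thm:2.2}, where the reflected factor $[2n-4k-1]\equiv-q^{-4k-1}[4k+1]$ is $a$-free.

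Moreover, the claim can be falsified outright in the smallest case $n=3$ (which your mechanism does not distinguish from $n>3$): there the parametric sum has only the two terms $1+c_1$, and evaluating at a primitive cube root of unity $\zeta$ one needs $(1+a\zeta^2)(1-a^2\zeta^2)(1-b\zeta^2)(1-c\zeta)\,a\zeta^2=-(1+a\zeta)(b-a^2\zeta)(c-a^2\zeta^2)(1-\zeta)$ identically in $a,b,c$; comparing the coefficients of $bc$ (degree $4$ versus degree $1$ in $a$) shows this fails. So the proposed strengthening modulo $\Phi_n(q)(1-a^2q^{2n})$ is not available, at least not with all three parameters generic, and your fallback suggestions (pre-transforming by Watson or $q$-Dixon, or finding extra vanishing in the $q$-Dixon closed form) are not carried out --- note that at $a=\pm q^{-n}$ the $q$-Dixon right-hand side vanishes only to first order, so no second $\Phi_n(q)$ can be read off there. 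As it stands the proposal is a plausible plan whose decisive step is missing and, in the form stated, apparently wrong; the conjecture remains open.
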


\begin{conjecture}
Let $n\equiv 3\pmod{4}$ be a positive integer. Then
\begin{equation}
\sum_{k=0}^{(n-1)/2} [4k+1]\frac{ (q^{2};q^4)_k (q^4;q^8)_k}
{(q^4;q^4)_k (q^8;q^8)_k} q^{k}
\equiv 0\pmod{\Phi_n(q)^2\Phi_n(-q)}. \label{4k+1-Dixon-f}
\end{equation}
\end{conjecture}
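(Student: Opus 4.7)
The plan is to follow the creative microscoping strategy employed throughout the paper, in close analogy with the proof of the structurally similar result \eqref{eq:qDixon-1}--\eqref{eq:qDixon-2}. The idea is to introduce one (or possibly two) auxiliary parameter(s) into the summand of \eqref{4k+1-Dixon-f} to produce a sharper generalized $q$-congruence whose modulus factors into pairwise coprime pieces, prove each piece separately, and then recover the conjecture by letting the parameter(s) tend to $1$.

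More concretely, I would propose a one-parameter refinement of the shape
\begin{equation*}
\sum_{k=0}^{(n-1)/2}[4k+1]\frac{(aq^2,q^2/a;q^4)_k\,(q^4;q^8)_k}{(aq^4,q^4/a;q^4)_k\,(q^8;q^8)_k}\,q^{k}\equiv 0\pmod{\Phi_n(q)(1-aq^n)(a-q^n)(1+aq^n)},
\end{equation*}
with the precise $a$-deformation to be adjusted so that the $q$-Dixon reduction succeeds (it may be more natural to split both $(q^2;q^4)_k$ and $(q^4;q^8)_k$ asymmetrically, introducing a second parameter $b$). At $a=1$ the product $(1-aq^n)(a-q^n)(1+aq^n)$ is divisible by $(1-q^n)^2(1+q^n)$, hence by $\Phi_n(q)^2\Phi_n(-q)$, which when combined with the prefactor $\Phi_n(q)$ is divisible by the target modulus $\Phi_n(q)^2\Phi_n(-q)$ (once the $a$-dependent pieces in the denominator, which are coprime to $\Phi_n(q)\Phi_n(-q)$ as $a\to 1$, have been discarded).

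The generalized congruence then splits, via the Chinese Remainder Theorem, into three verifications. For the factors $(1-aq^n)$ and $(a-q^n)$, setting $a=q^{\pm n}$ truncates the sum and turns it into a terminating instance of the $q$-Dixon summation \eqref{eq:q-Dixon} (under $q\mapsto q^4$ with carefully chosen $b,c$); for $n\equiv 3\pmod 4$ the $q$-Dixon closed form carries a vanishing infinite product, giving the desired zero, just as in the derivation of \eqref{eq:qDixon-3}. For the factor $(1+aq^n)$, setting $a=-q^{\pm n}$ and running the same specialization, while tracking the sign changes, should again force the right-hand side of $q$-Dixon to vanish for $n\equiv 3\pmod 4$. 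For the factor $\Phi_n(q)$, use the term-pairing technique from the proof of Theorem~\ref{thm:2.2}: Lemma~\ref{lem:2.1} (or its straightforward base-$q^4$/$q^8$ analog) shows that the $k$-th and $((n-1)/2-k)$-th summands are negatives of each other modulo $\Phi_n(q)$.

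The main obstacle is expected to be the third specialization, i.e., the production of the extra $\Phi_n(-q)$ factor. In \eqref{eq:qDixon-1} the factor $(1-a^2q^{2n})=(1-aq^n)(1+aq^n)$ arose organically from the three-parameter $q$-Dixon structure, but the summand here is no longer fully symmetric in $a$, and a single-parameter extension may well fail to give a Dixon instance that vanishes simultaneously at $a=\pm q^{\pm n}$. Addressing this will likely require identifying a genuinely two-parameter extension and a joint specialization $(a,b)=(\pm q^n,q^n)$ that triggers the vanishing. As a fallback, one can handle the mod-$\Phi_n(-q)$ piece separately by the substitution $q\mapsto -q$ and a companion $q$-Dixon identity, exploiting $\Phi_n(q^2)=\Phi_n(q)\Phi_n(-q)$ for odd $n$ to glue the two pieces together.
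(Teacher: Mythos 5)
You should note at the outset that \eqref{4k+1-Dixon-f} is stated in the paper as an open conjecture: the authors only observe that it holds modulo $\Phi_n(q)\Phi_n(-q)$ (by letting $a,b\to 1$ and $c\to -1$ in \eqref{eq:qDixon-1}) and that it is true for $q=1$, $n=p$ via Tauraso's identity \eqref{eq:tauraso}; they even remark that, unlike that case, the left-hand side has no closed form. So a proof requires a genuinely new ingredient, and your proposal does not supply one — it is a program whose decisive step is deferred. Already at the technical level, the one-parameter deformation you display does not reduce to the conjectured sum at $a=1$: it replaces $(q^2;q^4)_k/(q^4;q^4)_k$ by the square $(aq^2,q^2/a;q^4)_k/(aq^4,q^4/a;q^4)_k$ while leaving $(q^4;q^8)_k/(q^8;q^8)_k$ undeformed, and you leave ``the precise $a$-deformation to be adjusted''. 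No candidate deformation satisfying both requirements (correct $a\to1$ limit and a terminating, vanishing $q$-Dixon instance at the needed specializations) is exhibited.

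The substantive gap is exactly the one you flag as the ``main obstacle'', and it is why the paper's own $q$-Dixon method stops short. In \eqref{eq:q-Dixon}-based arguments the auxiliary parameter enters through $a^2$ (as in \eqref{eq:qDixon-33}), and the right-hand side vanishes only for $a=\pm q^{-n}$; there is no vanishing at $a=q^{n}$, so the modulus obtainable this way is $(1-a^2q^{2n})=(1-aq^n)(1+aq^n)$, which degenerates to $\Phi_n(q)\Phi_n(-q)$ — not $\Phi_n(q)^2\Phi_n(-q)$ — as $a\to1$. Your supplementary devices do not repair this: the pairing $k\leftrightarrow(n-1)/2-k$ via Lemma~\ref{lem:2.1} can contribute at most a single factor $\Phi_n(q)$, and the least common multiple of $\Phi_n(q)$ with $\Phi_n(q)\Phi_n(-q)$ is still $\Phi_n(q)\Phi_n(-q)$; likewise the $q\mapsto -q$ substitution combined with $\Phi_n(q^2)=\Phi_n(q)\Phi_n(-q)$ only reshuffles the information already contained in the known weaker congruence. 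What is missing is precisely a deformation (one- or two-parameter) that also vanishes at $a=q^{n}$, so as to produce the factor $(a-q^n)$ and hence the second power of $\Phi_n(q)$ in the limit, or some entirely different mechanism for that extra power; proposing that such a deformation ``should'' exist is a restatement of the conjecture, not a proof of it.
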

It is easy to see that the congruence \eqref{4k+1-Dixon-f}
is true modulo $\Phi_n(q)\Phi_n(-q)$ by taking $a,b\to 1$ and
$c\to -1$ in \eqref{eq:qDixon-1}. Moreover, it is also true when
$q=1$ and $n=p$ is an odd prime, since Tauraso observed that
\begin{equation}\label{eq:tauraso}
\sum_{k=0}^{n} \frac{4k+1}{16^k}{\binom{2k}k}^2
=\frac{(2n+1)^2}{16^n}{\binom{2n}n}^2.
\end{equation}
While a $q$-analogue of \eqref{eq:tauraso} was given by the first author
\cite{Guo2018}, namely
\begin{equation*}
\sum_{k=0}^nq^{-k}\begin{bmatrix}2k\\k\end{bmatrix}^2
(-q^{k+1};q)_{n-k}^4=q^{-n}[2n+1]^2
\begin{bmatrix}2n\\n\end{bmatrix}^2,
\end{equation*}
there is no closed form of the left-hand side of \eqref{4k+1-Dixon-f}.

%%%%%%%%%%%%%%%%%%%%%%%%%%%%%%%%%%%%%%%%%%%%%%%%%%%%%%%%%%%%%%%%%%%%%%%%%%%%%%%%%%%%%%%%%%%%%%%%%%%%%%%%%%%%%%%%%%%%%%%%%%%%%%%%%%%%%%%%%%%%%%%%%%%%%%%%%%%%%%%%%%%%%%%%%%%%%%%%%%%%%
\section{Some $q$-congruences from a double series transformation
of Ismail, Rahman and Suslov}\label{sec:irs}

In \cite[Thm.~1.1]{IRS} Ismail, Rahman and Suslov derived the
following transformation formula:
\begin{align}\label{eq:irs}
\sum_{k=0}^\infty&\frac{(1-aq^{2k})(a,b,c,d,e,f;q)_k}
{(1-a)(q,aq/b,aq/c,aq/d,aq/e,aq/f;q)_k}
\left(\frac{a^2q^2}{bcdef}\right)^k
%\notag\\&\times
{}_{4}\phi_{3}\!\left[\begin{array}{c}
q^{-k},\ aq^k,\ g,\ h \\
b,\, c,\, aghq/bc
\end{array};q,\, q
\right]\notag\\
&=\frac{(aq,aq/de,aq/df,aq/ef;q)_\infty}{(aq/d,aq/e,aq/f,aq/def;q)_\infty}
{}_{5}\phi_{4}\!\left[\begin{array}{c}
agq/bc,\ ahq/bc,\ d,\ e,\ f \\
aghq/bc,\ aq/b,\, aq/c,\, def/a
\end{array};q,\, q
\right]\notag\\
&\quad\;+{}
\frac{(aq,d,e,f,a^2q^2/bdef,a^2q^2/cdef,agq/bc,ahq/bc,a^2ghq^2/bcdef;q)_\infty}
{(aq/b,aq/c,aq/d,aq/e,aq/f,def/aq,aghq/bc,a^2gq^2/bcdef,a^2kq^2/bcdef;q)_\infty}
\notag\\&\qquad\;\times {}_{5}\phi_{4}\!\left[\begin{array}{c}
agq/de,\ aq/df,\ aq/ef,\ a^2gq^2/bcdef,\ a^2hq^2/bcdef \\
a^2q^2/bdef,\ a^2q^2/cdef,\, aq^2/def,\, a^2ghq^2/bcdef
\end{array};q,\, q
\right],
\end{align}
provided $|a^2q^2/bcdef|<1$.

If in \eqref{eq:irs} we replace $q$ by $q^3$, take $a=g=q$, $h=aq$
and $b=c=d=e=f=q^2$, and suitably truncate the sum, then the
following ``divergent'' $q$-supercongruence appears to be true.
\begin{conjecture}\label{conj:irs-1}
Let $n>1$ be a positive integer with $n\equiv 1\pmod 3$.Then
\begin{equation*}
\sum_{k=0}^{(2n-2)/3}[6k+1]\frac{(q;q^3)_k}{(q^3;q^3)_k}q^{-2k}
%\notag\\&\times
{}_{4}\phi_{3}\!\left[\begin{array}{c}
q^{-3k},\, q^{3k+1},\, q,\, aq \\
q^2,\, q^2,\, aq^2
\end{array};q^3,\, q^3
\right] \equiv 0 \pmod{\Phi_{n}(q)^2}.
\end{equation*}
Furthermore, the above congruence holds modulo $\Phi_n(q)^3$ when
$a=1$.
\end{conjecture}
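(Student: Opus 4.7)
The plan is to attack Conjecture~\ref{conj:irs-1} via the Ismail--Rahman--Suslov transformation~\eqref{eq:irs} combined with creative microscoping, in the spirit of the strategy used for Theorems~\ref{thm:first}--\ref{thm:fourth}.

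The starting observation is that the substitution in~\eqref{eq:irs} of $q\mapsto q^3$, $a\mapsto q$, $g\mapsto q$, $h\mapsto aq$, and $b=c=d=e=f=q^2$ reproduces the summand of Conjecture~\ref{conj:irs-1} exactly: the inner ${}_4\phi_3$ matches term-by-term, and the outer weight simplifies to $[6k+1](q;q^3)_k/(q^3;q^3)_k\cdot q^{-2k}$. The resulting identity has argument $q^{-2}$ with $|q^{-2}|>1$ and so does not converge analytically; it must be read as a formal or truncated polynomial identity. Under these substitutions, both prefactors on the right-hand side of~\eqref{eq:irs} vanish formally: for instance $aq^3/(de)=1$ forces the zero factor $(1;q^3)_\infty$ into the first prefactor, and analogous identifications handle the second. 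This supplies the foundation for the divisibility claim, but by itself is purely formal.

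To convert this formal vanishing into a genuine congruence modulo $\Phi_n(q)^2$ (and $\Phi_n(q)^3$ for $a=1$), I would introduce a microscoping parameter $\beta$ such that $\beta\to 1$ recovers the conjecture while $\beta=q^{\pm n}$ both terminates the deformed sum at $k=(2n-2)/3$ and leaves enough zero factors on the right-hand side of~\eqref{eq:irs} to trivialize the entire identity. A natural ansatz is to replace $d,e,f$ by expressions involving $\beta$ (e.g.\ $d=\beta q^2$, $e=q^2/\beta$, $f=q^2$, or a two-parameter variant) while preserving $def=q^6$, which keeps the argument equal to $q^{-2}$ and restores the conjectured summand in the limit. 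The generalized target is the deformed truncated sum congruent to $0$ modulo $\Phi_n(q)(1-\beta q^n)(\beta-q^n)$. One then establishes the complementary congruence modulo $\Phi_n(q)$ by a reflection/pairing argument in the style of Lemma~\ref{lem:2.1} with base $q^3$, showing that the summands at indices $k$ and $(2n-2)/3-k$ are negatives of each other modulo $\Phi_n(q)$, with any central term (occurring when $n\equiv 4\pmod 6$) proportional to $[n]$ via its $[6k+1]$ factor. Combining the two pieces by the coprimality of $\Phi_n(q)$, $1-\beta q^n$, $\beta-q^n$, and then sending $\beta\to 1$, contributes $(1-q^n)^2\supseteq\Phi_n(q)^2$, giving the sharp $\Phi_n(q)^3$ modulus at $a=1$. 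For generic $a$, one anticipates a single $\Phi_n(q)$ factor appearing in the denominator of the deformed summand in the limit $\beta\to 1$, cancelling one power of $\Phi_n(q)$ and producing the weaker $\Phi_n(q)^2$ statement.

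The main obstacle is the inner terminating ${}_4\phi_3$, which is nested inside the outer sum, couples to the outer index through the parameters $q^{-3k}$ and $q^{3k+1}$, and admits no closed-form evaluation. Because of this nesting, the truncation bound $k=(2n-2)/3$ does not coincide with any natural termination of an underlying infinite series; one must argue separately that the tail $k>(2n-2)/3$ does not contribute modulo the relevant power of $\Phi_n(q)$. A secondary difficulty is the non-convergent character of the IRS specialization, which forces a careful polynomial-identity viewpoint throughout, and the precise bookkeeping of the microscoping limit that distinguishes the sharp $\Phi_n(q)^3$ case at $a=1$ from the weaker $\Phi_n(q)^2$ case for general $a$.
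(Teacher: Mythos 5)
The statement you are trying to prove is not a theorem of the paper but an open conjecture: the authors only assert (by the method used for the related Theorem~\ref{conj:irs-5a}, i.e.\ a direct parameter specialization of \eqref{eq:irs} that makes the left-hand side terminate and the right-hand side vanish as an honest identity, followed by $q^n\equiv 1\pmod{\Phi_n(q)}$) that the congruence holds modulo $\Phi_n(q)$; the moduli $\Phi_n(q)^2$ and $\Phi_n(q)^3$ claimed in the statement are left unproved there. Your proposal does not close this gap. It is an outline whose decisive steps are exactly the ones left unexecuted: you never exhibit a concrete $\beta$-deformation for which $\beta=q^{\pm n}$ genuinely terminates the sum at $k=(2n-2)/3$ and turns \eqref{eq:irs} into a terminating (hence convergent) identity with vanishing right-hand side, and without that the ``formal vanishing'' you invoke is vacuous --- at the specialization $q\mapsto q^3$, $a=g=q$, $h=aq$, $b=c=d=e=f=q^2$ the argument is $q^{-2}$, the hypothesis $|a^2q^2/bcdef|<1$ of \eqref{eq:irs} fails, and the zero prefactors such as $(1;q^3)_\infty$ multiply divergent ${}_5\phi_4$ series, so no conclusion can be drawn from them as stated.

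Beyond that, the two remaining ingredients of your plan are asserted rather than proved, and neither is routine. The reflection argument modulo $\Phi_n(q)$ in the style of Lemma~\ref{lem:2.1} is not obviously available here, because the summand is not a pure product of $q$-shifted factorials: the inner terminating ${}_4\phi_3$ depends on $k$ through both $q^{-3k}$ and $q^{3k+1}$, and you give no reason why its values at $k$ and $(2n-2)/3-k$ should match up to the sign and power of $q$ needed for pairwise cancellation. Finally, the bookkeeping that is supposed to separate the modulus $\Phi_n(q)^3$ at $a=1$ from $\Phi_n(q)^2$ for general $a$ (``one anticipates a single $\Phi_n(q)$ factor in the denominator'') is pure anticipation; in the paper's microscoping proofs this step requires checking explicitly that the limiting denominators are coprime to $\Phi_n(q)$, and you have no candidate identity in which to perform that check. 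So the proposal is a plausible research programme, consistent in spirit with how the authors prove the mod $\Phi_n(q)$ part, but it is not a proof of the conjectured statement, which remains open in the paper as well.
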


On the other hand, if in \eqref{eq:irs} we replace $q$ by $q^3$,
take $a=g=q^{-1}$, $h=aq^{-1}$ and $b=c=d=e=f=q$, and suitably
truncate the sum, then the following ``divergent''
$q$-supercongruence appears to be true.
\begin{conjecture}Let $n>2$ be a positive integer with $n\equiv 2\pmod 3$. Then
\begin{equation*}
\sum_{k=0}^{(2n-1)/3}[6k-1]\frac{(q^{-1};q^3)_k}{(q^3;q^3)_k}q^{-k}
%\notag\\&\times
{}_{4}\phi_{3}\!\left[\begin{array}{c}
q^{-3k},\, q^{3k-1},\, q^{-1},\, aq^{-1} \\
q,\, q,\, aq^{-2}
\end{array};q^3,\, q^3
\right] \equiv 0 \pmod{\Phi_{n}(q)^2}.
\end{equation*}
Furthermore, the above congruence holds modulo $\Phi_n(q)^3$ when
$a=1$.
\end{conjecture}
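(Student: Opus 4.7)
The plan is to start from the Ismail--Rahman--Suslov double series transformation \eqref{eq:irs} with the exact substitutions suggested in the paragraph preceding the conjecture: $q$ is replaced by $q^3$, and the parameters of \eqref{eq:irs} are set to $a=g=q^{-1}$, $h=Aq^{-1}$, $b=c=d=e=f=q$, where $A$ is the parameter called $a$ in the conjecture (renamed here to avoid clash with the $a$ of \eqref{eq:irs}). A direct computation then matches the outer summand against $[6k-1](q^{-1};q^3)_k\,q^{-k}/(q^3;q^3)_k$ (up to bookkeeping of a $-q$--type scalar coming from $(1-aq^{2k})/(1-a)$), and matches the inner ${}_{4}\phi_{3}$ precisely with the one in the conjecture. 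Making this identification rigorous, together with the correct overall constant, is the first routine step.

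To upgrade this formal transformation into a $q$-congruence modulo $\Phi_n(q)^2$, I would employ the \emph{creative microscoping} strategy used throughout Sections~\ref{sec:proofs}--\ref{sec:proofs34}. Introduce an auxiliary parameter $\alpha$ by replacing $a=q^{-1}$ with $\alpha q^{-1}$ in \eqref{eq:irs} while keeping the other substitutions, and aim to establish the deformed identity modulo $\Phi_n(q)(1-\alpha q^n)(\alpha-q^n)$. The two specializations $\alpha=q^{\pm n}$ should force both ${}_{5}\phi_{4}$ series on the right-hand side of \eqref{eq:irs} to vanish: under $n\equiv 2\pmod 3$, factors of the shape $(q^{2\pm n};q^3)_\infty$ appear in their numerators and land on zeros of the infinite product, killing each series. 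Divisibility by the remaining factor $\Phi_n(q)$ would come from a pairing argument $k\leftrightarrow(2n-1)/3-k$, in the spirit of Lemma~\ref{lem:2.1}, applied to the outer sum after truncation. Letting $\alpha\to 1$ then produces one further factor of $\Phi_n(q)$ from $(1-\alpha q^n)(\alpha-q^n)$, giving the $\Phi_n(q)^2$ assertion; for $A=1$ an additional symmetry appears (the deformation and the free parameter coalesce at the symmetric point), and this should supply the extra factor of $\Phi_n(q)$ required for the $\Phi_n(q)^3$ strengthening.

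The main obstacle is that, after the substitutions, the convergence condition $|a^2q^2/(bcdef)|<1$ of \eqref{eq:irs} is violated: one computes $a^2q^2/(bcdef)=q^{-1}$, so with base $q^3$ the quantity $q^{-1}$ has modulus greater than $1$. Thus \eqref{eq:irs} is only a \emph{formal} identity in our setting, and the outer series must be truncated at the precise index $(2n-1)/3$. The hard part is to justify that this truncation does not disturb the congruence modulo $\Phi_n(q)^2$: one has to show that every tail term with $k>(2n-1)/3$ is \emph{already} divisible by $\Phi_n(q)^2$, and even by $\Phi_n(q)^3$ when $A=1$. Identifying which $q$-shifted factorial in those tail summands supplies the extra divisibility, and doing so uniformly for all $k$ in the tail, appears to lie beyond a direct application of the techniques developed in this paper, which is presumably why the statement is only conjectured. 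A possible auxiliary tool would be to apply a second transformation (for instance a Watson-type reduction) to the inner terminating ${}_{4}\phi_{3}$ of each tail term and to exploit factors of the form $(q^{1-n};q^3)_{3k}$ that emerge for large $k$; making this precise is the principal technical challenge of the proposed plan.
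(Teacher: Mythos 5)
This statement is not proved in the paper at all: it is one of the open conjectures of Section~\ref{sec:irs}, and the authors only claim (and can prove, by the technique illustrated in Theorem~\ref{conj:irs-5a}) that the congruences in Conjectures~\ref{conj:irs-1}--\ref{conj:irs-5} hold modulo the single factor $\Phi_n(q)$. So there is no paper proof for you to match, and your proposal does not supply one either; it is a programme whose decisive steps are left unverified. Concretely: (i) as you yourself compute, the argument of \eqref{eq:irs} becomes $q^{-1}$ after the substitution, so the transformation is divergent and cannot be invoked; the truncated sum at $k=(2n-1)/3$ is therefore not a specialization of \eqref{eq:irs}, and your plan gives no argument that the discarded ``tail'' terms are individually divisible by $\Phi_n(q)^2$ (let alone $\Phi_n(q)^3$) --- this is precisely the obstruction you flag and do not overcome. (ii) The creative-microscoping step is not actually set up: the free parameter $a$ of the conjecture sits only inside the inner terminating ${}_4\phi_3$ (as $h=aq^{-1}$ with denominator entry $aq^{-2}$), not in the paired form $(aq^{-1},q^{-1}/a)$ that Lemma~\ref{lem:2.1} and the proofs in Sections~\ref{sec:proofs}--\ref{sec:proofs34} exploit; deforming the very-well-poised parameter of \eqref{eq:irs} also deforms the inner ${}_4\phi_3$, so the $\alpha\to1$ limit does not return the conjectured left-hand side, and the asserted vanishing of the two ${}_5\phi_4$ series at $\alpha=q^{\pm n}$, as well as the $k\leftrightarrow(2n-1)/3-k$ pairing and the ``extra symmetry at $A=1$'', are all stated without verification.

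What your first step could legitimately yield, if carried out in the style of the proof of Theorem~\ref{conj:irs-5a} (choose the specialization so that the series terminates on the nose and the right-hand side of \eqref{eq:irs} vanishes, then reduce with $q^n\equiv1\pmod{\Phi_n(q)}$), is the congruence modulo $\Phi_n(q)$ only --- which is exactly the partial result the paper claims for these conjectures. The jump to $\Phi_n(q)^2$, and to $\Phi_n(q)^3$ at $a=1$, remains open, and your proposal should be read as a restatement of the heuristic behind the conjecture rather than a proof of it.
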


If in \eqref{eq:irs} we replace $q$ by $q^4$, take $a=b=c=d=e=f=q$,
$g=q^{-1}$, $h=aq^{-1}$, and suitably truncate the sum, then the
following $q$-supercongruence appears to be true.
\begin{conjecture} Let $n$ be a positive integer with $n\equiv 3\pmod 4$. Then
\begin{align*}
\sum_{k=0}^{(3n-1)/4}[8k+1]\frac{(q;q^4)_k^6}{(q^4;q^4)_k^6}q^{5k}
%\notag\\&\times
{}_{4}\phi_{3}\!\left[\begin{array}{c}
q^{-4k},\, q^{4k+1},\, q^{-1},\, aq^{-1} \\
q,\, q,\, aq
\end{array};q^4,\, q^4
\right]\equiv 0 \pmod{[n]\Phi_{n}(q)^2}.
\end{align*}
Furthermore, the above congruence holds modulo $[n]\Phi_n(q)^3$ when
$a=1$.
\end{conjecture}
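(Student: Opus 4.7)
My plan is to adapt the creative microscoping method of Guo--Zudilin, used throughout the paper (e.g.\ Theorem~\ref{thm:2.2} and Section~\ref{sec:qcong12phi11}), to the present situation where the left-hand side is a genuine double sum. The key observation is that the left-hand side is precisely the truncation at $k=(3n-1)/4$ of the left-hand side of the Ismail--Rahman--Suslov identity \eqref{eq:irs} under the parameter specialisation $q\mapsto q^4$, $a=b=c=d=e=f=q$, $g=q^{-1}$, $h=aq^{-1}$ (where the last $a$ is the conjecture's own parameter). Truncation is harmless modulo $\Phi_n(q)^6$ because $(q;q^4)_k^6/(q^4;q^4)_k^6$ vanishes to order six at $\Phi_n(q)$ once $k\geqslant(3n+3)/4$, which is the first index at which $(q;q^4)_k$ acquires a $\Phi_n(q)$-factor when $n\equiv 3\pmod 4$.

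I would introduce an auxiliary parameter $b$ in IRS by replacing, say, the outer parameters $(d,e)=(q,q)$ with $(bq,q/b)$; this leaves the inner ${}_4\phi_3$ unchanged and multiplies the outer summand by $(bq,q/b;q^4)_k/(bq^4,q^4/b;q^4)_k$. The goal is to prove that the resulting four-parameter sum vanishes modulo $[n]\Phi_n(q)(1-bq^n)(b-q^n)$; taking $b\to 1$ then yields the conjectured $[n]\Phi_n(q)^3$ divisibility at $a=1$, and with an $a$-sensitive refinement of step one also the $[n]\Phi_n(q)^2$ divisibility for general $a$.

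The proof of the four-parameter congruence would follow the usual three-step template. Pairwise cancellation modulo $\Phi_n(q)$ of the $k$-th and $((3n-1)/4-k)$-th outer summands uses Lemma~\ref{lem:2.1} on the ratios $(bq,q/b;q^4)_k/(bq^4,q^4/b;q^4)_k$ and $(q;q^4)_k^4/(q^4;q^4)_k^4$, combined with a reflection identity for the terminating balanced inner ${}_4\phi_3$ coming from Sears' transformation. The $[n]$-factor is then produced by the cyclotomic argument from the proof of Theorem~\ref{thm:first}, applied to each divisor $d>1$ of $n$. The remaining $(1-bq^n)(b-q^n)$-factor is established by evaluating the truncated sum at $b=q^{\pm n}$ and invoking IRS: the first prefactor on the right-hand side of \eqref{eq:irs} then involves $(q^3/b,bq^3;q^4)_\infty$, which acquires a genuine zero factor at index $(n-3)/4$ since $n\equiv 3\pmod 4$, killing the first ${}_5\phi_4$ piece; the second ${}_5\phi_4$ piece would then have to be shown to reduce to a manifestly vanishing terminating sum.

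The principal obstacle is exactly this last step: the second ${}_5\phi_4$ piece of IRS is nonterminating and a priori non-vanishing at $b=q^{\pm n}$, so a companion argument is required --- either exhibiting a cancellation between the two ${}_5\phi_4$ tails (via Sears' or Bailey's transformations), or choosing a different microscoping substitution that kills both prefactors simultaneously. A secondary difficulty lies in the reflection step: because the inner ${}_4\phi_3$ depends on $k$ through two upper parameters $q^{-4k}$ and $q^{4k+1}$, its symmetry under $k\mapsto(3n-1)/4-k$ modulo $\Phi_n(q)$ goes beyond the direct scope of Lemma~\ref{lem:2.1} and will almost certainly need a Sears four-term transformation for balanced terminating ${}_4\phi_3$'s. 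Finally, the sharper conclusion at $a=1$ is expected to rely on the fact that at $a=1$ the inner ${}_4\phi_3$ becomes very-well-poised and summable in closed form, collapsing the double sum to a single sum accessible to the Watson-based machinery of Section~\ref{sec:proofs}.
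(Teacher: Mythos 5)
There is no paper proof to compare your attempt against: the statement is posed in Section~\ref{sec:irs} as an open conjecture (one of Conjectures~\ref{conj:irs-1}--\ref{conj:irs-5}), introduced by ``appears to be true''. The authors only claim that these congruences can be verified modulo the single factor $\Phi_n(q)$, in the spirit of Theorem~\ref{conj:irs-5a}, and even that verification is not written out for this case. Your proposal, by its own admission, is a programme rather than a proof: you correctly identify the left-hand side as the truncation of the Ismail--Rahman--Suslov series \eqref{eq:irs} under the specialization stated in the paper, and you correctly recall the microscoping template, but none of the steps that would actually deliver the moduli $[n]\Phi_n(q)^2$ and $[n]\Phi_n(q)^3$ is carried out.

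Beyond the gap you acknowledge (there is no visible reason for the second nonterminating ${}_5\phi_4$ on the right of \eqref{eq:irs} to vanish or to cancel against anything), the step you do describe in detail would fail. Replacing $(d,e)$ by $(bq,q/b)$ and setting $b=q^{\pm n}$ does \emph{not} make the outer series terminate at $k=(3n-1)/4$: termination would require $(q^{1-n};q^4)_k$ or $(q^{1+n};q^4)_k$ to vanish, i.e.\ $1-n\equiv 0\pmod 4$, which is false for $n\equiv 3\pmod 4$. The factor that does vanish at the right place is $(q^{1-3n};q^4)_k$, so the substitution compatible with the truncation point is $b=q^{\pm 3n}$; but then one no longer controls the polynomial $(1-bq^n)(b-q^n)$ that your plan needs for the $\Phi_n(q)^2$-strengthening, and one is back to (at best) a congruence modulo $\Phi_n(q)$ --- which is all the paper itself claims. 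The remaining ingredients --- the reflection $k\mapsto (3n-1)/4-k$ for the $k$-dependent inner ${}_4\phi_3$ (outside the scope of Lemma~\ref{lem:2.1}, as you note), the divisor-by-divisor argument for the factor $[n]$ (divisors $d\equiv 1\pmod 4$ of $n$ are not covered by the conjectured congruence itself, so the argument of Theorem~\ref{thm:first} does not transfer), and the hoped-for collapse of the inner sum at $a=1$ --- are likewise unproved wishes. In short, the statement remains open in the paper, and your outline does not close any of the gaps that presumably led the authors to leave it as a conjecture.
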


On the other hand, if in \eqref{eq:irs} we replace $q$ by $q^4$,
take $a=b=c=d=e=f=q^{-1}$, $g=q^{-3}$, $h=aq^{-3}$, and suitably
truncate the sum, then the following $q$-supercongruence appears to
be true.
\begin{conjecture} Let $n>1$ be a positive integer with $n\equiv 1\pmod 4$. Then
\begin{align*}
\sum_{k=0}^{(3n+1)/4}[8k-1]\frac{(q^{-1};q^4)_k^6}{(q^4;q^4)_k^6}q^{11k}
%\notag\\&\times
{}_{4}\phi_{3}\!\left[\begin{array}{c}
q^{-4k},\, q^{4k-1},\, q^{-3},\, aq^{-3} \\
q^{-1},\, q^{-1},\, aq^{-1}
\end{array};q^4,\, q^4
\right] \equiv 0 \pmod{[n]\Phi_{n}(q)^2}.
\end{align*}
Furthermore, the above congruence holds modulo $[n]\Phi_n(q)^3$ when
$a=1$.
\end{conjecture}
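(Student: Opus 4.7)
My plan is to apply the creative microscoping technique of Guo and Zudilin to a one-parameter extension. As suggested by the ``furthermore \dots\ when $a=1$'' clause, I reinstate the free parameter $a$ into the inner $_4\phi_3$ by taking $h=aq^{-3}$ in the Ismail--Rahman--Suslov transformation \eqref{eq:irs}, consider
\[
T_n(a) := \sum_{k=0}^{(3n+1)/4}[8k-1]\,\frac{(q^{-1};q^4)_k^6}{(q^4;q^4)_k^6}\,q^{11k}\;
{}_{4}\phi_{3}\!\left[\begin{array}{c}
q^{-4k},\, q^{4k-1},\, q^{-3},\, aq^{-3} \\
q^{-1},\, q^{-1},\, aq^{-1}
\end{array};q^4,\, q^4\right],
\]
and aim at the sharper statement $T_n(a)\equiv 0\pmod{[n]\Phi_n(q)(1-aq^n)(a-q^n)}$. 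This yields divisibility by $[n]\Phi_n(q)^3$ when $a\to 1$ (since $(1-aq^n)(a-q^n)$ then picks up a factor $\Phi_n(q)^2$), and by $[n]\Phi_n(q)^2$ for generic $a$, covering both clauses of the conjecture.

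To obtain divisibility by $(1-aq^n)(a-q^n)$, apply \eqref{eq:irs} under the substitutions $q\mapsto q^4$, $a_{\mathrm{IRS}}=q^{-1}$, $b=c=d=e=f=q^{-1}$, $g=q^{-3}$, $h=aq^{-3}$, so that the LHS of \eqref{eq:irs} is (up to an explicit normalization) the infinite extension of $T_n(a)$. Setting $a=q^{\pm n}$, the outer sum automatically terminates at $k=(3n+1)/4$ because the Pochhammer factor $(q^{-1\mp n};q^4)_k$ vanishes beyond that point. Simultaneously, the prefactors of both $_5\phi_4$ series on the RHS of \eqref{eq:irs} acquire a vanishing factor of the form $(q^{\ast\pm n};q^4)_\infty$ precisely when $n\equiv 1\pmod 4$, forcing the RHS to equal zero. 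Hence $T_n(q^{\pm n})=0$ and $(1-aq^n)(a-q^n)\mid T_n(a)$.

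Divisibility by $\Phi_n(q)$ is then established by the pairing $k\leftrightarrow (3n+1)/4-k$. Using an analogue of Lemma~\ref{lem:2.1} with base $q^4$, the $q$-shifted-factorial ratios involving $a$ and $1/a$ reverse correctly modulo $\Phi_n(q)$, and $[8((3n+1)/4-k)-1]=[6n+1-8k]$ is congruent to $-q^{\alpha}[8k-1]$ for an explicit exponent $\alpha$, using $q^n\equiv 1$. The inner $_4\phi_3$ is preserved under the reflection because its upper parameters $q^{-4k}$ and $q^{4k-1}$ swap roles only up to $q^{\beta n}$-factors. Thus the $k$-th and $((3n+1)/4-k)$-th summands cancel in pairs modulo $\Phi_n(q)$, and any residual central term carries an explicit factor $[n]$. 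The remaining divisibility by $[n]$ is obtained by the cyclotomic reduction from the proof of Theorem~\ref{thm:first}: for each divisor $d>1$ of $n$, one verifies $T_n(a)|_{q=\zeta_d}=0$ by combining the periodicity $c_{\zeta_d}(\ell d+k)=c_{\zeta_d}(\ell d)\,c_{\zeta_d}(k)$ of the summand with the $n=d$ case of the congruence.

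The principal obstacle is verifying the vanishing of both $_5\phi_4$ multiples on the RHS of \eqref{eq:irs} at $a=q^{\pm n}$: the ``mixed'' $_5\phi_4$ (with arguments $agq/de,\ a^2gq^2/bcdef,\dots$) has a prefactor in which infinite products may partially cancel zeros in its denominator, producing a delicate $0/0$ form that must be resolved by expanding in $a-q^n$ (or $1-aq^n$) and matching leading orders against the first $_5\phi_4$. A secondary subtlety is that the pairing argument for $\Phi_n(q)$ treats the inner $_4\phi_3$ as a block: one must confirm this block is invariant modulo $\Phi_n(q)$ under $k\mapsto (3n+1)/4-k$, and performing this verification first for the simpler cubic analogue Conjecture~\ref{conj:irs-1} would be a sensible warm-up before tackling the full quartic case.
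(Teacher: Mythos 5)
This statement is an open conjecture in the paper: the authors explicitly say it ``appears to be true'' and prove nothing beyond the remark that all the congruences of this section can be shown modulo $\Phi_n(q)$, which they do (see Theorem~\ref{conj:irs-5a}) by specializing the \emph{IRS} parameter $a_{\mathrm{IRS}}$ in \eqref{eq:irs} to a power of $q^{\pm n}$ so that the outer series terminates and the right-hand side vanishes, and then reducing modulo $\Phi_n(q)$. So there is no paper proof to match, and your proposal would have to stand on its own; it does not, because its central mechanism is flawed. You insert the microscoping parameter only through $h=aq^{-3}$, so $a$ occurs solely inside the inner ${}_4\phi_3$; the outer summand $[8k-1](q^{-1};q^4)_k^6(q^4;q^4)_k^{-6}q^{11k}$ contains no $a$ at all. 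Consequently, setting $a=q^{\pm n}$ does not produce the factor $(q^{-1\mp n};q^4)_k$ you invoke (and even that factor would not vanish for $n\equiv 1\pmod 4$), the outer series does not terminate at $k=(3n+1)/4$, and on the right-hand side of \eqref{eq:irs} the prefactor of the first ${}_5\phi_4$ depends only on $a_{\mathrm{IRS}}=q^{-1}$ and $d,e,f$, hence is independent of $a$ and does not vanish. So the key step $T_n(q^{\pm n})=0$, i.e.\ divisibility by $(1-aq^n)(a-q^n)$, is not established; the termination device in the paper's mod-$\Phi_n(q)$ arguments is $a_{\mathrm{IRS}}$ itself (e.g.\ $a_{\mathrm{IRS}}=q^{-2-2n}$ in Theorem~\ref{conj:irs-5a}), which cannot simultaneously serve as your free creative-microscoping variable.

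There is also a logical gap independent of the above: even if you had $T_n(a)\equiv 0\pmod{[n]\Phi_n(q)(1-aq^n)(a-q^n)}$, for generic $a$ this yields only divisibility by $[n]\Phi_n(q)$, since $(1-aq^n)(a-q^n)$ contributes $\Phi_n(q)^2$ only in the limit $a\to 1$; thus the first clause (modulo $[n]\Phi_n(q)^2$ with the parameter $a$ present) would not follow from your target congruence. Finally, the pairing $k\leftrightarrow(3n+1)/4-k$ modulo $\Phi_n(q)$ (including the claimed invariance of the inner ${}_4\phi_3$ block and the treatment of a possible central term) and the cyclotomic reduction to modulus $[n]$ are only sketched, and you acknowledge the delicate $0/0$ analysis of the second ${}_5\phi_4$ prefactor is unresolved. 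As it stands, the proposal does not prove the conjecture, nor even recover the modulo-$\Phi_n(q)$ statement that the paper's own (different) specialization of \eqref{eq:irs} yields.
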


Ismail, Rahman and Suslov \cite[Eq.~(5.4)]{IRS} also noted the following
transformation formula (which can be obtained from \eqref{eq:irs} by taking
$d=aq/c$ and $h=0$):
\begin{align}
&\sum_{k=0}^\infty
\frac{(1-aq^{2k})(a,b,e,f;q)_k}
{(1-a)(q,aq/b,aq/e,aq/f;q)_k
} \left(\frac{aq}{bef}\right)^k
{}_{3}\phi_{2}\!\left[\begin{array}{c}
q^{-k},\, aq^{k},\, g \\
b,\, c
\end{array};q,\, q
\right]                \notag\\[5pt]
&\quad=\frac{(aq,c/e,c/f,aq/ef;q)_\infty}{(c,aq/e,aq/f,c/ef;q)_\infty}
{}_{3}\phi_{2}\!\left[\begin{array}{c}
agq/bc,\, e,\, f \\
efq/c,\, aq/b
\end{array};q,\, q
\right]  \notag\\[5pt]
&\qquad\quad{}+\frac{(aq,e,f,acq/bef,aq/ef,agq/bc;q)_\infty}
{(c,aq/b,aq/e,aq/f,ef/c,aqg/bef;q)_\infty}
{}_{3}\phi_{2}\!\left[\begin{array}{c}
c/e,\, c/f,\, agq/bef \\
acq/bef,\,cq/ef
\end{array};q,\, q
\right].  \label{eq:irs5.4}
\end{align}
If in \eqref{eq:irs5.4} we replace $q$ by $q^4$, take
$a=b=c=e=f=q^{-2}$, $g=q^5$ and truncate the sum, then the following
$q$-supercongruence appears to be true.
\begin{conjecture}\label{conj:irs-5}
Let $n$ be a positive integer with $n\equiv 3\pmod{8}$. Then
\begin{align}
\sum_{k=0}^{(n+1)/2}[8k-2]\frac{(q^{-2};q^4)_k^4}{(q^4;q^4)_k^4}
q^{8k} {}_{3}\phi_{2}\!\left[\begin{array}{c}
q^{-4k},\, q^{4k-2},\,q^5 \\
q^{-2},\,q^{-2}
\end{array};q^4,\, q^4
\right] \equiv 0\pmod{\Phi_n(q)^2 \Phi_n(-q)}.  \label{eq:irs-5}
\end{align}
\end{conjecture}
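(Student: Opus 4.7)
The strategy is to specialize the Ismail--Rahman--Suslov transformation \eqref{eq:irs5.4} as suggested in the paragraph preceding the conjecture, namely with $q\mapsto q^4$, $a=b=c=e=f=q^{-2}$, $g=q^5$. The argument naturally divides into three stages: evaluating the two terms on the right-hand side of \eqref{eq:irs5.4} under this substitution, matching the finite truncation of the sum with the resulting closed form, and establishing the desired cyclotomic divisibility of that closed form.

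For the first stage, the prefactor of the first ${}_3\phi_2$ on the right-hand side of \eqref{eq:irs5.4} becomes
\[
\frac{(q^2,1,1,q^6;q^4)_\infty}{(q^{-2},q^4,q^4,q^2;q^4)_\infty},
\]
which vanishes because $(1;q^4)_\infty=0$. The second ${}_3\phi_2$ on the right has two upper parameters $c/e=c/f=1$, so it degenerates to its $k=0$ summand, equal to $1$. Consequently, after cancelling the common factor $(q^{-2};q^4)_\infty^{2}$, the entire right-hand side collapses to
\[
R(q) \;=\; \frac{(q^2,q^6,q^6,q^{11};q^4)_\infty}{(q^4,q^4,q^4,q^{13};q^4)_\infty}.
\]

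For the second stage, I would bound the tail $\sum_{k>(n+1)/2}$ of the infinite series on the left. The key observation is that for $k\geqslant(n+3)/2$, the Pochhammer factor $(q^{-2};q^4)_k$ first acquires the term $1-q^{2n}=(1-q^n)(1+q^n)$, which is divisible by $\Phi_n(q)\Phi_n(-q)$; taken to the fourth power, such summands are divisible by $\Phi_n(q)^4\Phi_n(-q)^4$ before any cancellation. For $(n+3)/2\leqslant k\leqslant n-1$ the denominator $(q^4;q^4)_k^4$ carries no cyclotomic factors and the estimate is immediate; for $k\geqslant n$ additional care is required, as further cyclotomic factors accrue in both numerator and denominator and even in the inner ${}_3\phi_2$, but a pairing of neighbouring summands should still yield the bound $\Phi_n(q)^2\Phi_n(-q)$ on the tail.

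The third and crux stage is to verify that the closed form $R(q)$ itself vanishes to the required order at the primitive $n$th and $2n$th roots of unity when $n\equiv 3\pmod 8$. I would attempt this by pairing infinite products with exponents of the same residue $\pmod 4$ via identities such as $(x;q^4)_\infty/(xq^{4m};q^4)_\infty=(x;q^4)_m$, thereby reducing $R(q)$ to a finite rational function whose $\Phi_n(q)$- and $\Phi_n(-q)$-content can be read off directly. The hypothesis $n\equiv 3\pmod 8$ should single out the correct residues to yield two $\Phi_n(q)$-factors and one $\Phi_n(-q)$-factor. The main obstacle is precisely this last step, since the exponents $\{2,6,11\}$ in the numerator and $\{4,13\}$ in the denominator of $R(q)$ do not all share a common residue $\pmod 4$; a promising alternative, in the spirit of the paper's creative microscoping method, is to introduce an auxiliary parameter $a$ that promotes the target modulus to a polynomial such as $\Phi_n(q)(1-a^2q^{2n})$ and then let $a\to 1$, exploiting that $\Phi_n(q)^2\Phi_n(-q)$ divides $\Phi_n(q)(1-q^n)(1+q^n)$.
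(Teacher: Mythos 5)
Your first stage is fine: with $q\mapsto q^4$, $a=b=c=e=f=q^{-2}$, $g=q^5$, the first term on the right-hand side of \eqref{eq:irs5.4} vanishes because of $(1;q^4)_\infty$, the second ${}_3\phi_2$ collapses to $1$, and the full nonterminating series equals the infinite product $R(q)$ you display. But from there the plan breaks down, and in a way that matters: \eqref{eq:irs-5} is left as a conjecture in the paper, which only proves the weaker congruence modulo $\Phi_n(q)\Phi_n(-q)$ (Theorem~\ref{conj:irs-5a}), and it does so by a different, \emph{terminating} specialization ($a=q^{-2-2n}$, $b=c=q^{-2}$, $e=aq^{-2}$, $f=q^{-2}/a$ in \eqref{eq:irs5.4}), which makes the right-hand side vanish identically and leaves an exact identity between rational functions. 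Your stages two and three are not meaningful as stated: congruence modulo a polynomial is defined in this paper only for rational functions, and neither the tail $\sum_{k>(n+1)/2}$ nor $R(q)$ is one; worse, $R(q)$ and the tail are given by products and series that converge only for $|q|<1$, so they have no values at the relevant roots of unity and no well-defined $\Phi_n(q)$- or $\Phi_n(-q)$-content. Only their difference, the truncated sum, is rational, so any argument must treat them jointly. The term-by-term divisibility claim also fails quantitatively: already for $(n+3)/2\leqslant k$ the inner ${}_3\phi_2$ carries $(q^{-2};q^4)_j^2$, hence possibly $(1-q^{2n})^2$, in its denominators, and for $k\geqslant n$ the outer denominator $(q^4;q^4)_k^4$ vanishes at primitive $n$-th roots of unity (since $\gcd(n,4)=1$); the ``pairing of neighbouring summands'' you invoke for that range is not carried out and nothing suggests it works.

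The step you yourself call the crux is exactly where the content of the conjecture lies, and your fallback -- prove a parametric congruence modulo $\Phi_n(q)(1-a^2q^{2n})$ and let $a\to 1$ -- is precisely the strengthening the paper was unable to obtain: to make \eqref{eq:irs5.4} terminate one is forced to tie the Ismail--Rahman--Suslov parameter to $q^{-2-2n}$, and then the only available information is the exact vanishing of a sum in which $q^{2n}$ occurs, whence $q^{2n}\equiv 1$, i.e.\ the modulus $\Phi_n(q)\Phi_n(-q)$ of Theorem~\ref{conj:irs-5a}; the free parameter $a$ there sits inside the summand, not in the modulus, so letting $a\to 1$ gains nothing extra. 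Consequently, even if the tail analysis were repaired, your argument would at best recover the known weaker congruence; the additional factor $\Phi_n(q)$, which is what distinguishes \eqref{eq:irs-5} from the proved result, is not established. As written, this is not a proof of the conjecture, and the statement remains open.
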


Similarly as before, we can show that all the congruences in
Conjectures \ref{conj:irs-1}--\ref{conj:irs-5} are true modulo
$\Phi_n(q)$. For example, we have the following parametric generalization
of the congruence \eqref{eq:irs-5} modulo $\Phi_n(q)\Phi_n(-q)$.
\begin{theorem}\label{conj:irs-5a}
Let $n$ be a positive integer with $n\equiv 3\pmod{8}$. Then, modulo
$\Phi_n(q) \Phi_n(-q)$,
\begin{align*}
\sum_{k=0}^{(n+1)/2}[8k-2]\frac{(q^{-2};q^4)_k^2
(aq^{-2},q^{-2}/a;q^4)_k}{(q^4;q^4)_k^2 (aq^{4},q^4/a;q^4)_k} q^{8k}
{}_{3}\phi_{2}\!\left[\begin{array}{c}
q^{-4k},\, q^{4k-2},\,q^5 \\
q^{-2},\,q^{-2}
\end{array};q^4,\, q^4
\right] \equiv 0.
\end{align*}
\end{theorem}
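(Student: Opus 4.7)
The plan is to prove the congruence modulo $\Phi_n(q)\Phi_n(-q)$ by a pairing argument on the summation index, combined with a direct analysis of the self-paired central term. Since $n>1$ is odd, $\Phi_n(q)$ and $\Phi_n(-q)$ are coprime, their product equals $\Phi_n(q^2)$, and this polynomial divides $1-q^{2n}$; hence throughout the argument one reduces modulo $\Phi_n(q^2)$, equivalently under the substitution $q^{2n}\equiv 1$. Write $N:=(n+1)/2$; the hypothesis $n\equiv 3\pmod 8$ forces $N\equiv 2\pmod 4$, so the midpoint $k_{0}:=N/2=(n+1)/4$ is an integer and the sum has a single self-paired term. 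At $k=k_{0}$ the weight $[8k_{0}-2]=[2n]=(1-q^{2n})/(1-q)$ is already divisible by $\Phi_n(q)\Phi_n(-q)$, because $(1-q)\Phi_n(q)\Phi_n(-q)=\Phi_1(q)\Phi_n(q)\Phi_{2n}(q)$ divides $\prod_{d\mid 2n}\Phi_d(q)=1-q^{2n}$ for odd $n$.

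For the non-central pairs $(k,k')$ with $k<k_{0}$ and $k':=N-k$, the key observation is that modulo $\Phi_n(q^2)$ the upper parameters of the inner ${}_3\phi_2$ at $k'$ become $q^{-4k'}\equiv q^{4k-2}$ and $q^{4k'-2}\equiv q^{-4k}$, i.e.\ a permutation of those at $k$. Moreover, for $k<j\le k'$ the $j$th summand of the ${}_3\phi_2$ at $k'$ contains the factor $1-q^{4k'-2+4k}=1-q^{2n}$ and is therefore $\equiv 0\pmod{\Phi_n(q^2)}$, so both inner series truncate at $j=k$ modulo $\Phi_n(q^2)$ and agree by the symmetry of ${}_3\phi_2$ in its numerator parameters. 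It thus suffices to show that the ratio $R(k')/R(k)$ of the prefactors (everything preceding the ${}_3\phi_2$) is $\equiv -1\pmod{\Phi_n(q^2)}$.

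Using $q^{2n}\equiv 1$ and the substitution $j\mapsto N+1-l$ in each $q$-shifted factorial yields the base-$q^4$ reflection identities $(q^{-2};q^4)_N\equiv (-1)^N q^{-2N(N+1)}(q^4;q^4)_N$ and $(aq^{-2};q^4)_N\equiv (-a)^N q^{-2N(N+1)}(q^4/a;q^4)_N$ (together with its $a\mapsto 1/a$ analogue) modulo $\Phi_n(q^2)$. Multiplying these, the sign factors $(-1)^N$ and $(-a)^N(-1/a)^N$ cancel, leaving $(q^{-2};q^4)_N^2(aq^{-2},q^{-2}/a;q^4)_N/[(q^4;q^4)_N^2(aq^4,q^4/a;q^4)_N]\equiv q^{-8N(N+1)}$. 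Combining this with $[8k'-2]/[8k-2]\equiv -q^{2-8k}$ and $q^{8(k'-k)}\equiv q^{4-16k}$ yields $R(k')/R(k)\equiv -q^{6-8N(N+1)}=-q^{-2n(n+4)}\equiv -1\pmod{\Phi_n(q^2)}$, where the last step uses $2n\mid 2n(n+4)$ together with $q^{2n}\equiv 1$. Hence every non-central pair cancels and the entire sum is $\equiv 0\pmod{\Phi_n(q)\Phi_n(-q)}$. The main delicacy in the argument is establishing the base-$q^4$ reflection identities modulo the weaker modulus $\Phi_n(q^2)$ rather than $\Phi_n(q)$ (the analogue of Lemma~\ref{lem:2.1} in this setting): one must carry $q^{2n}\equiv 1$ through each telescoping product and verify that the $k$-independent powers of $q$ combine as claimed, but once this bookkeeping is in place the remaining steps assemble cleanly.
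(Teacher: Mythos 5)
Your proof is correct, but it follows a genuinely different route from the paper's. The paper gets the congruence in essentially two lines from the Ismail--Rahman--Suslov transformation \eqref{eq:irs5.4}: with $q\mapsto q^4$, $a=q^{-2-2n}$, $b=c=q^{-2}$, $e=aq^{-2}$, $f=q^{-2}/a$, the left-hand side terminates at $k=(n+1)/2$ while the right-hand side vanishes outright (because of the factor $(q^{2-2n};q^4)_\infty$), so the truncated sum is identically zero as a rational function, and reducing exponents via $q^{2n}\equiv1\pmod{\Phi_n(q)\Phi_n(-q)}$ yields the statement. You instead give a self-contained pairing argument $k\leftrightarrow N-k$, $N=(n+1)/2$, modulo $\Phi_n(q^2)=\Phi_n(q)\Phi_n(-q)$, in the spirit of Lemma~\ref{lem:2.1} and the proof of Theorem~\ref{thm:2.2} but with $q^n\equiv1$ replaced by $q^{2n}\equiv1$, plus two additional observations that the paper never needs: the inner ${}_3\phi_2$ at $k'=N-k$ truncates at $j=k$ modulo $\Phi_n(q^2)$ (its $j$-th term for $j>k$ contains $1-q^{4k'-2+4k}=1-q^{2n}$, and its denominators are coprime to the modulus) with upper parameters reducing to a permutation of those at $k$, and the central term $k=(n+1)/4$ carries $[2n]\equiv0$. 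I verified the crucial cancellation: the ratio of outer factors at $k'$ and $k$ is $-q^{2n^2}\equiv-1$, so the pairs do cancel; be aware, though, that as literally displayed your combination of $q^{-8N(N+1)}$ (the full-length reflection) with $-q^{2-8k}$ and $q^{4-16k}$ leaves an uncompensated $q^{-24k}$ --- the missing bookkeeping is that the reflections of the partial products $(q^{-2};q^4)_{N-k}$, $(aq^{-2};q^4)_{N-k}$, etc., contribute exactly the compensating factor $q^{24k}$ (this is the telescoping you flag as delicate, and it does work out; your final exponent and mine differ by multiples of $2n$, so both give $-1$). What each approach buys: the paper's specialization is shorter, works verbatim for the whole family of IRS-type congruences in Section~\ref{sec:irs}, and explains why the theorem sits there; yours avoids the double-series transformation entirely and is elementary, at the price of the reflection/exponent bookkeeping and of having to control the inner ${}_3\phi_2$ separately.
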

\begin{proof}Let $q\to q^4$, $a=q^{-2-2n}$, $b=c=q^{-2}$, $e=aq^{-2}$,
and $f=q^{-2}/a$ in \eqref{eq:irs5.4}. Then the left-hand side terminates
at $k=(n+1)/2$ because of the factor $(q^{-2-2n};q^4)_k$ in the numerator,
while the right-hand side vanishes because of the factor $(q^{2-2n};q^4)_\infty$.
The described specialization thus yields the following identity:

\begin{align*}
&\sum_{k=0}^{(n+1)/2}(1-q^{8k-2-2n})\frac{(q^{-2-2n},q^{-2},aq^{-2},q^{-2}/a;q^4)_k}
{(q^4,q^{4-2n},aq^{4-2n},q^{4-2n}/a;q^4)_k} q^{(8-2n)k} \\[5pt]
&\quad\qquad\times {}_{3}\phi_{2}\!\left[\begin{array}{c}
q^{-4k},\, q^{4k-2-2n},\,q^5 \\
q^{-2},\,q^{-2}
\end{array};q^4,\, q^4
\right] =0.
\end{align*}
Since $q^{2n}\equiv 1\pmod{\Phi_n(q)\Phi_n(-q)}$,
we immediately deduce the desired congruence from the above identity.
\end{proof}

%%%%%%%%%%%%%%%%%%%%%%%%%%%%%%%%%%%%%%%%%%%%%%%%%%%%%%%%%%%%%%%%%%%%%%%%%%%%%%%%%%%%%%%%%%%%%%%%%%%%%%%%%%%%%%%%%%%%%%%%%%%%%%%%%%%%%%%%%%%%%%%%%%%%%%%%%%%%%%%%%%%%%%%%%%%%%%%%%%%%%%%%%%%%%%%%%%%%%%%%%%%%%%%%%%%%%%%%%%%%%%%%%%%%%%
\section{Concluding remarks and further open problems}\label{sec:concl}
Most of the congruences in the manuscript \cite{GuoZu} are modulo
$[n](1-aq^n)(a-q^n)$. However, the congruence \eqref{eq:qab} does
not hold modulo $[n](1-aq^n)(a-q^n)$ in general. We only have a
generalization of \eqref{eq:qab} with $a=1$.

It is easy to see that the following generalization of
\eqref{eq:first} in Theorem~\ref{thm:first} is true.
\begin{theorem}\label{conj:with-b}
Let $n$ be a positive odd integer. Then, modulo $[n]\Phi_n(q)^2$,
\begin{equation}
\sum_{k=0}^{(n-1)/2} [4k+1]\frac{(bq;q^2)_k (q;q^2)_k^5}
{(q^2/b;q^2)_k (q^2;q^2)_k^5}\left(\frac{q}{b}\right)^k \equiv
[n]q^{(1-n)/2} \sum_{k=0}^{(n-1)/2}\frac{(q/b;q^2)_k (q;q^2)_k^3 }
{(q^2/b;q^2)_k (q^2;q^2)_k^3}q^{2k}. \label{eq:conj-1}
\end{equation}
\end{theorem}
Letting $a=1$ in Theorem~\ref{thm:2.2}, we see that the congruence
\eqref{eq:conj-1} holds modulo $\Phi_n(q)^3$. Therefore,
Theorem~\ref{conj:with-b} is equivalent to the left-hand side of
\eqref{eq:conj-1} being congruent to $0$ modulo $[n]$. By
\eqref{eq:qab}, we see that the left-hand side of \eqref{eq:conj-1}
is congruent to $0$ modulo $\Phi_n(q)$. And the same technique to
prove congruences modulo $[n]$ from congruences modulo $\Phi_n(q)$
as used in the proofs of \eqref{eq:4k+1} and \eqref{eq:4k+1,b} still works here.

%, because the argument of the series is $q/b$ and not just a power of $q$.
%If there is a
%transformation formula for the limiting case $n\to\infty$ of the left-hand side
%of \eqref{eq:conj-1}, then perhaps we can use the technique of computing basic
%hypergeometric sums at roots of unity in \cite{GuoZu} to prove the claim.

%Although Watson's $_8\phi_7$ transformation formula \eqref{eq:8phi7} can be
%generalized to Bailey's nonterminating $_8\phi_7$ transformation
%formula~\cite[Appendix (III.36)]{GR}:
%\begin{align}
%& _{8}\phi_{7}\!\left[\begin{array}{cccccccc}
%a,& qa^{\frac{1}{2}},& -qa^{\frac{1}{2}}, & b,    & c,    & d,    & e,    & f    \\
% & a^{\frac{1}{2}}, & -a^{\frac{1}{2}},  & aq/b, & aq/c, & aq/d, & aq/e, & aq/f
%\end{array};q,\, \frac{a^2q^2}{bcdef}
%\right] \notag\\
%&=\frac{(aq, aq/de, aq/df, aq/ef;q)_\infty}{(aq/d, aq/e, aq/f, aq/def;q)_\infty}\,
%{}_{4}\phi_{3}\!\left[\begin{array}{c}
%aq/bc,\ d,\ e,\ f \\
%aq/b,\, aq/c,\, def/a
%\end{array};q,\, q
%\right] \notag\allowdisplaybreaks[0]\\
%&\quad\;\;+\frac{(aq, aq/bc, d, e, f, a^2q^2/bdef, a^2q^2/cdef;q)_\infty}
%{(aq/b, aq/c, aq/d, aq/e, aq/f, a^2q^2/bcdef, def/aq;q)_\infty}
%\notag\allowdisplaybreaks[0]\\
%&\qquad\quad\times
%{}_{4}\phi_{3}\!\left[\begin{array}{c}
%aq/de,\, aq/df,\ aq/ef,\, a^2q^2/bcdef \\
%a^2q^2/bdef,\, a^2q^2/cdef, aq^2/def
%\end{array};q,\, q
%\right],
%\end{align}
%we were unable to find any suitable specialization of the latter which would
%yield a formula for the $n\to\infty$ case of the left-hand
%side of \eqref{eq:conj-1}.

We conjecture that the following generalization of the second part
of Theorem~\ref{thm:third} is true.
\begin{conjecture}\label{conj:11.2}
Let $n$ be a positive integer with $n\equiv 2\pmod 3$. Then
\begin{equation*}
\sum_{k=0}^{n-1}[6k+1]\frac{(q;q^3)_k^6}{(q^3;q^3)_k^6} q^{3k}
\equiv 0 \pmod{[n]\Phi_n(q)^3}.
\end{equation*}
\end{conjecture}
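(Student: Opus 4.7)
The plan is to sharpen the one-parameter generalization
\begin{equation*}
S_n(a;q):=\sum_{k=0}^{n-1}[6k+1]\frac{(aq,q/a;q^3)_k(q;q^3)_k^4}{(aq^3,q^3/a;q^3)_k(q^3;q^3)_k^4}q^{3k}
\end{equation*}
from Theorem~\ref{thm:third-new} (specialized to $d=3$) by computing it explicitly at $a=q^{\pm 2n}$, rather than only asserting divisibility by $\Phi_n(q)^2$ for generic $a$. Write $n=3m+2$ and $N=(2n-1)/3=2m+1$. At $a=q^{-2n}$ one has $aq=q^{-3N}$, so the summand vanishes for $k>N$ and $S_n(q^{-2n};q)$ is a terminating very-well-poised $_8\phi_7$ to which Watson's transformation~\eqref{eq:8phi7} (with $q\mapsto q^3$ and parameters $a_W=q$, $b_W=q^{1+2n}$, $c_W=q$, $d_W=q$, $e_W=q$, $f_W=q^{1-2n}$) applies, producing the prefactor $(q^4,q^2;q^3)_N/(q^3;q^3)_N^2$ multiplied by a $_4\phi_3$ whose upper parameter $a_W q^3/(b_W c_W)=q^{2-2n}$ coincides with its lower parameter $d_W e_W f_W/a_W=q^{2-2n}$. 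After cancelling this common factor the $_4\phi_3$ collapses to the balanced series
\begin{equation*}
{}_3\phi_2\!\left[\begin{matrix}q,\,q,\,q^{1-2n}\\q^{3-2n},\,q^3\end{matrix};q^3,\,q^3\right],
\end{equation*}
which the Pfaff--Saalsch\"utz summation~\cite[Appendix~(II.12)]{GR} evaluates as $(q^2;q^3)_N^2/[(q^3,q;q^3)_N]$; hence
\begin{equation*}
S_n(q^{-2n};q)=\frac{(q^4;q^3)_N\,(q^2;q^3)_N^3}{(q^3;q^3)_N^3\,(q;q^3)_N}.
\end{equation*}

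A routine cyclotomic count (using $\gcd(n,3)=1$ and $N<n$) then shows that $(q^4;q^3)_N$ contributes one factor of $\Phi_n(q)$---arising from $1-q^{2n}$ at index $j=N-1$---and $(q^2;q^3)_N$ contributes one factor---from $1-q^n$ at $j=m$---while $(q^3;q^3)_N$ and $(q;q^3)_N$ contribute none. The cube in the numerator then yields $\Phi_n(q)^{1+3}=\Phi_n(q)^4$ overall, so $S_n(q^{-2n};q)\equiv 0\pmod{\Phi_n(q)^4}$. The analogous divisibility at $a=q^{2n}$ follows from the obvious $a\leftrightarrow 1/a$ symmetry of the summand.

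To transfer this improvement to $a=1$, write $S_n(a;q)=\Phi_n(q)^2\,T_n(a;q)$ using Theorem~\ref{thm:third-new}, so that $T_n(q^{\pm 2n};q)\equiv 0\pmod{\Phi_n(q)^2}$. Setting $\epsilon=q^{2n}-1$, which carries exactly one factor of $\Phi_n(q)$, and Taylor-expanding the Laurent polynomial $T_n$ in $a$ about $a=1$, one obtains $T_n(q^{2n};q)-T_n(q^{-2n};q)\equiv 2\epsilon\,T_n'(1;q)\pmod{\Phi_n(q)^2}$, whence $T_n'(1;q)\equiv 0\pmod{\Phi_n(q)}$; then $T_n(q^{2n};q)\equiv T_n(1;q)+\epsilon\,T_n'(1;q)\equiv T_n(1;q)\pmod{\Phi_n(q)^2}$, giving $T_n(1;q)\equiv 0\pmod{\Phi_n(q)^2}$ and hence $S_n(1;q)\equiv 0\pmod{\Phi_n(q)^4}$. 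Combining with the $\pmod{[n]}$ divisibility obtained by the cyclotomic polynomial argument from the proof of \eqref{eq:4k+1,b} in Theorem~\ref{thm:first}---which applies verbatim using Theorem~\ref{thm:third-new} as the $\pmod{\Phi_d(q)}$ input at each divisor $d>1$ of $n$---produces divisibility by $\operatorname{lcm}(\Phi_n(q)^4,[n])=[n]\Phi_n(q)^3$, as required.

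The hard part is ensuring that Watson's transformation produces a $_4\phi_3$ whose upper parameter coincides with a lower one, so that the reduction to a Pfaff--Saalsch\"utz-summable $_3\phi_2$ is actually available. This cancellation requires the specific choice $b_W=q^{1+2n}$ among the five Watson upper parameters; alternative assignments yield $_4\phi_3$ series that admit no closed-form evaluation, and the extra $\Phi_n(q)^2$ gained from the $_3\phi_2$ value (over and above the $\Phi_n(q)^2$ in the Watson prefactor) would be lost.
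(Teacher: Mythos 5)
You should first be aware that the paper does not prove this statement at all: it is Conjecture~\ref{conj:11.2}, which the authors explicitly leave open (they only note that the weaker congruence modulo $[n]\Phi_n(q)$ follows from Theorem~\ref{thm:third-new}, and remark that congruences modulo $[n]\Phi_n(q)^3$ are very difficult). So your argument cannot be compared with a proof in the paper; it has to stand on its own. On its mathematical core it checks out. Writing $S_n(a;q)$ for the parametric sum, at $a=q^{-2n}$ the series is a terminating very-well-poised ${}_8\phi_7$ in base $q^3$, your Watson bookkeeping is correct (the upper parameter $q^{2-2n}$ of the resulting ${}_4\phi_3$ does coincide with the lower parameter $d_We_Wf_W/a_W=q^{2-2n}$, and this factor never vanishes since $2n-2\not\equiv 0\pmod 3$), the surviving ${}_3\phi_2$ is balanced, and Pfaff--Saalsch\"utz gives exactly $S_n(q^{\pm 2n};q)=\frac{(q^4;q^3)_N(q^2;q^3)_N^3}{(q^3;q^3)_N^3(q;q^3)_N}$ with $N=(2n-1)/3$; I verified this identity directly for $n=2$ and the $\Phi_n(q)$-valuation count (one factor from $1-q^{2n}$, three from $(1-q^n)^3$, none in the denominator) is correct, as is the final step $\operatorname{lcm}(\Phi_n(q)^4,[n])=[n]\Phi_n(q)^3$ together with the $[n]$-divisibility, which the paper itself establishes in the proof of Theorem~\ref{thm:third}.

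The one place you must tighten is the transfer to $a=1$. The function $T_n(a;q)=S_n(a;q)/\Phi_n(q)^2$ is \emph{not} a Laurent polynomial in $a$: each summand has the $a$-denominator $(aq^3,q^3/a;q^3)_k$, so $T_n$ is a rational function of $a$ with poles at $a=q^{\pm 3j}$, $1\leqslant j\leqslant n-1$, and a ``Taylor expansion modulo $\Phi_n(q)^2$'' of such a function at $a=1$ evaluated at $a=q^{\pm 2n}$ is not automatic. To make it rigorous you need two observations you do not state: (i) Theorem~\ref{thm:third-new} must be read as an identity in the parameter $a$, so that after multiplying $S_n(a;q)$ by $a^{n-1}\prod_{j=1}^{n-1}(1-aq^{3j})(a-q^{3j})$ one obtains a genuine polynomial in $a$ all of whose coefficients (rational in $q$ with denominators coprime to $\Phi_n(q)$) are divisible by $\Phi_n(q)^2$; and (ii) this clearing polynomial, evaluated at $a=1$ and at $a=q^{\pm 2n}$, is coprime to $\Phi_n(q)$ (true because $n\nmid 3j$ and $n\nmid 3j\pm 2n$ for $1\leqslant j\leqslant n-1$). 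With these in place the polynomial Taylor expansion of $\widetilde T(a)=S_n(a;q)\,a^{n-1}\prod(1-aq^{3j})(a-q^{3j})/\Phi_n(q)^2$ gives $\widetilde T(q^{2n})+\widetilde T(q^{-2n})\equiv 2\widetilde T(1)\pmod{\Phi_n(q)^2}$ (since $q^{2n}+q^{-2n}-2\in\Phi_n(q)^2\mathbb{Z}[q,q^{-1}]$), and your conclusion $S_n(1;q)\equiv 0\pmod{\Phi_n(q)^4}$ follows; note also that the detour through $T_n'(1)$ is unnecessary, since the $a\leftrightarrow 1/a$ symmetry forces $T_n'(1)=0$ outright (or one simply adds the two expansions, as above). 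Finally, your closing remark that only the assignment $b_W=q^{1+2n}$ leads to a summable series is inessential and in fact not quite accurate (other assignments also produce a cancelling parameter and a Saalsch\"utzian ${}_3\phi_2$), but this does not affect the proof.
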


We also have the following similar conjecture.
\begin{conjecture}\label{conj:11.3}
Let $n>1$ be a positive integer with $n\equiv 1\pmod 3$. Then
\begin{equation*}
\sum_{k=0}^{n-1}[6k-1]\frac{(q^{-1};q^3)_k^6}{(q^3;q^3)_k^6} q^{9k}
\equiv 0 \pmod{[n]\Phi_n(q)^3}.
\end{equation*}
\end{conjecture}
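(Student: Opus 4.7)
The plan is to prove Conjecture~\ref{conj:11.3} via creative microscoping, paralleling the proofs of Theorems~\ref{thm:first}--\ref{thm:third} and mirroring the anticipated treatment of Conjecture~\ref{conj:11.2}. First I would introduce the one-parameter enhancement
\[
T_n(a) := \sum_{k=0}^{n-1}[6k-1]\frac{(aq^{-1},q^{-1}/a;q^3)_k\,(q^{-1};q^3)_k^4}{(aq^{3},q^{3}/a;q^3)_k\,(q^{3};q^3)_k^4}\,q^{9k},
\]
which reduces to the sum in the conjecture at $a=1$. The goal is to prove the parametric congruence
\[
T_n(a)\equiv 0\pmod{[n]\,\Phi_n(q)\,(1-aq^n)(a-q^n)}.
\]
Since the three polynomials $[n]$, $\Phi_n(q)$, and $(1-aq^n)(a-q^n)$ are pairwise coprime in $\mathbb{Z}[a,q]$ (noting $[n]=\Phi_n(q)\prod_{1<d<n,\,d\mid n}\Phi_d(q)$), each factor of the modulus may be verified separately.

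For divisibility modulo $(1-aq^n)(a-q^n)$, substitute $a=q^{\pm n}$: the sum collapses to a terminating very-well-poised ${}_8\phi_7$, and Watson's transformation~\eqref{eq:8phi7} rewrites it as a prefactor times a terminating ${}_4\phi_3$; the prefactor should contain factors forcing the expression to vanish when $n\equiv 1\pmod 3$. For divisibility modulo $\Phi_n(q)$, use a $q^3$-analog of Lemma~\ref{lem:2.1} to pair the $k$-th term with a symmetric counterpart so that their sum is congruent to zero modulo $\Phi_n(q)$; terms with $k>(2n+1)/3$ are negligible, since $(q^{-1};q^3)_k^4$ then already contributes $\Phi_n(q)^4$-divisibility. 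For divisibility modulo $[n]$, apply the roots-of-unity argument of Theorem~\ref{thm:first}: for each proper divisor $d\mid n$ with $d>1$, the telescoping identity $c_\zeta(\ell d+k)=c_\zeta(\ell d)\,c_\zeta(k)$ together with the already-established congruence modulo $\Phi_d(q)$ forces $T_n$ to vanish at every primitive $d$-th root of unity $\zeta$, and assembling the $\Phi_d(q)$-divisibilities across all $d\mid n$ with $d>1$ yields divisibility by $[n]$.

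With the parametric congruence in place, the limit $a\to 1$ is safe (the denominators $(aq^{3},q^3/a;q^3)_k\big|_{a=1}=(q^3;q^3)_k^2$ remain coprime to $\Phi_n(q)$ for $k<n$), and $(1-aq^n)(a-q^n)\big|_{a=1}=(1-q^n)^2$ contributes two further copies of $\Phi_n(q)$; the limiting modulus therefore contains $[n]\,\Phi_n(q)^3$ as required. The main obstacle is the transformation step: for $n\equiv 1\pmod 3$, Theorem~\ref{thm:third-new} natively produces only one factor of $\Phi_n(q)$ in the prefactor of Watson's transformation (in contrast to two for $n\equiv -1\pmod 3$), whereas the parametric modulus demands the full $\Phi_n(q)\cdot(1-aq^n)(a-q^n)$. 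Identifying the extra $\Phi_n(q)$-factor after the substitution $a=q^{\pm n}$ is the technical heart of the argument, and may require either a more refined analysis of the transformed ${}_4\phi_3$ or a two-parameter version $T_n(a,b)$ splitting $(q^{-1};q^3)_k^4$ into $(aq^{-1},q^{-1}/a,bq^{-1},q^{-1}/b;q^3)_k\,(q^{-1};q^3)_k^2$ with coupled microscoping in both $a$ and $b$.
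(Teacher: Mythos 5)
First, be aware that the statement you are proving is posed in the paper as Conjecture~\ref{conj:11.3}, not as a theorem: the authors only remark that, similarly to the proof of Theorem~\ref{thm:third}, the congruence can be shown modulo the weaker modulus $[n]\Phi_n(q)$, and they stress that $q$-congruences modulo $[n]\Phi_n(q)^3$ are very difficult, the only fully proved instance in the literature being the one of Guo and Wang \cite{GW}. So there is no paper proof to compare against, and your sketch does not close the gap either; indeed you concede as much in your last sentence. Two concrete problems make the plan fail as stated. First, $[n]$, $\Phi_n(q)$ and $(1-aq^n)(a-q^n)$ are \emph{not} pairwise coprime: $\Phi_n(q)$ divides $[n]$ (your own factorization $[n]=\prod_{d\mid n,\,d>1}\Phi_d(q)$ shows this), so verifying the congruence modulo each factor separately does not yield the product modulus $[n]\Phi_n(q)(1-aq^n)(a-q^n)$. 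That modulus contains $\Phi_n(q)^2$ for generic $a$, and neither of the tools you invoke can produce a square: the pairing of the $k$-th and complementary terms via a $q^3$-analogue of Lemma~\ref{lem:2.1} gives one factor $\Phi_n(q)$, and the root-of-unity argument gives $[n]$, i.e.\ again only a single factor of $\Phi_n(q)$. With the ingredients you actually describe you would obtain at best a congruence modulo $[n](1-aq^n)(a-q^n)$, which at $a\to1$ yields $[n]\Phi_n(q)^2$ -- the level the paper's machinery already reaches for sums of this type -- and not the asserted $[n]\Phi_n(q)^3$.

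Second, the evaluation step at $a=q^{\pm n}$ does not work in the form you state for $n\equiv1\pmod 3$. After the substitution, the relevant numerator factors are $(q^{n-1};q^3)_k$ and $(q^{-1-n};q^3)_k$, and neither of these vanishes for any $k$ (one would need $3\mid n+1$, i.e.\ $n\equiv2\pmod3$), so the truncated sum does not coincide with a terminating very-well-poised series and Watson's transformation \eqref{eq:8phi7} cannot be applied to it directly. This is precisely why the proof of Theorem~\ref{thm:third-new} proceeds differently, replacing a pair $(q^{\mp1};q^d)_k^2$ by $(q^{\mp1-\alpha n},q^{\mp1+\alpha n};q^d)_k$ modulo $\Phi_n(q)^2$ with $\alpha$ chosen so that termination occurs (for the present sum one would need exponents like $\pm 2n$, forcing termination at $k=(2n+1)/3$); but that device by construction only gives information modulo $\Phi_n(q)^2$, which combined with the $[n]$-divisibility yields $[n]\Phi_n(q)$, exactly what the paper claims and no more. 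The missing idea -- how to extract the additional two powers of $\Phi_n(q)$, whether by a refined analysis of the transformed ${}_4\phi_3$ or by a coupled two-parameter microscoping -- is the actual content of the conjecture, and your proposal leaves it open.
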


Note that, similar to the proof of Theorem~\ref{thm:third}, we can
show that the above congruence holds modulo $[n]\Phi_n(q)$. We point
out that $q$-congruences modulo $[n]\Phi_n(q)^3$ or $\Phi_n(q)^4$
are very difficult to prove. As far as we know, the following result
\begin{equation*}
\sum_{k=0}^{(n-1)/2}[4k+1]\frac{(q;q^2)_k^4}{(q^2;q^2)_k^4} \equiv
q^{(1-n)/2}[n]+\frac{(n^2-1)(1-q)^2}{24}\,q^{(1-n)/2}[n]^3
\pmod{[n]\Phi_n(q)^3},
%\label{eq:GuoWang-r}
\end{equation*}
due to the first author and Wang~\cite{GW}, is the unique
$q$-congruence modulo $[n]\Phi_n(q)^3$ in the literature that is
completely proved. (Several similar conjectural $q$-congruences are
collected in \cite{GuoZu}.) It is natural to ask whether there is a
complete $q$-analogue of Long's supercongruence \eqref{eq:long-1}.

Inspired by the $q$-congruences in the previous sections, we shall
propose the following conjecture.
\begin{conjecture}Let $n$ be a positive odd integer. Then
\begin{equation*}
\sum_{k=0}^{n-1} [8k+1]\frac{(q;q^4)_k^6
(q^2;q^2)_{2k}}{(q^4;q^4)_k^6 (q;q^2)_{2k}} q^{4k} \equiv
\begin{cases} 0  \pmod{[n]}, &\text{if $n\equiv 1\pmod 4$,}\\[10pt]
 0 \pmod{[n]\Phi_n(q)^2}, &\text{if $n\equiv 3\pmod 4$.}
\end{cases}
\end{equation*}
\end{conjecture}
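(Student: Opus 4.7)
The plan is to apply the creative microscoping technique that pervades the paper. I would introduce an auxiliary parameter $a$ and consider
\begin{equation*}
T_n(a):=\sum_{k=0}^{n-1}[8k+1]\frac{(aq,q/a;q^4)_k(q;q^4)_k^4(q^2;q^2)_{2k}}{(aq^4,q^4/a;q^4)_k(q^4;q^4)_k^4(q;q^2)_{2k}}q^{4k},
\end{equation*}
which specializes at $a=1$ to the conjectured left-hand side. Using the factorization $(q^2;q^2)_{2k}/(q;q^2)_{2k}=(q^2;q^4)_k(q^4;q^4)_k/((q;q^4)_k(q^3;q^4)_k)$, I would recognize the summand as a very-well-poised series of $_8\phi_7$ shape with base $q^4$, leading parameter $q$, and side parameters $aq,q/a,q,q,q^2$ with argument $q^4$; the prefactor $[8k+1]$ is the residue of the auxiliary VWP pair after cancellation of half-integer powers.

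The next step is to apply an appropriate transformation formula, most likely Theorem~\ref{newtf} (the new $_{12}\phi_{11}$ transformation) with $q\mapsto q^4$ and a suitable parameter assignment, by analogy with how it is applied in Section~\ref{sec:qcong12phi11}. Upon setting $a=q^n$ or $a=q^{-n}$ the right-hand side should contain a vanishing factor of the form $(q^s;q^4)_\infty$ with $s\equiv -n\pmod 4$, which is zero precisely when $n\equiv 3\pmod 4$. This would yield
\begin{equation*}
T_n(a)\equiv 0\pmod{(1-aq^n)(a-q^n)}\quad\text{for }n\equiv 3\pmod 4.
\end{equation*}

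I would then prove $T_n(a)\equiv 0\pmod{\Phi_n(q)}$ by pairing the $k$-th summand with the $((n-1)/2-k)$-th one, using an analogue of Lemma~\ref{lem:2.1} to show the two are negatives of each other modulo $\Phi_n(q)$. The central term, when present, carries an explicit factor of $[n]$. Combining these yields $T_n(a)\equiv 0\pmod{\Phi_n(q)(1-aq^n)(a-q^n)}$ for $n\equiv 3\pmod 4$, and letting $a\to 1$ upgrades this to modulo $\Phi_n(q)^3$. For $n\equiv 1\pmod 4$ one obtains only $T_n(a)\equiv 0\pmod{\Phi_n(q)}$, hence $T_n(1)\equiv 0\pmod{\Phi_n(q)}$. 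In either case, the cyclotomic-divisor argument used in the proof of Theorem~\ref{thm:first} (evaluating the truncated sum at each primitive $d$-th root of unity for $d\mid n$, $d>1$, and invoking the parametric congruence at $n=d$) promotes the result to a congruence modulo $[n]$; combined with the $\Phi_n(q)^3$ divisibility this gives the desired $[n]\Phi_n(q)^2$ modulus when $n\equiv 3\pmod 4$.

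The main obstacle is the application of the transformation at the second step: the conjectured summand, namely $(q;q^4)_k^5(q^2;q^4)_k/((q^4;q^4)_k^5(q^3;q^4)_k)$, contains five copies of $(q;q^4)_k/(q^4;q^4)_k$ together with one copy of $(q^2;q^4)_k/(q^3;q^4)_k$, a slightly asymmetric shape compared to the $(q;q^3)_k^6(q^3;q^3)_{2k}/((q^3;q^3)_k^6(q^2;q^3)_{2k})$ structure handled in Section~\ref{sec:qcong12phi11}. Pinpointing a specialization of Theorem~\ref{newtf} that produces this exact summand, with a right-hand side that vanishes only for $n\equiv 3\pmod 4$, is the key technical hurdle. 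A secondary difficulty is executing the pairing argument for $\Phi_n(q)$, since the quadratic factor $(q^2;q^2)_{2k}/(q;q^2)_{2k}$ is not of the symmetric $(aq,q/a;q^2)_k$ type and its behaviour under $k\mapsto(n-1)/2-k$ modulo $\Phi_n(q)$ must be worked out separately, possibly requiring a lemma of the same flavour as Lemma~\ref{lem:2.1} but tailored to factors with base $q^2$ and $q^4$.
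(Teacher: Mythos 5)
There is nothing in the paper for your proposal to match: the statement you are addressing is one of the \emph{conjectures} in the concluding section, and the authors explicitly remark that the left-hand side is \emph{not} a truncated form of the ${}_{12}\phi_{11}$ transformation \eqref{12phi11} with $q\mapsto q^4$ and $a=b=c=d=q$, that even the congruence modulo $\Phi_n(q)$ is open, and that they were unable to find any parametric generalization. Your plan is therefore not a proof but a strategy sketch, and the step you yourself flag as the ``key technical hurdle'' is precisely the obstruction the authors could not overcome. Concretely: your simplification of the summand to $(q;q^4)_k^5(q^2;q^4)_k/\big((q^4;q^4)_k^5(q^3;q^4)_k\big)$ is correct, but in \eqref{12phi11} with base $q^4$ the non-well-poised piece is $(aq^4/b;q^4)_{2k}/(ab;q^4)_{2k}$, which upon splitting into $k$-indexed factorials lives on base $q^8$; no admissible choice of $a,b,c,d$ in Theorem~\ref{newtf} reproduces the base-$q^4$ ratio $(q^2;q^4)_k/(q^3;q^4)_k$ appearing here. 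So the ``suitable specialization'' with a right-hand side vanishing exactly for $n\equiv 3\pmod 4$ is not merely unidentified, it is not available from Theorem~\ref{newtf} at all, and asserting $T_n(a)\equiv 0\pmod{(1-aq^n)(a-q^n)}$ on this basis is a genuine gap.

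The remaining steps inherit this gap rather than repair it. The pairing argument modulo $\Phi_n(q)$ needs an analogue of Lemma~\ref{lem:2.1} for the factor $(q^2;q^2)_{2k}/(q;q^2)_{2k}$ under $k\mapsto (n-1)/2-k$, which you acknowledge but do not supply; note also that the sum runs to $n-1$ and the tail terms $(n-1)/2<k\leqslant n-1$ are not obviously divisible by high powers of $\Phi_n(q)$ here, so the truncation issues of Theorem~\ref{thm:first} do not transfer automatically. Finally, the root-of-unity argument promoting $\Phi_n(q)$ to $[n]$ requires the mod-$\Phi_d(q)$ statement for \emph{every} divisor $d>1$ of $n$ (divisors congruent to both $1$ and $3$ modulo $4$ occur), which is exactly the part that remains unproved. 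In short, the proposal reproduces the authors' own (unsuccessful) line of attack and does not close any of the gaps that caused them to state the result only as a conjecture.
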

Note that the left-hand side is not a truncated form of
\eqref{12phi11} with $q\mapsto q^4$ and $a=b=c=d=q$. Therefore, even
for the case modulo $\Phi_n(q)$, the above conjecture is still open.
Moreover, we cannot find any parametric generalization of the above
conjecture, although one would believe that such a generalization should
exist.

Similarly, the following conjecture seems to be true.
\begin{conjecture}Let $n>1$ be a positive odd integer. Then
\begin{equation*}
\sum_{k=0}^{n-1} [8k-1]\frac{(q^{-1};q^4)_k^6
(q^2;q^2)_{2k}}{(q^4;q^4)_k^6 (q^{-1};q^2)_{2k}} q^{8k} \equiv
\begin{cases} 0  \pmod{[n]\Phi_n(q)^2}, &\text{if $n\equiv 1\pmod 4$,}\\[10pt]
 0 \pmod{[n]}, &\text{if $n\equiv 3\pmod 4$.}
\end{cases}
\end{equation*}
\end{conjecture}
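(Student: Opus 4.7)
The plan is to follow the creative microscoping strategy used throughout the paper. Introduce an extra parameter $a$ and consider the two-parameter refinement
$$S_n(a):=\sum_{k=0}^{n-1}[8k-1]\frac{(aq^{-1},q^{-1}/a;q^4)_k\,(q^{-1};q^4)_k^4\,(q^2;q^2)_{2k}}{(aq^4,q^4/a;q^4)_k\,(q^4;q^4)_k^4\,(q^{-1};q^2)_{2k}}\,q^{8k}.$$
The goal is to prove $S_n(a)\equiv 0\pmod{\Phi_n(q)(1-aq^n)(a-q^n)}$ when $n\equiv 1\pmod4$, and $S_n(a)\equiv 0\pmod{(1-aq^n)(a-q^n)}$ when $n\equiv 3\pmod4$. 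Letting $a\to 1$, the factor $(1-aq^n)(a-q^n)$ contributes $\Phi_n(q)^2$, yielding divisibility by $\Phi_n(q)^3$ in the first case (and nothing extra beyond $\Phi_n(q)$ in the second).

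For the $(1-aq^n)(a-q^n)$ part, substitute $a=q^{\pm n}$, which makes $S_n(a)$ a terminating very-well-poised series in base $q^4$. Apply the new $_{12}\phi_{11}$ transformation (Theorem~\ref{newtf}) with $q\mapsto q^4$, $a\mapsto q^{-1-n}$, $b,c,d\mapsto q^{-1}$; the quadratic-base factor $(q^2;q^2)_{2k}/(q^{-1};q^2)_{2k}$ lines up with the $(aq^3;q^3)_{2k}/(aq;q^3)_{2k}$-type expression in the transformation after a routine rescaling. The right-hand side should carry a $q$-Pochhammer factor (such as $(q^{-n+2};q^4)_\infty$ arising from one of the two resulting $_4\phi_3$ pieces) that forces vanishing precisely in the mod~$4$ pattern asserted; this is the source of the mod~$4$ dichotomy.

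For divisibility by $\Phi_n(q)$, use the pairing $k\leftrightarrow M-k$ where $M$ is the effective upper bound: solving $4j-1\equiv 0\pmod n$ gives $M=(n+5)/4$ when $n\equiv 3\pmod 4$ and $M=(3n+5)/4$ when $n\equiv 1\pmod 4$. An analogue of Lemma~\ref{lem:2.1}, applied to $(aq^{-1};q^4)_k/(q^4/a;q^4)_k$, $(q^{-1}/a;q^4)_k/(aq^4;q^4)_k$, and the quadratic factor $(q^2;q^2)_{2k}/(q^{-1};q^2)_{2k}$, should then yield a clean cancellation of the $k$-th and $(M-k)$-th terms modulo $\Phi_n(q)$; any leftover central term contains $[8k-1]=[n]$ as a factor. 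Finally, the $[n]$ divisibility is obtained by the root-of-unity argument from the proof of Theorem~\ref{thm:first}: for each primitive $d$-th root of unity $\zeta$ with $d\mid n$ and $d>1$, the $\Phi_d(q)$-congruence just established gives $\sum_{k=0}^{d-1}c_\zeta(k)=0$, and then summing in blocks of length $d$ proves divisibility at $q=\zeta$, hence by $\prod_{d\mid n,\,d>1}\Phi_d(q)=[n]$.

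The hard part will be pinning down the precise specialization of Theorem~\ref{newtf} that produces a right-hand side vanishing with the correct mod~$4$ multiplicity. The $(q^2;q^2)_{2k}/(q^{-1};q^2)_{2k}$ factor, unlike the $(q^3;q^3)_{2k}/(q^2;q^3)_{2k}$ factor treated in Section~\ref{sec:qcong12phi11}, has an asymmetry around the boundary $k=(n\pm1)/4$, and the required cancellation of the two $_4\phi_3$ pieces on the right of \eqref{12phi11} may not be a direct substitution but rather require combining them---which is the main reason neither we nor (apparently) the authors currently know a parametric generalization of this conjecture.
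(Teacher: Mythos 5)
This statement is one of the paper's open conjectures: the authors give no proof, and they explicitly explain why the obvious route fails --- the left-hand side is \emph{not} a truncated form of \eqref{12phi11} under $q\mapsto q^4$ with all upper parameters equal (here $q^{-1}$), so ``even for the case modulo $\Phi_n(q)$, the above conjecture is still open'', and moreover they ``cannot find any parametric generalization'' of it. Your proposal runs into exactly this obstruction at its central step. With $q\mapsto q^4$, the transformation \eqref{12phi11} produces the quadratic factor $(aq^4/b;q^4)_{2k}/(ab;q^4)_{2k}$, which is a $q^4$-shifted factorial of length $2k$; the conjecture's factor $(q^2;q^2)_{2k}/(q^{-1};q^2)_{2k}$ lives in base $q^2$ (equivalently it splits as $(q^2,q^4;q^4)_k/(q^{-1},q;q^4)_k$), and no ``routine rescaling'' or choice of $a,b,c,d$ in Theorem~\ref{newtf} reconciles the two. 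Consequently the claimed evaluation of $S_n(a)$ at $a=q^{\pm n}$ via Theorem~\ref{newtf} never takes place, and the asserted vanishing factor ``such as $(q^{-n+2};q^4)_\infty$'' is not derived from any identity. Your parametric refinement $S_n(a)$ is also structurally suspect: in \eqref{12phi11} the quadratic factor depends on $a$ and $b$, so attaching $a$ only to two of the six $(q^{-1};q^4)_k$ factors while leaving $(q^2;q^2)_{2k}/(q^{-1};q^2)_{2k}$ untouched destroys the very-well-poised pattern one would need; this is presumably why the authors report being unable to find a parametric generalization at all.

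The remaining ingredients are likewise unproven sketches. No analogue of Lemma~\ref{lem:2.1} for the mixed-base factor $(q^2;q^2)_{2k}/(q^{-1};q^2)_{2k}$ is established (and your symmetry point is miscomputed: pairing $[8k-1]$ with $[8(M-k)-1]$ requires $4M-1\equiv 0\pmod n$, i.e.\ $M=(n+1)/4$ or $(3n+1)/4$, not $(n+5)/4$ or $(3n+5)/4$), so the mod $\Phi_n(q)$ cancellation is not in place; and the root-of-unity block argument for divisibility by $[n]$ can only be launched once a congruence modulo $\Phi_d(q)$ is available for every divisor $d>1$ of $n$, which is precisely what is missing. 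Since you yourself acknowledge that the key specialization ``may not be a direct substitution'' and remains unidentified, the proposal is a plausible program but not a proof; the conjecture remains open, for the reason the paper states.
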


For the $q=1$ case of \eqref{eq:6k+1-2}, much more seems to be true.
Numerical computations suggest the following result.
\begin{conjecture}\label{conj:7.5}
Let $p\equiv 1\pmod{3}$. Then
\begin{equation*}
\sum_{k=0}^{(p-1)/3} (6k+1)\frac{(\frac{1}{3})_k^4 (2k)!} {k!^4
(\frac{2}{3})_{2k}}\equiv p\pmod{p^3}.
\end{equation*}
\end{conjecture}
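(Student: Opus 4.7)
The plan is to upgrade the $q$-congruence \eqref{eq:6k+1-2} to the stronger form
\begin{equation*}
\sum_{k=0}^{(n-1)/3} [6k+1]\frac{(q;q^3)_k^4 (q^3;q^3)_{2k}}{(q^3;q^3)_k^4 (q^2;q^3)_{2k}} \equiv [n]\,q^{(1-n)/3} \pmod{[n]\Phi_n(q)^2}
\end{equation*}
for every positive integer $n \equiv 1 \pmod 3$. Specializing to $n = p$ and $q \to 1$ then immediately yields Conjecture~\ref{conj:7.5}, since $\lim_{q\to 1}[p]\Phi_p(q)^2 = p \cdot p^2 = p^3$ and $\lim_{q\to 1}[n]q^{(1-n)/3}=p$.

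Following the creative microscoping recipe of Section~\ref{sec:proofs}, I would first introduce an extra parameter $a$ and work with
\begin{equation*}
T_n(a,q) = \sum_{k=0}^{(n-1)/3} [6k+1]\frac{(aq, q/a;q^3)_k (q;q^3)_k^2 (q^3;q^3)_{2k}}{(aq^3, q^3/a;q^3)_k (q^3;q^3)_k^2 (q^2;q^3)_{2k}},
\end{equation*}
and aim to prove $T_n(a,q) \equiv [n]\,q^{(1-n)/3} \pmod{\Phi_n(q)(1-aq^n)(a-q^n)}$. The congruences modulo $1-aq^n$ and $a-q^n$ would be verified by setting $a = q^{\mp n}$: the sum terminates at $k = (n-1)/3$, and the $_{12}\phi_{11}$ transformation of Theorem~\ref{newtf} with $q \mapsto q^3$ and a suitable choice of $b,c,d$ reduces it to a single truncated $_4\phi_3$ series (the other $_4\phi_3$ summand on the right of \eqref{12phi11} vanishing because $n \equiv 1 \pmod 3$) that should evaluate via a $q$-Chu--Vandermonde sum to $[n]\,q^{(1-n)/3}$. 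The congruence modulo $\Phi_n(q)$ would be verified by a pairwise-cancellation argument: using a $q^3$-analogue of Lemma~\ref{lem:2.1} for the $(aq;q^3)_k(q/a;q^3)_k$ and $(q;q^3)_k^2$ factors, together with $q^n \equiv 1 \pmod{\Phi_n(q)}$ applied to the quotient $(q^3;q^3)_{2k}/(q^2;q^3)_{2k}$, one shows that the $k$-th and $((n-1)/3-k)$-th terms of $T_n(a,q)$ cancel modulo $\Phi_n(q)$, with any unpaired central term trivially divisible by $[n]=\Phi_n(q)$ through the $[6k+1]$ factor.

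Taking $a \to 1$ then promotes the modulus to $\Phi_n(q)^3$, and a roots-of-unity argument analogous to the one in the proof of Theorem~\ref{thm:first} (showing divisibility by $\Phi_d(q)$ for every $d \mid n$, $d > 1$) upgrades this to $[n]\Phi_n(q)^2$. The hardest step will be the evaluation at $a = q^{\mp n}$: the proof of \eqref{eq:6k+1-2} chooses a specialization where \emph{both} $_4\phi_3$ factors on the right-hand side of \eqref{12phi11} vanish, yielding only the trivial congruence $\equiv 0$; the refined specialization needed here must genuinely differ, so that exactly one $_4\phi_3$ vanishes while the other collapses to the explicit value $[n]\,q^{(1-n)/3}$. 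Identifying the correct specialization of the parameters in \eqref{12phi11}, and confirming that the surviving $_4\phi_3$ series admits a clean closed form, is where the substantive work lies; an alternative would be to search for the correct generalization $T_n(a,q)$ in the first place by matching truncations of \eqref{12phi11} against numerical data, since the choice of placement of the parameter $a$ (e.g.\ as $(aq;q^3)_k(q/a;q^3)_k$ versus $(aq^2;q^3)_k$ etc.)\ will affect whether the $_{12}\phi_{11}$ route succeeds.
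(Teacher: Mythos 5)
The statement you are trying to prove is stated in the paper only as a conjecture (Conjecture~\ref{conj:7.5}): the authors give no proof, and they explicitly remark that they ``failed to find complete $q$-analogues'' of this and the companion Conjecture~\ref{conj:7.6}, and that they ``do not know how to use the creative microscoping method to tackle them.'' Your proposal is therefore not comparable to a proof in the paper, and, more importantly, it is not itself a proof: it is an outline that defers exactly the step which the authors report being unable to carry out. The crucial unproved ingredients are (i) that your parametric sum $T_n(a,q)$ at $a=q^{\mp n}$ is \emph{exactly} a specialization of the ${}_{12}\phi_{11}$ in Theorem~\ref{newtf}, and (ii) that the surviving ${}_4\phi_3$ on the right of \eqref{12phi11} then evaluates in closed form to $[n]\,q^{(1-n)/3}$. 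Neither is established, and the first looks structurally problematic: the series in \eqref{12phi11} is very-well-poised, so the factor $(aq/b;q)_{2k}/(ab;q)_{2k}$, the leading factor $(1-aq^{2k})/(1-a)$, and all the paired upper/lower parameters are tied to the \emph{same} parameter $a$. In the derivation of \eqref{eq:6k+1-2} this parameter is already consumed by the choice $a=q^{1-n}$ (after $q\mapsto q^3$), which is precisely why that derivation only produces $\frac{1-q^{6k+1-n}}{1-q^{1-n}}$ and $(q^{3-n};q^3)_{2k}/(q^{2-n};q^3)_{2k}$, i.e.\ the desired summand only \emph{modulo} $\Phi_n(q)$, and hence only a congruence modulo $\Phi_n(q)$ with vanishing right-hand side. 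Inserting an additional microscoping parameter through $(aq,q/a;q^3)_k/(aq^3,q^3/a;q^3)_k$ while keeping $[6k+1]$ and $(q^3;q^3)_{2k}/(q^2;q^3)_{2k}$ fixed breaks the very-well-poised pattern, so there is no evident specialization of \eqref{12phi11} (or of Corollary~\ref{cortf}) whose left-hand side equals $T_n(q^{\mp n},q)$ identically; you acknowledge this yourself by saying the ``correct specialization'' still has to be identified.

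Two further gaps: your mod-$\Phi_n(q)$ step asserts a pairwise cancellation $k\leftrightarrow(n-1)/3-k$ over the third-length range, but the reflection property of Lemma~\ref{lem:2.1} is stated for half-range sums and for pure products of factors $(aq;q^2)_k/(q^2/a;q^2)_k$; the mixed factor $(q^3;q^3)_{2k}/(q^2;q^3)_{2k}$ has no verified symmetry of this kind, so even the $\Phi_n(q)$-divisibility of $T_n(a,q)$ is unproved (the paper only proves the weaker statement \eqref{eq:6k+1-2}, by a different mechanism). Finally, the proposed $q$-congruence with right-hand side $[n]\,q^{(1-n)/3}$ modulo $[n]\Phi_n(q)^2$ is itself a guess, not checked even numerically; if that guessed $q$-analogue is wrong in form (for instance in the power of $q$ in the summand or on the right-hand side), the whole plan collapses. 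As it stands, the proposal reduces an open conjecture to a sharper, equally open $q$-conjecture plus an unspecified hypergeometric evaluation, so there is a genuine gap rather than a proof.
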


We also have a similar conjecture related to \eqref{eq:6k-1-2}.
\begin{conjecture}\label{conj:7.6}
Let $p\equiv 2\pmod{3}$. Then
\begin{equation*}
\sum_{k=0}^{(p+1)/3} (6k-1)\frac{(-\frac{1}{3})_k^4 (2k)!} {k!^4
(-\frac{2}{3})_{2k}}\equiv p\pmod{p^3}.
\end{equation*}
\end{conjecture}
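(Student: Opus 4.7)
My plan is to prove Conjecture \ref{conj:7.6} by first establishing a stronger $q$-supercongruence that sharpens \eqref{eq:6k-1-2} from modulus $\Phi_n(q)$ to modulus $[n]\Phi_n(q)^2$, and then specializing $q\to 1$ with $n=p$. Since $\Phi_p(1)=p$, any $q$-supercongruence modulo $[n]\Phi_n(q)^2$ descends under $q=1$, $n=p$ to a relation modulo $p^3$, matching the target.

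The first step is to introduce an auxiliary parameter $a$ and consider
$$
F_n(a,q) := \sum_{k=0}^{(n+1)/3}[6k-1]\frac{(aq^{-1},q^{-1}/a;q^3)_k(q^{-1};q^3)_k^2(q^3;q^3)_{2k}}{(aq^3,q^3/a;q^3)_k(q^3;q^3)_k^2(q^{-2};q^3)_{2k}}q^{\alpha k},
$$
with an exponent $\alpha$ to be determined. The goal is to prove a congruence of the form
$$
F_n(a,q)\equiv [n]\,R_n(a,q)\pmod{\Phi_n(q)(1-aq^n)(a-q^n)},
$$
for some explicit $R_n(a,q)$ satisfying $R_n(1,1)=1$. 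To handle the modulus $(1-aq^n)(a-q^n)$, I would replace $q\mapsto q^3$ in the $_{12}\phi_{11}$ transformation \eqref{12phi11} and substitute $a\mapsto aq^{-1}$ and $b=c=d=q^{-1}$. Setting $a=q^{\pm n}$ then truncates the left-hand side at $k=(n+1)/3$ and produces precisely $F_n(q^{\pm n},q)$, while the right-hand side of \eqref{12phi11} should collapse in closed form to $[n]R_n(q^{\pm n},q)$.

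To upgrade the modulus by an extra factor of $\Phi_n(q)$, I would establish a base-$q^3$ analogue of Lemma \ref{lem:2.1}, namely
$$
\frac{(aq^{-1};q^3)_{(n+1)/3-k}}{(q^3/a;q^3)_{(n+1)/3-k}} \equiv (-a)^{(n+1)/3-2k}\frac{(aq^{-1};q^3)_k}{(q^3/a;q^3)_k}\,q^{\Xi(n,k)}\pmod{\Phi_n(q)},
$$
together with similar reversal relations for the remaining $q$-shifted factorials in $F_n(a,q)$, so that the $k$-th and $((n+1)/3-k)$-th summands cancel modulo $\Phi_n(q)$. For $n\equiv 5\pmod 6$ the summation contains an odd number of terms and a single unpaired summand remains at $k=(n+1)/6$; there the factor $[6k-1]$ reduces to $[n]$, supplying the needed divisibility, while for $n\equiv 2\pmod 6$ the pairing is fixed-point-free. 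Combined with the earlier evaluation via the pairwise coprimality of $\Phi_n(q)$, $1-aq^n$ and $a-q^n$ in $\mathbb Z[q]$, this gives the asserted congruence modulo $\Phi_n(q)(1-aq^n)(a-q^n)$. Letting $a\to 1$ replaces $(1-aq^n)(a-q^n)$ by $(1-q^n)^2$, which is divisible by $\Phi_n(q)^2$, yielding a congruence modulo $\Phi_n(q)^3$. A further root-of-unity argument at all $d$-th primitive roots with $d\mid n$, $d>1$, exactly as in the concluding step of the proof of Theorem \ref{thm:first}, then gives divisibility by $[n]$, and setting $q=1$ with $n=p$ concludes the proof.

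The principal obstacle is the closed-form evaluation of the right-hand side of \eqref{12phi11} at $a=q^{\pm n}$: the transformation expresses the $_{12}\phi_{11}$ sum as a combination of two $_4\phi_3$ series, and the correct choice of $\alpha$ together with the remaining specialization parameters must make this combination collapse to $[n]R_n(q^{\pm n},q)$ with $R_n(1,1)=1$, rather than to a still-nontrivial hypergeometric expression; this is the delicate matching step. A secondary difficulty is the parity distinction between $n\equiv 2$ and $n\equiv 5\pmod 6$, which requires treating the boundary term of the pairing separately as indicated. Should \eqref{12phi11} not yield a suitable $R_n$ directly, a fallback is to run the same microscoping program through the cubic transformation \eqref{eq:1-acq4k} of Gasper and Rahman, which has been effective for the closely related $q$-supercongruences appearing in Section \ref{sec:cubic}.
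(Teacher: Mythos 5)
This statement is not proved in the paper at all: it is stated as Conjecture~\ref{conj:7.6}, and the authors explicitly remark that they ``failed to find complete $q$-analogues'' of it and ``do not know how to use the creative microscoping method to tackle'' it. Your proposal is therefore not comparable to a proof in the paper; it is a programme, and its pivotal step is exactly the point where the known methods break down. Concretely, the sum $F_n(a,q)$ you introduce is \emph{not} a specialization of the transformation \eqref{12phi11}. In \eqref{12phi11} with $q\mapsto q^3$, $b=c=d=q^{-1}$ and $a\mapsto aq^{-1}$, the very-well-poised structure forces the numerator parameters $a,\,ab/c,\,ab/d$ to coalesce into \emph{three} copies of $aq^{-1}$ (and three copies of $aq^{3}$ below), and the quotient $(aq^3/b;q^3)_{2k}/(ab;q^3)_{2k}$ becomes $a$-dependent, namely $(aq^{3};q^3)_{2k}/(aq^{-2};q^3)_{2k}$. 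Hence putting $a=q^{\pm n}$ does not produce your $F_n(q^{\pm n},q)$, whose defining features are the symmetric pair $(aq^{-1},q^{-1}/a;q^3)_k$, $(aq^3,q^3/a;q^3)_k$ and the $a$-\emph{free} ratio $(q^3;q^3)_{2k}/(q^{-2};q^3)_{2k}$. Conversely, if you insist that the ratio coming from \eqref{12phi11} equal $(q^3;q^3)_{2k}/(q^{-2};q^3)_{2k}$, you are forced to take $b=a$ and $a^2=q^{-2}$, which pins $a$ down and leaves no free microscoping parameter at all. This is precisely the structural obstruction the authors allude to; the congruence \eqref{eq:6k-1-2} itself is only obtained modulo $\Phi_n(q)$ via the $n$-dependent substitution $a=q^{-1-n}$ as in \eqref{eq:6k-1-n}, which kills the right-hand side rather than evaluating it.

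Beyond this, the remaining ingredients are also only asserted, not established: the ``closed-form collapse'' of the two nonterminating ${}_4\phi_3$ series on the right of \eqref{12phi11} to $[n]R_n$ with $R_n(1,1)=1$ is exactly the delicate matching you flag and do not carry out; the base-$q^3$ reversal lemma replacing Lemma~\ref{lem:2.1} is nontrivial here because of the $(q^3;q^3)_{2k}/(q^{-2};q^3)_{2k}$ factor, whose behaviour under $k\mapsto (n+1)/3-k$ you never verify; and without a parametric congruence modulo $(1-aq^n)(a-q^n)$ the final $a\to1$ and root-of-unity steps have nothing to act on. So the proposal, as it stands, does not prove the conjecture, and its central reduction to \eqref{12phi11} would fail if executed literally; the fallback via the cubic transformation \eqref{eq:1-acq4k} faces the same unresolved matching problem.
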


Unfortunately, we failed to find complete $q$-analogues of the above
two conjectures. In particular, we do not know how to use the
creative microscoping method to tackle them.

In \cite[Conj.~5.4]{Guo2018} the first author has made the
following conjecture.
\begin{conjecture}\label{conj:q-Hamme}
Let $n$ and $r$ be positive integers. Then
\begin{subequations}
\begin{align}
\sum_{k=0}^n
(-1)^{k}q^{k^2+(r-2)k}[4k+1]\begin{bmatrix}2k\\k\end{bmatrix}^{2r-1}
(-q^{k+1};q)_{n-k}^{4r-2} \equiv 0 \pmod{
(1+q^{n})^{2r-2}[2n+1]\begin{bmatrix}2n\\n\end{bmatrix}},
\label{eq:sub-a} \\
\intertext{and} \sum_{k=0}^n
q^{(r-2)k}[4k+1]\begin{bmatrix}2k\\k\end{bmatrix}^{2r}
(-q^{k+1};q)_{n-k}^{4r} \equiv 0 \pmod{
(1+q^{n})^{2r-1}[2n+1]\begin{bmatrix}2k\\k\end{bmatrix}}.
\label{eq:sub-b}
\end{align}
\end{subequations}
\end{conjecture}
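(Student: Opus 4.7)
The plan is to first reduce each sum to a clean very-well-poised form via \eqref{eq:q-bino-1/2}: writing $\begin{bmatrix}2k\\k\end{bmatrix}=(-q;q)_k^2(q;q^2)_k/(q^2;q^2)_k$ together with $(-q^{k+1};q)_{n-k}=(-q;q)_n/(-q;q)_k$, the $(-q;q)_k$-factors from the binomial coefficients and from the tail cancel exactly, so that the left-hand sides of \eqref{eq:sub-a} and \eqref{eq:sub-b} factor as $(-q;q)_n^{4r-2}\,T_1$ and $(-q;q)_n^{4r}\,T_2$ respectively, with
\[
T_1=\sum_{k=0}^{n}(-1)^{k}q^{k^2+(r-2)k}[4k+1]\,\frac{(q;q^2)_k^{2r-1}}{(q^2;q^2)_k^{2r-1}},\qquad
T_2=\sum_{k=0}^{n}q^{(r-2)k}[4k+1]\,\frac{(q;q^2)_k^{2r}}{(q^2;q^2)_k^{2r}}.
\]
Since $(-q;q)_n$ already contains $(1+q^n)$ as a factor $4r-2$ (respectively $4r$) times, the required $(1+q^n)^{2r-2}$ (respectively $(1+q^n)^{2r-1}$) is absorbed into the extracted prefactor; moreover $(-q;q)_n$ is coprime to the odd polynomial $[2n+1]$. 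The reduction is therefore to establish that $T_1$ and $T_2$ are divisible by $[2n+1]$ together with the cyclotomic factors of $\begin{bmatrix}2n\\n\end{bmatrix}$ not already contained in the prefactor.

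The second step would be to embed $T_1$ and $T_2$ in an $(r-1)$-parametric creative-microscoping framework in the spirit of Theorem~\ref{thm:2.2}: for $i=1,\dots,r-1$ (and, in the \eqref{eq:sub-b}-case, one further parameter $b$) introduce auxiliary pairs $(a_i,1/a_i)$ and replace the corresponding copies of $(q;q^2)_k/(q^2;q^2)_k$ by $(a_iq,q/a_i;q^2)_k/(a_iq^2,q^2/a_i;q^2)_k$. The specialisation $a_i=q^{\pm(2n+1)}$ collapses the parametric sum to a terminating very-well-poised ${}_{8}\phi_7$, which Watson's formula \eqref{eq:8phi7} transforms into a terminating ${}_4\phi_3$ whose prefactor has $[2n+1]$ as an explicit factor. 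Each parameter thus contributes a polynomial divisor $(1-a_iq^{2n+1})(a_i-q^{2n+1})$; letting all $a_i\to 1$ yields $(1-q^{2n+1})^{2(r-1)}$, whose cyclotomic decomposition supplies the remaining cyclotomic content of $\begin{bmatrix}2n\\n\end{bmatrix}$.

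Divisibility modulo each individual cyclotomic factor $\Phi_d(q)$ of $[2n+1]$ would then be obtained by the symmetry argument in the proofs of \eqref{eq:4k+1,both} and Theorem~\ref{thm:4k-1-4th}: for each $d\mid 2n+1$ with $d>1$ and each primitive $d$-th root of unity $\zeta$, Lemma~\ref{lem:2.1} (at base $q^2$) pairs the $k$-th and $((d-1)/2-k)$-th summands and shows that their sum vanishes at $q=\zeta$; the passage from individual $\Phi_d(q)$'s to $[2n+1]=\prod_{d\mid 2n+1,\,d>1}\Phi_d(q)$ proceeds verbatim as in the proof of Theorem~\ref{thm:first}. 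An analogous pairing using roots whose order divides $2n$ but not $n$ would then supply the missing cyclotomic factors of $\begin{bmatrix}2n\\n\end{bmatrix}$.

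The hard part---and the reason the conjecture has resisted a proof despite its clean structure---is matching the precise power $(1+q^n)^{2r-2+s}$ with the power that $r-1$ applications of creative microscoping actually deliver. A single specialisation $a\to 1$ in Theorem~\ref{thm:2.2} typically yields a divisor of the form $\Phi_n(q)^2$, whereas the target $(1+q^n)^{2r-2}$ is built from a different family of cyclotomic polynomials $\Phi_{2d}(q)$ with $d\mid n$ odd. Reconciling these would likely require either a genuinely new very-well-poised transformation---plausibly a higher-order analogue of Theorem~\ref{newtf}---whose terminating specialisation furnishes $(1+q^n)$-divisibility directly, or a delicate $r$-fold induction in which each step preserves $\Phi_{2d}(q)$-divisibility separately from $\Phi_d(q)$-divisibility, a microscoping dictionary which does not yet appear in the literature.
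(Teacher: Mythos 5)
You should first note the status of this statement in the paper: it is reproduced there as an \emph{open conjecture} (quoted from an earlier paper of the first author), and the paper does not prove it --- it only proves the weaker assertion that \eqref{eq:sub-a} and \eqref{eq:sub-b} hold modulo $[n+1][2n+1]$. The sound part of your proposal is essentially that weaker argument in disguise. Your opening reduction is correct and is the paper's own reduction run in reverse: by \eqref{eq:q-bino-1/2} and $(-q^{k+1};q)_{n-k}=(-q;q)_n/(-q;q)_k$ the two left-hand sides equal $(-q;q)_n^{4r-2}T_1$ and $(-q;q)_n^{4r}T_2$, and the divisibility of $T_1,T_2$ by $[2n+1]$ obtained from the pairing of the $k$-th and $((d-1)/2-k)$-th terms via Lemma~\ref{lem:2.1} together with the root-of-unity argument of \eqref{eq:4k+1} and \eqref{eq:4k+1,b} is exactly how the paper gets its partial theorem. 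Up to that point you have recovered the paper's result, not the conjecture.

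Beyond that the proposal has concrete gaps, one of them an outright error. First, Watson's formula \eqref{eq:8phi7} transforms a very-well-poised ${}_8\phi_7$; after inserting $r-1$ parameter pairs $(a_i,1/a_i)$ (plus $b$) the series has far too many parameters for \eqref{eq:8phi7} to apply, so for general $r$ the proposed microscoping step has nothing to run on --- this is precisely why only $r=1,2$ are known, and even $r=2$ required different techniques. Second, and more seriously, the claim that letting all $a_i\to 1$, i.e.\ the factor $(1-q^{2n+1})^{2(r-1)}$, ``supplies the remaining cyclotomic content of $\begin{bmatrix}2n\\n\end{bmatrix}$'' is false: $1-q^{2n+1}$ is the product of the $\Phi_d(q)$ with $d\mid 2n+1$, and for every such $d$ one has $\lfloor 2n/d\rfloor=2\lfloor n/d\rfloor$, so \emph{none} of these $\Phi_d(q)$ divides $\begin{bmatrix}2n\\n\end{bmatrix}$; the factors actually needed are the $\Phi_d(q)$ with fractional part $\{n/d\}\geqslant\tfrac12$, a family disjoint from the divisors of $2n+1$. (In fact the even-order part of the modulus, including all powers of $1+q^n$, is already contained in the extracted prefactor $(-q;q)_n^{4r-2}$ resp.\ $(-q;q)_n^{4r}$, so the genuine remaining task is the odd-order cyclotomic content of $\begin{bmatrix}2n\\n\end{bmatrix}$ coprime to $[2n+1]$ --- which your microscoping step does not touch, and for which your proposed ``analogous pairing at roots of order dividing $2n$ but not $n$'' is both unsupported by Lemma~\ref{lem:2.1}, stated only for odd order, and aimed at the wrong factors.) Finally, as you concede yourself, the powers $(1+q^n)^{2r-2}$ and $(1+q^n)^{2r-1}$ are never matched by your scheme. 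So the proposal, where it is correct, reproduces the paper's weaker theorem, and the conjecture itself remains unproved.
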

Note that the congruences \eqref{eq:sub-a} for $r=1,2$ and
\eqref{eq:sub-b} for $r=1$ have been proved by the first author
\cite{Guo2018} himself, and the congruence \eqref{eq:sub-b} for
$r=2$ has been established by the first author and Wang \cite{GW}.

In this section, we shall prove the following weaker form of the
above conjecture.
\begin{theorem}The congruences \eqref{eq:sub-a} and \eqref{eq:sub-b}
are true modulo $[n+1][2n+1]$.
\end{theorem}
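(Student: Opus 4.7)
My plan is to prove the divisibility one cyclotomic factor at a time, after a simplifying factorization of the summand. Using $(-q^{k+1};q)_{n-k}=(-q;q)_n/(-q;q)_k$ together with identity \eqref{eq:q-bino-1/2}, both left-hand sides factor as
\begin{equation*}
F^{(a)}_r(q):=\text{LHS of \eqref{eq:sub-a}}=(-q;q)_n^{4r-2}\,U^{(a)}_r(q),\qquad F^{(b)}_r(q):=\text{LHS of \eqref{eq:sub-b}}=(-q;q)_n^{4r}\,U^{(b)}_r(q),
\end{equation*}
where $U^{(a)}_r(q)=\sum_{k=0}^n(-1)^kq^{k^2+(r-2)k}[4k+1](q;q^2)_k^{2r-1}/(q^2;q^2)_k^{2r-1}$ and $U^{(b)}_r(q)=\sum_{k=0}^nq^{(r-2)k}[4k+1](q;q^2)_k^{2r}/(q^2;q^2)_k^{2r}$. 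Since $\gcd(n+1,2n+1)=1$, the polynomials $[n+1]$ and $[2n+1]$ are coprime in $\mathbb{Z}[q]$, so it suffices to show $\Phi_d(q)\mid F^{(a/b)}_r$ for every divisor $d>1$ of $(n+1)(2n+1)$. I would split this into the odd- and even-$d$ cases.

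For odd $d$ (which includes every divisor $d>1$ of $2n+1$ and the odd divisors of $n+1$), the prefactor $(-q;q)_n$ is coprime to $\Phi_d$, so one only needs $\Phi_d\mid U^{(a/b)}_r$. Here I would use the pairing idea that recurs throughout Sections~\ref{sec:proofs} and \ref{sec:more}: Lemma~\ref{lem:2.1} at $a=1$ (with $n\rightsquigarrow d$), together with $[2d-4k-1]\equiv-q^{-4k-1}[4k+1]\pmod{\Phi_d}$, gives the pairwise cancellation $c_q((d-1)/2-k)+c_q(k)\equiv0\pmod{\Phi_d}$ for the summand $c_q(k)$ (the leftover $q$-power collapses to $1$ because the relevant exponent satisfies $2X\equiv0\pmod d$ and $d$ is odd). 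Any unpaired central term carries a factor $[d]\equiv 0$, so $\sum_{k=0}^{(d-1)/2}c_q(k)\equiv 0\pmod{\Phi_d}$. A short asymptotic analysis at a primitive $d$-th root $\zeta$ yields a quasi-periodicity $c_\zeta(\ell d+k_0)=\lambda_\ell c_\zeta(k_0)$ for $0\le k_0\le(d-1)/2$, with $c_\zeta(\ell d+k_0)=0$ for $(d+1)/2\le k_0\le d-1$, where the leading coefficient $\lambda_\ell$ comes from cancelling $\ell$ simple zeros in both $(q;q^2)_{\ell d+k_0}$ and $(q^2;q^2)_{\ell d+k_0}$ and equals $(-1)^\ell(\tbinom{2\ell}{\ell}/4^\ell)^{2r-1}$ in (a), $(\tbinom{2\ell}{\ell}/4^\ell)^{2r}$ in (b). For the two possible residues $n\equiv d-1\pmod d$ (when $d\mid n+1$) or $n\equiv(d-1)/2\pmod d$ (when $d\mid 2n+1$), the sums telescope to $U^{(a/b)}_r(\zeta)=0$.

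For even $d$ (necessarily $d\mid n+1$) the divisibility is inherited from the prefactor. The count of $k\in\{1,\dots,n\}$ that are odd multiples of $d/2$ shows $\Phi_d\mid(-q;q)_n$ with multiplicity exactly $(n+1)/d$, so $\Phi_d$ appears in $(-q;q)_n^{4r-2}$ with multiplicity $(4r-2)(n+1)/d$. On the other hand, the worst possible pole of $U^{(a)}_r$ at $\Phi_d$ (coming from $(q^2;q^2)_k^{-(2r-1)}$) has order at most $(2r-1)\lfloor 2n/d\rfloor=(2r-1)(2(n+1)/d-1)$. Their difference is $2r-1\ge 1$, forcing $\Phi_d\mid F^{(a)}_r$; the argument for $F^{(b)}_r$ is analogous with $2r$ in place of $2r-1$. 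Combining all contributions and invoking the coprimality of $[n+1]$ and $[2n+1]$ gives $[n+1][2n+1]\mid F^{(a/b)}_r$.

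The main obstacle lies in the odd-$d$ case: while the pairwise cancellation modulo $\Phi_d$ is routine, establishing the quasi-periodicity of $c_\zeta$ under $k\mapsto k+d$ is the crux. One has to isolate the $\ell$ first-order zeros of both $(q;q^2)_{\ell d+k_0}$ and $(q^2;q^2)_{\ell d+k_0}$ at $q=\zeta$ and verify that the ratio of their leading coefficients is exactly $\binom{2\ell}{\ell}/4^\ell$. Once that computation is in hand, the remainder—cancellation in pairs, elimination of the central term via the factor $[d]$, telescoping over periods, and the coprimality of $[n+1]$ and $[2n+1]$—is mechanical.
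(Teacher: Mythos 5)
Your proposal is correct, and its core coincides with the paper's own argument: factor the summands via $(-q^{k+1};q)_{n-k}=(-q;q)_n/(-q;q)_k$ and \eqref{eq:q-bino-1/2}, prove $\sum_{k=0}^{(d-1)/2}c_q(k)\equiv 0\pmod{\Phi_d(q)}$ by the Lemma~\ref{lem:2.1} pairing (with the unpaired central term killed by the factor $[d]$), and then reach all of $[2n+1]$ and the odd part of $[n+1]$ by evaluating at primitive $d$-th roots of unity, using the vanishing of the upper-half terms and the quasi-periodicity $c_\zeta(\ell d+k_0)=c_\zeta(\ell d)\,c_\zeta(k_0)$ to telescope over complete periods --- this is precisely the paper's ``replace $n$ by $n+1$ or $2n+1$'' step carried out divisor by divisor (your explicit constant $\lambda_\ell=(-1)^\ell\bigl(\tbinom{2\ell}{\ell}/4^\ell\bigr)^{2r-1}$, resp.\ the $2r$-th power for \eqref{eq:sub-b}, is correct but not actually needed; only its independence of $k_0$ matters). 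Where you genuinely go beyond the paper is the even cyclotomic divisors of $n+1$, which occur exactly when $n$ is odd: the paper's auxiliary congruences are proved only for odd moduli, so the substitution $n\mapsto n+1$ literally covers only even $n$, whereas your zero/pole count --- multiplicity $(4r-2)(n+1)/d$ of $\Phi_d$ in $(-q;q)_n^{4r-2}$ against a pole of order at most $(2r-1)\bigl(2(n+1)/d-1\bigr)$ in the reduced sum, and the analogous count with $2r$ for \eqref{eq:sub-b} --- disposes of this case cleanly and is, as far as I can check, correct; it buys a fully explicit treatment of a case the paper's two-line conclusion leaves implicit. One phrasing fix: it is neither necessary nor true in general that $\Phi_d$ divides the left-hand sides for every divisor $d>1$ of the product $(n+1)(2n+1)$; since $\gcd(n+1,2n+1)=1$, what is needed --- and what your case analysis in fact delivers, as your telescoping uses $n\equiv d-1$ or $n\equiv(d-1)/2\pmod d$ --- is divisibility by $\Phi_d$ for every $d>1$ dividing $n+1$ or dividing $2n+1$.
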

\begin{proof}
The proof is similar to the second step of that of
Theorem~\ref{thm:2.2}. By Lemma~\ref{lem:2.1}, it is easy to see
that for odd $n$ we have
\begin{align*}
&(-1)^{(n-1)/2-k}q^{((n-1)/2-k)^2+(r-2)((n-1)/2-k)}
\frac{[2n-4k-1](q;q^2)_{(n-1)/2-k}^{2r-1}}
{(q^2;q^2)_{(n-1)/2-k}^{2r-1}}  \notag\\[5pt]
&\quad\equiv -(-1)^k q^{k^2+(r-2)k}[4k+1]\frac{(q;q^2)_k^{2r-1}}
{(q^2;q^2)_k^{2r-1}} \pmod{\Phi_n(q)},
\end{align*}
and
\begin{align*}
&q^{(r-2)((n-1)/2-k)}\frac{[2n-4k-1](q;q^2)_{(n-1)/2-k}^{2r}}
{(q^2;q^2)_{(n-1)/2-k}^{2r}}  \notag\\[5pt]
&\quad\equiv -q^{(r-2)k}[4k+1]\frac{(q;q^2)_k^{2r}}
{(q^2;q^2)_k^{2r}} \pmod{\Phi_n(q)}.
\end{align*}
This proves that, for odd $n$,
\begin{equation*}
\sum_{k=0}^{(n-1)/2}(-1)^k
q^{k^2+(r-2)k}[4k+1]\frac{(q;q^2)_k^{2r-1}}{(q^2;q^2)_k^{2r-1}}
\equiv 0\pmod{\Phi_n(q)},
\end{equation*}
and
\begin{equation*}
\sum_{k=0}^{(n-1)/2}q^{(r-2)k}[4k+1]\frac{(q;q^2)_k^{2r}}{(q^2;q^2)_k^{2r}}
\equiv 0\pmod{\Phi_n(q)}.
\end{equation*}

Similar to the proofs of \eqref{eq:4k+1} and \eqref{eq:4k+1,b}, we
can further show that
\begin{equation*}
\sum_{k=0}^{M}(-1)^k
q^{k^2+(r-2)k}[4k+1]\frac{(q;q^2)_k^{2r-1}}{(q^2;q^2)_k^{2r-1}}
\equiv 0\pmod{[n]},
\end{equation*}
and
\begin{equation*}
\sum_{k=0}^{M}q^{(r-2)k}[4k+1]\frac{(q;q^2)_k^{2r}}{(q^2;q^2)_k^{2r}}
\equiv 0\pmod{[n]},
\end{equation*}
where $M=n-1$ or $(n-1)/2$. It is easy to see that the polynomials
$[n+1]$ and $[2n+1]$ are relatively prime. The proof then follows
from the above two congruences by replacing $n$ with $n+1$ or $2n+1$
and using the relation \eqref{eq:q-bino-1/2}.
\end{proof}

Finally, we consider the general very-well-poised
${}_{2d}\phi_{2d-1}$ series (which satisfies Slater's transformation
\cite[$r=d$ and $b_{2r}=a$ in Eq.~(5.5.2)]{GR}) where we replace $q$
by $q^d$ and take all upper parameters to be $q$. Then the following
further generalization of Conjecture \ref{conj:11.2} appears to be true.

\begin{conjecture}\label{conj:gs-1}
Let $d\geqslant 3$ and $n$ be positive integers
with $n\equiv -1\pmod{d}$. Then
\begin{equation*}
\sum_{k=0}^{M}[2dk+1]
\frac{(q;q^d)_k^{2d}}{(q^d;q^d)_k^{2d}}q^{d(d-2)k} \equiv 0
\pmod{[n]\Phi_{n}(q)^3},
\end{equation*}
where $M=((d-1)n-1)/d$ or $n-1$.
\end{conjecture}

Similarly, we consider the general very-well-poised
${}_{2d}\phi_{2d-1}$ series where we replace $q$ by $q^d$ and take
all upper parameters to be $q^{-1}$. Then the following
generalization of Conjecture \ref{conj:11.3} appears to be true.

\begin{conjecture}\label{conj:gs-2}
Let $d\geqslant 3$ and $n>1$ be positive integers
with $n\equiv 1\pmod{d}$. Then
\begin{equation*}
\sum_{k=0}^{M}[2dk-1]
\frac{(q^{-1};q^d)_k^{2d}}{(q^d;q^d)_k^{2d}}q^{d^2 k} \equiv 0
\pmod{[n]\Phi_{n}(q)^3},
\end{equation*}
where $M=((d-1)n+1)/d$ or $n-1$.
\end{conjecture}

\noindent{\it Remark.}
Since the submission of the original version of
this paper (which also appeared as a preprint on
the arXiv) and the present final version, relevant
developments have taken place. In particular, meanwhile
some of our conjectures were confirmed by other authors.
Recently, Wang~\cite{CWang} proved
Conjecture~\ref{conj:3k-1} and he further extended it to the modulus $p^4$ case.
The first author and Zudilin~\cite{GuoZu2} proved Conjecture~\ref{conj:GuoZu},
Jana and Kalita~\cite{JK} proved
Conjectures \ref{conj:jk-1}, \ref{conj:7.5} and \ref{conj:7.6},
Ni and Pan~\cite{NP} proved Conjecture~\ref{conj:q-Hamme},
and the authors themselves~\cite{GS2,GS4} confirmed
Conjectures~\ref{conj:4k-1}, \ref{conj:gs-1} and \ref{conj:gs-2}.
Moreover, Liu~\cite{Liu2} generalized Conjectures~\ref{conj:7.5}
and \ref{conj:7.6} to the modulus $p^4$ case.

%%%%%%%%%%%%%%%%%%%%%%%%%%%%%%%%%%%%%%%%%%%%%%%%%%%%%%%%%%%%%%%%%%%%%%%%%%%%%%%%%%%%%%%%%%%%%%%%%%%%%%%%%%%%%%%%%%%%%%%%%%%%%%%%%%%%%%%%%%%%%%%%%%%%%%%%%%%%%%%%%%%%%%%%%%%%%%%%%%%%%%%%%%%%%%%%%%%%%%%%%%%%
\begin{appendix}
\section{A new nonterminating ${}_{12}\phi_{11}$
transformation and linearization of the continuous
$q$-ultraspherical polynomials}\label{sec:newtr}
Our starting point for deriving the nonterminating transformation formula in
Theorem~\ref{newtf} is the following transformation formula between two
terminating very-well-poised ${}_{14}\phi_{13}$ series from
\cite[Thm.~4.1]{LSW}.
\begin{align}\label{14phi13}
&\sum_{k=0}^n\frac{1-aq^{2k}}{1-a}
\frac{(a,b,c,d,ab/c,ab/d,abq^n,q^{-n};q)_k}
{(q,aq/b,aq/c,aq/d,cq/b,dq/b,q^{1-n}/b,aq^{n+1};q)_k}
\frac{(aq/b;q)_{2k}}{(ab;q)_{2k}}\Big(\frac qb\Big)^{2k}\notag\\
&=\frac{(aq,\hat aq/c,\hat aq/d,aq/cd;q)_n}
{(\hat aq,aq/c,aq/d,\hat aq/cd;q)_n}\notag\allowdisplaybreaks[0]\\
&\quad\;\times\sum_{k=0}^n\frac{1-\hat aq^{2k}}{1-\hat a}
\frac{(\hat a,b,c,d,\hat ab/c,\hat ab/d,\hat abq^n,q^{-n};q)_k}
{(q,\hat aq/b,\hat aq/c,\hat aq/d,cq/b,dq/b,q^{1-n}/b,\hat aq^{n+1};q)_k}
\frac{(\hat aq/b;q)_{2k}}{(\hat ab;q)_{2k}}\Big(\frac qb\Big)^{2k},
\end{align}
where $\hat a=q^{-n}cd/ab$.

We now have the following new transformation for a nonterminating very-well-poised
${}_{12}\phi_{11}$ series into two multiples of nonterminating
${}_4\phi_3$ series.
\begin{theorem}\label{newtf}
We have
\begin{align}\label{12phi11}
&\sum_{k=0}^\infty\frac{1-aq^{2k}}{1-a}
\frac{(a,b,c,d,ab/c,ab/d;q)_k}
{(q,aq/b,aq/c,aq/d,cq/b,dq/b;q)_k}
\frac{(aq/b;q)_{2k}}{(ab;q)_{2k}}\Big(\frac qb\Big)^{k}\notag\\
&=\frac{(aq,ab/c,ab/d,aq/cd;q)_\infty}
{(ab,aq/c,aq/d,ab/cd;q)_\infty}\sum_{k=0}^\infty
\frac{(b,c,d,cd/a;q)_k}
{(q,cq/b,dq/b,cdq/ab;q)_k}\Big(\frac qb\Big)^{2k}\notag\allowdisplaybreaks[0]\\
&\quad\;+
\frac{(aq,c,d,cdq/ab^2;q)_\infty}
{(ab,cq/b,dq/b,cd/ab;q)_\infty}\sum_{k=0}^\infty
\frac{(b,ab/c,ab/d,ab^2/cd;q)_k}
{(q,aq/c,aq/d,abq/cd;q)_k}\Big(\frac qb\Big)^{2k},
\end{align}
where $|q/b|<1$.
\end{theorem}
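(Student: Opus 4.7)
My plan is to derive \eqref{12phi11} as the $n\to\infty$ limit of the terminating transformation \eqref{14phi13} from \cite{LSW}, treating the two sides separately and exploiting the convergence hypothesis $|q/b|<1$ to justify termwise passage to the limit on the left.

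For the left-hand side, the only parameters depending on $n$ are $q^{-n},\,abq^n$ in the numerator and $q^{1-n}/b,\,aq^{n+1}$ in the denominator, together with the argument $(q/b)^{2k}$. A routine asymptotic computation using $|q|<1$ gives
\begin{align*}
(q^{-n};q)_k&\sim(-1)^kq^{-nk+\binom{k}{2}},&
(q^{1-n}/b;q)_k&\sim(-1/b)^kq^{(1-n)k+\binom{k}{2}},\\
(abq^n;q)_k&\to 1,&(aq^{n+1};q)_k&\to 1,
\end{align*}
so the full $n$-dependent block collapses to $b^kq^{-k}\cdot(q/b)^{2k}=(q/b)^k$, which is exactly the argument of the ${}_{12}\phi_{11}$ on the left of \eqref{12phi11}. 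The remaining summand factors are independent of $n$, so termwise convergence is immediate, and the hypothesis $|q/b|<1$ plus standard geometric majorization of the summands (the extra $(aq/b;q)_{2k}/(ab;q)_{2k}$ factor is bounded in $k$) allows one to interchange limit and summation.

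The real work is the right-hand side, where $\hat a=q^{-n}cd/(ab)\to\infty$. I would first rewrite the prefactor $(aq,\hat aq/c,\hat aq/d,aq/cd;q)_n/(\hat aq,aq/c,aq/d,\hat aq/cd;q)_n$ using the reflection identity $(q^{1-n}y;q)_n=(-y)^nq^{-\binom{n}{2}}\cdot q^{-(n-1)n/2}\cdot(q/y;q)_n^{-1}\cdot\ldots$ applied to the three factors that contain $\hat a$, turning them into genuinely convergent $q$-shifted factorials. Inside the ${}_{14}\phi_{13}$ series itself, each factor involving $\hat a$ (namely $(\hat a;q)_k,\,(\hat a b/c;q)_k,\,(\hat ab/d;q)_k,\,(\hat abq^n;q)_k=(cd/a;q)_k,\,(\hat aq/b;q)_{2k},\,(\hat ab;q)_{2k}$ in the numerator, and $(\hat aq/b,\hat aq/c,\hat aq/d,\hat aq^{n+1};q)_k$ in the denominator) must be expanded to leading order in $\hat a$. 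The key observation — and the technical heart of the proof — is that after collecting powers of $\hat a$ and combining with the prefactor, the $k$-th summand of the right-hand $_{14}\phi_{13}$ decomposes into exactly \emph{two} asymptotically nonvanishing contributions: one in which the dominant $\hat a$-scaling inside $(\hat aq/b;q)_{2k}/(\hat ab;q)_{2k}$ cancels against the prefactor to leave a $_4\phi_3$ with parameters $(b,c,d,cd/a)$ over $(cq/b,dq/b,cdq/ab)$ and argument $(q/b)^2$, and a second in which a different balancing of the $\hat a$ scales produces the dual $_4\phi_3$ with parameters $(b,ab/c,ab/d,ab^2/cd)$ over $(aq/c,aq/d,abq/cd)$ with the same argument. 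The prefactors of the two sums are then read off directly from the limiting $q$-shifted factorials.

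The main obstacle is precisely this bookkeeping: one must track the competing powers $\hat a^k,\,\hat a^{2k}$ arising from the quadratic factor $(\hat aq/b;q)_{2k}/(\hat ab;q)_{2k}$ and verify that they recombine with the prefactor to give two, and only two, surviving terms. This two-term splitting is the nonterminating analogue of the familiar Watson-type phenomenon (compare \eqref{eq:8phi7}), and the duality $c\leftrightarrow ab/c,\,d\leftrightarrow ab/d$ visible in the two $_4\phi_3$'s on the right of \eqref{12phi11} is the signature of this splitting. Once the two limits are identified, a direct comparison with the claimed prefactors $(aq,ab/c,ab/d,aq/cd;q)_\infty/(ab,aq/c,aq/d,ab/cd;q)_\infty$ and $(aq,c,d,cdq/ab^2;q)_\infty/(ab,cq/b,dq/b,cd/ab;q)_\infty$ completes the proof.
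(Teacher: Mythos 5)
Your starting point (the terminating ${}_{14}\phi_{13}$ transformation \eqref{14phi13} with $n\to\infty$) is the same as the paper's, and your treatment of the left-hand side is fine: the $n$-dependent block indeed tends to $(b/q)^k$, converting the argument $(q/b)^{2k}$ into $(q/b)^k$. But the mechanism you propose for the right-hand side contains a genuine gap. You claim that ``the $k$-th summand of the right-hand ${}_{14}\phi_{13}$ decomposes into exactly two asymptotically nonvanishing contributions'' coming from competing powers of $\hat a$ in $(\hat aq/b;q)_{2k}/(\hat ab;q)_{2k}$. That cannot work as stated: for each fixed $k$ the summand (times the prefactor) is a single quantity and has a single limit as $n\to\infty$, so a termwise passage to the limit can only ever produce \emph{one} nonterminating series, not two. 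The second ${}_4\phi_3$ in \eqref{12phi11} does not come from a splitting of individual summands; it comes from the fact that the terms of the terminating series with index $k$ near $n$ are \emph{not} negligible (they are comparable in size to the early terms), which is precisely why a direct term-by-term limit is illegitimate here -- this is the pitfall the bookkeeping you describe would run into, not resolve.

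The correct device, which is what the paper does following Bailey's derivation of the nonterminating Watson transformation, is to control the two ends of the terminating sum separately: set $n=2m+1$, write the right-hand series as $\sum_{k=0}^{m}\lambda_k+\sum_{k=0}^{m}\lambda_{2m+1-k}$ (i.e.\ split at the midpoint and reverse the order of summation in the top half), and then take the termwise limit $m\to\infty$ in each half, justified by Tannery's theorem under $|q/b|<1$. The bottom half yields the first ${}_4\phi_3$ with its infinite-product prefactor, and the reversed top half yields the second; the duality $c\leftrightarrow ab/c$, $d\leftrightarrow ab/d$ you correctly observed is exactly the reflection $k\mapsto 2m+1-k$. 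To repair your proof you should replace the ``two-term splitting of each summand'' step by this split-and-reverse argument (and supply the uniform bounds needed for Tannery's theorem); the asymptotics of the $\hat a$-dependent factors that you began computing are then only needed termwise within each half, where they are unproblematic.
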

This result extends Gasper's \cite[Eq.~(3.2)]{G} (see also
\cite[Ex.~8.15]{GR}). Observe that the two ${}_4\phi_3$ series on
the right-hand side are not balanced, nor well-poised. However, they
satisfy the remarkable property that the quotient (not the product!)
of corresponding upper and lower parameters is throughout the same,
namely $b/q$.

By replacing $a$, $b$, $c$, $d$ in \eqref{12phi11} by
$q^a$, $q^b$, $q^c$, $q^d$, respectively, and letting $q\to 1^-$
we obtain the following transformation between a
nonterminating very-well-poised ${}_9F_8$ series into two multiples of
nonterminating ${}_4F_3$ series.
(For the notion of a hypergeometric ${}_rF_s$ series, see \cite{AAR}.
In the following, we employ the condensed notation for products of
Pochhammer symbols, $(a_1,\dots,a_m)_k=(a_1)_k\cdots(a_m)_k$.)
\begin{align}\label{9F8}
&\sum_{k=0}^\infty\frac{a/2+k}{a/2}
\frac{(a,b,c,d,a+b-c,a+b-d)_k}
{(1,a+1-b,a+1-c,a+1-d,c+1-b,d+1-b)_k}
\frac{(a+1-b)_{2k}}{(a+b)_{2k}}\notag\\
&=\frac{\Gamma(a+b)\Gamma(a+1-c)\Gamma(a+1-d)\Gamma(a+b-c-d)}
{\Gamma(a+1)\Gamma(a+b-c)\Gamma(a+b-d)\Gamma(a+1-c-d)}
\notag\allowdisplaybreaks[0]\\
&\quad\;\times
\sum_{k=0}^\infty
\frac{(b,c,d,c+d-a)_k}
{(1,c+1-b,d+1-b,c+d+1-a-b)_k}\notag\allowdisplaybreaks[0]\\
&\quad\;+
\frac{\Gamma(a+b)\Gamma(c+1-b)\Gamma(d+1-b)\Gamma(c+d-a-b)}
{\Gamma(a+1)\Gamma(c)\Gamma(d)\Gamma(c+d+1-a-2b)}
\notag\allowdisplaybreaks[0]\\
&\qquad\;\times\sum_{k=0}^\infty
\frac{(b,a+b-c,a+b-d,a+2b-c-d)_k}
{(1,a+1-c,a+1-d,a+b+1-c-d)_k},
\end{align}
where, for convergence, $\Re(b)<\frac 34$.

The transformation in \eqref{9F8} extends \cite[Eq.~(3.3)]{G}.
\begin{proof}[Proof of Theorem~\ref{newtf}]
We would like to take $n\to\infty$ in \eqref{14phi13} but the series
on the right-hand side has large terms near the end compared to
those in the middle of the series which prevents us from taking
the term-by-term limit directly.
We thus apply a similar analysis as applied by
Bailey~\cite[Eq.~8.5(3)]{B} in his derivation
of the nonterminating Watson transformation (who started with
the terminating balanced very-well-poised ${}_{10}\phi_9$ transformation
to derive a transformation of a nonterminating very-well-poised
${}_8\phi_7$ series into two multiples of balanced ${}_4\phi_3$ series),
see also \cite[Sec.~2.10]{GR}.
In \eqref{14phi13}, we first replace $n$ by $2m+1$. Then we write the
series on the right-hand side as
\begin{equation}
\sum_{k=0}^{2m+1}\lambda_k=\sum_{k=0}^m\lambda_k+
\sum_{k=m+1}^{2m+1}\lambda_k=\sum_{k=0}^m\lambda_k+
\sum_{k=0}^m\lambda_{2m+1-k},
\end{equation}
after which in each of the sums we can term-wise let $m\to\infty$
(which is justified by Tannery's theorem~\cite[p.~292]{Tannery}
under the restriction $|q/b|<1$). The identity in \eqref{12phi11} thus follows.
\end{proof}

Notice that if in \eqref{12phi11} we take $d=a/c$ the first
series on the right-hand side reduces to $1$.
(If instead $d=ab^2/c$ then the second series on the right-hand
side reduces to $1$. The resulting series is equivalent
to \eqref{corid} by the substitution $c\mapsto c/b$.)
We thus have the following nonterminating very-well-poised
${}_{12}\phi_{11}$ summation:
\begin{corollary}\label{cor:A2}
We have
\begin{align}\label{corid}
&\sum_{k=0}^\infty\frac{(1-aq^{2k})(1-c)(1-a/c)}{(1-a)(1-aq^k/c)(1-cq^k)}
\frac{(a,b,bc,ab/c;q)_k}
{(q,aq/b,aq/bc,cq/b;q)_k}
\frac{(aq/b;q)_{2k}}{(ab;q)_{2k}}\Big(\frac qb\Big)^{k}\notag\\
&=\frac{(q,aq,bc,ab/c;q)_\infty}
{(b,cq,aq/c,ab;q)_\infty}
+\frac{(aq,c,a/c,q/b^2;q)_\infty}
{(ab,cq/b,aq/bc,1/b;q)_\infty}\sum_{k=0}^\infty
\frac{(b,b^2,bc,ab/c;q)_k}
{(q,bq,cq,aq/c;q)_k}\Big(\frac qb\Big)^{2k},
\end{align}
where $|q/b|<1$.
\end{corollary}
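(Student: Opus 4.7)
The plan is to derive \eqref{corid} directly from Theorem~\ref{newtf} by the specialization $d = a/c$. The decisive observation is that with this choice the quantity $cd/a$ collapses to $1$, so that $(cd/a;q)_k = (1;q)_k$ vanishes for every $k \geqslant 1$, and hence the first ${}_4\phi_3$ series on the right-hand side of \eqref{12phi11} reduces to its $k=0$ term, which equals $1$. All other simplifications are then routine manipulations of $q$-shifted factorials, and the convergence hypothesis $|q/b|<1$ is inherited unchanged from Theorem~\ref{newtf}.

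First I would substitute $d = a/c$ into the left-hand side of \eqref{12phi11}: the numerator factors $(d;q)_k$ and $(ab/d;q)_k$ become $(a/c;q)_k$ and $(bc;q)_k$, while the denominator factors $(aq/d;q)_k$ and $(dq/b;q)_k$ become $(cq;q)_k$ and $(aq/bc;q)_k$. Using the telescopings
\[
\frac{(c;q)_k}{(cq;q)_k} = \frac{1-c}{1-cq^k},
\qquad
\frac{(a/c;q)_k}{(aq/c;q)_k} = \frac{1-a/c}{1-aq^k/c},
\]
these factors can be pulled out of the summand, producing exactly the left-hand side of \eqref{corid}.

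Next I would treat the right-hand side of \eqref{12phi11} term by term. As already noted, after the substitution the first inner series collapses to $1$. Its prefactor simplifies via $ab/d = bc$, $aq/cd = q$, $aq/d = cq$, and $ab/cd = b$ to
\[
\frac{(aq,ab/c,bc,q;q)_\infty}{(ab,aq/c,cq,b;q)_\infty},
\]
which is the first summand on the right of \eqref{corid}. For the second term, the analogous simplifications $cdq/(ab^2) = q/b^2$, $cd/(ab) = 1/b$, $ab/d = bc$, $ab^2/(cd) = b^2$, $aq/d = cq$, and $abq/(cd) = bq$ transform the prefactor and the accompanying ${}_4\phi_3$ series into precisely the expression shown in \eqref{corid}.

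Since the argument is purely a substitution into a previously proved identity, there is no real obstacle. The only point that deserves care is that the specialization must avoid poles: for generic values of $a$, $b$, $c$ the denominators on both sides remain nonzero at $d = a/c$, so \eqref{corid} holds as an identity of meromorphic functions, after which the claimed convergence $|q/b|<1$ is inherited from Theorem~\ref{newtf}. This should complete the derivation.
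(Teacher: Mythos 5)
Your proposal is correct and coincides with the paper's own derivation: the paper obtains \eqref{corid} precisely by setting $d=a/c$ in Theorem~\ref{newtf}, noting that the upper parameter $cd/a=1$ forces the first ${}_4\phi_3$ series in \eqref{12phi11} to collapse to $1$, with the remaining simplifications of $q$-shifted factorials and the condition $|q/b|<1$ carried over exactly as you describe.
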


A further special case of Corollary~\ref{cor:A2} is worth stating
separately. If we take $b=-1$, the second sum reduces to its first term.
We then obtain the following nonterminating very-well-poised
${}_{12}\phi_{11}$ summation where the two terms on the right-hand side
satisfy a nice symmetry.
\begin{corollary}\label{cor:A3}
We have
\begin{align}\label{corid3}
&\sum_{k=0}^\infty\frac{(1-a^2q^{4k})(1-c^2)(1-a^2/c^2)}
{(1-a^2)(1-c^2q^{2k})(1-a^2q^{2k}/c^2)}
\frac{(a,-1;q)_k}
{(q,-aq;q)_k}(-q)^{k}\notag\\
&=\frac{(q,aq;q)_\infty}
{2\,(-q,-a;q)_\infty}\left(
\frac{(-c,-a/c;q)_\infty}
{(cq,aq/c;q)_\infty}
+\frac{(c,a/c;q)_\infty}
{(-cq,-aq/c;q)_\infty}\right).
\end{align}
\end{corollary}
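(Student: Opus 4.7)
The plan is to derive Corollary~\ref{cor:A3} as a direct specialization of Corollary~\ref{cor:A2} by setting $b=-1$. First, I would carry out the substitution on the left-hand side of \eqref{corid} and check that the resulting prefactor matches the one displayed in \eqref{corid3}. The factors $(-c;q)_k/(-cq;q)_k$, $(-a/c;q)_k/(-aq/c;q)_k$, and $(-aq;q)_{2k}/(-a;q)_{2k}$ each telescope to give
\begin{equation*}
\frac{1+c}{1+cq^k},\qquad \frac{1+a/c}{1+aq^k/c},\qquad \frac{1+aq^{2k}}{1+a}.
\end{equation*}
Multiplying these against the original prefactor $\frac{(1-aq^{2k})(1-c)(1-a/c)}{(1-a)(1-aq^k/c)(1-cq^k)}$ and using the differences of squares
$(1-x)(1+x)=1-x^2$ converts it into $\frac{(1-a^2q^{4k})(1-c^2)(1-a^2/c^2)}{(1-a^2)(1-c^2q^{2k})(1-a^2q^{2k}/c^2)}$, while the remaining $k$-dependent quotient collapses precisely to $\frac{(a,-1;q)_k}{(q,-aq;q)_k}(-q)^k$. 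Thus the left-hand sides of \eqref{corid} (at $b=-1$) and \eqref{corid3} agree term by term.

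Next I would turn to the right-hand side. The key observation is that the second sum in \eqref{corid} contains the factor $(b^2;q)_k$, which at $b=-1$ becomes $(1;q)_k$, vanishing for every $k\geqslant 1$. Hence that infinite series collapses to its $k=0$ term, which is $1$. The right-hand side of \eqref{corid} therefore reduces at $b=-1$ to the bare sum of two prefactors
\begin{equation*}
\frac{(q,aq,-c,-a/c;q)_\infty}{(-1,cq,aq/c,-a;q)_\infty}+\frac{(aq,c,a/c,q;q)_\infty}{(-a,-cq,-aq/c,-1;q)_\infty},
\end{equation*}
and pulling out the common factor $(q,aq;q)_\infty/\bigl((-1;q)_\infty(-a;q)_\infty\bigr)$ together with the elementary identity $(-1;q)_\infty=2(-q;q)_\infty$ produces exactly the symmetric expression on the right-hand side of \eqref{corid3}.

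The main things to check are essentially bookkeeping: that the substitution $b=-1$ produces no zero denominators on either side (it does not, since factors such as $(-1;q)_\infty$, $(-a;q)_\infty$, $(-cq;q)_\infty$, $(-aq/c;q)_\infty$ are all nonzero for generic $a,c$), that the convergence condition $|q/b|<1$ from Theorem~\ref{newtf} still holds (indeed $|q/(-1)|=|q|<1$), and that the collapse of the second series to its $k=0$ term is genuine rather than a cancellation of a $0$ against a divergent prefactor. Since the prefactor $\frac{(aq,c,a/c,q/b^2;q)_\infty}{(ab,cq/b,aq/bc,1/b;q)_\infty}$ remains finite and nonzero at $b=-1$, no such indeterminacy arises and the specialization is valid. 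Once these routine verifications are recorded, Corollary~\ref{cor:A3} follows immediately from Corollary~\ref{cor:A2}; the only moderately interesting step is the algebraic recognition of the doubled prefactor on the left-hand side via the $(1-x)(1+x)=1-x^2$ pairing.
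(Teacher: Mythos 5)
Your proof is correct and follows exactly the paper's route: Corollary~\ref{cor:A3} is obtained by setting $b=-1$ in Corollary~\ref{cor:A2}, where the factor $(b^2;q)_k=(1;q)_k$ kills all terms $k\geqslant 1$ of the second series, and the identity $(-1;q)_\infty=2(-q;q)_\infty$ together with the difference-of-squares pairing yields the stated symmetric form. Your verification of the nonvanishing denominators and the convergence condition $|q/b|=|q|<1$ is the same routine bookkeeping the paper leaves implicit.
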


If in \eqref{12phi11} we take (instead of $d=a/c$ which led to
Corollary~\ref{cor:A2}) $d=aq/c$ the prefactor of the first
series on the right-hand vanishes.
(If instead $d=ab^2/cq$ then the prefactor of the second series
on the right-hand vanishes. The resulting series is equivalent
to \eqref{coridb} by the substitution $c\mapsto ab/c$.)
We thus have the following nonterminating very-well-poised
${}_{10}\phi_9$ transformation:
\begin{corollary}\label{cor:A2b}
We have
\begin{align}\label{coridb}
&\sum_{k=0}^\infty\frac{1-aq^{2k}}{1-a}
\frac{(a,b,ab/c,bc/q;q)_k}
{(q,aq/b,cq/b,aq^2/bc;q)_k}
\frac{(aq/b;q)_{2k}}{(ab;q)_{2k}}\Big(\frac qb\Big)^{k}\notag\\
&=\frac{(aq,q^2/b^2,c,aq/c;q)_\infty}
{(ab,cq/b,aq^2/bc,q/b;q)_\infty}\sum_{k=0}^\infty
\frac{(b^2/q,bc/q,ab/c;q)_k}
{(q,aq/c,c;q)_k}\Big(\frac qb\Big)^{2k},
\end{align}
where $|q/b|<1$.
\end{corollary}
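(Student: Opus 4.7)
The plan is to obtain Corollary~\ref{cor:A2b} as a direct specialization of Theorem~\ref{newtf}, following the hint in the paragraph immediately preceding the statement. Specifically, in \eqref{12phi11} I would set $d = ab^2/(cq)$. Under this choice one has $cdq/(ab^2)=1$, so that the factor $(cdq/(ab^2);q)_\infty$ appearing in the numerator of the prefactor of the \emph{second} ${}_4\phi_3$ on the right-hand side of \eqref{12phi11} evaluates to $(1;q)_\infty=0$. The remaining factors in that prefactor are generically finite (for $b$ not of the form $q^{1-j}$), and the surviving ${}_4\phi_3$ converges under the hypothesis $|q/b|<1$, so the entire second term vanishes genuinely rather than being of $0\cdot\infty$ type.

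Next I would simplify the identity that remains. On the left-hand side, the specialization gives $ab/d=cq/b$ and $dq/b=ab/c$; these match existing factors of the very-well-poised series, so $(ab/c;q)_k/(dq/b;q)_k=1=(ab/d;q)_k/(cq/b;q)_k$, and the ${}_{12}\phi_{11}$ collapses to a very-well-poised ${}_{10}\phi_9$ with numerator parameters $a,b,c,ab^2/(cq)$ and denominator parameters $aq/b,aq/c,cq^2/b^2$ (together with $(1-aq^{2k})$ and the $(aq/b;q)_{2k}/(ab;q)_{2k}$ quotient). On the right-hand side, the first series has $cdq/(ab)=b$, so the $(cdq/ab;q)_k=(b;q)_k$ in the denominator cancels the numerator factor $(b;q)_k$, leaving a ${}_4\phi_3$ with numerator parameters $c,ab^2/(cq),b^2/q$ and denominator parameters $q,cq/b,ab/c$.

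Finally, I would apply the substitution $c\mapsto ab/c$ throughout. This sends $ab^2/(cq)\mapsto bc/q$, $aq/c\mapsto cq/b$, $cq^2/b^2\mapsto aq^2/(bc)$, and $ab/c\mapsto c$, whereupon the simplified left-hand side becomes the series displayed in \eqref{coridb}, the prefactor on the right-hand side turns into $\frac{(aq,q^2/b^2,c,aq/c;q)_\infty}{(ab,cq/b,aq^2/bc,q/b;q)_\infty}$, and the right-hand ${}_4\phi_3$ becomes the one stated. Since $b$ is untouched, the convergence condition $|q/b|<1$ is preserved.

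No substantive obstacle is expected; the entire argument is algebraic bookkeeping. The only point requiring a small amount of care is confirming that the vanishing of the second prefactor really kills the second term (rather than producing an indeterminate $0\cdot\infty$), which is ensured by the convergence of the accompanying ${}_4\phi_3$ under $|q/b|<1$, and tracking the Pochhammer cancellations on both sides so that the final relabelling $c\mapsto ab/c$ matches \eqref{coridb} exactly.
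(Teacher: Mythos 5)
Your proposal is correct and is essentially the paper's own approach: Corollary~\ref{cor:A2b} is obtained by specializing $d$ in Theorem~\ref{newtf} so that one of the two terms on the right of \eqref{12phi11} vanishes. You chose $d=ab^2/(cq)$ followed by the relabelling $c\mapsto ab/c$, which is exactly the equivalent variant the paper notes parenthetically, while its primary choice $d=aq/c$ kills the first term and yields \eqref{coridb} directly without any relabelling.
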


We also record another (simpler) special case of \eqref{14phi13},
obtained by taking $d\to\infty$. Alternatively, it can be obtained from
Theorem~\ref{newtf} by choosing $d=q^{-n}$.
\begin{corollary}\label{cortf}
We have
\begin{align}\label{12phi11t}
\sum_{k=0}^n\frac{1-aq^{2k}}{1-a}
\frac{(a,b,c,ab/c,abq^n,q^{-n};q)_k}
{(q,aq/b,aq/c,cq/b,q^{1-n}/b,aq^{n+1};q)_k}
\frac{(aq/b;q)_{2k}}{(ab;q)_{2k}}\Big(\frac qb\Big)^{k}&\notag\\
=\frac{(aq,ab/c;q)_n}
{(ab,aq/c;q)_n}\sum_{k=0}^n
\frac{(b,c,q^{-n}c/a,q^{-n};q)_k}
{(q,cq/b,cq^{1-n}/ab,q^{1-n}/b;q)_k}\Big(\frac qb\Big)^{2k}&.
\end{align}
\end{corollary}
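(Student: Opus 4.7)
The plan is to derive \eqref{12phi11t} as the specialization $d=q^{-n}$ of Theorem~\ref{newtf} (i.e., of equation \eqref{12phi11}). This is the quickest route because the terminating condition is built in simultaneously on both sides: on the left, the factor $(d;q)_k=(q^{-n};q)_k$ truncates the series at $k=n$, while on the right, the factor $(d;q)_\infty=(q^{-n};q)_\infty$ that appears in the prefactor of the \emph{second} series on the right-hand side of \eqref{12phi11} is identically zero (since one of its factors is $1-q^{-n}\cdot q^n=0$), so the entire second series drops out.

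The first step is thus to set $d=q^{-n}$ on the left-hand side of \eqref{12phi11} and read off that $(aq/d;q)_k=(aq^{n+1};q)_k$, $(ab/d;q)_k=(abq^n;q)_k$, and $(dq/b;q)_k=(q^{1-n}/b;q)_k$, so the left-hand side becomes precisely the left-hand side of \eqref{12phi11t}. The next step is to simplify the surviving (first) series on the right-hand side of \eqref{12phi11} under $d=q^{-n}$: its series part becomes
\[
\sum_{k=0}^{n}\frac{(b,c,q^{-n},cq^{-n}/a;q)_k}
{(q,cq/b,q^{1-n}/b,cq^{1-n}/(ab);q)_k}\Big(\frac{q}{b}\Big)^{2k},
\]
(terminating at $k=n$ due to $(q^{-n};q)_k$), matching the series on the right of \eqref{12phi11t}.

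The final step is the prefactor simplification. With $d=q^{-n}$ one has
\[
\frac{(aq,ab/c,ab/d,aq/cd;q)_\infty}{(ab,aq/c,aq/d,ab/cd;q)_\infty}
=\frac{(aq,ab/c,abq^n,aq^{n+1}/c;q)_\infty}{(ab,aq/c,aq^{n+1},abq^n/c;q)_\infty},
\]
and each ratio of the form $(x;q)_\infty/(xq^n;q)_\infty$ collapses to $(x;q)_n$, producing
\[
\frac{(aq;q)_n\,(ab/c;q)_n}{(ab;q)_n\,(aq/c;q)_n},
\]
which is exactly the prefactor on the right of \eqref{12phi11t}.

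There is no real obstacle; this is a straightforward specialization, and the main care point is simply verifying that the vanishing of the second term on the right of \eqref{12phi11} under $d=q^{-n}$ is genuine (it is, since $(q^{-n};q)_\infty$ appears in its numerator). As an alternative proof one may instead take the limit $d\to\infty$ directly in \eqref{14phi13}; this requires tracking the asymptotics $(d;q)_k\sim(-d)^kq^{\binom{k}{2}}$ and $\hat a=q^{-n}cd/(ab)\to\infty$, observing the cancellation of all powers of $\hat a$ and $d$ on the right-hand side, and applying the elementary identity $(x;q)_n=(-x)^nq^{\binom{n}{2}}(q^{1-n}/x;q)_n$ to recognize the surviving prefactor as $(aq,ab/c;q)_n/(ab,aq/c;q)_n$. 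Either route yields \eqref{12phi11t}; the $d=q^{-n}$ specialization of Theorem~\ref{newtf} is cleaner because it avoids any limiting analysis.
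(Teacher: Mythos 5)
Your proposal is correct and matches the paper's own derivation: the paper states that Corollary~\ref{cortf} can be obtained from Theorem~\ref{newtf} by choosing $d=q^{-n}$ (which is exactly your argument, with the second term on the right of \eqref{12phi11} killed by the factor $(q^{-n};q)_\infty$ and the prefactor collapsing to $(aq,ab/c;q)_n/(ab,aq/c;q)_n$), the only other route mentioned there being the $d\to\infty$ limit of \eqref{14phi13}, which you also note as an alternative.
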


We use Corollary~\ref{cortf}, which is equivalent to
Gasper's aforementioned identity in \cite[Eq.~(3.2)]{G} (also
stated in \cite[Ex.~8.15]{GR}),  to provide a generalization of
Rogers' linearization formula for the continuous
$q$-ultraspherical polynomials in \eqref{linear}.

The continuous $q$-ultraspherical polynomials, which depend on
a parameter $\ba$ and the base $q$, are given by
\begin{equation}\label{defC}
C_n(x;\ba\,|\,q)=\sum_{k=0}^n\frac{(\ba;q)_k\,(\ba;q)_{n-k}}
{(q;q)_k\,(q;q)_{n-k}}e^{i(n-2k)\ta}, \qquad x=\cos\ta.
\end{equation}
(Note that $\ta$ need not be real.)
They were originally considered by Rogers~\cite{R3} in 1884
(not aware of their orthogonality) in the pursuit of (what is now called)
the Rogers--Ramanujan identities.

These functions, which can be written as
\begin{equation}
C_n(x;\ba\,|\,q)=\frac{(\ba;q)_n}{(q;q)_n}e^{in\ta}\,
{}_2\phi_1\!\left[\begin{matrix}
\ba,q^{-n}\\q^{1-n}/\ba\end{matrix};q,\frac{qe^{-2i\ta}}{\ba}\right],
\end{equation}
are polynomials in $x$ of degree $n$.

As was established by Askey and Ismail~\cite{AI} (see also
\cite[Thm.~13.2.1]{Ibook}), the continuous $q$-ultraspherical polynomials
satisfy for $|\ba|<1$ the orthogonality relation
\begin{align}
\frac 1{2\pi}\int_{-1}^1C_m(x;\ba\,|\,q)\,C_n(x;\ba\,|\,q)\,
\frac{(e^{\pm 2i\ta};q)_\infty}{(\ba e^{\pm 2i\ta};q)_\infty}
\frac{\rd x}{\sqrt{1-x^2}}&\notag\\
\label{orth}=
\frac{(\ba,q\ba;q)_\infty}{(q,\ba^2;q)_\infty}
\frac{(\ba^2;q)_n}{(q;q)_n}\frac{(1-\ba)}{(1-\ba q^n)}\,
\da_{m,n}&.
\end{align}

Rogers~\cite{R3} (cf.\ also \cite[Thm.~13.3.2]{Ibook})
derived the following linearization formula:
\begin{align}\label{linear}
&C_m(x;\ba\,|\,q)\,C_n(x;\ba\,|\,q)\notag\\
&=\sum_{k=0}^{\min(m,n)}
\frac{(q;q)_{m+n-2k}(\ba;q)_{m-k}(\ba;q)_{n-k}(\ba;q)_k(\ba^2;q)_{m+n-k}}
{(\ba^2;q)_{m+n-2k}(q;q)_{m-k}(q;q)_{n-k}(q;q)_k(q\ba;q)_{m+n-k}}
\notag\allowdisplaybreaks[0]\\
&\qquad\qquad\;\times\frac{1-\ba q^{m+n-2k}}{1-\ba}\,C_{m+n-2k}(x;\ba\,|\,q).
\end{align}

Rogers' proof of \eqref{linear} involved induction. Other proofs have been
given by Bressoud~\cite{Br}, Rahman~\cite{R}, and by Gasper~\cite{G}
(see also \cite[Sec.~8.5]{GR}).
For our extension of Rogers' linearization formula we define, for
$\beta$ and $\nu$ being any two complex numbers,
\begin{equation}\label{defF}
F_\nu(z;\ba\,|\,q)=\sum_{k\geqslant
0}\frac{(\ba;q)_k\,(\ba;q)_{\nu-k}} {(q;q)_k\,(q;q)_{\nu-k}}z^{2k},
\end{equation}
which is an even analytic function in $z$. (Recall that in \eqref{qpoch}
the $q$-shifted factorials were defined for any complex subindex.)
These functions can be written as
\begin{equation}
F_\nu(z;\ba\,|\,q)=\frac{(q^{1+\nu},\ba;q)_\infty}{(q,\ba q^\nu;q)_\infty}\,
{}_2\phi_1\!\left[\begin{matrix}
\ba,q^{-\nu}\\q^{1-\nu}/\ba\end{matrix};q,\frac{qz^2}{\ba}\right],
\end{equation}
where $|qz^2/\ba|<1$, for absolute convergence.
For $\nu=n$ being a nonnegative integer,
we have $F_n(z;\ba\,|\,q)=z^nC_n(x;\ba\,|\,q)$,
where $x=(z+z^{-1})/2$ (or, equivalently, $x=\cos\ta$ with $z=e^{i\ta}$).

\begin{theorem}\label{thm:linearF}
Let $\mu,\nu$ and $\ba$ be three complex numbers. Then we have
the following identity of power series in $z$.
\begin{align}\label{linearF}
&F_\mu(z;\ba\,|\,q)\,F_\nu(z;\ba\,|\,q)\notag\\
&=\sum_{k\geqslant 0}
\frac{(q;q)_{\mu+\nu-2k}(\ba;q)_{\mu-k}(\ba;q)_{\nu-k}(\ba;q)_k
(\ba^2;q)_{\mu+\nu-k}}
{(\ba^2;q)_{\mu+\nu-2k}(q;q)_{\mu-k}(q;q)_{\nu-k}(q;q)_k
(q\ba;q)_{\mu+\nu-k}}\notag\allowdisplaybreaks[0]\\
&\qquad\quad\;\times\frac{1-\ba q^{\mu+\nu-2k}}{1-\ba}\,z^{2k}\,
F_{\mu+\nu-2k}(z;\ba\,|\,q),
\end{align}
where $|qz^2/\beta|<1$, for absolute convergence.
\end{theorem}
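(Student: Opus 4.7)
The plan is to verify \eqref{linearF} by comparing coefficients of $z^{2N}$ on both sides for each integer $N\geqslant 0$; each such coefficient identity will be shown to be a direct consequence of the terminating transformation \eqref{12phi11t} in Corollary~\ref{cortf}. Both sides of \eqref{linearF} are even analytic functions of $z$ in the disk $|qz^2/\beta|<1$ (the common region of convergence of the defining ${}_2\phi_1$'s of all $F_\gamma$'s appearing, with the outer sum on the right collapsing to a finite one at each fixed power of $z$), so this coefficient-wise verification is sufficient.

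First I would extract $[z^{2N}]$ on the LHS by Cauchy product, obtaining
\begin{equation*}
[z^{2N}]\bigl(F_\mu(z;\beta|q)\,F_\nu(z;\beta|q)\bigr)
=\sum_{j=0}^{N}\frac{(\beta;q)_j\,(\beta;q)_{\mu-j}\,(\beta;q)_{N-j}\,(\beta;q)_{\nu-N+j}}
{(q;q)_j\,(q;q)_{\mu-j}\,(q;q)_{N-j}\,(q;q)_{\nu-N+j}}.
\end{equation*}
Using the reflection identity $(a;q)_{\alpha-j}=(a;q)_\alpha(-q/a)^jq^{\binom{j}{2}-\alpha j}/(q^{1-\alpha}/a;q)_j$ to flip each ``$\alpha-j$'' subscript into an ``$+j$'' subscript, this rewrites as the overall factor $(\beta;q)_\mu(\beta;q)_N(\beta;q)_{\nu-N}/[(q;q)_\mu(q;q)_N(q;q)_{\nu-N}]$ times the terminating
\begin{equation*}
{}_4\phi_3\!\left[\begin{matrix}\beta,\,q^{-\mu},\,q^{-N},\,\beta q^{\nu-N}\\[2pt]
q^{1-\mu}/\beta,\,q^{1-N}/\beta,\,q^{\nu-N+1}\end{matrix};q,\frac{q^2}{\beta^2}\right].
\end{equation*}
This is precisely the right-hand side of \eqref{12phi11t} under the specialization $a=q^{-\mu-\nu}/\beta$, $b=\beta$, $c=q^{-\mu}$, $n=N$: all four upper and three lower parameters match (e.g.\ $q^{-n}c/a=\beta q^{\nu-N}$ and $cq^{1-n}/ab=q^{\nu-N+1}$), as does the argument $(q/b)^2=q^2/\beta^2$.

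Second, I would extract $[z^{2N}]$ on the RHS of \eqref{linearF} by expanding each $F_{\mu+\nu-2k}(z;\beta|q)$ into its defining series and taking the coefficient of $z^{2(N-k)}$ of its inner sum. After applying the same reflection identity, the result must match the very-well-poised ${}_{12}\phi_{11}$ series on the left-hand side of \eqref{12phi11t}, multiplied by the same overall constant produced in the first step. The main obstacle is this second identification: one has to verify that the connection coefficient on the RHS of \eqref{linearF}, carrying $(\beta^2;q)_{\mu+\nu-k}/(\beta^2;q)_{\mu+\nu-2k}$, $(q\beta;q)_{\mu+\nu-k}$ and $(1-\beta q^{\mu+\nu-2k})/(1-\beta)$, reconstructs the standard well-poised combination $(1-aq^{2k})(aq/b;q)_{2k}/[(1-a)(ab;q)_{2k}]$ under the chosen specialization, while the remaining factors $(\beta;q)_{\mu-k}(\beta;q)_{\nu-k}(\beta;q)_k(\beta;q)_{\mu+\nu-N-k}/[(q;q)_{\mu-k}(q;q)_{\nu-k}(q;q)_k(q;q)_{\mu+\nu-N-k}]$ reproduce the numerator/denominator Pochhammers $(a,b,c,ab/c,abq^n,q^{-n};q)_k$ and $(q,aq/b,aq/c,cq/b,q^{1-n}/b,aq^{n+1};q)_k$. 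This matching is mechanical but intricate, relying on splittings of the form $(\beta^2;q)_{\mu+\nu-k}=(\beta^2;q)_{\mu+\nu-N-k}(\beta^2 q^{\mu+\nu-N-k};q)_{N-k}$ together with repeated applications of the reflection identity; once completed, \eqref{12phi11t} delivers \eqref{linearF} for all $\mu,\nu\in\mathbb{C}$ and all $z$ with $|qz^2/\beta|<1$.
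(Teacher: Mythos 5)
Your plan is essentially the paper's own proof: the paper likewise expands the product by the Cauchy product, flips the shifted factorials so that the coefficient of each fixed power of $z$ becomes a terminating ${}_4\phi_3$, and then invokes Corollary~\ref{cortf} (there with the specialization $(a,b,c,n)=(\ba q^{\mu+\nu-2N},\ba,\ba q^{\mu-N},N)$, of which your choice $(q^{-\mu-\nu}/\ba,\ba,q^{-\mu},N)$ is just the reversed, $\mu\leftrightarrow\nu$-flipped form) before re-summing into the right-hand side, so your coefficient-wise organization is the same argument and it does go through. One correction to the write-up: the coefficient of $z^{2N}$ on the right-hand side of \eqref{linearF} equals the ${}_{12}\phi_{11}$ of \eqref{12phi11t} multiplied not by ``the same overall constant'' from your first step but by that constant \emph{divided by} the prefactor $(aq,ab/c;q)_N/(ab,aq/c;q)_N$ of \eqref{12phi11t} (under your specialization this is $(q^{1-\mu-\nu}/\ba,q^{-\nu};q)_N/(q^{-\mu-\nu},q^{1-\nu}/\ba;q)_N$); this is precisely the factor the transformation supplies, so the mechanical matching you postpone does close --- the $k$-dependent powers $q^{3k}\ba^{-k}$ and all Pochhammer ratios line up and the $k=0$ normalization checks --- but the constant must be stated correctly for the two sides to agree.
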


\begin{proof}
In the subsequent manipulations of series we silently interchange double sums
which is justified as all the involved series absolutely converge (if they do
not terminate) for $|qz^2/\ba|<1$. We start with expanding the product of the
two functions on the left-hand side of \eqref{linearF} using \eqref{defF}:
\begin{align*}
&F_\mu(z;\ba,|\,q)\,F_\nu(z;\ba\,|\,q)\\
&=\sum_{k\geqslant 0}
\frac{(\ba;q)_k(\ba;q)_{\mu-k}}{(q;q)_k(q;q)_{\mu-k}}z^{2k}
\sum_{l\geqslant 0}\frac{(\ba;q)_l(\ba;q)_{\nu-l}}
{(q;q)_l(q;q)_{\nu-l}}z^{2l}\\
&\underset{k\mapsto k-l}{=}\sum_{l\geqslant 0}\sum_{k\geqslant l}
\frac{(\ba;q)_{k-l}(\ba;q)_{\mu-k+l}}{(q;q)_{k-l}(q;q)_{\mu-k+l}}z^{2k-2l}
\frac{(\ba;q)_l(\ba;q)_{\nu-l}}
{(q;q)_l(q;q)_{\nu-l}}z^{2l}\\
&=\sum_{k\geqslant 0}\sum_{0\le l\le k}
\frac{(\ba;q)_{k-l}(\ba;q)_{\mu-k+l}}{(q;q)_{k-l}(q;q)_{\mu-k+l}}z^{2k-2l}
\frac{(\ba;q)_l(\ba;q)_{\nu-l}}
{(q;q)_l(q;q)_{\nu-l}}z^{2l}\\
&=\sum_{k\geqslant 0}
\frac{(\ba;q)_k(\ba;q)_{\mu-k}}{(q;q)_k(q;q)_{\mu-k}}
\frac{(\ba;q)_\nu}{(q;q)_\nu}z^{2k}\allowdisplaybreaks[0]\\
&\hphantom{\underset{k\mapsto k-l}{=}}\times
\sum_{0\le l\le k}\frac{(\ba,q^{-k},q^{\mu-k}\ba,q^{-\nu};q)_l}
{(q,q^{1-k}/\ba,q^{1+\mu-k},q^{1-\nu}/\ba;q)_l}\Big(\frac q\beta\Big)^{2l}.
\end{align*}
Now the inner sum over $l$, which is a special terminating ${}_4\phi_3$ series,
can be transformed into a multiple of a ${}_{12}\phi_{11}$ series
using the $(a,b,c,n)\mapsto(\ba q^{\mu+\nu-2k},\ba,q^{\mu-k}\ba,k)$
case of \eqref{12phi11t}. We thus obtain
\begin{align*}
&F_\mu(z;\ba,|\,q)\,F_\nu(z;\ba\,|\,q)\\
&=\sum_{k\geqslant 0}
\frac{(\ba;q)_k(\ba;q)_{\mu-k}}{(q;q)_k(q;q)_{\mu-k}}
\frac{(\ba;q)_\nu}{(q;q)_\nu}z^{2k}\frac
{(\ba^2q^{\mu+\nu-2k},q^{1+\nu-k};q)_k}
{(\ba q^{1+\mu+\nu-2k},\ba q^{\nu-k};q)_k}\allowdisplaybreaks[0]\\
&\qquad\quad\times\sum_{0\le l\le k}
\frac{1-\ba q^{\mu+\nu-2k+2l}}{1-\ba q^{\mu+\nu-2k}}
\frac{(\ba q^{\mu+\nu-2k},\ba,\ba q^{\mu-k},\ba q^{\nu-k},\
ba^2q^{\mu+\nu-k},q^{-k};q)_l}
{(q,q^{1+\mu+\nu-2k},q^{1+\nu-k},q^{1+\mu-k},q^{1-k}/\ba,
\ba q^{1+\mu+\nu-k};q)_l}\allowdisplaybreaks[0]\\
&\qquad\qquad\qquad\times
\frac{(q^{1+\mu+\nu-2k};q)_{2l}}
{(\ba^2q^{\mu+\nu-2k};q)_{2l}}\Big(\frac q\ba\Big)^l\\
&=\sum_{l\geqslant 0}\sum_{k\geqslant l}
\frac{(\ba;q)_k(\ba;q)_{\mu-k}}{(q;q)_k(q;q)_{\mu-k}}
\frac{(\ba;q)_\nu}{(q;q)_\nu}z^{2k}\frac
{(\ba^2q^{\mu+\nu-2k},q^{1+\nu-k};q)_k} {(\ba q^{1+\mu+\nu-2k},\ba
q^{\nu-k};q)_k}
\frac{1-\ba q^{\mu+\nu-2k+2l}}{1-\ba q^{\mu+\nu-2k}}\allowdisplaybreaks[0]\\
&\qquad\qquad\;\times
\frac{(\ba q^{\mu+\nu-2k},\ba,\ba q^{\mu-k},\ba q^{\nu-k},\
ba^2q^{\mu+\nu-k},q^{-k};q)_l}
{(q,q^{1+\mu+\nu-2k},q^{1+\nu-k},q^{1+\mu-k},q^{1-k}/\ba,
\ba q^{1+\mu+\nu-k};q)_l}
\frac{(q^{1+\mu+\nu-2k};q)_{2l}}
{(\ba^2q^{\mu+\nu-2k};q)_{2l}}\Big(\frac q\ba\Big)^l\allowdisplaybreaks[0]\\
&\underset{k\mapsto k+l}{=} \sum_{l\geqslant 0}\sum_{k\geqslant 0}
\frac{(\ba;q)_l(\ba;q)_{\mu+\nu-2k-l}}{(q;q)_l(q;q)_{\mu+\nu-2k-l}}
z^{2l+2k}\;\frac{1-\ba q^{\mu+\nu-2k}}{1-\ba}\allowdisplaybreaks[0]\\
&\qquad\qquad\qquad\times
\frac{(q;q)_{\mu+\nu-2k}(\ba;q)_{\mu-k}(\ba;q)_{\nu-k}(\ba;q)_k
(\ba^2;q)_{\mu+\nu-k}}
{(\ba^2;q)_{\mu+\nu-2k}(q;q)_{\mu-k}(q;q)_{\nu-k}(q;q)_k(q\ba;q)_{\mu+\nu-k}}\\
&=\sum_{k\geqslant 0}
\frac{(q;q)_{\mu+\nu-2k}(\ba;q)_{\mu-k}(\ba;q)_{\nu-k}(\ba;q)_k
(\ba^2;q)_{\mu+\nu-k}}
{(\ba^2;q)_{\mu+\nu-2k}(q;q)_{\mu-k}(q;q)_{\nu-k}(q;q)_k(q\ba;q)_{\mu+\nu-k}}\,
\frac{1-\ba
q^{\mu+\nu-2k}}{1-\ba}z^{2k}\allowdisplaybreaks[0]\\&\qquad\quad\times
\sum_{l\ge
0}\frac{(\ba;q)_l(\ba;q)_{\mu+\nu-2k-l}}{(q;q)_l(q;q)_{\mu+\nu-2k-l}}
z^{2l},
\end{align*}
which, after identifying the inner sum as $F_{\mu+\nu-2k}(z;\ba\,|\,q)$
according to \eqref{defF}, establishes
the assertion.
\end{proof}

\begin{corollary}
Rogers' linearization formula for the continuous ultraspherical
polynomials in \eqref{linear} is true.
\end{corollary}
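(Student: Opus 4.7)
The plan is to deduce Rogers' classical linearization formula \eqref{linear} as a direct specialization of Theorem~\ref{thm:linearF}, using the polynomial reduction $F_n(z;\ba\,|\,q) = z^n\,C_n(x;\ba\,|\,q)$ (with $x=(z+z^{-1})/2$) noted right after the definition \eqref{defF}. Concretely, I would set $\mu = m$ and $\nu = n$ with $m,n$ nonnegative integers in the identity \eqref{linearF}. The left-hand side then instantly becomes $z^{m+n}\,C_m(x;\ba\,|\,q)\,C_n(x;\ba\,|\,q)$, while each summand on the right carries a factor $F_{\mu+\nu-2k}(z;\ba\,|\,q) \cdot z^{2k}$.

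Next I would verify that the sum on the right-hand side truncates at $k=\min(m,n)$. The general summand contains the two reciprocals $1/(q;q)_{m-k}$ and $1/(q;q)_{n-k}$. Under the standard convention $(q;q)_{-j} = (q;q)_\infty / (q^{1-j};q)_\infty$ with $j$ a positive integer, the infinite product in the denominator contains the vanishing factor $1-q^0$, so $(q;q)_{-j} = \infty$ and hence $1/(q;q)_{-j} = 0$. This kills every term with $k>m$ or $k>n$. For $0 \leqslant k \leqslant \min(m,n)$, the index $m+n-2k$ is a nonnegative integer, and so $F_{m+n-2k}(z;\ba\,|\,q) = z^{m+n-2k}\,C_{m+n-2k}(x;\ba\,|\,q)$; combined with the $z^{2k}$ in the summand this gives an overall $z^{m+n}$ in every term on the right.

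Cancelling the common factor $z^{m+n}$ from both sides then yields \eqref{linear} term by term, with the coefficients matching exactly those given in the theorem statement. Since Theorem~\ref{thm:linearF} has already been established in full (its essential new input being the nonterminating ${}_{12}\phi_{11}$ transformation of Corollary~\ref{cortf}), no additional analytic work is required. The only point demanding any genuine care is the truncation argument just described; otherwise the corollary is simply the polynomial shadow of the more general identity for the analytic functions $F_\nu(z;\ba\,|\,q)$, and there is no substantive obstacle to overcome.
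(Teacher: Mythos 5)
Your proposal is correct and follows essentially the same route as the paper: specialize $\mu=m$, $\nu=n$ in Theorem~\ref{thm:linearF}, use $F_n(z;\ba\,|\,q)=z^nC_n(x;\ba\,|\,q)$, and divide by $z^{m+n}$. The only difference is that you spell out the truncation at $k=\min(m,n)$ via $1/(q;q)_{m-k}=0$ for $k>m$, which the paper leaves implicit; this is a valid and welcome clarification.
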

\begin{proof}
In Theorem~\ref{thm:linearF} choose $\mu=m$ and $\nu=n$ for two
nonnegative integers $m$ and $n$. The identity \eqref{linearF} then
reduces, after dividing both sides by $z^{n+m}$, to \eqref{linear}.
\end{proof}
\end{appendix}

\medskip
\textbf{Acknowledgements.} The authors would like to thank the anonymous referees for very careful readings of a previous version of this paper.
Both authors thank Prof.~Shishuo~Fu
at Chongqing University in China for hosting them for several days in
September 2018, during which parts of the present research were performed.

%%%%%%%%%%%%%%%%%%%%%%%%%%%%%%%%%%%%%%%%%%%%%%%%%%%%%%%%%%%%%%%%%%%%%%%%%%%%%%%%%%%%%%%%%%%%%%%%%%%%%%%%%%%%%%%%%%%%%%%%%%%%%%%%%%%%%%%%%%%%%%%%%%%%%%%%%%%%%%%%%%%%%%%%%%%%%%%%%%%%%%%%%%%%%%%%%%%%%%%%%%%%%%%%%%%%%%


\begin{thebibliography}{99}

\bibitem{Andrews99}G.E. Andrews,
$q$-Analogs of the binomial coefficient congruences of
{B}abbage, {W}olstenholme and {G}laisher,
{\em Discrete Math.} \textbf{204} (1999), 15--25.

\bibitem{Andrews} G.E. Andrews,
{\em The Theory of Partitions},
Cambridge University Press, Cambridge, 1998.

\bibitem{AAR} G.E.~Andrews, R.A.~Askey, and R.~Roy,
{\em Special functions},
Encyclopedia of Mathematics and Its Applications \textbf{71},
Cambridge University Press, Cambridge, 1999.

\bibitem{AO}S. Ahlgren and K. Ono,
Gaussian hypergeometric series evaluation and {A}p\'ery number congruences,
{\em J. Reine Angew. Math.} \textbf{518} (2000), 187--212.

\bibitem{AI} R.A.~Askey and M.E.H.~Ismail,
A generalization of the ultraspherical polynomials,
in {\em Studies in Pure Mathematics}, Birkh\"auser,
Basel, 1983; pp. 55–78.

\bibitem{B} W.N.~Bailey,
{\em Generalized hypergeometric series},
Cambridge University Press, Cambridge, 1964.

\bibitem{Br}D. Bressoud,
Some identities for terminating $q$-series,
{\em Math.\ Proc.\ Camb.\ Phil.\ Soc.\ }\textbf{81} (1981), 211--223.

\bibitem{CP}H.-Q. Cao and H. Pan,
Factors of alternating binomial sums,
{\em Adv. Appl. Math.} \textbf{45} (2010), 96--107.

\bibitem{CXH}Y.-G. Chen, X.-Y. Xie, and B. He,
On some congruences of certain binomials sums,
{\em Ramanujan J.\ }\textbf{40} (2016), 237--244.

\bibitem{FH}J. F\"urlinger and J. Hofbauer,
$q$-{C}atalan numbers,
{\em J. Combin. Theory, Ser. A} \textbf{2} (1985), 248--264.

\bibitem{G}G. Gasper,
Rogers' linearization formula for the continuous
$q$-ultraspherical polynomials and quadratic transformation formulas,
{\em SIAM J.\ Math.\ Anal.\ }\textbf{16} (1985), 1061--1071.

\bibitem{Gasper}G. Gasper,
Summation, transformation, and expansion formulas for bibasic series,
{\em Trans. Amer. Math. Soc.\ }\textbf{312} (1989), 257--277.

\bibitem{GR0}G. Gasper and M. Rahman,
An indefinite bibasic summation formula and some quadratic, cubic and
quartic summation and transformation formulas,
{\em Canad. J. Math.\ }\textbf{42} (1990), 1--27.

\bibitem{GR} G.~Gasper and M.~Rahman,
{\em Basic Hypergeometric Series}, second edition,
Encyclopedia of Mathematics and Its Applications \textbf{96},
Cambridge University Press, Cambridge, 2004.

\bibitem{Guillera3}J. Guillera,
WZ pairs and $q$-analogues of {R}amanujan series for $1/\pi$,
{\em J. Difference Equ. Appl.\ }\textbf{24} (2018), 1871--1879.

\bibitem{GuZu}J. Guillera and W. Zudilin,
``Divergent" {R}amanujan-type supercongruences,
{\em Proc. Amer. Math. Soc.} \textbf{140} (2012), 765--777.

\bibitem{Guo2018}V.J.W. Guo,
A $q$-analogue of a {R}amanujan-type supercongruence involving
central binomial coefficients,
{\em J. Math. Anal. Appl.} \textbf{458} (2018), 590--600.

\bibitem{Guo-a2}V.J.W. Guo,
A $q$-analogue of the (A.2) supercongruence of Van Hamme for primes
$p\equiv 1\pmod 4$,
{\em Rev. R. Acad. Cienc. Exactas F\'is. Nat., Ser. A Mat.} \textbf{114} (2020), Art. 123.

\bibitem{Guo-jmaa}V.J.W. Guo,
$q$-Analogues of Dwork-type supercongruences,
{\em J. Math. Anal. Appl.} \textbf{487} (2020), Art.~124022.

\bibitem{Guo-rama}V.J.W. Guo,
$q$-Analogues of three Ramanujan-type formulas for $1/\pi$,
{\em Ramanujan J.} \textbf{52} (2020), 123--132.

\bibitem{Guo-mod4}V.J.W. Guo,
$q$-Supercongruences modulo the fourth power of a cyclotomic
polynomial via creative microscoping,
{\em Adv. Appl. Math.} \textbf{120} (2020), Art. 102078.

\bibitem{Guo4}V.J.W. Guo,
$q$-Analogues of two ``divergent" {R}amanujan-type supercongruences,
{\em Ramanujan J.}  \textbf{52} (2020), 605--624.

\bibitem{GS1}V.J.W. Guo and M.J. Schlosser,
Some new $q$-congruences for truncated basic hypergeometric series,
{\em Symmetry} \textbf{11}(2) (2019), Art. 268.

\bibitem{GS2}V.J.W. Guo and M.J. Schlosser,
Proof of a basic hypergeometric supercongruence modulo the
fifth power of a cyclotomic polynomial,
{\em J. Difference Equ. Appl.} \textbf{25} (2019), 921--929.

\bibitem{GS3}V.J.W. Guo and M.J. Schlosser,
Some new $q$-congruences for truncated basic hypergeometric series:
even powers, {\em Results Math.} \textbf{75} (2020), Art. 1.

\bibitem{GS4}V.J.W. Guo and M.J. Schlosser,
A family of $q$-hypergeometric congruences modulo the fourth power
of a cyclotomic polynomial,
{\em Israel J. Math.}, to appear.

\bibitem{GW}V.J.W. Guo and S.-D. Wang,
Some congruences involving fourth powers of central $q$-binomial coefficients,
{\em Proc. Roy. Soc. Edinburgh Sect.~A} \textbf{150} (2020), 1127--1138.

\bibitem{GZ15}V.J.W. Guo and J. Zeng,
Some $q$-supercongruences for truncated basic hypergeometric series,
{\em Acta Arith.}  \textbf{171} (2015), 309--326.

\bibitem{GuoZu}V.J.W. Guo and W. Zudilin,
A $q$-microscope for supercongruences,
{\em Adv. Math.} \textbf{346} (2019), 329--358.

\bibitem{GuoZu2}V.J.W. Guo and W. Zudilin,
 A common $q$-analogue of two supercongruences,
 {\em Results Math.} \textbf{75} (2020), Art. 46.

\bibitem{Hu}D.W. Hu,
On combinatorial congruences and additive combinatorics,
{\em Ph.D. thesis}, Nanjing University, China, 2017.

\bibitem{Ibook} M.E.H.~Ismail,
{\em Classical and quantum orthogonal polynomials in one variable},
Encyclopedia of Mathematics and Its Applications \textbf{98},
Cambridge University Press, Cambridge, 2005.

\bibitem{IRS} M.E.H.~Ismail, M.~Rahman, and S.~Suslov,
Some summation theorems and transformations for $q$-series,
{\em Canad.\ J.\ Math.\ }\textbf{49} (1997), 543--567.

\bibitem{JK}A. Jana and G. Kalita,
Proof of some conjectural supercongruences of {G}uo and {S}chlosser,
{\em Ramanujan J.} (2020),
\texttt{https://doi.org/10.1007/s11139-019-00221-5}.

\bibitem{Kilbourn}T. Kilbourn,
An extension of the {A}p\'ery number supercongruence,
{\em Acta Arith.} \textbf{123} (2006), 335--348.

\bibitem{LSW} R.~Langer, M.J.~Schlosser, and S.O.~Warnaar,
Theta functions, elliptic hypergeometric series, and
{K}awanaka's {M}acdonald polynomial conjecture,
SIGMA 05 (2009), 055, 20 pp.

\bibitem{LPZ}J. Liu, H. Pan and Y. Zhang,
A generalization of {M}orley's congruence,
{\em Adv. Difference Equ.} (2015), 2015:254, 7 pp.

\bibitem{Liu}J-C. Liu,
On {V}an {H}amme's (A.2) and (H.2) supercongruences,
{\em J. Math. Anal. Appl.} \textbf{471}(1-2) (2019), 613--622.

\bibitem{Liu2}J.-C. Liu,
Some supercongruences arising from symbolic summation,
{\em J. Math. Anal. Appl.} \textbf{488} (2020) Art. 124062.

\bibitem{LP}J.-C. Liu and F. Petrov,
Congruences on sums of $q$-binomial coefficients,
{\em Adv. Appl. Math.} \textbf{116} (2020), Art.~102003.

\bibitem{Long}L. Long,
Hypergeometric evaluation identities and supercongruences,
{\em Pacific J. Math.} \textbf{249} (2011), 405--418.

\bibitem{LR}L. Long and R. Ramakrishna,
Some supercongruences occurring in truncated hypergeometric series,
{\em Adv. Math.} \textbf{290} (2016), 773--808.

\bibitem{MZ}G.-S. Mao and T. Zhang,
Proof of {S}un's conjectures on super congruences and the divisibility
of certain binomial sums,
{\em Ramanujan J.\ }\textbf{50}(1) (2019), 1--11.

\bibitem{MO}D. McCarthy and R. Osburn,
A $p$-adic analogue of a formula of {R}amanujan,
{\em Arch. Math.} \textbf{91} (2008), 492--504.

\bibitem{Mortenson} E. Mortenson,
A $p$-adic supercongruence conjecture of {V}an {H}amme,
{\em Proc. Amer. Math. Soc.} \textbf{136} (2008), 4321--4328.

\bibitem{NP}H.-X. Ni and H. Pan,
Divisibility of some binomial sums,
{\em Acta Arith.} \textbf{194} (2020), 367--382.

\bibitem{R}M. Rahman,
The linearization of the product of the continuous $q$-{J}acobi polynomials,
{\em Canad.\ J.\ Math.\ }\textbf{33} (1981), 255--284.

\bibitem{Rahman}M. Rahman,
Some quadratic and cubic summation formulas for basic hypergeometric series,
{\em Canad. J. Math.} \textbf{45} (1993),  394--411.

\bibitem{R3} L.J.~Rogers,
Third memoir on the expansion of certain infinite products,
Proc.\ London Math.\ Soc.\ \textbf{26} (1894), 15--32.

\bibitem{SP}L.-L. Shi and H. Pan,
A $q$-analogue of {W}olstenholme's harmonic series congruence,
{\em Amer. Math. Monthly} \textbf{114} (2005), 529--531.

\bibitem{Straub}A. Straub,
Supercongruences for polynomial analogs of the {A}p\'ery numbers,
{\em Proc. Amer. Math. Soc.} \textbf{147} (2019), 1023--1036.

\bibitem{Sun4}Z.-W. Sun,
Super congruences and {E}uler numbers,
{\em Sci. China Math.} \textbf{54} (2011), 2509--2535.

\bibitem{Tannery}J. Tannery,
{\em Introduction a la Th\'eorie des Fonctions d'une Variable}, 2 ed., Tome 1,
Libraire Scientifique A.\ Hermann, Paris, 1904.

\bibitem{Tauraso1}R. Tauraso,
$q$-Analogs of some congruences involving {C}atalan numbers,
{\em Adv. Appl. Math.} \textbf{48} (2009), 603--614.

\bibitem{Tauraso2}R. Tauraso,
Some $q$-analogs of congruences for central binomial sums,
{\em Colloq. Math.} \textbf{133} (2013), 133--143.

\bibitem{Hamme}L. Van Hamme,
Some conjectures concerning partial sums of generalized hypergeometric series,
in: {\em $p$-Adic Functional Analysis} (Nijmegen, 1996),
Lecture Notes in Pure and Appl. Math. \textbf{192},
Dekker, New York (1997), 223--236.

\bibitem{CWang}C. Wang,
Symbolic summation methods and hypergeometric supercongruences,
{\em J. Math. Anal. Appl.} \textbf{488} (2020), Art. 124068.

\bibitem{WY}X. Wang and M. Yue,
A $q$-analogue of the (A.2) supercongruence of Van Hamme for any prime
$p\equiv 3\pmod{4}$,
{\em Int. J. Number Theory} \textbf{16} (2020), 1325--1335.

\bibitem{Zudilin2}W. Zudilin,
Congruences for $q$-binomial coefficients,
{\em Ann. Combin.}  \textbf{23} (2019), 1123--1135.


\end{thebibliography}
\end{document}